\documentclass{article}
\usepackage[utf8]{inputenc}

\usepackage{amssymb}
\usepackage{amsthm}
\usepackage{amsmath,amscd}
\usepackage[mathscr]{euscript}
\usepackage[all]{xy}
\usepackage{lmodern}
\usepackage[T1]{fontenc}
\usepackage{stmaryrd} 
\usepackage[textwidth=14cm,hcentering]{geometry}
\usepackage[colorlinks=true,linkcolor=red,citecolor=blue]{hyperref}
\usepackage{cleveref}
\usepackage{mathtools}
\usepackage{xspace}
\usepackage{tikz}
\usetikzlibrary{cd}
\usetikzlibrary{arrows,positioning}
\usepackage{bbm}
\usepackage{aliascnt}
\usepackage{caption}

\title{Chromatic Noshift}
\author{Maxime Ramzi }
\date{}

\newtheorem{thm}{Theorem}[section]
\newaliascnt{lm}{thm}  
\newtheorem{lm}[lm]{Lemma}
\aliascntresetthe{lm}
\Crefname{lm}{Lemma}{Lemmas}
\newaliascnt{prop}{thm}  
\newtheorem{prop}[prop]{Proposition}
\aliascntresetthe{prop}
\Crefname{prop}{Proposition}{Propositions}
\newaliascnt{cor}{thm}  
\newtheorem{cor}[cor]{Corollary}
\aliascntresetthe{cor}
\Crefname{cor}{Corollary}{Corollaries}

\newtheorem*{thm*}{Theorem}
\newtheorem*{cor*}{Corollary}

\theoremstyle{definition}
\newaliascnt{defn}{thm}  
\newtheorem{defn}[defn]{Definition}
\aliascntresetthe{defn}
\Crefname{defn}{Definition}{Definitions}
\newaliascnt{cons}{thm}  
\newtheorem{cons}[cons]{Construction}
\aliascntresetthe{cons}
\Crefname{cons}{Construction}{Constructions}
\newaliascnt{nota}{thm}  
\newtheorem{nota}[nota]{Notation}
\aliascntresetthe{nota}
\Crefname{nota}{Notation}{Notations}
\newaliascnt{conv}{thm}  
\newtheorem{conv}[conv]{Convention}
\aliascntresetthe{conv}
\Crefname{conv}{Convention}{Conventions}
\newaliascnt{ex}{thm}  
\newtheorem{ex}[ex]{Example}
\aliascntresetthe{ex}
\Crefname{ex}{Example}{Examples}
\newaliascnt{rmk}{thm}  
\newtheorem{rmk}[rmk]{Remark}
\aliascntresetthe{rmk}
\Crefname{rmk}{Remark}{Remarks}
\newaliascnt{ques}{thm}  
\newtheorem{ques}[ques]{Question}
\aliascntresetthe{ques}
\Crefname{ques}{Question}{Questions}
\newaliascnt{conj}{thm}  
\newtheorem{conj}[conj]{Conjecture}
\aliascntresetthe{conj}
\Crefname{conj}{Conjecture}{Conjectures}
\newaliascnt{warn}{thm}  
\newtheorem{warn}[warn]{Warning}
\aliascntresetthe{warn}
\Crefname{warn}{Warning}{Warnings}
\newaliascnt{obs}{thm}  
\newtheorem{obs}[obs]{Observation}
\aliascntresetthe{obs}
\Crefname{obs}{Observation}{Observations}
\newtheorem*{ques*}{Question}
\newtheorem*{rmk*}{Remark}
\newtheorem*{ex*}{Example}
\newaliascnt{recoll}{thm}  
\newtheorem{recoll}[recoll]{Recollection}
\aliascntresetthe{recoll}
\Crefname{recoll}{Recollection}{Recollections}

\AtBeginEnvironment{defn}{\pushQED{\qed}}
\AtEndEnvironment{defn}{\popQED\enddefn}
\AtBeginEnvironment{defn*}{\pushQED{\qed}}
\AtEndEnvironment{defn*}{\popQED\enddefn}
\AtBeginEnvironment{cons}{\pushQED{\qed}}
\AtEndEnvironment{cons}{\popQED\endcons}
\AtBeginEnvironment{assu}{\pushQED{\qed}}
\AtEndEnvironment{assu}{\popQED\endassu}
\AtBeginEnvironment{nota}{\pushQED{\qed}}
\AtEndEnvironment{nota}{\popQED\endnota}
\AtBeginEnvironment{nota*}{\pushQED{\qed}}
\AtEndEnvironment{nota*}{\popQED\endnota}
\AtBeginEnvironment{conv}{\pushQED{\qed}}
\AtEndEnvironment{conv}{\popQED\enddefn}\AtBeginEnvironment{ex}{\pushQED{\qed}}
\AtEndEnvironment{ex}{\popQED\endex}
\AtBeginEnvironment{exs}{\pushQED{\qed}}
\AtEndEnvironment{exs}{\popQED\endexs}
\AtBeginEnvironment{exc}{\pushQED{\qed}}
\AtEndEnvironment{exc}{\popQED\endexc}
\AtBeginEnvironment{rmk}{\pushQED{\qed}}
\AtEndEnvironment{rmk}{\popQED\endrmk}
\AtBeginEnvironment{rmk*}{\pushQED{\qed}}
\AtEndEnvironment{rmk*}{\popQED\endrmk}
\AtBeginEnvironment{ques}{\pushQED{\qed}}
\AtEndEnvironment{ques}{\popQED\endques}
\AtBeginEnvironment{ques*}{\pushQED{\qed}}
\AtEndEnvironment{ques*}{\popQED\endques}
\AtBeginEnvironment{conj}{\pushQED{\qed}}
\AtEndEnvironment{conj}{\popQED\endconj}
\AtBeginEnvironment{warn}{\pushQED{\qed}}
\AtEndEnvironment{warn}{\popQED\endwarn}
\AtBeginEnvironment{obs}{\pushQED{\qed}}
\AtEndEnvironment{obs}{\popQED\endobs}

\newcommand{\op}{^{\mathrm{op}}}
\newcommand{\ho}{\mathrm{ho}}
\newcommand{\cat}{\mathrm}
\newcommand{\Cat}{\cat{Cat}}
\newcommand{\Catperf}{\mathrm{Cat}^{\mathrm{perf}}}
\newcommand{\on}{\operatorname}
\newcommand{\id}{\mathrm{id}}
\newcommand{\Fun}{\on{Fun}}
\newcommand{\map}{\on{map}}
\newcommand{\Map}{\on{Map}}

\newcommand{\NN}{\mathbb N}
\newcommand{\Z}{\mathbb Z}
\newcommand{\Sph}{\mathbb S}
\newcommand{\can}{\mathrm{can}}

\newcommand{\tr}{\mathrm{tr}}

\newcommand{\An}{\mathrm{An}}
\newcommand{\Sp}{\cat{Sp}}
\newcommand{\at}{\mathrm{at}}

\newcommand{\PrL}{\mathrm{Pr}^\mathrm{L} }

\newcommand{\Alg}{\mathrm{Alg}}
\newcommand{\CAlg}{\mathrm{CAlg}}

\newcommand{\Mod}{\cat{Mod}}

\newcommand{\HH}{\mathrm{HH}}
\newcommand{\THH}{\mathrm{THH}}
\newcommand{\Perf}{\mathrm{Perf}}

\newcommand{\Ind}{\mathrm{Ind}}
\newcommand{\End}{\mathrm{End}}

\newcommand{\pt}{\mathrm{pt}}
\newcommand{\colim}{\mathrm{colim}}

\newcommand{\Pic}{\mathrm{Pic}}

\newcommand{\PPic}{\mathbf{Pic}}

\newcommand{\Mot}{\mathrm{Mot}}

\newcommand{\one}{\mathbbm{1}}
\newcommand{\V}{\mathcal V}

\newcommand{\st}{\mathrm{st}}
\newcommand{\dbl}{\mathrm{dbl}}

\newcommand{\udl}[1]{\lfloor #1 \rfloor}

\newcommand{\Dim}{\mathrm{Dim}}
\newcommand{\Vect}{\mathrm{Vect}}
\newcommand{\Cob}{\mathrm{Cob}}
\newcommand{\FCom}{\mathrm{Free}_{\mathbb{E}_{\infty}}}
\newcommand{\Q}{\mathbb{Q}}
\newcommand{\Frrig}{\mathrm{Free}^\mathrm{rig}}
\newcommand{\rig}{\mathrm{rig}}
\newcommand{\TwoRing}{2\mathrm{Ring}}
\newcommand{\locrig}{\mathrm{locrig}}
\newcommand{\category}{\xspace{$\infty$-category}\xspace}
\newcommand{\categories}{\xspace{$\infty$-categories}\xspace}

\begin{document}

\maketitle\begin{abstract}
The chromatic redshift philosophy, introduced by Ausoni and Rognes, suggests that algebraic $K$-theory raises chromatic height by $1$. We show that the analogue of this philosophy fails in the case of rigid symmetric monoidal stable $\infty$-categories. More precisely, we construct examples of rigid $T(n)$-local categories $C$ where a refinement $\Dim$ of the dimension morphism induces an equivalence $$K(C)\to \End(\one_C)^{BS^1}$$ and for which $K(C)$ therefore vanishes $T(n+1)$-locally. 

In fact, we prove that this equivalence always holds for $\aleph_1$-Nullstellensatzian rigid $T(n)$-local categories in the sense of Burklund, Schlank and Yuan. We study more in depth the rational version of these results to find a rigid rational additive $1$-category witnessing the failure of redshift at height $0$. 

Finally, we use our methods to prove and generalize a conjecture of Levy about categorification of ordinary rings.
\end{abstract}
\begin{center}
    \includegraphics[scale=0.4]{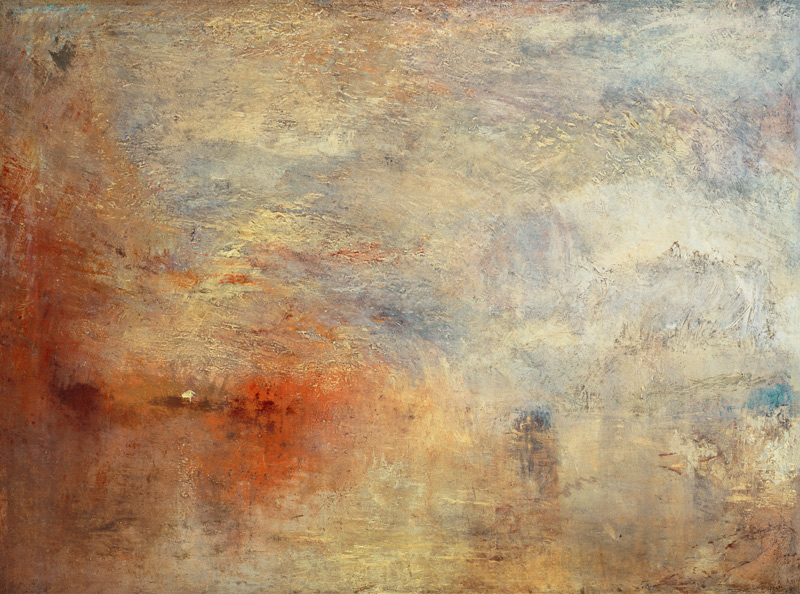}
    \captionof{figure}{Sun Setting over a Lake, J.M.W. Turner}
\end{center}
\tableofcontents
\setcounter{secnumdepth}{0}

\section{Introduction}
The study of the algebraic $K$-theory of ring spectra and its relation to the chromatic filtration, initiated by Waldhausen \cite{waldhausen}, has been significantly transformed in recent years by Ausoni and Rognes' redshift philosophy \cite{AusoniRognes,Rognes}. This philosophy roughly suggests that algebraic $K$-theory raises chromatic height by $1$, and has become an organizing principle in the study of chromatically localized $K$-theory. 

It has since been substantially confirmed, for example in \cite{KK1AR,hahnwilson,Kroots}, and a precise form of it was recently completely proved in a series of landmark papers \cite{CMNN,LMMT,Allenredshift,ChroNS} which, altogether, establish chromatic redshift for commutative ring spectra: the $K$-theory of a commutative ring spectrum of height $n\geq 0$ has height exactly $n+1$. 

A very surprising feature of these recent advances is that while $K$-theory is fundamentally a categorical invariant, the results in \cite{Allenredshift,ChroNS}, which establish the lower bound for redshift (the $K$-theory of a commutative ring spectrum of height $n\geq 0$ is of height \emph{at least} $n+1$) work exclusively at the ring-theoretic level. This is in stark contrast with the usual $K$-theoretic practice of working at the categorical level, as in \cite{CMNN,LMMT}, where the upper bound for redshift is established globally, for stable $\infty$-categories (the $K$-theory of a stable $\infty$-category of height $\leq n$ is of height $\leq n+1$). This leaves open the question of the lower bound for redshift for symmetric monoidal stable $\infty$-categories. It is easy to see that one cannot expect a fully general redshift: 
\begin{ex*}
    The \category of spectra with countable homotopy groups is a small symmetric monoidal \category and its $K$-theory vanishes. This example can be modified to work at any finite height. 
\end{ex*}
However, it is well understood that arbitrary symmetric monoidal stable $\infty$-categories are somewhat ill-behaved and too general; the ones that are thought of as behaving somewhat like categories of perfect modules over commutative ring spectra are the \emph{rigid} symmetric monoidal stable $\infty$-categories, that is, those where every object is dualizable. These categories are particularly interesting and somewhat unavoidable -- in \cite{CMNN} the proof of Galois descent for the $T(n+1)$-local $K$-theory of $T(n)$-local Galois extensions uses a category of $T(n)$-locally dualizable objects over a commutative ring spectrum $A$, and in particular establishes that \emph{it} behaves $K$-theoretically like the category of perfect modules over $A$.

Among other things, for rigid symmetric monoidal stable $\infty$-categories, there is a \emph{dimension morphism} $K(C)\to \End(\one_C)$ which guarantees that $K(C)$ is of height \emph{at least} greater or equal to that of $\End(\one_C)$, where $\one_C$ is the unit of $C$, which is a weak form of the lower bound for redshift (``$K$-theory does not \emph{decrease} height''). This puts rigid categories at an interesting place for the redshift question: they are too constrained for the general swindle-type constructions to give obvious counterexamples, and there is \emph{some} evidence that they behave somewhat like categories of perfect modules. Therefore one might expect the $K$-theory of rigid stably symmetric monoidal \categories to also exhibit chromatic redshift. 

Our main results shows that this expectation fails completely:
\begin{thm*}[Noshift, \Cref{cor:noshift}]
    Let $C$ be a nonzero rigid $T(n)$-local symmetric monoidal \category. There exists a nonzero rigid $T(n)$-local symmetric monoidal \category and a symmetric monoidal functor (which can be chosen to be fully faithful) $C\to D$ such that $$L_{T(n+1)}K(D^\dbl) = 0.$$
\end{thm*}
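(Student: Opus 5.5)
We will deduce the statement from the abstract's two main inputs: the theorem that for an $\aleph_1$-Nullstellensatzian rigid $T(n)$-local category $D$ the refined dimension map $\Dim\colon K(D)\to \End(\one_D)^{BS^1}$ is an equivalence, and the existence of enough such categories (Burklund--Schlank--Yuan). First, given $C$ nonzero rigid $T(n)$-local, we embed it into an $\aleph_1$-Nullstellensatzian one. For this we work in the $\infty$-category of rigid $T(n)$-local symmetric monoidal stable categories, or their large counterparts $\Ind$-completions, which are compactly generated and presentable in a suitable sense. We apply the small-object-type construction of BSY: iteratively adjoin, along countably presented (i.e. $\aleph_1$-compact) maps, solutions to all "equations" which have solutions in some nonzero extension, and take a transfinite colimit of length $\omega_1$. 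This produces $D$ together with a map $C\to D$, with $D$ nonzero since $C$ is nonzero and nonzeroness is preserved by filtered colimits of nonzero objects along unit-preserving maps (the unit is compact).

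For fully faithfulness, we use the trick of taking a product/retract: the functor $C\to C\times D$ is not unital, so instead we use that in the rigid setting, maps out of $C$ are detected on $\End(\one)$-linear data. Concretely, we would replace the first step by first passing to $\Perf$ over a Nullstellensatzian extension of each $\End(\one)$-algebra and arguing that the BSY construction can be performed "relative to $C$" by only adjoining cells that do not change mapping spectra between objects of $C$. More realistically, we show that a Nullstellensatzian extension can be chosen so that $C\to D$ is conservative and then use rigidity (a conservative symmetric monoidal functor between rigid categories with $\one_C\to \End$ split injective) to upgrade to fully faithful. I expect this fully faithful refinement to be optional and only remark on it.

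Second, we compute. By the main equivalence, $K(D^\dbl)\simeq K(D)\simeq \End(\one_D)^{BS^1}$. Here $D^\dbl$ is just a harmless idempotent completion, which does not change $K$ in the relevant nonconnective/positive degrees; $T(n+1)$-locally this difference vanishes. Now $R=\End(\one_D)$ is a $T(n)$-local $\mathbb E_\infty$-ring, hence $L_{T(n+1)}R=0$. The homotopy fixed points $R^{BS^1}$ are a limit of $T(n)$-local spectra, hence $T(n)$-local, so $L_{T(n+1)}R^{BS^1}=0$ since $T(n)$-local and $T(n+1)$-acyclic coincide for disjoint heights (a $T(n)$-local spectrum smashed with a type $n+1$ complex is zero because $v_n$ acts nilpotently there). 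Hence $L_{T(n+1)}K(D^\dbl)=0$.

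The main obstacle is the first step: verifying that the BSY Nullstellensatzian construction applies in the category of rigid $T(n)$-local categories, in particular that this category is presentable with compact unit so that the $\omega_1$-indexed construction exists and stays nonzero, and that the resulting $D$ is rigid. I would try to work in $\CAlg(\PrL)$ of compactly generated rigid categories, where rigidity is closed under filtered colimits and pushouts.
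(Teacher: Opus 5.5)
Without the parenthetical, your deduction is sound given the paper's other results. Since $\Sp_{T(n)}$ is quasi-compact, \Cref{cor:existNS} gives $C\to D$ with $D$ $\aleph_1$-NS. Then \Cref{thm:KNS} and your $L_n^f$-locality argument give $L_{T(n+1)}K(D^\dbl)=0$; this is \Cref{cor:existNS} combined with \Cref{cor:NSnoshift}.

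Two side corrections. First, the unit of $\Sp_{T(n)}$ is \emph{not} compact. Compactness of the terminal object comes instead from a $\otimes$-conservative compact object such as $L_{T(n)}V$ with $V$ of type $n$, cf. \Cref{defn:quasicompact}. Second, $D^\dbl$ is not an idempotent completion of $D$: for large $D$ one has $K(D)=0$, and the proper inclusion $D^{\aleph_0}\subset D^\dbl$ is only a $T(n+1)$-local equivalence on $K$-theory, \Cref{lm:agreeK}. You don't need that comparison anyway, since \Cref{thm:KNS} is already about $K(D^\dbl)$.

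The genuine gap is the claim that $C\to D$ can be chosen fully faithful; the Nullstellensatzian route cannot deliver this. By \Cref{lm:end1NS} and \Cref{thm:choNS}, $\End(\one_D)$ is a Lubin--Tate theory over an algebraically closed field. Full faithfulness would force $\End(\one_D)\simeq\End(\one_C)$, which already fails for $C=\Sp_{T(n)}$. Your proposed upgrade ``conservative with split unit map $\Rightarrow$ fully faithful'' is also false: take $\Perf(k)\to\Perf(L)$ for a nontrivial field extension $L/k$.

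The fix you gesture at, running the small object argument only against $\kappa$-compact \emph{rigid extensions}, is what the paper does in \Cref{thm:designer}. Weak saturation of fully faithful maps (via \Cref{cor:fftensor}) gives a fully faithful $C\to D$, hence $\End(\one_D)\simeq\End(\one_C)$, over which every $\kappa$-compact rigid extension splits. But such a $D$ is not Nullstellensatzian, so \Cref{thm:KNS} can no longer be used as a black box. You must show directly that $\Dim$ is an equivalence using only this splitting property, and that is the real content.

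For surjectivity (\Cref{lm:atomicsurj}), choose $D_0$ with $M_C(D_0)\simeq\Sigma^n M_C(C)$ and atomic canonical class, so that $\End(\one_{\Frrig_C(D_0)})\simeq\one_C\{\Sigma^n\one_C[BS^1]\}$. Then base change $\Frrig_C(D_0)$ along the map to $\one_C$ classified by $x$.

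For injectivity (\Cref{lm:atinj}), use \cite{RSW} to realize a class with $\Dim(x)=0$ as a functor $E_0\to C^\dbl$. Embed $E_0$ into a motivically trivial $F_0$ via \Cref{lm:disk}. Then use \Cref{prop:Frrigff} to build a rigid extension over which $E_0\to D^\dbl$ factors through $F_0$; the class dies because $K(F_0)=0$.

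The paper proves \Cref{thm:KNS} precisely by observing that this argument only uses splitting of $\kappa$-compact rigid extensions. By citing \Cref{thm:KNS}, you have outsourced all of this, and in the form you cite it, it cannot reach the fully faithful statement.
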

This result shows that the distinction between the categorical results of \cite{CMNN,LMMT} and the ring-theoretic ones from \cite{Allenredshift,ChroNS} is essential: the redshift phenomenon for commutative ring spectra does not arise from a purely categorical property of algebraic $K$-theory.
\begin{rmk*}
Since $D$ is something ``like'' $T(n)$-local spectra, there are several small categories one can associate to it, and so ``the $K$-theory of $D$'' can a priori be slightly ambiguous. In fact, $T(n+1)$-locally, all reasonable notions agree (\Cref{lm:agreeK}) and so the above result does not suffer from this ambiguity. 
\end{rmk*}
\begin{rmk*}
    The $D$'s that we construct are a priori quite large, and so one may wonder how ``prevalent'' this failure of redshift is, e.g. if it ever occurs for relatively ``small'' rigid $T(n)$-local symmetric monoidal \categories. 

    It is actually not so difficult to deduce from the above result that there exist $\aleph_1$-compact, i.e. \emph{countably generated}, rigid $T(n)$-local symmetric monoidal \categories with the same $T(n+1)$-local vanishing. We explain this in \Cref{cor:smallnoshift}. Since this Noshift phenomenon is ``upwards closed'' (if it is satisfied by $D$, and $E$ receives a map from $D$, then it is also satisfied by $E$), this suggests that failure of redshift is closer to the norm, and that commutative ring spectra are in fact very specific among rigid symmetric monoidal \categories. 
\end{rmk*}

To prove this result, we study how one can modify the $K$-theory of rigid categories. We find that a certain refinement $\Dim : K(C)\to \End(\one_C)^{BS^1}$ of the dimension morphism governs precisely which classes in $K$-theory can be added or killed under rigid extensions $C\to D$, that is, fully faithful symmetric monoidal exact functors. We prove precise versions of this idea, but the outcome is the following:
\begin{thm*}[\Cref{thm:designer}]
    Let $C$ be a locally rigid $2$-ring. There exists a rigid extension $C\to D$ such that the Dimension morphism $$\Dim: K(D^\dbl)\to \End(\one_D)^{BS^1} \simeq \End(\one_C)^{BS^1}$$ is an equivalence. 
\end{thm*}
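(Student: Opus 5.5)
We construct $D$ as a transfinite colimit $C=D_0\to D_1\to\cdots\to D_\alpha\to\cdots$ of rigid extensions (fully faithful symmetric monoidal functors). Each stage is designed to kill part of the fiber of $\Dim$ and to add preimages for part of its target. The unit endomorphisms $\End(\one_{D_\alpha})\simeq\End(\one_C)$ do not change, because every map in the tower is fully faithful. Hence the target $\End(\one_C)^{BS^1}$ is fixed throughout, and we only need to modify the source $K(D^\dbl)$.

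The construction needs two basic moves. I would establish them first, since they are the categorical content of the slogan that $\Dim$ governs which classes can be added or killed.

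\emph{Killing.} Suppose $x\in\pi_*K(D^\dbl)$ satisfies $\Dim(x)=0$. Then there is a rigid extension $D\to D'$ in which $x$ maps to $0$.

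\emph{Adding.} Suppose $y\in\pi_*\End(\one_D)^{BS^1}$. Then there is a rigid extension $D\to D'$ and a class $x\in\pi_*K(D'^\dbl)$ with $\Dim(x)=y$.

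For adding, I would use the universal property of free constructions on locally rigid $2$-rings. Adjoin a free dualizable object, or a free dualizable object with a prescribed $S^1$-equivariant trace. Its dimension, with the $BS^1$-refinement coming from the cyclic invariance of traces, is the universal source of classes in $\End(\one)^{BS^1}$. Then base change along the map classifying $y$. For killing, represent $x$ by a map from a finite spectrum into $K(D^\dbl)$, that is, by (formal differences of) dualizable objects. Adjoin a null-homotopy of $x$ in $K$ by a pushout of $2$-rings. The hypothesis $\Dim(x)=0$ is exactly what should ensure that this pushout does not alter $\End(\one)$.

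Fully faithfulness in both moves amounts to computing $\End(\one_{D'})$ for the resulting pushouts. I expect this to be the main obstacle. It requires an explicit formula for the unit of a pushout of locally rigid $2$-rings, presumably via a bar construction or THH-type description of free rigid $2$-rings, in which the trace/dimension appears as the only obstruction. My first attempt would be to identify the free locally rigid $2$-ring on a dualizable object with a category of representations of a cobordism-type or $BS^1$-twisted object, so that $\End(\one)$ of the free object is $\Sph^{BS^1}$-related. The relative version over $D$ would then follow by base change.

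With both moves available, I would iterate them, doing all classes at once at each stage by taking tensor products or coproducts of extensions. We may need to pass to a larger cardinal or to $\aleph_1$-many steps, and to handle all homotopy degrees simultaneously. At a limit stage we take the colimit. This is again a locally rigid $2$-ring, and it is a rigid extension of $C$. Moreover $K$ commutes with filtered colimits, so $K(D^\dbl)=\colim_\alpha K(D_\alpha^\dbl)$.

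Every class in the kernel at stage $\alpha$ dies at stage $\alpha+1$, and every target class is hit there. Hence $\Dim:K(D^\dbl)\to\End(\one_C)^{BS^1}$ is injective and surjective on homotopy groups. Here injectivity means: if $\Dim(x)=0$ for $x$ coming from some stage, then $x$ is killed at the next stage. So $\Dim$ is an equivalence.

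One subtlety remains: $\Dim$ may fail to be a ring map on homotopy, so injectivity has to be interpreted on the fiber rather than the kernel. To handle this, I would kill classes in $\pi_*\mathrm{fib}(\Dim)$ directly by the killing move. This move should apply to a class in the fiber together with a chosen null-homotopy of its dimension, and this is how I would formulate the killing lemma.
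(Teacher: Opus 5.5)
Your skeleton matches the paper's: one move kills classes in the kernel of $\Dim$, another hits classes of $\End(\one_C)^{BS^1}$, and they are iterated transfinitely with $\End(\one)$ frozen by fully faithfulness. But as you describe them, both moves are only meaningful in degree $0$, and the killing move is missing its essential ingredient. For $n\neq 0$, a class $x\in\pi_nK(D^\dbl)$ is not given by (differences of) dualizable objects. You also never say which pushout of $2$-rings ``adjoins a null-homotopy of $x$ in $K$''. $K$-theory classes are not a priori encoded by objects or functors that one can freely adjoin, and arranging this is the heart of the paper's argument.

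The missing idea is to categorify $x$ using \cite{RSW}. View $x$ as a map of motives $\Sigma^nM_{C^\dbl}(C^\dbl)\to M_{C^\dbl}(C^\dbl)$. Since $M$ is a Dwyer--Kan localization whose cofibrations are the fully faithful functors, $x$ is represented by a zigzag $E_0\to C'\xleftarrow{\sim}C^\dbl$ of $\kappa$-compact modules with $M_{C^\dbl}(E_0)\simeq\Sigma^nM_{C^\dbl}(C^\dbl)$ (\Cref{cor:zigzag}). The backwards motivic equivalence acquires a retraction after the small rigid extension $\Frrig_C(\tilde C')\otimes_{\Frrig_C(C)}C$, where $\tilde C'$ is the $C$-linearization of $C'$. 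This extension is fully faithful over $C$: $\HH$ is a motivic invariant, so $\Frrig_C(C)\to\Frrig_C(\tilde C')$ is fully faithful by \Cref{prop:Frrigff} (see \Cref{cor:almostself-inj} and \Cref{cor:almostrealizingclasses}). Only then is $x$ an honest functor $X:E_0\to C^\dbl$. One kills it by embedding $E_0$ into a motivically trivial $F_0$ (\Cref{lm:disk}) and forcing $X$ to extend over $F_0$, so that $K(E_0)\to K(D^\dbl)$ factors through $K(F_0)=0$.

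This is also where $\Dim(x)=0$ enters, and it shows why you need no general formula for units of pushouts. Every pushout in the argument is along a fully faithful map, and these remain fully faithful by \Cref{cor:fftensor}. Write $E,F$ for the $C$-linearizations of $E_0,F_0$ and $\tilde X:\Frrig_C(E)\to C$ for the map induced by $X$. Then \Cref{cor:commutativediagram} converts $\Dim(x)=0$ into nullity of the induced map $\HH(E/C)_{hS^1}\simeq\Sigma^n\one_C[BS^1]\to\one_C$. Hence $\tilde X$ factors through $R:=\Frrig_C(E)\otimes_{\End(\one_{\Frrig_C(E)})}\one_C$. Since $\HH(F/C)=0$, \Cref{prop:Frrigff} makes $R\to\Frrig_C(F)$ fully faithful, and $\Frrig_C(F)\otimes_RC$ is the desired rigid extension.

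Your adding move has the same degree problem. A free dualizable object has unit endomorphisms $\one_C\{BS^1\}$ and only reaches $\pi_0\End(\one_C)^{BS^1}$. For $y$ in degree $n$ you need $\Frrig_C(D_0)$ for a $C$-module $D_0$ with $M_C(D_0)\simeq\Sigma^nM_C(C)$. Its unit endomorphisms are $\one_C\{\Sigma^n\one_C[BS^1]\}$ by the Barkan--Steinebrunner formula (\Cref{cor:unitinFrrig}), and you then base-change along $y$.

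Finally, when $C$ is only locally rigid, motivic arguments naturally compute $K_C(D)$ rather than $K(D^\dbl)$. So one must check that the categorified classes are atomic (\Cref{ex:atomicshift}) and run the killing argument over $C^\dbl$; your sketch does not address this. Your fiber-versus-kernel worry is unnecessary: $\pi_n\Dim$ is a group homomorphism, so bijectivity on every $\pi_n$ already gives an equivalence.
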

The term ``locally rigid $2$-ring'' will be defined in \Cref{defn:locrig}, and is here to allow examples such as the category of $T(n)$-local spectra, which is not rigidly-compactly generated. In that example, considering rigid algebras over $\Sp_{T(n)}$ allows us to consider rigid categories with $T(n)$-local mapping spectra. We note that while there is a more general definition, and the results proved in this paper would apply in that larger generality as well, here, all locally rigid $2$-rings are taken to be compactly generated. 

In turn, the Dimension morphism  $\Dim: K(C)\to \End(\one_C)^{BS^1}$ appearing in the theorem is a refinement of the usual dimension morphism, recording the $S^1$-action on symmetric monoidal dimensions\footnote{The convention here is that ``Dimension'' records the $S^1$-action, while ``dimension'' does not.}. This extra structure on dimensions arises from the structure of the free rigid symmetric monoidal category on an object, per the $1$ dimensional cobordism hypothesis \cite{baezdolan} (see \Cref{section:dimensions} for a detailed discussion), and it is ultimately why it appears in this result. 
\begin{rmk*}
    One can think of this result as a ``maximal no-shift'' result: while we usually think of $K$-theory as not being ``linear'' over the base, this theorem says that this expectation can be maximally falsified. 
\end{rmk*}
As a consequence, we prove and generalize a conjecture of Levy \cite[Conjecture 5]{levycat} about the categorification of ordinary commutative rings, thereby also answering Question 16 in \textit{loc. cit.}:
\begin{cor*}[\Cref{cor:levytext}]
 For every ordinary ring $R$, there exists a small rigid idempotent-complete commutative $\Perf(R)$-algebra $C$ such that $K_0(C)\cong R$.    
\end{cor*}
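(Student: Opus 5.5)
We deduce the Corollary from the designer theorem (\Cref{thm:designer}) applied to the locally rigid $2$-ring $C_0=\Mod_{HR}$, i.e. $\Ind(\Perf(R))$, which is rigidly-compactly generated. Here $\End(\one)=HR$, so \Cref{thm:designer} provides a rigid extension $C_0\to D$ with
\[\Dim : K(D^\dbl)\xrightarrow{\ \simeq\ } HR^{BS^1}.\]
We take $C=D^\dbl$. It is a small, idempotent-complete, rigid symmetric monoidal stable \category. It receives a symmetric monoidal exact functor from $\Perf(R)=C_0^\dbl$, so it is a commutative $\Perf(R)$-algebra; the functor is even fully faithful, but that is not needed.

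Next we compute $\pi_0(HR^{BS^1})$. The homotopy fixed point spectral sequence has $E_2^{s,t}=H^{s}(BS^1;\pi_t HR)$, which is concentrated in $t=0$ and even $s\geq 0$, with $\pi_{t-s}$ as abutment. For total degree $0$ only $s=t=0$ contributes, giving $\pi_0(HR^{BS^1})\cong H^0(BS^1;R)=R$; there are no differentials or extension problems in that degree. More directly, $HR^{BS^1}=\map(\Sigma^\infty_+BS^1,HR)$, whose $\pi_0$ is $H^0(BS^1;R)=R$, since $BS^1$ is connected. Hence $K_0(C)\cong\pi_0K(D^\dbl)\cong R$.

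It remains to see that this is the expected isomorphism. On $\pi_0$, the map $\Dim$ sends $[x]$ to the underlying dimension $\dim(x)\in\pi_0\End(\one)=R$. Composing with $K_0(\Perf(R))\to K_0(C)$ sends $[R]$ to $1$. So the isomorphism is a ring isomorphism, because dimension is multiplicative, and it is the one compatible with the unit map.

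The main obstacle is entirely hidden in \Cref{thm:designer}: building $D$ by transfinitely adjoining dualizable objects to realize classes in $\pi_*\End(\one)^{BS^1}$, and killing classes in the kernel of $\Dim$, while keeping the extension fully faithful. Given that theorem, the only points to check are the following.
\begin{itemize}
\item $\Mod_{HR}$ is a locally rigid $2$-ring in the sense of \Cref{defn:locrig}.
\item The "dbl" category is small. If $D$ is presented as a compactly generated category, its dualizable objects form an essentially small category.
\item A possible subtlety: if $R$ is noncommutative, "commutative $\Perf(R)$-algebra" makes no sense. Levy's conjecture concerns commutative rings, so we assume $R$ is commutative.
\end{itemize}
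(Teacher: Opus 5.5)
Your proposal is correct and matches the paper's argument: the paper states the corollary as a direct consequence of \Cref{thm:designer} applied to $\Mod_R$, takes $C=D^\dbl$, and reads off $K_0(C)\cong\pi_0(R^{BS^1})\cong H^0(BS^1;R)\cong R$ for discrete $R$. One small correction is that the fully faithfulness of $\Mod_R\to D$ is in fact used, since it is what identifies $\End(\one_D)$ with $R$.
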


We were led to these results by trying to follow the proof strategy of \cite{ChroNS}: the authors introduce the notion of a Nullstellensatzian object in a category, which abstracts Hilbert's Nullstellensatz for algebraically closed fields, and they prove two key facts about them. The first, which is relatively formal and works in great generality, is that any object has a map to some nonzero Nullstellensatzian object. The second, which is much less formal, is that in the category of $T(n)$-local commutative ring spectra, Nullstellensatzian objects are precisely the Lubin--Tate theories (based on algebraically closed fields). 

More generally, this means that studying Nullstellensatzian objects provides a reasonable strategy to address these kinds of questions: if they satisfy a form of chromatic redshift (as proved by Yuan \cite{Allenredshift} in the above example), then it follows that the same holds for all objects; and if not, they provide canonical counterexamples. Our results imply that they do not satisfy redshift, thus providing canonical examples of this failure:
\begin{thm*}[$K$-theoretic Nullstellensatz, \Cref{thm:KNS,lm:end1NS,cor:NSnoshift}]
 Let $C$ be a locally rigid $2$-ring, e.g. $C=\Sp_{T(n)}$, with $\kappa$-compact unit, and let $D$ be a $\max(\kappa, \aleph_1)$-Nullstellensatzian rigid commutative $C$-algebra. The Dimension morphism $$\Dim: K(D^\dbl)\to \End(\one_D)^{BS^1}$$ is an \emph{equivalence} of commutative ring spectra, and $\End(\one_D)$ is a $\max(\kappa,\aleph_1)$-Nullstellensatzian commutative algebra in $C$.

 In particular, if $C=\Sp_{T(n)}$, then $L_{T(n+1)}K(D^\dbl)=0$.
\end{thm*}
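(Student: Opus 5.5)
The plan is to deduce the $K$-theoretic Nullstellensatz from the ``designer'' theorem (\Cref{thm:designer}) together with the defining property of Nullstellensatzian objects. Let $\lambda=\max(\kappa,\aleph_1)$ and let $D$ be a $\lambda$-Nullstellensatzian rigid commutative $C$-algebra. Recall that this means every $\lambda$-compact nonzero $D$-algebra $D\to E$ (in the relevant category of rigid commutative $C$-algebras) admits a retraction $E\to D$.

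\emph{Step 1: build a designer extension.} Apply \Cref{thm:designer} to $D$ to get a rigid extension $D\to E$ with $\Dim: K(E^\dbl)\to \End(\one_E)^{BS^1}\simeq\End(\one_D)^{BS^1}$ an equivalence. This extension is generally huge, so we cannot retract onto $D$ directly. Instead, I would revisit the construction of $E$. It should be built by transfinitely attaching cells that kill or add classes in $K$-theory, the classes detected by $\Dim$. Each individual attachment is a $\lambda$-compact extension. This is where $\aleph_1$ enters: $K$-theory classes and the countable data of $\Dim$ and the $S^1$-action live at a countable stage, and the unit is $\kappa$-compact.

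\emph{Step 2: use the retractions to transfer properties.} Naturality of $\Dim$ with respect to $D\to E'\to D$ shows the following. Any class $x\in\pi_*K(D^\dbl)$ with $\Dim(x)=0$ becomes zero in some $\lambda$-compact rigid extension $E'$ (a fully faithful one, which is therefore nonzero). Retracting, $x=0$, so $\Dim$ is injective on homotopy. Similarly, any class $y\in\pi_*\End(\one_D)^{BS^1}$ is hit by $K$-theory in some $\lambda$-compact extension $E'$. Since the extension is fully faithful, the endomorphisms of the unit are unchanged. Retracting gives a preimage in $K(D^\dbl)$, so $\Dim$ is surjective. This argument requires the statement that the $K$-theory of the retract is a retract, together with the fact that $\Dim$ is a map of commutative ring spectra. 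Both are formal, and the latter is presumably established in \Cref{section:dimensions}.

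\emph{Step 3: the remaining claims.} For the claim that $\End(\one_D)$ is $\lambda$-Nullstellensatzian, consider any $\lambda$-compact nonzero commutative algebra $\End(\one_D)\to A$ in $C$. Base-change $D$ to $D\otimes_{\Mod_{\End(\one_D)}}\Mod_A$, taking the rigid part. This is a $\lambda$-compact nonzero rigid $D$-algebra, so it retracts onto $D$. Applying $\End(\one)$ then yields a retraction $A\to\End(\one_D)$ (this is \Cref{lm:end1NS}). Finally, for $C=\Sp_{T(n)}$, the ring spectrum $\End(\one_D)$ is $T(n)$-local, hence so is $\End(\one_D)^{BS^1}$, since limits of $T(n)$-local spectra are $T(n)$-local. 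Therefore it vanishes $T(n+1)$-locally, and so does $K(D^\dbl)$.

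The main obstacle is Step 1: making sure the elementary kill/add moves behind \Cref{thm:designer} are genuinely $\lambda$-compact and nonzero. This includes checking that attaching a cell to realize a prescribed $S^1$-equivariant dimension in a fully faithful way is controlled by countably many data. I would try first to describe these moves as pushouts along maps of free rigid algebras on a single object, which the cobordism hypothesis controls.
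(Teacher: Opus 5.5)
Your Steps 2 and 3 are exactly the paper's argument: the proof of \Cref{thm:designer} only uses that $\kappa$-compact rigid extensions of $D$ split, which a $\max(\kappa,\aleph_1)$-Nullstellensatzian $D$ guarantees because rigid extensions of a nonzero $D$ are nonzero, and \Cref{lm:end1NS} is proved by the same base change $D\otimes_{\End(\one_D)}R$ you describe (nonzero by \Cref{cor:fftensor}). The obstacle you flag in Step 1 is already handled by \Cref{lm:atomicsurj} and \Cref{lm:atinj} applied over the base $D$, so you need neither the large designer extension nor to revisit its construction; note that these moves are not just pushouts of single-object free rigid algebras (which only produce $\pi_0$-classes), but use $\Frrig_D$ of modules with motive $\Sigma^n M_D(D)$ and, for killing classes, the realization of $K$-theory classes by functors from \cite{RSW} together with an embedding into a motivically trivial module.
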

\begin{ex*}
 When $C=\Sp_{T(n)}, n \geq 0$ (and some implicit prime $p$ if $n\geq 1$), the second condition and \cite{ChroNS} imply that $\End(\one_D)$ is a Lubin--Tate theory of height $n$, based on an algebraically closed field of characteristic $p$ (or $0$, if $n=0$). Therefore, the target of the Dimension morphism is completely understandable and this is in fact a complete calculation of $K(D^\dbl)$. 
\end{ex*}
    When $C=\Mod_{\mathbb F_p}$ for some odd prime $p$, the situation is more subtle, see \cite{FloNS} for a discussion.
    \begin{rmk*}
    For a fully general such $C$, we are not able to guarantee the existence of such Nullstellensatzian $D$'s, because in general the $0$ algebra in $C$ need not be compact.  However, as we shall see in \Cref{cor:existNS}, this existence \emph{is} guaranteed as soon as $C$ is what we call ``quasi-compact'', cf. \Cref{defn:quasicompact}, which covers most cases of interest (e.g. it covers all rigid $C$'s, and localizations of rigid $C$'s at dualizable objects, such as $\Sp_{T(n)}$). Thus the above theorem is not vacuous in the $T(n)$-local setting.
\end{rmk*}

So far, we have only discussed $K$-theory, but given \cite{ChroNS} it only seems reasonable to try and probe Nullstellensatzian rigid \categories in more detail and ask whether we can classify or understand them to some extent. In general, this seems like a challenging task. At height $0$, that is, in the rational setting, one key additional tool makes this study more approachable: free rigid categories were already studied by Deligne, who proved that his $\mathrm{Rep}(GL_t)$ category, a variant of the free rational rigid category on an object, is semi-simple. This rather strong result allows for a more detailed study.  

We therefore conclude the paper with the beginnings of such an investigation over $\mathbb Q$. We find that Nullstellensatzian rigid rational \categories are quite large (which is to be expected), and have ``large objects'' making them really different from ``fields'' in any sense. To get a sense of what they look like, we mention the following result: 
\begin{thm*}[\Cref{lm:uniquetrans}, \Cref{cor:summandtrans}, \Cref{cor:NStransdiv}]
Let $C$ be an $\aleph_1$-Nullstellensatzian rigid rational stably symmetric monoidal \category. 

Objects in $C^\dbl$ whose dimension is \emph{not} an integer are classified by their Dimension in $\End(\one_C)^{BS^1}$ up to \emph{equivalence}, their $K$-theory classes span $K_0(C^\dbl)$, and for each such $x$, and an arbitrary $z\in C^\dbl$, $z$ is a summand of $x$. 

Furthermore, each object with non-integer dimension is $\oplus$-divisible: for every $n\geq 1$, it is of the form $\oplus_n y$ for some $y$. 
\end{thm*}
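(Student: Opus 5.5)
Working inside the $\aleph_1$-Nullstellensatzian rigid rational category $C$, the plan rests on two facts. First, the $K$-theoretic Nullstellensatz stated above identifies $K(C^\dbl)\simeq \End(\one_C)^{BS^1}$ via $\Dim$. Second, rationally and with $\End(\one_C)$ Nullstellensatzian, this target should be very explicit: $\End(\one_C)$ should be an algebraically closed field $k$ of characteristic $0$ in degree $0$, so that $\End(\one_C)^{BS^1}\simeq k[[t]]$ with $|t|=-2$, and $\pi_0$ of it is just $k$.

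The basic tool is the following test for the existence of maps or equivalences. Let $x,z\in C^\dbl$. Consider the rigid extension $C\to C'$ obtained by freely adjoining the desired datum: an isomorphism $x\simeq z$, or a splitting $x\simeq z\oplus w$ with $w$ a new object, or $x\simeq\oplus_n y$ with $y$ a new object. The Nullstellensatz property then gives a section back to $C$ as soon as $C'$ is nonzero, and the countability assumptions make the adjoined datum $\aleph_1$-compact. So each claim reduces to showing that the corresponding free rigid extension is nonzero. I would test nonvanishing after base change to $k$, and then use Deligne's semisimplicity of $\mathrm{Rep}(GL_t)$ for $t\notin\Z$ to build explicit nonzero targets:

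\begin{itemize}
\item[(i)] For the summand claim, with $\dim x = t\notin\Z$ and $\dim z = s$ arbitrary: map the free extension to a Deligne-type category in which $x\mapsto V_t$ and $z\mapsto V_s$ (or some object of dimension $s$), and $w\mapsto V_{t-s}$. This is consistent because $V_t\cong V_s\oplus V_{t-s}$ in $\mathrm{Rep}(GL_t)$-type interpolation categories when $t$ is generic. Non-integrality of $t$ is exactly what keeps these categories nonzero with the right dimensions, whereas integer dimensions force Schur-functor vanishing.
\item[(ii)] For divisibility, use $y\mapsto V_{t/n}$. Here $t/n\notin\Z$ as well, so the construction applies.
\item[(iii)] For uniqueness up to equivalence, the Dimensions must agree, and that agreement is what makes the free category on ``two objects plus an isomorphism'' nonzero.
\end{itemize}

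Once these hold, the span statement follows quickly. Any class $[z]$ can be written as $[x]-[w]$ with $z\oplus w\simeq x$ and $x$ of non-integer dimension. Objects of non-integer dimension exist, since $\Dim$ is an equivalence and $k$ contains non-integers.

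The main obstacle I expect is making precise ``the free rigid extension is nonzero iff Dimensions are compatible''. This requires more than dimensions in $\pi_0$: the full $S^1$-equivariant Dimension must be matched, which is why the uniqueness statement is phrased with $\Dim$. I would handle this by comparing the freely generated category, via the cobordism hypothesis, with Deligne's category over $k$. The key step there is checking that the $\pi_0(\End(\one)^{BS^1})$-data are determined by the dimension, which holds rationally since $t$ acts trivially in degree $0$.
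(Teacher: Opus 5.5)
Your overall strategy (freely adjoin the datum, show the resulting extension is nonzero, then retract using the NS property) matches the paper's. However, the step that carries all the content is missing, and a factual error about the target feeds into it.

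\textbf{The error about the target.} By \Cref{lm:end1NS} and \Cref{thm:choNS}, $\End(\one_C)\simeq L[u^{\pm 1}]$ with $|u|=2$; it is not a field concentrated in degree $0$. Hence $\pi_0(\End(\one_C)^{BS^1})\cong L[[x]]$ (\Cref{ex:KNSTn}), not $L$. The components $\Dim(x)_i\in\pi_{2i}\End(\one_C)\cong L$ for $i\geq 1$ are invariants independent of $\dim(x)$. So your closing claim that the $\pi_0$-data are determined by the dimension is false, and this is exactly why the statement classifies by $\Dim$ rather than by $\dim$.

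This error breaks your nonvanishing test. Base changing along $\Q\{BS^1\}\to k$ (forcing $t_0\mapsto t$ and $t_i\mapsto 0$ for $i\geq1$) kills $C$ as soon as some $\Dim(x)_i$ with $i\geq 1$ is nonzero, since such a class is a unit in the periodic field. The correct move is to base change along an admissible $f\colon\Q\{BS^1\}\to K$ into a \emph{graded} residue field through which all of $\Dim(x)$ factors (\Cref{obs:fcan}). The relevant semisimple category is then $\Frrig_\Q(\pt)_K$ from \Cref{thm:semisimple}, not $\mathrm{Rep}(GL_t)$ over a degree-$0$ field.

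\textbf{The gap: how semisimplicity makes the pushouts nonzero.} Each free extension is a pushout $C\otimes_A B$. To exhibit a nonzero target for it, you would need a symmetric monoidal functor out of all of $C$, about which nothing is known beyond the NS property. Prescribing $x\mapsto V_t$ and $z\mapsto V_s$ defines no such functor. Moreover, $V_t$ is simple in $\mathrm{Rep}(GL_t)$, so $V_t\simeq V_s\oplus V_{t-s}$ only makes sense after restricting to $\mathrm{Rep}(GL_s)\boxtimes\mathrm{Rep}(GL_{t-s})$.

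The missing idea is \Cref{cor:semiscons}: if $\ho(A)$ is semisimple with simple unit, then $D\otimes_A E\neq 0$ for all nonzero $A$-modules $D,E$.
\begin{itemize}
\item[(ii)] For divisibility, apply this with $A=\Frrig_\Q(\pt)_K$ and $D=C\otimes_{\Q\{BS^1\}}K$, which is nonzero by transcendence. Take $E$ to be the base change along the $n$-fold sum map; it is nonzero because $t_i\mapsto nt_i$ is invertible rationally (\Cref{lm:transpdiv}).
\item[(iii)] For uniqueness, you need the nontrivial fact that $\Frrig_\Q(\pt)_K\otimes_{\Perf(K)}\Frrig_\Q(\pt)_K$ is still semisimple with simple unit. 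This is where \emph{absolute} semisimplicity and \Cref{prop:abssstensor} enter. One then tensors $C$ with $\Frrig_\Q(\pt)_K$ over it via the multiplication map. Your step (iii) contains none of this.
\item[(i)] For the summand claim, you invoke a Deligne category at $s=\dim z$, which may be an integer; there semisimplicity fails and nothing controls the pushout. The paper avoids this by applying uniqueness to $N$ and $N\oplus z\oplus\Sigma z$. These have the same Dimension because $\Dim(z\oplus\Sigma z)=0$, which gives \Cref{cor:summandtrans}.
\end{itemize}
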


Using this, we are further able to refine the failure of redshift at height $0$ and show that it also happens for ordinary $1$-categories - thus, this failure is not only a higher algebraic phenomenon: 
\begin{cor*}[{\Cref{cor:1cat}}]
     There exist nonzero rational additively symmetric monoidal $1$-categories where every object is dualizable whose (additive) algebraic $K$-theory is rational. 
 \end{cor*}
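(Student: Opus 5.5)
We deduce the corollary from the rational $K$-theoretic Nullstellensatz together with the structure results on $\aleph_1$-Nullstellensatzian rigid rational categories stated above.

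\textbf{Choice of $C$ and its $K$-theory.} Start from $\Mod_{\mathbb Q}$, which is quasi-compact, and use \Cref{cor:existNS} to produce a nonzero $\aleph_1$-Nullstellensatzian rigid rational stably symmetric monoidal \category $C$. By the $K$-theoretic Nullstellensatz, $\Dim: K(C^\dbl)\to \End(\one_C)^{BS^1}$ is an equivalence, and $\End(\one_C)$ is an $\aleph_1$-Nullstellensatzian rational commutative algebra. Since $\End(\one_C)$ is rational, so is $\End(\one_C)^{BS^1}$, and hence $K(C^\dbl)$ is rational. This is the stable statement. It remains to pass to an additive $1$-category.

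\textbf{Passage to a $1$-category.} Set $A := h(C^\dbl)$, or better its full subcategory on a chosen class of objects. It is additive and rational, inherits a symmetric monoidal structure, and all its objects are dualizable, since duality data are detected in the homotopy category. The additive $K$-theory of $A$ is the group completion $K^\oplus(A)$ of the $E_\infty$-groupoid $A^{\simeq}$. The task is to show that $K^\oplus(A)$ is rational. Two facts from the classification do the work:
\begin{itemize}
\item \emph{$K_0$ is a $\mathbb Q$-vector space.} By \Cref{cor:NStransdiv}, each object of non-integer dimension is $\oplus$-divisible. By \Cref{cor:summandtrans}, every object is a summand of such an object. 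Together these show that the monoid $\pi_0(A^\simeq)$ group-completes to a uniquely divisible group. Divisibility is clear; torsion-freeness should follow from the equivalence-classification by Dimension in \Cref{lm:uniquetrans}.
\item \emph{Higher homotopy is rational.} The automorphism groups $\mathrm{Aut}_A(x)$ are units in the rational algebras $\pi_0\End(x)$. They may fail to be rational in the discrete sense, for instance because of roots of unity, so the higher groups are a genuine concern.
\end{itemize}

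To handle the higher groups, use group completion. Pick a cofinal sequence $x_1 \to x_1\oplus x_2\to\cdots$ of non-integer-dimension objects, which exists by the summand property. Then
\[
K^\oplus(A)_0 \simeq \left(\colim_k B\mathrm{Aut}(x_k)\right)^+ .
\]
I would try to make the $x_k$ of the form $x\oplus x\oplus\cdots$, exploiting divisibility $x\simeq \oplus_n y$. The aim is a Mather/"infinite sum" style argument showing that multiplication by $n$ on this homology is invertible: block-sum with $\oplus_n$ should induce an equivalence after colimit. Alternatively, one could restrict $A$ to a full subcategory on which a mapping-telescope swindle shows that integral homology equals rational homology. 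In either version, a delooping/plus-construction argument shows that $n$ acts invertibly on all homotopy groups of $K^\oplus(A)$, which is exactly rationality.

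\textbf{Main obstacle.} The hard step is the higher part: controlling $H_*(\colim B\mathrm{Aut}(x_k);\mathbb Z)$ using only the $\oplus$-divisibility of objects. If the direct homology argument fails, I would fall back on a different choice of $C$, using the "designer" \Cref{thm:designer} freedom to replace $C$ by a further extension that kills the relevant torsion classes. Since the noshift property is upward closed, this replacement is harmless.
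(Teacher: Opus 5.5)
Your proposal has a genuine gap, and it sits exactly where you locate the ``main obstacle'': the higher homotopy groups of $K^\oplus(A)$ are never controlled.
\begin{itemize}
\item The Mather/swindle argument on $\colim_k B\mathrm{Aut}(x_k)$ is only hoped for (``should induce an equivalence'').
\item The torsion-freeness of $K_0$ via \Cref{lm:uniquetrans} is also left unjustified.
\item The fallback does not help. \Cref{thm:designer} computes the $K$-theory of the stable category $D^\dbl$, not the additive $K$-theory of the $1$-category $\ho(D^\dbl)$. In general there is no ring map $K(C^\dbl)\to K^\oplus(\ho(C^\dbl))$, since the relation $[\Sigma x]=-[x]$ fails in $K_0^\oplus$.
\item For the same reason, your first step (rationality of $K(C^\dbl)$ via the $K$-theoretic Nullstellensatz) does no work for the $1$-categorical statement.
\end{itemize}

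The missing idea is multiplicativity. Since $\otimes$ distributes over $\oplus$ in $A=\ho(C^\dbl)$, $K^\oplus(A)$ is an $\mathbb E_\infty$-ring spectrum with $\pi_0=K_0^\oplus(A)$ and unit $[\one]$. Every $\pi_k K^\oplus(A)$ is then a $\pi_0$-module. It therefore suffices to show that $[\one]$ is $n$-divisible in $K_0^\oplus(A)$ for each $n\geq 1$: then $n$ is a unit in $\pi_0$, so it acts invertibly on all homotopy groups, and $K^\oplus(A)$ is rational. No analysis of $H_*(B\mathrm{Aut}(x_k))$ and no separate torsion-freeness argument are needed.

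The divisibility comes from a purely additive relation, which is what the paper uses:
\begin{itemize}
\item Take $x$ non-integral, by \Cref{lm:existtrans}.
\item Then $\one\oplus x$ is also non-integral, by \Cref{lm:trans+1istrans}.
\item By \Cref{cor:NStransdiv} there are $y,z$ with $x\simeq y^{\oplus n}$ and $\one\oplus x\simeq z^{\oplus n}$, hence $\one\oplus y^{\oplus n}\simeq z^{\oplus n}$ in $C^\dbl$.
\item This isomorphism persists in $\ho(C^\dbl)$, giving $[\one]=n([z]-[y])$ in $K_0^\oplus$.
\end{itemize}
Your own remark that noshift is ``upward closed'' is an instance of this same ring-structure argument, which is the step that closes the gap.
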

\begin{rmk*}
As is the case for the Noshift theorem, these categories can be chosen to be countably generated. Over $\mathbb Q$, this simply means countable. 
\end{rmk*}
\begin{rmk*}
The question of redshift is interesting beyond the commutative case: it was fully established for commutative ring spectra as explained in the beginning of the introduction, and our results establish its failure in the case of rigid categories; but the question remains interesting for $\mathbb E_k$-ring spectra, $k\geq 2$ and even for some associative ring spectra, even though in this case we know that we have to restrict the class of rings that are considered. There is a growing body of work around these questions (see e.g. \cite{KK1AR,hahnwilson,Kroots}), and the present work adds nothing to this question. 
\end{rmk*}

\subsection*{Proof strategy}
We use two key inputs to prove our main result: first, the main result of \cite{RSW} which allows us to categorify $K$-theory classes, and second, the description of free rigid \categories from \cite{Jan,Frrig}, a generalization of the $1$D cobordism hypothesis, which allows us to pass from $\infty$-categories with no monoidal structure to rigid $\infty$-categories and control the result in terms of traces and Dimensions.

Concretely, given a rigid symmetric monoidal stable $\infty$-category $C$ and a homotopy class in $\End(\one_C)^{BS^1}$, we interpret that class as a map $\Sigma^n \one_C[BS^1]\to \one_C$ and interpret the source in terms of the free rigid commutative $C$-algebra on some $C$-module categorifying suspensions or deloopings (\Cref{cor:unitinFrrig}). This allows us to lift that class along the Dimension map to the $K$-theory of a small rigid extension of $C$ (\Cref{lm:atomicsurj}). 

Conversely, given a homotopy class in $K(C^\dbl)$, \cite{RSW} lets us interpret its image in $K(D^\dbl)$ as a map of $D$-modules (\Cref{cor:almostrealizingclasses}) for a small rigid extension $D$ of $C$, and we relate properties of the induced map on free rigid commutative algebras with the application of $\Dim$ to the original homotopy class (\Cref{cor:commutativediagram}). In particular, if $\Dim$ vanishes on the original class, this interpretation allows us to kill the class \emph{motivically} in some small enough extension of $D$ (\Cref{lm:atinj}). 

A fair amount of the paper is set-up, dedicated to making precise the above strategy. A minor difficulty is related to cardinality estimates, and this is why we reason in terms of homotopy classes instead of reasoning directly at the level of spectra. A slightly more relevant difficulty is that in the $T(n)$-local setting ($n>0$), which we generalize to the locally rigid setting, $K$-theory is harder to relate to localizing motives, and the results of \cite{RSW} apply to the latter more than to the former. Thus we have to be a bit careful in actually implementing the above strategy. 

As briefly mentioned above, in the rational case, we add another method to analyze Nullstellensatzian rational rigid $2$-rings, namely, we leverage the semi-simplicity of Deligne's $\mathrm{Rep}(GL_t)$ category \cite{deligne}. Semi-simplicity guarantees the non-vanishing of many pushouts of rigid $\infty$-categories (\Cref{cor:semiscons}), which is generally a difficult task. Since knowing when something is or is not zero is crucial to understanding Nullstellensatzian objects, this guarantee is extremely helpful. We use this to give a second, simpler proof of our Noshift theorem at height $0$, and to observe that it can be implemented for rigid additive $1$-categories.     
\subsection*{Relation to other work}
We drew a lot of inspiration from \cite{ChroNS}, and the idea to consider the redshift question for Nullstellensatzian rigid \categories arose from there. This is ultimately what led to our ``$K$-theoretic Nullstellensatz'', and later to the more refined results presented in the introduction.

The author originally envisioned a program where one could prove a ``rigid version'' of the main result from \cite{RSW} and try to prove redshift this way. Clearly, this failed, but the failure was robust enough to actually disprove redshift. One can see the results of this paper as approximations to such a ``rigid version'' of \cite{RSW}. 

Once the main results in the Nullstellensatzian case had been proved, the connection to \cite{levycat} also became clear since \Cref{thm:KNS} immediately implied a positive answer to Conjecture 5 in \emph{loc. cit.} (without idempotent-completeness), and trying to answer Question 16 in \emph{loc. cit.} \emph{with} idempotent-completeness led to the more precise results in \Cref{section:main}. Our methods are, however, quite different from the ones in \cite{levycat}. 

Finally, there is a philosophical connection between this work and the work of Barthel, Hyslop and the author \cite{BHR}, as in both cases the main results were obtained by analyzing the free situation in detail. We believe that, especially in characteristic $0$ but also more generally, it is a fruitful endeavour to try and push these ideas to understand (rational) tt-geometry more precisely: free objects and Nullstellensatzian objects do not necessarily arise daily in the common practice of tt-geometry, but they are an important part of the landscape, and in somewhat dual ways they control the theory. In this landscape, the work of Barkan and Steinebrunner \cite{Jan} can serve as a guiding light to understand these free constructions (see also \cite{Frrig}).
\subsection*{Sectionwise outline}
\Cref{section:prelim} contains various conventions, recollections and preliminaries that set the stage for the rest of the paper:
\begin{itemize}
    \item \Cref{section:catalg} reminds the reader of the notions of rigid and locally rigid $2$-rings, and basic manipulations about enriched hom objects and tensor products of categories;
    \item \Cref{section:Mot} is about motives: it sets up our conventions in that regard, recalls and mildly refines some results from \cite{RSW}, and finally warns about the pitfalls associated with motives over locally rigid $2$-rings;
    \item \Cref{section:dimensions} is about the $1$D cobordism hypothesis, basic facts about the $S^1$-action it induces on dimensions of dualizable objects in $2$-rings, and Barkan--Steinebrunner's extension of the $1$D cobordism hypothesis \cite{Jan}.
\end{itemize} 
\Cref{section:main} is the main section of the paper, it contains our technical results and the actual implementation of the strategy described in the introduction. There, we prove \Cref{thm:designer} and its consequence, the Noshift theorem (\Cref{cor:noshift}).

\Cref{section:NS} recalls the notion of Nullstellensatzian object from \cite{ChroNS}, establishes basic properties of Nullstellensatzian rigid categories, and finally proves our $K$-theoretic Nullstellensatz (\Cref{thm:KNS}). 

Finally, \Cref{section:Q} contains the slightly more in-depth analysis of Nullstellensatzian rational rigid categories: it starts with a specialization of the $S^1$-actions on dimensions in the rational\footnote{More honestly, complex oriented.} case; \Cref{section:sscat} contains some brief recollections about semi-simple additive categories; \Cref{section:nonint} contains the main results in height $0$ and introduces a particular class of objects we call ``non-integral'', which are the center of our approach to Nullstellensatzian rigid rational $2$-rings; \Cref{section:duality} studies the action of duality on non-integral objects; and finally \Cref{section:picard} studies invertible objects in rational $2$-rings. This section also contains an elementary proof of the failure of redshift at height $0$, so the reader is invited to consult it for the easiest example\footnote{Which, historically, was the first example we found.} of our main result.
\subsection*{Conventions}
We use the language of $\infty$-categories as developed in \cite{HTT,HA}. From now on, we say ``categories'' for $\infty$-categories, and specify with ``ordinary'' or ``$1$-'' when we are referring to ordinary categories. All relevant notions are to be understood in the $\infty$-categorical sense. 

Besides the conventions from \textit{loc. cit.} which we try to follow, we have the following conventions: 
\begin{enumerate}
\item Instead of ``space'' or ``$\infty$-groupoid'', we say ``anima'' and write $\An$ for the corresponding category;
\item We use $\Map$ for mapping anima, and $\map$ for mapping spectra in stable categories;
    \item For a symmetric monoidal category $C$, we let $\CAlg(C)$ denote commutative algebras in $C$, and $\PrL$ or $\PrL_\st$ are always understood to be equipped with the Lurie symmetric monoidal structure; 
    \item Given a symmetric monoidal category $C$, we let $\one_C$ denote the unit, and for a commutative algebra $A$ in $C$, $\Mod_A(C)$ the category of $A$-modules. We let $C^\dbl$ denote the full subcategory spanned by dualizable objects in $C$; 
    \item For $C\in\CAlg(\PrL_\st)$, we let $\one_C\otimes-: \Sp\to C$ denote the unique symmetric monoidal colimit-preserving functor, and we write $\udl{-}: C\to \Sp$ for its right adjoint, $\Map_C(\one_C,-)$; 
    \item Given a symmetric monoidal category $C$ and a $C$-module $M$ which is $C$-enriched, we write $\hom$ for the $C$-enriched hom in $M$, hoping that the $C$ under consideration is clear from context, and similarly for $\End$ (which, when $C$ is stable, can therefore refer to either an endomorphism $C$-object or an endomorphism ring spectrum). 
    \end{enumerate}
 
\subsection*{Acknowledgments}
I started thinking about this subject again after dealing with Deligne's $\mathrm{Rep}(GL_t)$ category in the context of my collaboration with Tobias Barthel and Logan Hyslop, and I thank them both for the inspiration this brought. I also thank Logan for some helpful feedback.

I am also grateful to Shay Ben-Moshe, Robert Burklund, Shai Keidar, Akhil Mathew, Thomas Nikolaus, Vova Sosnilo for helpful conversations related to this project, and particularly to Jan Steinebrunner for his patience and for generously describing the results of \cite{Jan}. The proof in the rational case was inspired by a trick I learned in the Almost Mathematics Seminar at Münster Universität, ran by Franziska Jahnke and Thomas Nikolaus, and I thank them for organizing this seminar.  Finally, I received helpful feedback about a draft from Phil Pützstück.

This research was funded by the Deutsche Forschungsgemeinschaft (DFG, German Research Foundation) – Project-ID 427320536 – SFB 1442, as well as under Germany's Excellence Strategy EXC 2044/2 –390685587, Mathematics Münster: Dynamics–Geometry–Structure.

\setcounter{secnumdepth}{3}
\section{Preliminaries}\label{section:prelim}
\subsection{Categorical algebra}\label{section:catalg}
In this section, we introduce our basic objects of study, namely (compactly generated) locally rigid $2$-rings, and prove some of their basic properties and properties of their modules that we will need. The story simplifies psychologically when we consider rigid $2$-rings instead, since in that case the connection between the ``large'' world of cocomplete categories and the ``small'' world is tighter, and in the small world, rigid $2$-rings are quite familiar objects.  In fact, even for rigid $2$-rings, some of the proofs we give simplify when passing through the large world, so while we will make the explicit connection between large and small, the reader is invited to think of all our categories as large, unless we explicitly consider categories of dualizable objects or compact objects. 

First, we recall some definitions, and we refer to \cite{LocRig} for a detailed treatment. 
\begin{nota}
    We let $\TwoRing$ denote $\CAlg(\PrL_\st)$, and call its objects $2$-rings. 
\end{nota}
\begin{defn}\label{defn:locrig}
A compactly generated $2$-ring is called \emph{locally rigid} if all its compact objects are dualizable. It is called \emph{rigid} if furthermore its unit is compact, or equivalently if all its dualizable objects are compact. 

We let $\TwoRing^\rig$ denote the full subcategory of $\CAlg(\PrL_\st)$ spanned by rigid $2$-rings, and $\TwoRing^{\locrig}$ the non-full subcategory of $\CAlg(\PrL_\st)$ spanned by locally rigid $2$-rings and functors preserving compact objects. 
\end{defn}
\begin{rmk}
    The above is the compactly generated version of the notion of rigid from \cite[Chapter 1, §9]{GR}, also studied in \cite[Section 2.2]{HSSS} and \cite{LocRig}. 

    All the arguments in the paper actually apply to the usual, ``dualizable'' version of the notion, using the generalization to dualizable categories of the results of Barkan and Steinebrunner \cite{Jan} from \cite{Frrig}. Since we actually want to say something about the ``small''/compactly-generated world\footnote{To make sure that the examples we construct are not just obtained by broadening the definition of rigid categories.}, we have chosen to write the paper in this context; and to avoid clutter, we do not state everything twice. 
\end{rmk}
\begin{rmk}
    That the above definitions are equivalent (in the compactly generated case) to \cite[Definition 4.5, Definition 4.34]{LocRig} follows from \cite[Example 4.6]{LocRig} and \cite[Example 1.24]{Dbl}. 
\end{rmk}
\begin{rmk}
    If $C$ is a rigid $2$-ring, its category of compact objects $C^{\aleph_0}$ is equivalently its category of dualizable objects and this is a small stable category with all objects dualizable, hence a(n enhancement of) a tt-category à la Balmer.  

    If $C$ is only locally rigid, $C^{\aleph_0}$ is a non-unitally symmetric monoidal stable category, with an ind-unit. On the other hand, $C^\dbl$ is a small $2$-ring as above, but its ind-completion is generally a ``de-completed'' version of $C$ (e.g. if $C=\Mod_E(\Sp_{T(n)})$, where $E$ is Morava $E$-theory, then $\Ind(C^\dbl)\simeq \Mod_E(\Sp)$). 
\end{rmk}

\begin{ex}
    If $R$ is a commutative ring spectrum, the category $\Mod_R$ of $R$-module spectra is a rigid $2$-ring. 
\end{ex}
\begin{ex}
    The category of $p$-complete spectra, $\Sp_p$, is a locally rigid $2$-ring which is not rigid. 
\end{ex}

For our purposes, we will need a relative version. We will only care about things that are rigid relative to a locally rigid category, for which we can give the following definition:
\begin{defn}
    Let $C\to D$ be a morphism of locally rigid $2$-rings. It is called rigid, or $D$ is called a rigid commutative $C$-algebra, if its right adjoint $D\to C$ preserves colimits. 
\end{defn}
\begin{rmk}
    We defined morphisms in $\TwoRing^\locrig$ as preserving compact objects, and hence, since we defined the objects as being compactly generated, this condition is automatic. In other words, ``rigid morphism of locally rigid $2$-rings'' is synonymous with ``morphism in $\TwoRing^\locrig$'', and we use it to disambiguate this phrase from ``morphism in $\CAlg(\PrL_\st)$ between locally rigid $2$-rings''. 
\end{rmk}
\begin{rmk}
    This is equivalent to the relative notion of rigid commutative algebra from \cite[Definition 4.5]{LocRig}  by Proposition 4.18 in \textit{loc.cit.}. 
\end{rmk}
\begin{nota}
Let $C$ be a locally rigid $2$-ring. 
    We let $\TwoRing^\rig_C$ denote $\TwoRing^\locrig_{C/}$, or equivalently, the full subcategory of $\CAlg(\PrL_\st)_{C/}$ spanned by rigid commutative $C$-algebras. 
\end{nota}
\begin{nota}
    If $C=\Mod_\Q$, we simply write $\TwoRing^\rig_\Q$, and if $C=\Sp_{T(n)}$, we write $\TwoRing^\rig_{T(n)}$ for simplicity. 
\end{nota}
\begin{rmk}\label{rmk:relrig}
   If $C$ is a rigid $2$-ring, $\TwoRing^\rig_C \simeq (\TwoRing^\rig)_{C/}$. More generally, if $C$ is a locally rigid $2$-ring and $C\to D$ is rigid, then $\TwoRing_D^\rig\simeq (\TwoRing^\rig_C)_{D/}$. 
\end{rmk}
\begin{rmk}
   Any map between rigid commutative $C$-algebras preserves compacts and $C$-atomics, to be defined below.
\end{rmk}
\begin{nota}
For $C$ a locally rigid $2$-ring, we let $\Mod_{C,\omega}$ denote the category of compactly generated $C$-modules and compact-preserving functors between them. 
\end{nota}
\begin{rmk}
    Via taking compact objects, this category is equivalent to the full subcategory of non-unital modules over the non-unital commutative algebra $C^{\aleph_0}$ in $\Catperf$ such that they become unital upon ind-completing. 

    Under this equivalence, rigid commutative $C$-algebras correspond to non-unital commutative algebras $D$ over $C^{\aleph_0}$ in $\Catperf$,  whose ind-completion is unital and such that the map $D\to \Ind(D)$ lands in dualizable objects. 
\end{rmk}
\begin{nota}
    We let $\Frrig_C : \Mod_{C,\omega}\to \TwoRing^\rig_C$ denote the left adjoint to the forgetful functor. Its existence is easy in the case of rigid $2$-rings, and can be deduced from that in general -- alternatively, it exists by the main result of \cite{Frrig}. 
\end{nota}

The following notion will also be relevant: 
\begin{defn}
    Let $C$ be a $2$-ring, and $M$ a $C$-module (e.g. a commutative $C$-algebra). An object $m\in M$ is called \emph{atomic} if the right adjoint $\hom(m,-): M\to C$ to the $C$-linear functor $C\xrightarrow{-\otimes m}M$ preserves colimits, and is $C$-linear, that is, the canonical map $c\otimes \hom(m,-)\to \hom(m,c\otimes -)$ is an equivalence for all $c\in C$. 
\end{defn}
\begin{rmk}
  If $C$ is locally rigid, the second condition is automatic, see \cite[Proposition 4.18]{LocRig}. 
\end{rmk}
We make this notion explicit in the following examples: 
\begin{ex}\label{ex:rigat=cpct}
    If $C$ is rigid, the notion of atomic object coincides with that of compact object, cf. \cite[Corollary 3.8]{LocRig}. 
\end{ex}
\begin{ex}\label{ex:rigat=dbl}
    If $C\to D$ is a rigid map of locally rigid $2$-rings, $C$-atomic objects in $D$ coincide with dualizable objects in $D$: for $C=D$ this is essentially definitional, and in general, see \cite[Lemma 4.50]{LocRig}. 
\end{ex}
\begin{ex}
    If $C$ is a locally rigid $2$-ring, compact objects in (dualizable) $C$-modules are all atomic, cf. \cite[Proposition 4.15]{LocRig}. 
\end{ex}
\begin{lm}
    Suppose $C$ is a rigid $2$-ring, and fix $E\in C^\dbl$. If the localization of $C$ at $E$, $C_E$, is compactly generated, then it is locally rigid. In that case, for a $C_E$-module $M$, an object $m\in M$ is atomic if and only if $E^\vee \otimes m$ is compact. 
\end{lm}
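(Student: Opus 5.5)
We set $L := L_E: C\to C_E$ for the localization and $i$ for the fully faithful right adjoint. Since $E$ is dualizable, $L_E$ is a smashing localization: $L_E X \simeq L_E\one \otimes X$, so $C_E \simeq \Mod_{L_E\one}(C)$ as $2$-rings. The plan has three steps.

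\textbf{Local rigidity.} Assuming $C_E$ is compactly generated, we must show every compact object of $C_E$ is dualizable. Note that $L$ preserves compacts, since $i$ preserves colimits by smashingness, and $L$ is symmetric monoidal, so $L(C^{\aleph_0})=L(C^\dbl)$ consists of dualizable objects. It is not a priori clear that $L(C^{\aleph_0})$ generates $C_E$ under colimits in a way that captures all compacts: $L$ is essentially surjective, but compacts of $C_E$ are only retracts of finite colimits of objects from a generating set. Instead we show that $L(C^\dbl)\otimes E$ already generates and consists of compacts. For $c\in C^\dbl$ and $X\in C_E$,
\[
\map_{C_E}(L c\otimes E, X)\simeq \map_C(c\otimes E, iX)\simeq \map_C(c, E^\vee\otimes iX),
\]
and $E^\vee\otimes i(-)$ commutes with colimits. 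So the objects $Lc\otimes E$ are compact in $C_E$. They also detect zero: if $\map(Lc\otimes E,X)=0$ for all $c$, then $E^\vee\otimes iX=0$, hence $E\otimes iX=0$ (as $E$ is a retract of $E\otimes E^\vee\otimes E$), hence $X$ is $E$-acyclic and $E$-local, so $X=0$. Thus $C_E$ is generated by the dualizable compacts $Lc\otimes E$. Any compact object is then a retract of a finite colimit of these, hence dualizable. Compact generation is in fact automatic from this argument, but the hypothesis is harmless.

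\textbf{Atomic criterion.} For a $C_E$-module $M$, $m$ is atomic if and only if $\hom_M(m,-): M\to C_E$ preserves colimits; linearity is automatic by the remark following the definition of atomic objects, since $C_E$ is locally rigid. For any $N$ and $c\in C^\dbl$ we have
\[
\map_M(Lc\otimes E^\vee\otimes m, N)\simeq \map_{C_E}(Lc\otimes E, \hom(m,N)).
\]
By the generation statement above, $\hom(m,-)$ preserves colimits if and only if $\map_{C_E}(Lc\otimes E,\hom(m,-))$ does for all $c$. This happens if and only if $Lc\otimes E^\vee\otimes m$ is compact for all $c$, which is equivalent to $E^\vee\otimes m$ being compact: one direction takes $c=\one$, and for the other, $Lc\otimes-$ has the colimit-preserving right adjoint $Lc^\vee\otimes-$. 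The only place to double-check is the adjunction identity itself: $\map_{C_E}(a, \hom(m,N)) \simeq \map_M(a\otimes m, N)$ is the definition of $\hom$, applied with $a=Lc\otimes E^{\vee}$, using $\map_{C_E}(Lc\otimes E^\vee\otimes m',\ldots)$ adjoint to tensoring with $E$ via the duality $E\dashv E^\vee$.

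\textbf{Main obstacle.} The main obstacle is the variance bookkeeping between $E$ and $E^\vee$. It is resolved by observing that $E$ and $E^\vee$ generate the same thick tensor ideal: $E$ is a retract of $E\otimes E^\vee\otimes E$. Hence compactness of $E^\vee\otimes m$ is equivalent to compactness of $E\otimes m$, and one can swap them freely.
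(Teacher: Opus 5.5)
\textbf{A false foundational claim.} Your opening claim is false, and it fails exactly in the cases the lemma is designed for. Bousfield localization at a dualizable $E$ is not smashing, and in general $C_E\not\simeq\Mod_{L_E\one}(C)$. The smashing localization attached to $E$ is the finite localization that \emph{kills} $E$. Inverting $E$-equivalences is instead a completion.

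\begin{itemize}
\item For $C=\Sp$ and $E=\Sph/p$ you get $p$-completion, which does not commute with infinite direct sums.
\item For $C=L_n^f\Sp$ and $E=L_n^fV$ you get $\Sp_{T(n)}$, whose unit is not compact.
\end{itemize}

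Indeed, if $C_E$ were $\Mod_{L_E\one}(C)$ with $C$ rigid, it would itself be rigid, and by \Cref{ex:rigat=cpct} the lemma would have no content. Your remark that $L$ preserves compacts is therefore false (e.g.\ $L_{T(n)}\Sph$). That is harmless, since you never use it. More seriously, the one load-bearing assertion of your first step is that $E^\vee\otimes i(-)\colon C_E\to C$ commutes with colimits. Your write-up supports this only through ``$i$ preserves colimits by smashingness'', and $i$ does not preserve colimits.

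\textbf{The repair, and how your route compares.} The assertion is nonetheless true, for the reason that is the heart of the paper's proof. For every $Y\in C$, the map $Y\to iLY$ is an $E$-equivalence. Also, $E^\vee\otimes-$ inverts $E$-equivalences, because $E^\vee$ is a retract of $E^\vee\otimes E\otimes E^\vee$. Hence
$E^\vee\otimes i(\colim_{C_E}X_j)\simeq E^\vee\otimes iL(\colim_C iX_j)\simeq\colim_C(E^\vee\otimes iX_j)$.

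With this repair your argument becomes a correct proof by a different route:
\begin{itemize}
\item You first show that the objects $L(c\otimes E)$, $c\in C^\dbl$, are dualizable compact generators. This gives compact generation and local rigidity, a part of the statement the paper's proof passes over in silence.
\item You then test colimit-preservation of $\hom(m,-)$ against these generators.
\end{itemize}
Your displayed identity should read $\map_{C_E}(L(c\otimes E^\vee),\hom(m,N))$; with $E$ it is wrong. Your $E\leftrightarrow E^\vee$ swap, or simply using the generators $L(c\otimes E^\vee)$, fixes this.

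The paper instead uses three ingredients:
\begin{itemize}
\item $E\otimes-$ factors through a colimit-preserving functor $C_E\to C$ and is conservative on $C_E$;
\item the identity $\hom(m,-)\otimes E\simeq\hom(E^\vee\otimes m,-)$;
\item \Cref{ex:rigat=cpct}, to identify $C$-atomic objects with compact ones.
\end{itemize}
Your version avoids that citation, at the cost of constructing generators first.
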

\begin{proof}
We note that the functor $E\otimes - : C\to C$ inverts $E$-equivalences, and therefore factors uniquely through a \emph{colimit-preserving} functor $E\otimes - : C_E\to C$, given by forgetting to $C$ and then tensoring with $E$. 

We also note that $\hom(m,-)\otimes E \simeq \hom(E^\vee\otimes m,-)$ as functors to $C_E$ or to $C$. 

Therefore, if $m$ is atomic, then $\hom(m,-)\otimes E\simeq \hom(E^\vee\otimes m-,)$ preserves colimits with values in $C$, and hence $E^\vee\otimes m$ is $C$-atomic, i.e. compact by \Cref{ex:rigat=cpct}. 

Conversely, if $E^\vee\otimes m$ is compact, then $\hom(E^\vee\otimes m,-) \simeq \hom(m,-)\otimes E$ commutes with colimits with values in $C$, and hence with values in $C_E$, but in the latter, tensoring with $E$ is both colimit-preserving and conservative, so $\hom(m,-)$ also preserves colimits. 
\end{proof}
Thus we may unwind the notion of atomic object in our motivating example of a locally rigid ``base'', namely, $T(n)$-local spectra: 
\begin{ex}\label{ex:loccompactT(n)}
Fix a prime $p$ and an integer $n\geq 1$. 
    Let $C$ be the category of $L_n^f$-local spectra, $L_n^f\Sp$ - this is rigid since it is a localization of spectra at dualizable objects and hence is equivalent to $\Mod_{L_n^f\Sph}(\Sp)$. In this case, for any finite type $n$ spectrum $V$, $L_n^fV$ is dualizable and the $L_n^fV$-localization of $L_n^f\Sp$ is $\Sp_{T(n)}$, the category of $T(n)$-local spectra. In particular, if $M$ is a $\Sp_{T(n)}$-module, $m\in M$ is atomic if and only if for any finite type $n$-spectrum, $V\otimes m$ is compact. 
\end{ex}

Having introduced our basic objects of study, let us establish some basic properties which will be used throughout.

\begin{lm}\label{lm:homsintensor}
Let $C$ be a $2$-ring, $M,N$ are two $C$-modules, $m_0,m_1\in M, n_0,n_1\in~N$. If $m_0,n_0$ are atomic, then, with internal homs calculated in $C$, the natural map $$\hom_M(m_0,m_1)\otimes \hom_N(n_0,n_1)\to \hom_{M\otimes_C N}(m_0\otimes n_0,m_1\otimes n_1)$$ is an equivalence. 
\end{lm}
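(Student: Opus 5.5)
We reduce to the universal case by exploiting that, for atomic $m_0$, the functor $\hom_M(m_0,-):M\to C$ is a $C$-linear colimit-preserving right adjoint of $-\otimes m_0: C\to M$. Consequently the adjunction $(-\otimes m_0)\dashv \hom_M(m_0,-)$ is an adjunction in $\Mod_C(\PrL_\st)$, not merely in $\PrL_\st$. Here we use that a $C$-linear left adjoint whose right adjoint is colimit-preserving and $C$-linear (i.e. the lax structure is strict) gives an internal adjunction in the $2$-category of $C$-modules. The same holds for $n_0$ in $N$.

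Since $-\otimes_C-$ is a $2$-functor on $\Mod_C(\PrL_\st)$, it preserves internal adjunctions. Tensoring the two adjunctions therefore gives an adjunction
\[ (-\otimes m_0)\otimes(-\otimes n_0) : C\simeq C\otimes_C C \rightleftarrows M\otimes_C N : \hom_M(m_0,-)\otimes\hom_N(n_0,-). \]
The left adjoint is $c\mapsto c\otimes(m_0\otimes n_0)$, so by uniqueness of adjoints its right adjoint is $\hom_{M\otimes_C N}(m_0\otimes n_0,-)$. In particular $m_0\otimes n_0$ is atomic.

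Now evaluate both right adjoints on the pure tensor $m_1\otimes n_1$, i.e. the image of $(m_1,n_1)$ under $M\times N\to M\otimes_C N$. The tensor product of functors $F\otimes G$ satisfies $(F\otimes G)(m\otimes n)\simeq F(m)\otimes G(n)$. This gives
\[ \hom_{M\otimes_C N}(m_0\otimes n_0,m_1\otimes n_1)\simeq \hom_M(m_0,m_1)\otimes\hom_N(n_0,n_1). \]

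The remaining check, and the main obstacle, is that this equivalence is the natural comparison map in the statement. That map is adjoint to the composite of $\hom_M(m_0,m_1)\otimes\hom_N(n_0,n_1)\otimes m_0\otimes n_0\to m_1\otimes n_1$ with the tensor of the two counits. To identify them, I would observe that the counit of the tensored adjunction is the tensor product of the counits, since $-\otimes_C-$ is a $2$-functor, and then trace the adjunction equivalence through this description. I would also verify the first step carefully, namely that $C$-linearity of $\hom(m_0,-)$ makes the unit and counit $C$-linear, which is exactly what the atomicity hypothesis provides.
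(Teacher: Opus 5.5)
Your proposal is correct and follows essentially the same argument as the paper: atomicity makes $-\otimes m_0$ and $-\otimes n_0$ internal left adjoints in $C$-modules, the $2$-functoriality of $\otimes_C$ identifies the right adjoint of their tensor product with the tensor product of the right adjoints, and evaluating on $m_1\otimes n_1$ gives the claim. Your additional check that this equivalence is the natural comparison map, using that the counit of the tensored adjunction is the tensor product of the counits, is exactly the ``unwinding'' that the paper leaves implicit.
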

\begin{proof}
Since $m_0\in M$ and $n_0\in N$ are atomic, the $C$-linear functors $C\xrightarrow{m_0\otimes -}M$, and $C\xrightarrow{n_0\otimes-}N$ classifying $m_0,n_0$ respectively have $C$-linear colimit-preserving right adjoints by assumption.  Hence, the right adjoint of their tensor product over $C$ is the tensor product of their right adjoints since $\otimes_{C}$ is a $2$-functor, \cite[§4.4]{HSS}.

Since internal homs are exactly right adjoints to these classifying functors, unwinding this statement gives the desired result. 
\end{proof}
The following is a special case:
\begin{cor}\label{cor:unittensor}
Let $C\to C_0,C_1$ be two maps of $2$-rings, and assume $C$ is rigid, and that the units in $C_0,C_1$ are compact. More generally, assume simply that $C$ is a $2$-ring and that the units in $C_0,C_1$ are $C$-atomic. 

In this case, in $C$ we have\footnote{Both the endomorphism objects and the tensor product are taken in $C$!} $\End(\one_{C_0})\otimes\End(\one_{C_1})\simeq \End(\one_{C_0\otimes_C C_1})$.
\end{cor}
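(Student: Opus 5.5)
The plan is to deduce this directly from \Cref{lm:homsintensor}, applied with $M=C_0$, $N=C_1$ (viewed as $C$-modules via the given maps of $2$-rings), and $m_0=m_1=\one_{C_0}$, $n_0=n_1=\one_{C_1}$. First I will reduce the rigid case to the general one: when $C$ is rigid, \Cref{ex:rigat=cpct} says that $C$-atomic objects in a $C$-module are exactly the compact ones. (That example is stated for $C$ itself, but the cited result of \cite{LocRig} covers $C$-modules.) So the hypothesis that $\one_{C_0},\one_{C_1}$ are compact gives that they are $C$-atomic. From then on I will only use atomicity of the units.

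With $m_0=\one_{C_0}$ and $n_0=\one_{C_1}$ atomic, \Cref{lm:homsintensor} gives an equivalence in $C$
$$\hom_{C_0}(\one_{C_0},\one_{C_0})\otimes\hom_{C_1}(\one_{C_1},\one_{C_1})\xrightarrow{\ \sim\ }\hom_{C_0\otimes_C C_1}(\one_{C_0}\otimes\one_{C_1},\one_{C_0}\otimes\one_{C_1}).$$
It remains to identify the object $\one_{C_0}\otimes\one_{C_1}$ of $C_0\otimes_C C_1$ with the unit $\one_{C_0\otimes_C C_1}$. This holds because the symmetric monoidal structure on the relative tensor product of commutative $C$-algebras in $\PrL_\st$ has unit the image of $(\one_{C_0},\one_{C_1})$ under the canonical bilinear functor $C_0\times C_1\to C_0\otimes_C C_1$. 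Equivalently, the unit map $C\to C_0\otimes_C C_1$ is the tensor product of the two unit maps $C\to C_0$ and $C\to C_1$.

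Finally, I will check that the $C$-enriched endomorphism objects in the statement are the ones produced by the lemma. Here $\End(\one_{C_i})$ means $\hom(\one_{C_i},\one_{C_i})$, the value at $\one_{C_i}$ of the right adjoint of $C\to C_i$, $c\mapsto c\otimes\one_{C_i}$, and this right adjoint is precisely the forgetful functor. The only mild subtlety is this unit identification together with making sure the $C$-module structure on $C_0\otimes_C C_1$ is the one induced from either factor. I do not expect a genuine obstacle: the corollary is essentially a specialization of \Cref{lm:homsintensor}.
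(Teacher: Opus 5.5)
Your proposal is correct and is exactly the paper's argument: the paper presents the corollary as a direct special case of \Cref{lm:homsintensor}, taking $m_0=m_1=\one_{C_0}$ and $n_0=n_1=\one_{C_1}$, identifying $\one_{C_0}\otimes\one_{C_1}\simeq\one_{C_0\otimes_C C_1}$, and using \Cref{ex:rigat=cpct} in the rigid case to pass from compact to atomic units. One small correction: \Cref{ex:rigat=cpct} is already stated for objects of arbitrary $C$-modules, since that is where atomicity is defined, so you do not need to extend it yourself.
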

The following is a direct consequence of \Cref{lm:homsintensor}: 
\begin{cor}\label{cor:fftensor}
    Let $C$ be a $2$-ring, and $M,N,P$ be $C$-modules. Suppose $M,N,P$ are atomically generated in the sense that their atomic objects generate them under colimits and tensors with $C$. 

    In this case for any atomic-preserving fully faithful $C$-linear functor $M\to N$, the induced functor $M\otimes_C P\to N\otimes_C P$ is also fully faithful.  
\end{cor}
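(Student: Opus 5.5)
The idea is to reduce full faithfulness to a computation on generators, using \Cref{lm:homsintensor} to identify the relevant mapping objects. Write $f: M\to N$ for the given functor. The induced functor $f\otimes_C P: M\otimes_C P\to N\otimes_C P$ is $C$-linear and colimit-preserving. The tensor product $M\otimes_C P$ is generated under colimits and tensors with $C$ by the objects $m\otimes p$ with $m\in M$ and $p\in P$ atomic. Such objects are atomic in $M\otimes_C P$, since the classifying functor $C\to M\otimes_C P$ is the tensor product of the two classifying functors, so its right adjoint is again colimit-preserving and $C$-linear (as in the proof of \Cref{lm:homsintensor}). Also, $f\otimes P$ sends $m\otimes p$ to $f(m)\otimes p$, and $f(m)$ is atomic because $f$ is atomic-preserving.

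First I check that the map on enriched homs
$$\hom_{M\otimes_C P}(m_0\otimes p_0, X)\to \hom_{N\otimes_C P}(f(m_0)\otimes p_0, (f\otimes P)(X))$$
is an equivalence for atomic $m_0,p_0$ and arbitrary $X$. For $X=m_1\otimes p_1$ with $m_1,p_1$ arbitrary, \Cref{lm:homsintensor} identifies both sides with $\hom_M(m_0,m_1)\otimes\hom_P(p_0,p_1)$ and $\hom_N(fm_0,fm_1)\otimes\hom_P(p_0,p_1)$. By naturality of the comparison map, the map between them is $(\text{map induced by } f)\otimes\id$, which is an equivalence because $f$ is fully faithful. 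Since $f$ is $C$-linear, full faithfulness on mapping anima upgrades to equivalences on $C$-enriched homs. Concretely, $\hom(fm_0,fm_1)$ corepresents $\Map_N(c\otimes fm_0,fm_1)\simeq \Map_N(f(c\otimes m_0),fm_1)$, and this agrees with $\Map_M(c\otimes m_0,m_1)$.

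Next I extend in the variable $X$. Both sides are colimit-preserving and $C$-linear in $X$: on the left because $m_0\otimes p_0$ is atomic, on the right because $f(m_0)\otimes p_0$ is atomic and $f\otimes P$ is colimit-preserving and $C$-linear. The objects $m_1\otimes p_1$, with $m_1,p_1$ ranging over atomics, generate $M\otimes_C P$ under colimits and $C$-tensors. The full subcategory of $X$ on which the map is an equivalence is closed under these operations, so it is everything. Then I extend in the first variable in the same way. For fixed $Y$, the functor $\hom(-,Y)$ sends colimits to limits and satisfies $\hom(c\otimes x,Y)\simeq\hom(c,\hom(x,Y))$. Hence the class of $x$ for which $\hom(x,Y)\to\hom(fx,fY)$ is an equivalence for all $Y$ is closed under colimits and $C$-tensors, and it contains the generators $m_0\otimes p_0$. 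Taking the underlying mapping anima, via $\Map(\one_C,-)$, gives full faithfulness.

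The main obstacle I expect is the naturality claim in the first step. One needs the equivalence of \Cref{lm:homsintensor} to be compatible with the maps induced by $f$, that is, to know that the comparison map is natural for $C$-linear functors applied to the first tensor factor. I would handle this by noting that the comparison arises from the lax structure of the $2$-functor $\otimes_C$ on right adjoints, which is natural in the pair of $C$-linear functors. Secondary to this is the bookkeeping for the generation statement of $M\otimes_C P$ by pure tensors of atomics, which follows from the construction of the tensor product of presentable modules.
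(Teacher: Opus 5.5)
Your proposal is correct and is the argument the paper intends. The paper gives no written proof; it states the corollary as a direct consequence of \Cref{lm:homsintensor}: compute enriched homs out of pure tensors of atomics using that lemma, use full faithfulness of $f$ on the $M$-factor, and extend to all objects by generation. The naturality you flag is harmless, because homotopy commutativity of the comparison square already forces the remaining map to be an equivalence, and that commutativity follows from building the map out of the counits $\hom(m_0,m_1)\otimes m_0\to m_1$.
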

A special case of the above is if $C$ is a locally rigid $2$-ring and $M,N$ are compactly generated $C$-modules, then it suffices to check fully faithfulness on compact objects. 

\begin{defn}\label{defn:ext}
    We say a rigid map $C\to D$ of locally rigid $2$-rings is an \emph{extension}, or that $D$ is a \emph{rigid extension} of $C$ if it is fully faithful. 
\end{defn}
We introduce two pieces of notation that will be convenient throughout: 
\begin{nota}\label{nota:basechange}
Let $C$ be a $2$-ring, and $R\in\CAlg(C)$. Let $M,N$ be two $\Mod_R(C)$-modules. If $C$ is understood from context, we abbreviate $M\otimes_{\Mod_R(C)}N$ as $M\otimes_R N$. If furthermore $N=\Mod_S(C)$ for some commutative $R$-algebra $S$, we also write $M\otimes_R S$ for $M\otimes_R\Mod_S(C)$. 
\end{nota}
We will not need the following a lot, but to avoid confusion, let us give the following name to the ``small'' version of rigid $2$-rings:
\begin{nota}\label{nota:smallrigid}
    We call small stably symmetric monoidal categories with all objects dualizable ``small rigid $2$-rings''. The dictionary between small rigid $2$-rings and rigid $2$-rings is simply that ind-completing small rigid $2$-rings gives rigid $2$-rings, and full subcategories of compact/dualizable objects in rigid $2$-rings are small rigid $2$-rings.
\end{nota}
Since $\Ind(-)$ establishes an equivalence between (idempotent complete) small rigid $2$-rings and rigid $2$-rings\footnote{Which, we recall, are compactly generated in the context of this article.}, and between (small, idempotent complete) modules over the former and compactly generated modules over the latter, every notion we introduce for rigid $2$-rings specializes to a notion for small rigid $2$-rings, and we will do the translation implicitly at times. No confusion should arise, since except $0$, no stable category is both cocomplete and small. 

\begin{rmk}
    Most of the results of this paper have generalizations to non-locally rigid $2$-rings, by considering the general notion of rigid commutative algebra over a general $2$-ring, cf. \cite{LocRig}. We have opted to nonetheless stay in the context of locally rigid $2$-rings, mostly for readability (it allows us to discuss compact objects rather than atomic objects), but also because the main examples where we truly care about the distinction between ``small rigid $C^\dbl$-algebras'' and ``rigid $C$-algebras'' are ``completions'' of rigid categories, which are locally rigid. 
\end{rmk}
\subsection{Motives}\label{section:Mot}
In this section, we briefly discuss motives over locally rigid categories and related subtleties, recall the main result from \cite{RSW} as well as some mild variants of the intermediate results therein which we shall need. 

We refer to \cite[Appendix B]{RSW} for the definition of $C$-motives when $C$ is an arbitrary $2$-ring, as well as to \cite{efimovrigidity} for a more in-depth discussion of the locally rigid case.  Since we will use slightly different notation, let us introduce it as follows, in the restricted context of locally rigid $2$-rings: 
\begin{recoll}
    For $C$ a locally rigid $2$-ring, we have a stable presentably symmetric monoidal category $\Mot_C$ of $C$-linear motives. It comes with a functor $M_C:\Mod_{C,\omega}\to \Mot_C$ which preserves filtered colimits and sends fiber/cofiber sequences to fiber/cofiber sequences. 

    $\Mot_{-}$ is a functor on the category of $2$-rings, and $M_{-}$ is natural along the extension of scalars functoriality.

    If $C$ is rigid, $\map_{\Mot_C}(M_C(C), M_C(-))$ is equivalent to $K$-theory of the compact objects (or continuous $K$-theory of the module itself, following \cite{efimovI}). 
\end{recoll}
The last fact here fails when $C$ is only locally rigid, and ultimately it comes from the following point: when $C$ is rigid, the extension of scalars/restriction of scalars adjunction along $\Sp\to C$ descends to the category of motives sinces both the unit $M\to C\otimes M$ and the counit $C\otimes M\to M$ preserve compacts, but if $C$ is only locally rigid, only the counit does. 

Another way of understanding the issue is that the functor $$\Mod_{C,\omega}\to \Catperf, D\mapsto D^\at$$ does not preserve fiber/cofiber sequences. 

Therefore, ``maps from the unit'' in $\Mot_C$ is more subtle than $K$-theory when $C$ is only locally rigid. Efimov beautifully explains in \cite[Section 5.4]{efimovrigidity} how to describe it in categorical terms, at least when the unit of $C$ is $\aleph_1$-compact, but for our purposes, this description is not needed. Despite this difficulty, in \Cref{section:main} we will manage to say something about $K$-theory of dualizable objects in $C$ in some cases (as well as endomorphisms of the unit in $\Mot_C$). 
\begin{rmk}
    If we name the functor $\map_{\Mot_C}(M_C(C),M_C(-))$ ``relative $K$-theory'', then let us point out that the above argument shows also that if $C\to D$ is a rigid map of locally rigid $2$-rings\footnote{This is not necessary, but we have not introduced in this article the general notion of a rigid map of $2$-rings. See \cite{LocRig} for a discussion.}, then $K$-theory relative to $D$ agrees with that relative to $C$. 
\end{rmk}
This suggests the notation:
\begin{nota}
    For $C$ a locally rigid\footnote{Cf. the above footnote.} $2$-ring, let $K_C$ denote $\map_{\Mot_C}(M_C(C),M_C(-)):\Mod_{C,\omega}\to \Sp$.
\end{nota}
\begin{cons}\label{cons:compK}
For $C$ a locally rigid\footnote{Cf. the above footnote.} $2$-ring, there is a natural transformation on $\Mod_{C,\omega}$ from $K((-)^\at)$ to $K_C$, coming from the universal property of $K$-theory and the adjunction $C\otimes\Ind(-):\Cat^{\mathrm{perf}}\rightleftharpoons \Mod_{C,\omega}:(-)^\at$. 
\end{cons}

For the immediate purpose of exhibiting failures of redshift at higher heights, we do want to point out that the distinction between those two things is irrelevant. We will not use the following fact, which is only a mild variant of \cite[Proposition 4.15]{CMNN}, but it is informative and likely good to know in general:
\begin{lm}\label{lm:agreeK}
    Let $C$ be a commutative rigid $\Sp_{T(n)}$-algebra. In this case, the maps $$K((-)^{\aleph_0})\to K((-)^\at)\to K_C$$ are $T(n+1)$-local equivalences. 

    In particular the map $K(C^\dbl)\to K_C(C)$ is a $T(n+1)$-local equivalence. 
\end{lm}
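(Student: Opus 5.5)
We reduce both comparisons to the following fact, a mild variant of \cite[Proposition 4.15]{CMNN}: $L_{T(n+1)}K$ is insensitive to the passage between a category and a suitable ``local'' enlargement whenever the difference is controlled by $T(n)$-local data. The plan has three steps: handle $K((-)^{\aleph_0})\to K((-)^\at)$ by a thick-subcategory argument, then handle $K((-)^\at)\to K_C$ by comparing with the rigid base $L_n^f\Sp$, and finally specialise to $M=C$.

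For the first map, fix $M\in\Mod_{C,\omega}$ and a finite type $n$ spectrum $V$. By \Cref{ex:loccompactT(n)}, an object $m\in M$ is atomic if and only if $V\otimes m$ is compact. So $M^{\aleph_0}\subseteq M^\at$ is a thick subcategory containing $V\otimes M^\at$. We then show that the Verdier quotient $M^\at/M^{\aleph_0}$ is a module over $\Perf(\Sph)/\langle V\rangle$, in a way compatible with the $K$-theory localisation sequence. The cofiber of $K(M^{\aleph_0})\to K(M^\at)$ is therefore a module over $K(\Sp^{\omega}/\Sp^{\omega}_{\geq n})$, or more precisely over a ring annihilated by $V$-torsion in the appropriate sense. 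Exactly as in \cite[Prop. 4.15]{CMNN}, this cofiber is $L_{n}^{p,f}$-acyclic in the sense needed. Running the argument through purity (\cite{LMMT}: $L_{T(n+1)}K(\mathcal{A})$ depends only on the $T(n)\oplus T(n+1)$-localisation of mapping spectra) then shows the cofiber is $T(n+1)$-acyclic. Since $M^\at/M^{\aleph_0}$ has mapping spectra that are $T(n)$-acyclic, they are killed by $V$-multiplication. Purity gives $L_{T(n+1)}K(M^\at/M^{\aleph_0})\simeq L_{T(n+1)}K$ of a category with $L_{T(n)\oplus T(n+1)}$-trivial mapping spectra, which is $0$.

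For the second map, write $\Sp_{T(n)}$ as the $L_n^fV$-localisation of the rigid $2$-ring $L_n^f\Sp$. Via the adjunction of \Cref{cons:compK}, we express $K_C$ as a localising invariant computed in $\Mot_{L_n^f\Sp}$, where maps from the unit are honest $K$-theory of compacts because the base is rigid. We then identify the failure of the unit to descend (discussed before \Cref{cons:compK}) with a cofiber of the same $V$-torsion shape as above. Concretely, $M_C(M)$ is obtained from $M_{L_n^f}(M)$ by a localisation whose kernel is generated by $V$-divisible objects. After $L_{T(n+1)}$, this again reduces by purity to $K$ of categories with $T(n)$-acyclic mapping spectra, which vanishes $T(n+1)$-locally. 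Applying the previous step with $M=C$, where $C^\at=C^\dbl$ by \Cref{ex:rigat=dbl}, gives the final claim.

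The main obstacle is the second step. Making precise how $\Mot_C$ compares to $\Mot_{L_n^f\Sp}$, in a way that isolates the kernel as generated by $T(n)$-acyclic objects, requires care because $C$ is only locally rigid. I would try first to express $K_C(M)$ via Efimov's continuous $K$-theory of $M$ as a dualizable $L_n^f\Sp$-module, and then compare with $K(M^\at)$ using the telescopic purity of \cite{LMMT}.
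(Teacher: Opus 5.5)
Your first step matches the paper's argument. Since $V\otimes M^\at\subseteq M^{\aleph_0}$, the quotient $M^\at/M^{\aleph_0}$ is a module over $\Sp^\omega/\langle V\rangle$, i.e.\ $L_{n-1}^f$-linear. By \cite[Theorem A]{CMNN} its $K$-theory is therefore $L_n^f$-\emph{local}, hence $T(n+1)$-acyclic. (You wrote ``acyclic'' for the first of these, which is the wrong word.)

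The gap is your second step, which you acknowledge you have not carried out, and the route you sketch does not lead anywhere as stated. The claim that $M_C(M)$ is obtained from $M_{L_n^f\Sp}(M)$ by a localisation with $V$-divisible kernel has no precise meaning. Restriction of scalars along $L_n^f\Sp\to C$ does not descend to motives: this is exactly the phenomenon discussed before \Cref{cons:compK}, namely that the unit $M\to C\otimes M$ does not preserve compacts when $C$ is only locally rigid. Extension of scalars goes the other way and does not return $M$. Likewise, Efimov's continuous $K$-theory of $M$ over the rigid base $L_n^f\Sp$ is just $K(M^{\aleph_0})$, so passing to it leaves the comparison with $K_C(M)$ exactly as open as before.

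The missing idea is that the second comparison is purely formal once the first is known.
\begin{itemize}
\item The functor $(-)^{\aleph_0}:\Mod_{C,\omega}\to\Catperf$ preserves fiber/cofiber sequences and filtered colimits. So step 1 shows that $T(n+1)\otimes K((-)^\at)\simeq T(n+1)\otimes K((-)^{\aleph_0})$ is a finitary localizing invariant on $\Mod_{C,\omega}$.
\item The functor $(-)^\at$ is right adjoint to $C\otimes\Ind(-)$, and $C\otimes\Ind(-)$ preserves localization sequences and filtered colimits. Hence for any finitary localizing invariant $E$ on $\Mod_{C,\omega}$, the universal property of $K$ gives $\mathrm{Nat}(K\circ(-)^\at,E)\simeq\mathrm{Nat}(K,E\circ(C\otimes\Ind(-)))\simeq\Omega^\infty E(C)$.
\item This is precisely the functor corepresented by $K_C=\map_{\Mot_C}(M_C(C),M_C(-))$, and the comparison map of \Cref{cons:compK} induces this identification.
\end{itemize}
So after tensoring with $T(n+1)$, the comparison map is a map between localizing invariants that induces equivalences on maps into every localizing invariant, and it is an equivalence by Yoneda.

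To make the Yoneda step clean, first tensor with a finite type $n+1$ complex $F$. It annihilates $L_n^f$-local spectra, so $F\otimes K((-)^\at)$ is literally localizing. Then use $\mathrm{Nat}(F\otimes G,E)\simeq\mathrm{Nat}(G,DF\otimes E)$ and invert $v_{n+1}$. No comparison of motive categories and no second use of purity is needed.
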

\begin{proof}
Let $D$ be a compactly generated $C$-module. 
We note that by \Cref{ex:loccompactT(n)}, the argument from \cite[Proposition 4.15]{CMNN} applies to show that the cofiber of the natural inclusion $D^{\aleph_0}\to D^\at$ is $L_{n-1}^f$-linear. 

Together with \cite[Theorem A]{CMNN}, this shows that $K(D^{\aleph_0})\to K(D^\at)$ is a $T(n+1)$-local equivalence, which proves the claim for one of the two maps involved. 

It follows that $T(n+1)\otimes K((-)^\at)$ is a finitary localizing invariant on $\Mod_{C,\omega}$, and since $(-)^\at$ is right adjoint to the functor $\Catperf\to \Mod_{C,\omega}$, it has the same universal property as $T(n+1)\otimes K_C$ among motivic localizing invariants, and they are therefore equivalent. 
\end{proof}
In other words, if we were able only to calculate $K_C(C)$, and prove that \emph{it} does not redshift, we could still condlude that $K(C^\dbl)$ also does not redshift. 

We now recall the main result of \cite{RSW}:
\begin{thm}[\cite{RSW}]
    Let $C$ be a locally rigid\footnote{See footnotes above.} $2$-ring. The functor $$M_C: \Mod_{C,\omega}\to \Mot_C$$ is a Dwyer--Kan localization, and the source has a cofibration structure where the cofibrations are exactly the fully faithful $C$-linear functors. 

    In particular it is essentially surjective. 
\end{thm}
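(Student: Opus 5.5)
The strategy is to reduce to the general statement of \cite[Appendix B]{RSW} for arbitrary $2$-rings, checking that the locally rigid case fits into that framework. I would proceed in three steps: set up the cofibration structure, identify the Dwyer--Kan localization with $\Mot_C$, and then deduce essential surjectivity.

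\emph{Cofibration structure.} The cofibrations are the fully faithful $C$-linear compact-preserving functors. I would check Waldhausen's axioms directly in $\Mod_{C,\omega}$:
\begin{itemize}
\item $0$ is initial, and equivalences are cofibrations.
\item Pushouts along a fully faithful $M\to N$ exist. They are computed in $\Mod_C(\PrL_\st)$, remain compactly generated, and the structure maps preserve compacts.
\item Cobase change preserves full faithfulness. This is the point where local rigidity enters: compact objects in dualizable $C$-modules are atomic, so one can argue as in \Cref{lm:homsintensor} and \Cref{cor:fftensor}, computing mapping objects in the pushout on atomic generators.
\end{itemize}
The weak equivalences are the morphisms inverted by $M_C$. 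These contain the equivalences, satisfy 2-out-of-3, and satisfy the gluing lemma because $M_C$ sends cofiber sequences to cofiber sequences.

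\emph{Identification with $\Mot_C$.} I would form the Dwyer--Kan localization $L=\Mod_{C,\omega}[W^{-1}]$. By Cisinski's calculus of left fractions for categories with cofibrations, $L$ has finite colimits, and cofiber sequences of modules give cofiber sequences in $L$. Filtered colimits in $L$ would come from the fact that $M_C$ preserves filtered colimits: every filtered diagram in $L$ can be lifted to $\Mod_{C,\omega}$ after passing to cofibrant (fully faithful) replacements. Stability should follow from two constructions:
\begin{itemize}
\item \emph{Suspension.} For each $D$, take the $C$-linear Eilenberg swindle category $D\hookrightarrow \mathrm{Sw}(D)$, for instance built from countable sums in $\Ind(D)$ and made compactly generated. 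It has vanishing motive, so its quotient represents $\Sigma M_C(D)$.
\item \emph{Loops.} Realize $D$ as a Verdier quotient of a flasque $D'$, which also has vanishing motive, and take the kernel; this represents $\Omega M_C(D)$.
\end{itemize}
With stability and filtered colimits in hand, $L$ is the initial stable, filtered-colimit-complete target receiving a localizing invariant. By construction, $\Mot_C$ has the same universal property, so the induced comparison $L\to \Mot_C$ is an equivalence.

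\emph{Essential surjectivity.} In a localization with left fractions, objects of $L$ are represented by objects of the source, so essential surjectivity is immediate once $L\simeq\Mot_C$.

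I expect the main obstacle to be showing that $L$ is closed under filtered colimits and loops without leaving compactly generated $C$-modules. The naive swindle and flasque constructions are large, so one has to check that they stay compactly generated over $C$ and that the relevant maps preserve compacts. This is the place where the locally rigid (as opposed to rigid) subtlety discussed above could bite: the unit $M\to C\otimes M$ need not preserve compacts. My first attempt would be to perform all constructions on $C$-atomic generators, where \Cref{lm:homsintensor} gives control, and to invoke \cite[Appendix B]{RSW} for the general $2$-ring bookkeeping.
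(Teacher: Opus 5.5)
The paper gives no proof of this statement. It quotes it from \cite{RSW}, whose Appendix~B treats $C$-linear motives over an arbitrary $2$-ring; the footnote indicates that local rigidity is not essential. The only locally rigid input is that compactly generated $C$-modules are dualizable and compact-preserving $C$-linear functors are $C$-linear left adjoints (\cite[Theorem 1.35]{Dbl}, \cite[Proposition 4.17]{LocRig}, as recalled in \Cref{defn:HH}).

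A bare citation would therefore match the paper. The argument you sketch around it, however, fails at the two places where \cite{RSW} does its real work.

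\emph{Filtered colimits.} That $M_C$ preserves filtered colimits is a statement about $\Mot_C$. Using it to produce filtered colimits in $L$ presupposes the equivalence $L\simeq\Mot_C$ you are trying to prove. Lifting filtered diagrams from $L$ back to $\Mod_{C,\omega}$ is not automatic either. Even a lift would not help unless you know that the localization functor $\ell\colon\Mod_{C,\omega}\to L$ sends filtered colimits to colimits, and that is the actual claim to prove. Concretely, in the fraction formula $\Map_L(\ell D,\ell E)\simeq\colim_{E\to E'}\Map(D,E')$ (colimit over fully faithful motivic equivalences out of $E$), you must commute this colimit past the limit over $i$ when $D=\colim_i D_i$. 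That needs genuine size control, and your outline does not address it.

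\emph{Stability.} Producing, for each $D$, some $K$ with $\ell D\simeq\Sigma\,\ell K$ only shows that $\Sigma$ is essentially surjective on $L$. There is no $\Omega$ on $L$ yet, since finite limits are not known to exist there. You also need $\Sigma$ to be fully faithful, for instance via a functorial desuspension with natural equivalences to the identity on both sides, and nothing in your outline provides this. Even the objectwise step is unjustified as stated. The Verdier projection $D'\to D$ must preserve compact objects, and the natural candidate, the colimit functor $\Ind(D^{\aleph_1})\to D$ out of a swindle category, does not.

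Some smaller points:
\begin{itemize}
\item Cisinski's calculus of fractions for categories of cofibrant objects requires a factorization (cylinder) axiom, which you never check. It does hold here. Factor $f\colon A\to B$ through the lax pullback $\{(a,b,f(a)\to b)\}$ via $a\mapsto(a,f(a),\id)$. Then push out the extra semi-orthogonal copy $\{(a,0,0)\}$ of $A$ along a fully faithful embedding of $A$ into a motivically trivial module.
\item Cobase change has nothing to do with local rigidity or with \Cref{lm:homsintensor} and \Cref{cor:fftensor}. Pushouts in $\Mod_{C,\omega}$ are computed in $\PrL_\st$ (on compacts, in $\Catperf$), not as relative tensor products $\otimes_C$, and full faithfulness is stable under them for general reasons.
\item The failure of the unit $M\to C\otimes M$ to preserve compacts concerns comparing $K_C$ with $K((-)^\at)$, not this theorem.
\item $L$ is initial among targets of functors inverting $W$, not among localizing invariants. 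Identifying it with $\Mot_C$ requires showing that $\ell$ is a finitary localizing invariant and that the induced $L\to\Mot_C$ is exact and preserves filtered colimits. Both rest on the two missing steps above.
\end{itemize}
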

We recall also that the class of maps inverted by $M_C$ consists exactly of the \emph{motivic equivalences}, cf. \cite[Definition 1.4, Definition B.4]{RSW}.

A corollary of the cofibration structure is the following fact about morphisms in $\Mot_C$:
\begin{cor}[{\cite[Remark 2.3, Example 2.4, Proposition 2.8]{RSW}}]
    Let $D,E\in\Mod_{C,\omega}$. For any map $f:M_C(D)\to M_C(E)$ in $\Mot_C$, there exists a fully faithful motivic equivalence $E\to E'$ and a map $D\to E'$ representing $f$ in $\Mot_C$.
\end{cor}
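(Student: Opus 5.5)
The plan is to deduce the statement formally from the theorem of \cite{RSW} just recalled, namely that $M_C:\Mod_{C,\omega}\to\Mot_C$ is a Dwyer--Kan localization of a category with cofibrations whose cofibrations are the fully faithful $C$-linear functors. The idea is to use the calculus of left fractions that such a structure provides.

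\textbf{Step 1: reduce to a single zigzag.} In a category with cofibrations (a Waldhausen-type structure in which every object is cofibrant), the localization at the weak equivalences admits a left calculus of fractions. In the $\infty$-categorical setting, the cited Remark 2.3, Example 2.4 and Proposition 2.8 of \cite{RSW} make this precise: the mapping anima $\Map_{\Mot_C}(M_C(D),M_C(E))$ is the filtered colimit, over the category of weak equivalences $E\to E'$ under $E$, of $\Map_{\Mod_{C,\omega}}(D,E')$. One can moreover restrict to those $E\to E'$ that are acyclic cofibrations, i.e.\ fully faithful motivic equivalences, because this subcategory is cofinal: any weak equivalence $E\to E''$ can be factored as a cofibration followed by a weak equivalence, for instance via a mapping cylinder. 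Concretely, $f$ is then represented by a zigzag $D\to E'\xleftarrow{\sim} E$.

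\textbf{Step 2: extract a representative.} By Step 1, the point $f$ of the colimit lies in the image of $\Map(D,E')$ for some stage $E\to E'$. So there is a map $D\to E'$ whose image under $M_C$, composed with the inverse of the equivalence $M_C(E)\simeq M_C(E')$, is $f$. This is precisely the claim.

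\textbf{Step 3: the main obstacle.} The only nontrivial input is justifying the left-fraction description in Step 1, specifically that the relevant indexing category of acyclic cofibrations under $E$ is filtered. Filteredness comes from pushouts: given two fully faithful motivic equivalences out of $E$, their pushout in $\Mod_{C,\omega}$ is again of this type, since pushouts of cofibrations are cofibrations and motivic equivalences are stable under cobase change along cofibrations (they become pushouts in $\Mot_C$ because $M_C$ preserves cofiber sequences). The existence of factorizations is what allows replacing arbitrary equivalences by acyclic cofibrations. I would check that both facts are established in \cite{RSW}, and cite them rather than reprove them.
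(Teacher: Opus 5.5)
\textbf{The overall route matches the paper, but the reduction in your Step 1 fails as argued.} The paper gives no argument beyond citing the calculus of fractions that \cite{RSW} extract from the cofibration structure on $\Mod_{C,\omega}$. Your Step 2 is exactly how a representative is read off from it. The problem is the passage from arbitrary motivic equivalences under $E$ to fully faithful ones.

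\emph{The factorization points the wrong way.} Factor a motivic equivalence $w\colon E\to E''$ as $w=p\circ j$ with $j\colon E\to E'$ fully faithful. This produces a morphism $p$ \emph{from} $j$ \emph{to} $w$ in the category $W_{E/}$ of motivic equivalences under $E$. Cofinality of the fully faithful ones would need morphisms from $w$ to such objects, so the factorization does not give it.

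\emph{Cofinality is in fact false.} Take $E\neq 0$ with $M_C(E)=0$, for instance $E=N$ as in \Cref{lm:disk}. The object $E\to 0$ of $W_{E/}$ admits no map under $E$ to any fully faithful $j\colon E\to E'$, because the composite $E\to 0\to E'$ is the zero functor.

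\textbf{Two repairs are available.} The cleanest is to use the fraction formula in the form in which it holds for categories with cofibrations, the dual of Cisinski's calculus of fractions. There the colimit is indexed by trivial cofibrations under $E$, and this already gives the statement with no reduction step; this is what the cited results provide.

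Alternatively, start from a representative $D\xrightarrow{g}E''\xleftarrow{w}E$ with $w$ merely a motivic equivalence. Use the mapping-cylinder factorization $w=p\circ j$, which comes with a section $s\colon E''\to E'$ of $p$. Then $s\circ g$ together with $j$ represents the same morphism:
$$M_C(j)^{-1}M_C(s\circ g)=M_C(w)^{-1}M_C(p)M_C(p)^{-1}M_C(g)=M_C(w)^{-1}M_C(g)=f .$$

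Separately, in Step 3, filteredness in the $\infty$-categorical sense needs more than the existence of pushouts of pairs of trivial cofibrations. Since you propose to cite it rather than prove it, that is fine.
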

We will need the slightly more precise quantitative version, which is already implicit in \cite{RSW}:
\begin{cor}\label{cor:zigzag}
     Let $D,E\in\Mod_{C,\omega}$. If $D,E$ are $\kappa$-compact in $\Mod_{C,\omega}$ for some regular uncountable cardinal $\kappa$, then for any map $f:M_C(D)\to M_C(E)$ in $\Mot_C$, there exists a fully faithful motivic equivalence $E\to E'$ and a map $D\to E'$ representing $f$ in $\Mot_C$, where $E'$ is itself $\kappa$-compact. 
\end{cor}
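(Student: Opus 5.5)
The strategy is to redo the argument of \cite{RSW} behind the preceding corollary while tracking cardinalities. The key input is that in \cite{RSW}, $\Mot_C$ is built from $\Mod_{C,\omega}$ as a Dwyer--Kan localization with respect to a cofibration structure whose cofibrations are the fully faithful functors. In that setting, mapping anima in the localization can be computed by a calculus of fractions. Concretely, $\Map_{\Mot_C}(M_C(D),M_C(E))$ is the filtered colimit, over the category of fully faithful motivic equivalences $E\to E'$ under $E$, of $\Map_{\Mod_{C,\omega}}(D,E')$. This is the content of \cite[Remark 2.3, Proposition 2.8]{RSW}. The plan is to show that this colimit may be restricted to the full subcategory of those $E\to E'$ with $E'$ $\kappa$-compact, and that this subcategory is cofinal.

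First I will set up the cardinality bookkeeping. Because $C$ is locally rigid with compactly generated modules, $\Mod_{C,\omega}$ is equivalent to a category of small idempotent-complete non-unital $C^{\aleph_0}$-modules. It is therefore compactly generated, hence $\kappa$-accessible for every uncountable regular $\kappa$. Its $\kappa$-compact objects are exactly the modules whose compact objects form a $\kappa$-small category, up to equivalence.

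Next I will check that the constructions used in \cite{RSW} to produce the motivic equivalences $E\to E'$ preserve $\kappa$-compactness. These constructions are finite colimits, pushouts along fully faithful functors, Verdier quotients, idempotent completion, and the ``flasque'' (Calkin/swindle-type) constructions built from countable sums. Finite colimits and pushouts of $\kappa$-compact objects are $\kappa$-compact. The countable constructions only involve $\aleph_1$-small diagrams, so they preserve $\kappa$-compactness because $\kappa$ is uncountable; this is exactly where the uncountability hypothesis enters. Consequently, every generating motivic equivalence out of a $\kappa$-compact object can be chosen with $\kappa$-compact target.

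The main obstacle is cofinality: an arbitrary zigzag $D\to E'\xleftarrow{\sim}E$ must be replaced by one with $E'$ $\kappa$-compact. I will try two arguments, in this order.

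The first uses the fact that the map $f$ is built in finitely many steps. An element of the colimit is represented at some stage of a transfinite construction of the localization. Each step of that construction is a pushout along a generating trivial cofibration with $\kappa$-compact source and target, by the previous step. Closure of $\kappa$-compact objects under these pushouts, and under $\kappa$-small colimits, then keeps the whole construction $\kappa$-compact.

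The second is a fallback. Write $E'$ as the $\kappa$-filtered colimit of its $\kappa$-compact submodules $E'_\alpha$ containing $E$. The map $D\to E'$ factors through some $E'_\alpha$ because $D$ is $\kappa$-compact. Being a motivic equivalence is detected by the localizing invariant $M_C$, which commutes with filtered colimits. A standard closure argument of L\"owenheim--Skolem type, enlarging $E'_\alpha$ countably many times and taking the union, then produces a $\kappa$-compact $E''\subseteq E'$ with $E\to E''$ still a motivic equivalence. The fallback requires checking that motivic equivalence is detected by a $\kappa$-small amount of data, which I expect to be the subtle point.
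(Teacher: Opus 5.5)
\textbf{Verdict: genuine gap.} Your fallback matches the paper's route, but its key step is left unproved, and your primary argument fails.

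\textbf{The first argument.} It fails for two reasons.
\begin{itemize}
\item \cite{RSW} gives no set of generating trivial cofibrations and no cellular description of the representing zigzag. The calculus of fractions only provides \emph{some} fully faithful motivic equivalence $E\to E'$, with no control over how $E'$ is built.
\item More seriously, your cardinality claim is false. Flasque and Calkin-type constructions are not $\aleph_1$-small diagrams; they are categories of \emph{all} countable diagrams. For instance $M\mapsto \Ind(M)^{\aleph_1}$ has size roughly $|M|^{\aleph_0}$. Already $\Ind(\Sp^{\aleph_0})^{\aleph_1}=\Sp^{\aleph_1}$ has $2^{\aleph_0}$ equivalence classes of objects, so it is not $\aleph_1$-compact. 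Such constructions preserve $\kappa$-compactness only for countably closed $\kappa$. That is exactly the restriction from \cite{RSW} which this corollary is meant to remove; compare the remark after the statement and the footnote in \Cref{ex:atomicshift}.
\end{itemize}
Uncountability of $\kappa$ is not what saves this step. It is used when you close up under countable unions.

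\textbf{The fallback.} This is the paper's proof in outline. Take an arbitrary zigzag $D\to E'\xleftarrow{\sim}E$ from the previous corollary. Then cut $E'$ down to a $\kappa$-compact stage $E_i$ through which both maps factor, with $E\to E_i$ still a fully faithful motivic equivalence. However, the step you postpone as ``the subtle point'' is the whole content. The paper gets it from \cite[Lemmas 5.3, 5.8]{RSW}. There, the relevant objects are written as $\kappa$-filtered colimits of $\kappa$-compacts, indexed by a category with $\kappa$-small colimits that the diagram preserves. A $\kappa$-compact object presented this way is then already attained at cofinally many stages.

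If you argue by hand instead, two parts of your sketch must change.
\begin{itemize}
\item $E'$ is in general not a union of $\kappa$-compact \emph{full} submodules. Already over $C=\Sp$, $\kappa$-compact objects of $\Catperf$ have $\kappa$-compact mapping spectra. Objects of $E'$, possibly in the image of $D$, can have endomorphism spectra that are not $\kappa$-compact. So the transition maps must be non-full, and fully faithfulness of $E\to E_i$ has to be recovered separately, e.g.\ by pulling back along $E_i\to E'$ as in the proof of \Cref{lm:disk}.
\item $M_C$ commuting with filtered colimits is not enough: you also need $M_C(E_\alpha)$ to be $\kappa$-compact in $\Mot_C$. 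Granting that, let $r$ be an inverse of $M_C(E)\to M_C(E')$. You can then choose $\alpha_{n+1}$ so that the transition map $M_C(E_{\alpha_n})\to M_C(E_{\alpha_{n+1}})$ is homotopic to the composite through $r$ and $M_C(E)$. The induced maps between the split cofibers of $M_C(E)\to M_C(E_{\alpha_n})$ are then null. Hence $M_C(E)\to M_C(\colim_n E_{\alpha_n})$ is an equivalence, and $\colim_n E_{\alpha_n}$ is $\kappa$-compact because $\kappa$ is uncountable.
\end{itemize}
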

\begin{rmk}
    In \cite{RSW}, for simplicity we restricted to those $\kappa$'s we called ``countably closed''. This is no serious loss of generality, especially if we only want to exhibit failures of redshift, but for the more precise results we wish to establish, we do not want to artificially restrict to those when $\kappa=\aleph_1$ does the trick. 
\end{rmk}
\begin{proof}
    By the previous corollary, we may find an $E'$, not necessarily $\kappa$-compact, that does the job. Write $E'$ as a canonical $\kappa$-filtered colimit of $\kappa$-compacts $E_i,i\in I$. By \cite[Lemmas 5.3, 5.8]{RSW}, for some cofinal subcategory  of $I$, the map $E\to E'$ factors through $E_i$ so that $E\to E_i$ is a motivic equivalence. 

Since $D$ is also $\kappa$-compact, the map $D\to E'$ factors through one of those $E_i$'s, as was to be shown. 
\end{proof}
We will also need the following, which is a quantitative version of the construction ``$D\to \Ind(D)^{\aleph_1}$'': 
\begin{lm}\label{lm:disk}
    Let $\kappa$ be an uncountable regular cardinal, and $C$ a locally rigid $2$-ring with $\kappa$-compact unit, and let $M$ be a $\kappa$-compact compactly generated $C$-module. 

    There exists a $\kappa$-compact $C$-module $N$ with trivial motive, and a fully faithful $C$-linear embedding $M\to N$. 
\end{lm}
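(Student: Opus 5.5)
The model for this is the construction $D\to\Ind(D)^{\aleph_1}$: the $\aleph_1$-compact objects of an Ind-category admit an Eilenberg swindle, because countable direct sums exist there. I will carry this out $C$-linearly and with $\kappa$ in place of $\aleph_1$. Concretely, I take $N$ to be the compactly generated $C$-module $\Ind\big(M^{\kappa}\big)$, where $M^\kappa$ denotes the $\kappa$-compact objects of $M$. The embedding $M\to N$ is the one induced by $M^{\aleph_0}\subset M^\kappa$. It is fully faithful and preserves compact objects, and it is $C$-linear because $M^\kappa$ is stable under tensoring with compact objects of $C$.

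The last point needs a check. $M^\kappa$ is a module over $C^{\aleph_0}$ since $\kappa$ is uncountable regular and compact objects of $C$ are dualizable, so tensoring with them preserves $\kappa$-compactness. I also need $N$ to be a unital $C$-module after ind-completion, in the sense of the remark describing $\Mod_{C,\omega}$ via non-unital $C^{\aleph_0}$-modules. Here I use that the unit of $C$ is $\kappa$-compact. It is a $\kappa$-small filtered colimit of compact objects, and the corresponding colimit of the action on $M^\kappa$ stays inside $M^\kappa$, so $M^\kappa$ really is a unital module in the required sense.

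Next I show that $N$ has trivial motive. The category $M^\kappa$ has countable direct sums, since $\kappa$ is uncountable regular. The functor $F=\bigoplus_{\NN}(-)$ is an exact, $C$-linear endofunctor of $M^\kappa$ satisfying $\id\oplus F\simeq F$. After ind-extension it gives a compact-preserving $C$-linear endofunctor of $N$ with the same relation. Since $M_C$ is additive on direct sums of functors (it sends split cofiber sequences to split cofiber sequences), we get $M_C(\id)+M_C(F)=M_C(F)$ in $\Mot_C$. Hence $\id_{M_C(N)}=0$, so $M_C(N)=0$. The point needing the most care is linearity of the swindle: because tensoring with compact objects of $C$ commutes with countable sums, $F$ is $C^{\aleph_0}$-linear. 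Then $C$-linearity of its ind-extension follows from the equivalence $\Mod_{C,\omega}\simeq$ (unital-upon-ind-completion non-unital $C^{\aleph_0}$-modules).

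The main obstacle is $\kappa$-compactness of $N$ in $\Mod_{C,\omega}$. $M^\kappa$ is typically not $\kappa$-small: it has roughly $\kappa^{<\kappa}$ objects. So I will not take all of $M^\kappa$. Instead, I will run a closure argument with an increasing chain of length $\omega$ (or $\kappa$ if necessary). Start from a $\kappa$-small full subcategory $A_0\subset M^\kappa$ containing a set of compact generators of $M$, which is possible because $M$ is $\kappa$-compact. Then repeatedly enlarge: close under finite colimits, retracts, and tensoring with a $\kappa$-small generating set of compact objects of $C$, and add the countable sums $\bigoplus_\NN x$ for $x$ in the previous stage. Each stage stays $\kappa$-small because $\kappa$ is uncountable regular, and the union $A$ is closed under $F$. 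Then $N=\Ind(A)$ is $\kappa$-compact in $\Mod_{C,\omega}$, since $\kappa$-compacts there are the modules whose compact objects form a $\kappa$-small category, up to the unitality condition. It still contains $M$ fully faithfully, and the swindle argument above applies verbatim.
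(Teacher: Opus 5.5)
There are two genuine gaps, located at the two points where the paper's proof defers to \cite{RSW}.

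The first gap is unitality of $N=\Ind(M^\kappa)$ when $\one_C$ is not compact. This is the case the lemma is designed for, e.g.\ $C=\Sp_{T(n)}$. Write $\one_C=\colim_j c_j$ with $c_j$ compact. The ind-extended action sends $x\in M^\kappa$ to the \emph{formal} filtered colimit of the $c_j\otimes x$ in $\Ind(M^\kappa)$. The Yoneda embedding $M^\kappa\to\Ind(M^\kappa)$ does not preserve the infinite filtered colimit $\colim_j c_j\otimes x\simeq x$ computed in $M$. So the fact that this colimit ``stays inside $M^\kappa$'' gives no unitality. Concretely, take $C=M=\Sp_{T(n)}$ and $x=\one_C$. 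Since $x$ is compact in $\Ind(M^\kappa)$, an equivalence between $x$ and that formal colimit would exhibit $\one_C$ as a retract of some compact $c_j$, which is false. This is exactly why the paper allows $M\to\Ind(M^{\aleph_1})$ only over $\Sp$ or a rigid base. A $C$-linear replacement is $M\otimes\Ind(\Sp^{\aleph_1})$ with $C$ acting through $M$. The functor $M\to M\otimes\Ind(\Sp^{\aleph_1})$ is fully faithful by \Cref{cor:fftensor}, and $\id_M\otimes\bigoplus_{\NN}$ is a compact-preserving $C$-linear swindle.

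The second gap is the size estimate, and it is the more serious one. $\kappa$-compactness of $\Ind(A)$ requires $\kappa$-small mapping spectra in $A$, not just fewer than $\kappa$ objects; this is already true for $C=\Sp$, where $\Mod_{C,\omega}\simeq\Catperf$. As soon as $A$ contains a nonzero compact $x$ and $\bigoplus_{\NN}x$, you get $\pi_0\End(\bigoplus_{\NN}x)\cong\prod_{\NN}\bigoplus_{\NN}\pi_0\End(x)$, which has cardinality at least $2^{\aleph_0}$. Closing under cofibers then creates that many objects as well. So your bookkeeping works only when $\lambda^{\aleph_0}<\kappa$ for all $\lambda<\kappa$. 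That is essentially the ``countably closed'' restriction of \cite{RSW}, which the paper deliberately avoids (see the remark after \Cref{cor:zigzag}). In particular, for $\kappa=\aleph_1$ no $N$ of your form can be $\aleph_1$-compact: its compact objects would form a full subcategory of $M^\kappa$ containing $M^{\aleph_0}$ and closed under $\bigoplus_{\NN}$.

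The missing idea is that the small module should not be asked to carry its own swindle. The paper instead embeds $M$ into some large motivically trivial $N$. It writes $N$ as a $\kappa$-filtered colimit of $\kappa$-compact motivically trivial $N_i$ (Lemma 5.3 of \cite{RSW}). It then uses $\kappa$-compactness of $M$ to get $M\simeq M\times_N N_i$ for some $i$ (Lemma 5.8 of \cite{RSW}). The composite $M\to N_i$ is the required embedding.
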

This lemma has a more general version with base $\V\in\CAlg(\PrL_\st)$.
\begin{proof}
Consider \emph{any} $C$-linear fully faithful embedding $M\to N$ into a motivically trivial $C$-module, as is possible, cf. the paragraph following \cite[Observation B.7]{RSW} (over $\Sp$, or a rigid base, one can simply choose $\hat y: M\to\Ind(M^{\aleph_1})$). 

By \cite[Lemma 5.3]{RSW}, we may write $N$ as a $\kappa$-filtered colimit of $\kappa$-compact, motivically trivial $C$-modules $N_i, i\in I$, where $I$ has $\kappa$-small colimits and $N_\bullet$ preserves them. Taking a pullback $M_i :=M\times_N N_i$, we get $M\simeq \colim_i M_i$. Since $M$ is $\kappa$-compact, \cite[Lemma 5.8]{RSW} guarantees that for some $i$, $M\simeq M_i$, so that $M\simeq M_i\to N_i$ is the desired fully faithful embedding. 
\end{proof}

We conclude this section by introducing a particular localizing invariant, $C$-linear Hochschild homology, as well as the Dennis trace map, which will play a crucial role in the rest of the article: 
\begin{defn}\label{defn:HH}
    Let $C$ be a locally rigid $2$-ring. By \cite[Theorem 1.35]{Dbl} and \cite[Proposition 4.17]{LocRig}, compactly generated $C$-modules are dualizable over $C$ and compact-preserving $C$-linear maps are $C$-linear left adjoints, hence by \cite[Definition 2.11, Theorem 2.14]{HSS}  we obtain a symmetric monoidal dimension functor, which we call $$\HH(-/C): \Mod_{C,\omega}\to C^{BS^1}.$$

    This is a finitary localizing invariant and hence descends to $\Mot_C$. 

We deduce a unique symmetric monoidal, $S^1$-equivariant natural transformation $$\one_C\otimes K_C\to \HH(-/C),$$ and hence also $\one_C\otimes K((-)^\at)\to \HH(-/C)$, or equivalently, $K_C\to \udl{\HH(-/C)}$, resp. $K\to \udl{\HH(-/C)}$. We call either of these the Dennis trace map. 
\end{defn}
\begin{nota}
    In accordance with \Cref{nota:smallrigid} and the discussion following it, we also have motives for small rigid $2$-rings. If $C$ is a small rigid $2$-ring, we will also write $\Mot_C$ and $M_C$ for what would be $\Mot_{\Ind(C)}, M_{\Ind(C)}$ in the previous notation, where again, we hope no confusion will arise. 
\end{nota}
\subsection{Dimension morphisms and the generalized 1$D$ cobordism hypothesis}\label{section:dimensions}
In this section, we discuss three things: first, \emph{symmetric monoidal dimensions} of dualizable objects in symmetric monoidal categories, second, trace maps from Hochschild homology of rigid categories to endomorphisms of their units,  and finally Barkan and Steinebrunner's formula for the free rigid category on a given category.  

We begin by recalling the $1$-dimensional cobordism hypothesis. Recall that this was conjectured in some form by Baez and Dolan \cite{baezdolan} in all dimensions, and a sketch of proof in that generality was given by Lurie \cite{luriecob}. In dimension $1$, a full proof was given by Harpaz \cite{harpazcob}, and Barkan--Steinebrunner \cite{Jan} give a new proof. The following is an informal definition of the $1$D cobordism category:
\begin{defn}
The $1$-dimensional oriented cobordism category $\Cob^{\mathrm{1d,or}}$ is the \category whose objects are oriented closed $0$-dimensional manifolds, i.e. finite sets with a sign $\pm$ on each element, and whose morphisms from $M$ to $N$ are oriented $1$-dimensional compact manifolds with boundary $W$ with an oriented identification $\partial W\cong \overline{M}\coprod N$.
\end{defn}
Here, $\overline{M}$ is the manifold $M$ with the reverse orientation. 
With a more precise version of this definition, the following is a theorem:
\begin{thm}[\cite{harpazcob},\cite{luriecob}]
The free symmetric monoidal \category on a dualizable object is the category $\Cob^{\mathrm{1d,or}}$ of 1-dimensional oriented cobordisms between oriented 0-manifolds.
\end{thm}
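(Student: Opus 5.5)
We have to prove the $1$-dimensional cobordism hypothesis: for any symmetric monoidal \category $E$, evaluation at the positive point gives an equivalence
$$\Fun^\otimes(\Cob^{\mathrm{1d,or}},E)\xrightarrow{\sim}(E^\dbl)^\simeq .$$
Since every symmetric monoidal functor sends dualizable objects to dualizable objects, the left side only depends on $E^\dbl$. So we may assume $E$ is rigid (every object dualizable) and show that $\Cob^{\mathrm{1d,or}}$ corepresents the core of rigid $E$.

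The plan follows the Barkan--Steinebrunner approach rather than Harpaz's Morse-theoretic one. First, we isolate the subcategory $\Cob^{\mathrm{1d,or}}_{\partial\neq\emptyset}$ of cobordisms with no closed components. This is the free symmetric monoidal \category with duals generated by a single object, subject to no circle relations. Concretely, its morphisms are oriented tangles of arcs, that is, the data of a pairing (evaluation/coevaluation) together with the contractible spaces of embeddings. We show that a symmetric monoidal functor out of it is the same as an object $x$, an object $y$, and maps $ev: x\otimes y\to\one$ and $coev:\one\to y\otimes x$ satisfying the zigzag identities up to coherent homotopy. The key fact is that the space of such duality data on a fixed $x$ is contractible when $x$ is dualizable and empty otherwise. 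This yields
$$\Fun^\otimes(\Cob_{\partial\neq\emptyset},E)\simeq (E^\dbl)^\simeq .$$
The coherence here would be handled by identifying $\Cob_{\partial\neq\emptyset}$ with a free construction on the ``walking adjunction'' (Riehl--Verity style) made symmetric monoidal.

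Second, we pass from $\Cob_{\partial\neq\emptyset}$ to the full $\Cob^{\mathrm{1d,or}}$ by adding the closed components. The observation is that $\Cob^{\mathrm{1d,or}}$ is obtained from $\Cob_{\partial\neq\emptyset}$ by formally adding circles. Each closed component is a circle, whose diffeomorphism group is $S^1$ up to homotopy, so the endomorphisms of $\emptyset$ form the free $\mathbb E_\infty$-space on $BS^1$. Moreover, every morphism factors as (a morphism with no closed components) $\otimes$ (a closed manifold), and this factorization is unique in the appropriate homotopical sense. This exhibits $\Cob^{\mathrm{1d,or}}$ as a left Kan extension / pushout. Barkan--Steinebrunner package this as saying that $\Cob_{\partial\neq\emptyset}\to\Cob$ is a ``free addition of endomorphisms of the unit'' determined by the trace. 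Concretely, a circle must be sent to $ev\circ coev$, i.e. the dimension of $x$, with $S^1$ acting by rotation, which is the cyclic invariance of the trace. Hence the restriction map $\Fun^\otimes(\Cob,E)\to\Fun^\otimes(\Cob_{\partial\neq\emptyset},E)$ is an equivalence: extending along the added circles is forced and unobstructed.

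The main obstacle is this second step. One must show that the mapping anima of $\Cob^{\mathrm{1d,or}}$ decompose exactly as the mapping anima of $\Cob_{\partial\neq\emptyset}$ times free $\mathbb E_\infty$-anima on $BS^1$, compatibly with composition. The subtle point is composition: gluing arcs can create new circles, so the decomposition is not a product of symmetric monoidal categories. To handle it, I would first establish a criterion recognizing a symmetric monoidal functor as an equivalence on functor categories, using Segal-space models (complete Segal spaces of cobordisms). I would then verify, through a ``cut along boundary'' argument, that the relevant left Kan extension is computed pointwise by the closed-component decomposition.
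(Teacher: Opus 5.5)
The paper does not prove this statement; it quotes it from Harpaz and Lurie and mentions Barkan--Steinebrunner as a new proof. So your sketch has to stand on its own, and as written it has a structural gap.

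\textbf{The object $\Cob_{\partial\neq\emptyset}$ is not a symmetric monoidal $\infty$-category.} As you note yourself, it is not closed under composition: $\mathrm{ev}\circ\tau\circ\mathrm{coev}$ is a circle. So $\Fun^\otimes(\Cob_{\partial\neq\emptyset},E)$ and the restriction map in your second step are undefined. Moreover, no symmetric monoidal model without closed components can make either of your claims true.
\begin{itemize}
\item \emph{Step two.} Suppose $i\colon\Cob_{\partial\neq\emptyset}\to\Cob$ were symmetric monoidal with $i^*$ an equivalence for every $E$. Yoneda would then make $i$ an equivalence. But $\End(\one)$ is a point on one side and $\FCom(BS^1)$ on the other.
\item \emph{Step one.} Any symmetric monoidal category containing the duality data contains $\mathrm{ev}\circ\tau\circ\mathrm{coev}\in\End(\one)$. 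A symmetric monoidal functor $F$ must send it to $\dim(Fx)$. If that composite is forced to be the identity, you only detect objects of dimension $\id_\one$. This excludes, for instance, $x=0$ in any nonzero stable $E$.
\end{itemize}
In other words, the free symmetric monoidal $\infty$-category on coherent duality data \emph{is} $\Cob$. That identification is the theorem, not a preliminary lemma. Your contractibility claim for the space of coherent duality data on a dualizable object is correct. But it only shows that this free object corepresents $(E^\dbl)^\simeq$, not that it is $\Cob$.

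\textbf{What a working version would need.} $\Cob_{\partial\neq\emptyset}$ has to be a non-Segal object. For example, take a commutative monoid in simplicial anima whose $n$-simplices are chains of cobordisms whose total composite has no closed components. The theorem then splits into two claims:
\begin{itemize}
\item[(a)] Maps from this object to nerves of symmetric monoidal $\infty$-categories are the same as dualizable objects. This requires matching its higher simplices with the coherent zigzag data.
\item[(b)] Its Segalification is $\Cob$. That is, freely forcing composition produces exactly the circles, each with automorphisms $\mathrm{Diff}^+(S^1)\simeq S^1$, compatibly with all further gluings.
\end{itemize}
You correctly observe that the mapping anima of $\Cob$ split as (no closed components) $\times\,\FCom(BS^1)$. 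That is only input to (b). The compatibility with composition is the whole difficulty, and your proposal names tools for it (a recognition criterion, a cut-along-boundary argument) without supplying them.

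Likewise, saying that extending along circles is ``forced and unobstructed'' by cyclic invariance of the trace is not an argument. A coherent $S^1$-equivariant trace compatible with all gluings is exactly what has to be produced. In this paper the $S^1$-action on dimensions is \emph{derived from} the cobordism hypothesis, so you would have to construct it independently to avoid circularity.
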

In particular, we have: 
\begin{obs}\label{obs:End1Cob}
    In the free symmetric monoidal category on a dualizable object, the endomorphism commutative monoid of the unit is the free commutative monoid on the anima $BS^1$, generated by the dimension of the universal dualizable object.
\end{obs}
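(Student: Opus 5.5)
We deduce this directly from the cobordism hypothesis (the preceding theorem). The endomorphism anima of the unit $\emptyset$ in $\Cob^{\mathrm{1d,or}}$ is the moduli anima of closed oriented $1$-manifolds, i.e. of oriented $1$-dimensional cobordisms $\emptyset\to\emptyset$. We identify it in three steps. First, every compact closed $1$-manifold is a finite disjoint union of circles. Hence this moduli anima decomposes as $\coprod_{k\geq 0} \big(B\mathrm{Diff}^+(S^1)^{k}\big)_{h\Sigma_k}$, the homotopy orbits of the symmetric group permuting the components. Second, by the classical result that $\mathrm{Diff}^+(S^1)\simeq SO(2)= S^1$ (the inclusion of rotations is a homotopy equivalence), each component is $(BS^1)^k_{h\Sigma_k}$. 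Third, the composition law in $\End(\emptyset)$ is disjoint union, which is also the symmetric monoidal structure. So the $\mathbb E_\infty$-structure is the one for which $\coprod_k (X^k)_{h\Sigma_k}$ is the free $\mathbb E_\infty$-monoid on $X=BS^1$. We therefore get $\End(\emptyset)\simeq \FCom(BS^1)$, freely generated by the summand $k=1$, namely $B\mathrm{Diff}^+(S^1)\simeq BS^1$.

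It then remains to check that the generator is the dimension of the universal dualizable object $+$. For this, recall that the evaluation and coevaluation of $+$ are the two oriented half-circle (elbow) cobordisms $+\sqcup -\to\emptyset$ and $\emptyset\to -\sqcup +$. The dimension is the composite $\mathrm{ev}\circ\tau\circ\mathrm{coev}$, which glues these into the circle $S^1$. Thus the dimension is the point of $BS^1$ classifying the circle with its tautological $\mathrm{Diff}^+(S^1)$-action, i.e. the basepoint of the generating copy of $BS^1$. Phrased functorially: the $BS^1$-family of circles is the dimension together with its canonical $S^1$-symmetry, and this is what "generated by the dimension" means at the level of anima.

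The main obstacle is being honest about the point-set model. The informal definition of $\Cob^{\mathrm{1d,or}}$ does not by itself say that mapping anima are classifying spaces of diffeomorphism groups, nor that composition is the gluing map. I would invoke the precise model used in \cite{harpazcob} or \cite{Jan}, in which $\Map(\emptyset,\emptyset)$ is by construction the moduli anima $\coprod_{[W]}B\mathrm{Diff}^+(W)$ over diffeomorphism classes of closed oriented $W$, with $\sqcup$ as the monoid structure. Given that model, the steps above are the routine computations $\mathrm{Diff}^+(\sqcup_k S^1)\simeq S^1\wr\Sigma_k$ (orientation-preserving diffeomorphisms permute the components and act on each) and $\mathrm{Diff}^+(S^1)\simeq S^1$.
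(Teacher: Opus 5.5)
Your proposal is correct and follows the same route as the paper, which deduces the observation directly (``in particular'') from the $1$-dimensional cobordism hypothesis by reading off $\End(\emptyset)$ in $\Cob^{\mathrm{1d,or}}$. You spell out that computation via $\mathrm{Diff}^+(\coprod_k S^1)\simeq S^1\wr\Sigma_k$, and you identify the circle as $\mathrm{ev}\circ\tau\circ\mathrm{coev}$. One phrasing to tighten: freeness should be stated as the canonical map from the free $\mathbb{E}_\infty$-monoid on the $k=1$ summand being, componentwise, the disjoint-union map; that map is an equivalence, which is exactly what your wreath-product computation gives.
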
 
This result allows us in particular to construct $S^1$-action on dimensions:
\begin{cons}
    Let $C$ be a symmetric monoidal category and $x\in C$ a dualizable object. It induces a symmetric monoidal functor $\Cob\to C$ and by looking at endomorphisms of the unit, a map $BS^1\to \End(\one_C)$ which we call $$\Dim(x):BS^1\to \End(\one_C).$$ 
    
    The evaluation at a point is equivalent to the usual symmetric monoidal dimension of $x$, which we denote by $\dim(x)$. 
    
    More coherently, we have a map $\Dim : (C^\dbl)^\simeq \to \End(\one_C)^{BS^1}$ whose value at $x$ is $\Dim(x)$. 
\end{cons}
\begin{ex}
    Under the identification of (topological) Hochschild homology with the symmetric monoidal dimension of the module category, this $S^1$-action is the usual $S^1$-action on Hochschild homology.
\end{ex}
\begin{ex}\label[ex]{ex:dim(1)}
    If $C$ is a $1$-category, $\End(\one_C)$ is a set, and so $\Dim(x)$ factors through the map $BS^1\to \pt$. In particular, this is the case for $x=\one_C$ in \emph{any} symmetric monoidal category since the initial symmetric monoidal category is a $1$-category (namely, the point). 
\end{ex}
We now recall the following: 
\begin{lm}
    Suppose $C$ is stably symmetric monoidal and $x,y,z\in C$ are dualizable. In this case, any cofiber sequence $x\to y\to z$ provides an equivalence $\Dim(y)\simeq \Dim(x)+\Dim(y)$, as maps $BS^1\to \End(\one_C)$. 
\end{lm}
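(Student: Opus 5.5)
We read the conclusion as $\Dim(y)\simeq\Dim(x)+\Dim(z)$; the second $\Dim(y)$ in the displayed statement is evidently a typo. The plan is to factor the map $\Dim:(C^\dbl)^\simeq\to\End(\one_C)^{BS^1}$ through algebraic $K$-theory of $C^\dbl$, which is additive in cofiber sequences. We may assume $C$ is small, replacing it by $C^\dbl$, and pass to $\Ind(C^\dbl)$, which is a rigid $2$-ring. The factorization is
$$(C^\dbl)^\simeq\to K(C^\dbl)\xrightarrow{\ \mathrm{tr}\ }\THH(C^\dbl)^{hS^1}\to \End(\one_C)^{hS^1}_{\mathrm{triv}}=\End(\one_C)^{BS^1},$$
where the middle arrow is the Dennis trace of \Cref{defn:HH}, applied with base $\Sp$.

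The last arrow comes from rigidity. Write $\mathcal C=\Ind(C^\dbl)$. Since $\mathcal C$ is rigid, the multiplication $\mathcal C\otimes\mathcal C\to\mathcal C$ has a colimit-preserving, $\mathcal C$-bilinear right adjoint. Hence the symmetric monoidal dimension functor of \cite{HSS} provides an $S^1$-equivariant map
$$\THH(C^\dbl)=\dim_{\PrL_\st}(\mathcal C)\longrightarrow \HH(\mathcal C/\mathcal C).$$
Here $\HH(\mathcal C/\mathcal C)=\dim_{\Mod_{\mathcal C}}(\mathcal C)=\one_{\mathcal C}$ carries the trivial action, because $\mathcal C$ is the unit of $\Mod_{\mathcal C}$ (compare \Cref{ex:dim(1)}). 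Taking $\udl{-}$ and then $hS^1$ gives the last arrow.

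Granting the factorization, the lemma follows immediately. The first map $(C^\dbl)^\simeq\to K(C^\dbl)$ satisfies $[y]=[x]+[z]$ for any cofiber sequence $x\to y\to z$, by additivity of $K$-theory. The remaining maps are maps of spectra, and therefore respect sums.

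The main obstacle is checking that this composite, evaluated on $x$, really is $\Dim(x)$ as produced by the $1$D cobordism hypothesis, including the $S^1$-action and not merely $\dim(x)$ at the basepoint. I would reduce this to a universal case. Both constructions are natural in symmetric monoidal exact functors: $\Dim$ obviously, and the composite by naturality of the Dennis trace and of \cite[Theorem 2.14]{HSS}. It therefore suffices to compare them on the free stable rigid category on a dualizable object, i.e.\ the stabilization of $\Cob^{\mathrm{1d,or}}$ (using \Cref{obs:End1Cob}).

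In that universal case, the Dennis trace of $[x]$ is the $S^1$-equivariant family of traces of $\id_x$, that is, the "circle" bordisms rotated by $S^1$. I would identify this family with the generator $BS^1\to\End(\one)$ by unwinding that the $S^1$-action on $\THH$ is the cyclic rotation of the trace. That rotation is precisely the rotation of the circle bordism in $\Cob^{\mathrm{1d,or}}$, which is what defines $\Dim$.

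If this identification proves too delicate, there is a fallback that avoids traces entirely. One first constructs the universal cofiber sequence: the free rigid stably symmetric monoidal category on $\Fun(\Delta^1,\Sp^{\omega})$, via $\Frrig$ and \cite{Jan}. One then computes directly in it that $\Dim(y)$ and $\Dim(x)+\Dim(z)$ agree as maps out of $BS^1$. This needs the explicit Barkan--Steinebrunner description of its endomorphisms of the unit.
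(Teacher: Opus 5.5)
Your argument is the approach the paper points to: its proof just cites \cite[Corollary 3.2.2]{keidarragimov}, which, as the paper explains right after the lemma, factors $\Dim$ through $K(C^\dbl)$ via the $S^1$-equivariant Dennis trace followed by a trace map $\THH(C^\dbl)\to\End(\one_C)$. Your map built from the multiplication $\mu$ is exactly the description of that trace in \Cref{rmk:desctr} and \Cref{cor:trconstr}, and you are right that the conclusion should read $\Dim(y)\simeq\Dim(x)+\Dim(z)$.

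The only non-formal step is identifying this composite with the cobordism-hypothesis $\Dim(x)$, including its $S^1$-structure, and that step is precisely what the paper cites; your ``rotation of the trace equals rotation of the circle'' argument for it is a heuristic rather than a proof. Your fallback does not bypass it either: Barkan--Steinebrunner describe $\End(\one)$ through $\HH(-)_{hS^1}$, so expressing the cobordism $\Dim$ of $x,y,z$ there requires the same comparison.
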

\begin{proof}
    See \cite[Corollary 3.2.2]{keidarragimov}. 
\end{proof}
This result in particular suggests that the dimension map $\Dim: (C^\dbl)^\simeq \to \End(\one_C)^{BS^1}$ factors through $K(C^\dbl)$ - this is in fact how Keidar--Ragimov prove the above-cited result. 

Essentially, any decent factorization works as follows: find an $S^1$-equivariant map $$\THH(C^\dbl)\to \End(\one_C)$$ implementing more generally traces of endomorphisms in $C^\dbl$, and use the Dennis trace to get an $S^1$-equivariant map $K(C^\dbl)\to \THH(C^\dbl)\to \End(\one_C)$, and then by adjunction find the desired map $K(C^\dbl)\to \End(\one_C)^{BS^1}$. \textit{A priori}, the main freedom we have is in the $S^1$-equivariant map $\THH(C^\dbl)\to\End(\one_C)$.

There are several approaches to constructing this map, but there is a certain rigidity\footnote{Pun intended.} to our setting, coming from a sufficient amount of naturality, that actually guarantees uniqueness of this map, if we put it in the right context. To state this naturality, let us introduce some definitions. 
\newcommand{\W}{\mathcal{W}}
\begin{defn}
Let $\V\in\CAlg(\PrL)$ and let $\W$ be a commutative $\V$-algebra. $\W$ is called \emph{atomic rigid} over $\V$ if it is generated under colimits and tensors over $\V$ by dualizable objects, and if all dualizables in $\W$ are atomic over $\V$ (equivalently, the unit of $\W$ is atomic). 

    Consider the functor $\CAlg(\PrL)\to \PrL$ defined by sending $\mathcal V$ to the full subcategory of $\CAlg(\PrL)_{\V/}$ spanned by atomic rigid commutative $\V$-algebras. We let $\CAlg^\rig$ denote its (cocartesian) unstraightening. 

    Consider the forgetful functor $\CAlg(\PrL)\to \PrL$, and let $\CAlg(\PrL)_*$ denote its unstraightening. 
\end{defn}
\begin{rmk}
    Everything we say below would also work with the notion of ``rigid'' from \cite{LocRig} in place of ``atomic rigid'' from the above definition. Since we have chosen to remain within the atomically generated world for this paper, we phrase things in this generality.
\end{rmk}
\begin{cons}
By naturality of $\HH(-/-)$ in both variables, Hochschild homology assembles into a morphism of cocartesian fibrations over $\CAlg(\PrL)$ of the form $\CAlg^\rig\to \CAlg(\PrL)_*$, and so does $\End(\one_{-})$. 
\end{cons}
\begin{thm}\label{thm:uniquetr}
    The anima of natural transformations $\HH(-/-)\to \End(\one_{-})$ as functors $\CAlg^\rig\to \CAlg(\PrL)_*$ over $\CAlg(\PrL)$ is contractible. 

    In particular, the same is true for the anima of $S^1$-equivariant natural transformations. 
\end{thm}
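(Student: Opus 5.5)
The approach is a Yoneda argument: I would reduce the space of natural transformations to a corepresented space and show that it is a point. Both functors send $(\V,\W)\in\CAlg^\rig$ to an object of $\V$. The target sends it to $\End(\one_\W)$, i.e. $\hom_\W(\one_\W,\one_\W)$ computed in $\V$, which makes sense because $\one_\W$ is $\V$-atomic. The first step is to find an initial, or corepresenting, object for the pair $(\V,\W)$ over a fixed $\V$. The natural candidate is $\W=\V$ itself, since the unit map $\V\to\W$ is initial among commutative $\V$-algebras. I would then use cocartesian transport along $\Sp\to\V$, or better $\PrL$-level base change from $\An$ or $\Sp$, to reduce further to the universal case $\V=\An$ (or $\Sp$), $\W=\V$. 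The naturality over $\CAlg(\PrL)$ means a transformation is determined fiberwise and must be compatible with base change $\V\to\V'$. Both $\HH(\W'\otimes_\V\V'/\V')\simeq\HH(\W/\V)\otimes_\V\V'$ and $\End(\one_{\W\otimes_\V\V'})\simeq\End(\one_\W)\otimes_\V\V'$ commute with base change, the latter by \Cref{cor:unittensor}/\Cref{lm:homsintensor}.

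The key step is the following. A natural transformation $\alpha:\HH(-/-)\to\End(\one_-)$ at $(\V,\W)$ is a map in $\V$. Precomposing with the canonical map induced by the unit $\V\to\W$ gives $\one_\V\simeq\HH(\V/\V)\to\HH(\W/\V)$, and this lands in $\End(\one_\W)$ as the image of $\alpha_{(\V,\V)}:\one_\V\to\one_\V$ under the unit. I would then try to show that a transformation is uniquely determined by its value on this universal pair, and that this value is forced to be the identity of $\one_\V$. The underlying reason is the following. The map $\HH(\W/\V)\to\End(\one_\W)$ is not generated by the unit alone; it is determined by how $\W$ sits over $\W\otimes_\V\W$. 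Specifically, $\W$ is a rigid $\V$-algebra, so the multiplication $\mu:\W\otimes_\V\W\to\W$ has a $\W\otimes_\V\W$-linear right adjoint, and $\HH(\W/\V)=\mathrm{dim}_\V(\W)$ is computed as $\hom_{\W\otimes\W}(\Delta,\Delta)$-type traces. To exploit this, I would use naturality along the morphism $(\V,\W)\to(\W,\W\otimes_\V\W)$ in $\CAlg^\rig$, where $\W\otimes_\V\W$ is a rigid $\W$-algebra via the left factor, together with the map $(\W,\W\otimes_\V\W)\to(\W,\W)$ given by $\mu$. Over the base $\W$, the pair $(\W,\W)$ is the unit pair, where both functors are $\one_\W$. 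The relation $\HH((\W\otimes_\V\W)/\W)\simeq\HH(\W/\V)\otimes_\V\W$, followed by pushing along $\mu$, should exhibit $\alpha_{(\V,\W)}$ as the $\W$-linear adjoint of a map determined by $\alpha_{(\W,\W)}=\id$. This is the standard trick showing that the ``trace over the base'' is forced. Hence the space of transformations maps by an equivalence to the space of automorphisms of the identity on the unit pair, i.e. to the space of natural endomorphisms of $\one_\V$ over $\CAlg(\PrL)$. That space is contractible because it is computed at the initial object $\V=\An$, where $\End(\one)$ of the unit in $\An$ is a point.

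The main obstacle is making this ``shearing'' reduction coherent at the level of anima of natural transformations, not just on homotopy classes. I would first try to phrase it as a left Kan extension statement: the functor $\HH(-/-)$ should be left Kan extended, as a map of cocartesian fibrations, from the full subcategory on pairs $(\V,\V)$ along $(\V,\W)\mapsto(\W,\W)$ via the shear maps above. Mapping out of a left Kan extension from a subcategory where the source is the unit (corepresented by the initial section) then gives contractibility. The $S^1$-equivariant statement follows immediately, since it is a limit over $BS^1$ of a diagram of contractible anima, namely the homotopy fixed points of $S^1$ acting on a contractible space.
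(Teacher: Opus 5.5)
There is a genuine gap, and it sits exactly where you locate the "main obstacle".

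Your component-level picture is right. Using naturality along $(\mathcal V,\mathcal W)\to(\mathcal W,\mathcal W)$ and passing to the $\mathcal W$-linear mate is the unwinding given in \Cref{rmk:desctr}. But the theorem asserts that an anima is contractible, so the coherent mechanism is the whole content, and the one you propose is false.

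Write $\sigma(\mathcal W)=(\mathcal W,\mathcal W)$, $q(\mathcal V,\mathcal W)=\mathcal W$ and $p_0(\mathcal V,\mathcal W)=\mathcal V$. The pair $(\mathcal V,\mathcal V)$ is initial in its fiber, so maps $(b,b)\to(\mathcal V,\mathcal W)$ in $\CAlg^\rig$ are the same as maps $b\to\mathcal V$; that is, $\sigma\dashv p_0$. Hence the left Kan extension of $\HH\circ\sigma$ along the inclusion $\sigma$ of diagonal pairs is $\HH\circ\sigma\circ p_0$. This is the functor $(\mathcal V,\mathcal W)\mapsto\one_{\mathcal V}$, not $\HH(\mathcal W/\mathcal V)$. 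Mapping out of it only recovers the unit component $\one_{\mathcal V}\to\End(\one_{\mathcal W})$, which is your first "corepresenting" idea that you rightly discarded.

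More to the point, uniqueness here is not a property of the source. In \Cref{lm:axiomunique} the only hypothesis on $f=\HH(-/-)$ is that $f\sigma$ is the cocartesian section $\mathcal V\mapsto(\mathcal V,\one_{\mathcal V})$. Neither the base-change formula for $\HH$ nor the factorization through $\mathcal W\otimes_{\mathcal V}\mathcal W$ is needed; they only serve to \emph{identify} the map afterwards.

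The missing idea is a property of the target, which your "take the $\mathcal W$-linear mate" step already uses on components. Since $\End(\one_{\mathcal W})$ is the right adjoint of $\mathcal V\to\mathcal W$ applied to $\one_{\mathcal W}$, the functor $\End(\one_{-})$ sends $(\mathcal V,\mathcal W)\to(\mathcal W,\mathcal W)$ to a \emph{cartesian} edge $(\mathcal V,\End(\one_{\mathcal W}))\to(\mathcal W,\one_{\mathcal W})$ of $\CAlg(\PrL)_*$. This edge is the unit of the adjunction $q\dashv\sigma$. The argument then runs in three steps:
\begin{enumerate}
\item Cartesianness identifies natural transformations $\HH\to\End(\one_{-})$ over $\CAlg(\PrL)$ with transformations $\HH\to\End(\one_{-})\circ\sigma q$ lying over this unit.
\item Precomposition with $q$ is right adjoint to precomposition with $\sigma$, so these are transformations $\HH\circ\sigma\to\End(\one_{-})\circ\sigma$, i.e.\ natural maps $\one_{\mathcal V}\to\one_{\mathcal V}$.
\item Since $\mathcal V\mapsto(\mathcal V,\one_{\mathcal V})$ is cocartesian out of $(\An,\pt)$, this anima is $\Map_{\An}(\pt,\pt)\simeq\pt$.
\end{enumerate}
This is the paper's argument via \Cref{lm:axiomunique}, and it is coherent by construction. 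Your deduction of the $S^1$-equivariant statement, as homotopy fixed points of a contractible anima, is fine.
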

In fact, this does not use a lot about the functors in question. The key point is to use naturality and the arrow $(C,D)\to (D,D)$ and the fact that $(\End(\one_D), C)\to (\one_D, D)$ is a \emph{cartesian} morphism lying over $C\to D$. The following is the abstract situation we are in: 
\begin{lm}\label{lm:axiomunique}
    Let $p_0,p_1: E_0, E_1\to B$ be two coCartesian fibrations, and let $q: E_0\to B$ be a second functor, with a right adjoint $\sigma : B\to E_0$.

    Suppose $B$ has an initial object $\emptyset$. 

    Let $f,g: E_0\to E_1$ be two functors over $B$ and suppose $g$ sends the components of the unit map $\id\to \sigma q$ to $p_1$-cartesian morphisms in $E_1$. 

    Finally, suppose that $f\sigma$ sends the unique map $\emptyset\to b$ to a coCartesian morphism for every $b\in B$, and that $\Map_{E_1}(f\sigma\emptyset, g\sigma\emptyset)$ is contractible. 

    In this case, $\Map_{/B}(f,g)$ is contractible. 
\end{lm}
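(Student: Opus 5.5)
The plan is to reduce $\Map_{/B}(f,g)$ in two steps. First I replace $g$ by $g\sigma q$ using cartesianness, and then use the adjunction $q\dashv\sigma$ to move the problem to functors out of $B$. Second, I replace functors out of $B$ by their value at $\emptyset$ using cocartesianness. Throughout, ``over $B$'' is bookkept by working in fibre products of the form $\Fun(X,E_1)\times_{\Fun(X,B)}\{\phi\}$ for the relevant $\phi : X\to B$. Accordingly, a natural transformation lying over a prescribed transformation $\phi\Rightarrow\psi$ of functors to $B$ means a point of the corresponding mapping anima in $\Fun(X,E_1)$, lying over that transformation in $\Fun(X,B)$.

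\emph{Step 1 (cartesian reduction).} Let $\eta:\id\to\sigma q$ be the unit. By hypothesis each $g(\eta_x): g(x)\to g\sigma q(x)$ is $p_1$-cartesian, lying over $p_0(\eta_x)$. Postcomposition with $g\eta$ therefore gives an equivalence between:
\begin{itemize}
\item transformations $f\Rightarrow g$ lying over $\id_{p_0}$, and
\item transformations $f\Rightarrow g\sigma q$ lying over $p_0\eta : p_0\Rightarrow p_0\sigma q$.
\end{itemize}
Pointwise this is the defining property of cartesian morphisms. To globalize it, I would use that a natural transformation whose components are $p_1$-cartesian is a $p_{1*}$-cartesian morphism in $\Fun(E_0,E_1)$ over $\Fun(E_0,B)$. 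This follows since $p_{1*}:\Fun(E_0,E_1)\to\Fun(E_0,B)$ is a cartesian fibration when $p_1$ is, with cartesian edges detected pointwise (HTT 3.1.2.1).

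\emph{Step 2 (adjunction).} Since $q\dashv\sigma$ with unit $\eta$, precomposition gives an adjunction $\sigma^*\dashv q^*$ on functor categories, with unit $\eta^*$. This holds both for functors into $E_1$ and into $B$, compatibly with $p_{1*}$. Hence transformations $f\Rightarrow (g\sigma)q$ lying over $p_0\eta$ correspond to transformations $f\sigma\Rightarrow g\sigma$ lying over $\id_{p_0\sigma}$: the adjunction correspondence sends a transformation $\alpha$ to $\alpha\sigma$, composed with the counit. I need to check that the triangle identities make ``lying over $p_0\eta$'' correspond exactly to ``lying over the identity of $p_0\sigma$''. I expect this to work because $p_0\eta\sigma$ composed with $p_0\sigma\epsilon$ is the identity.

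\emph{Step 3 (cocartesian reduction).} Since $\emptyset$ is initial in $B$ and $f\sigma$ sends each $\emptyset\to b$ to a $p_1$-cocartesian morphism, $f\sigma$ is a $p_1$-left Kan extension of its restriction to $\{\emptyset\}$, relative to $p_0\sigma$ (HTT 4.3.2). Restriction to $\emptyset$ therefore induces an equivalence from transformations $f\sigma\Rightarrow g\sigma$ over $\id_{p_0\sigma}$ to maps $f\sigma\emptyset\to g\sigma\emptyset$ over $\id_{p_0\sigma\emptyset}$. Concretely, such a transformation is determined by its $\emptyset$-component, since each $b$-component is forced by the cocartesian lift of $\emptyset\to b$. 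The target anima is contractible by hypothesis (it is a fibre of the contractible anima $\Map_{E_1}(f\sigma\emptyset,g\sigma\emptyset)$ over a point of $\Map_B$, or is that anima itself, depending on conventions). Chaining Steps 1--3 gives that $\Map_{/B}(f,g)$ is contractible.

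The main obstacle is Step 2. The adjunction must be carried out in the relative setting, and one must verify that the two ``lying over'' conditions match under it. I would handle this by writing everything as fibres of the map $\Fun(-,E_1)\to\Fun(-,B)$ and noting that $\sigma^*\dashv q^*$ is an adjunction of such maps. The matching of fibres then follows by applying $p_{1*}$ to the adjunction equivalence and comparing basepoints via the triangle identity.
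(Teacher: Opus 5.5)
Your proposal is the paper's proof, step for step. You postcompose with the pointwise cartesian transformation $g\eta$, use $\sigma^*\dashv q^*$ and the triangle identity to trade ``over $p_0\eta$'' for ``over $\id_{p_0\sigma}$'', and then reduce to the component at $\emptyset$. The paper does that last reduction by precomposing with the pointwise cocartesian transformation from the constant functor at $f\sigma\emptyset$ to $f\sigma$, followed by the adjunction between constant functors and evaluation at the initial object. That is the same content as your relative left Kan extension argument, just reusing the mechanism of the first two steps.

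Two small caveats. In Step 1, the pointwise criterion for $p_{1*}$-cartesian edges holds for any inner fibration. So you neither need nor are given that $p_1$ is a cartesian fibration; only that the components of $g\eta$ are $p_1$-cartesian.

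In Step 3, what you actually obtain is the fibre of $\Map_{E_1}(f\sigma\emptyset,g\sigma\emptyset)\to\Map_B(p_0\sigma\emptyset,p_0\sigma\emptyset)$ over the identity. A fibre of a contractible anima over a point need not be contractible: it is empty or a loop space. So your parenthetical justification is wrong as stated. The conclusion follows only if $\Map_B(p_0\sigma\emptyset,p_0\sigma\emptyset)\simeq *$, or if the hypothesis is read as being about that fibre. The first holds in the application, where $p_0\sigma=\id$ and $\emptyset$ is initial. The paper's proof makes this same identification without comment.
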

The long list of assumptions is meant to axiomatize the situation of the theorem:
\begin{proof}[Proof of \Cref{thm:uniquetr}]
We verify the assumptions of the lemma, where we take $B=\CAlg(\PrL), E_0= \CAlg^\rig, E_1=\CAlg(\PrL)_*$, $f=HH(-/-), g=\End(\one_{-})$, and finally $q$ is the functor $(C,D)\mapsto D$. 

    The right adjoint $\sigma$ to $q$ is $D\mapsto (D,D)$, as is easily verified, and the unit map is the canonical map $(C,D)\to (D,D)$. The functor $g$ sends this to $(C,\End(\one_D))\to (D,\one_D)$ which is cartesian essentially by definition. 

    The initial object of $B$ is $\An$, and in this case $f\sigma = (C\mapsto (C,\one_C))$ indeed sends each map $\An\to C$ to the cocartesian edge $(\An, \pt)\to (C,\one_C)$ lying over $\An\to C$.

    Finally, $\Map_{\CAlg(\PrL)_*}((\An,\pt),(\An,\pt))$ is contractible. Thus, all the assumptions are satisfied, and the result indeed follows from \Cref{lm:axiomunique}.
\end{proof}

We now prove the lemma. 
\begin{proof}[Proof of \Cref{lm:axiomunique}]

Given functors $h,k: E_0\to E_1$ and a natural transformation $\theta: p_1\circ h \to p_1\circ k$, we let $\Map_{/\theta}(h,k)$ denote the pullback $$\Map_{\Fun(E_0,E_1)}(h,k)\times_{\Map_{\Fun(E_0,B)}(p_1\circ h,p_1\circ k)} \{\theta\}$$ 
For example, if $h,k$ are functors over $B$ and $\theta$ is the identity transformation from $p_0$ to itself, $\Map_{/\id}(h,k)$ is simply the mapping anima in $\Fun_{/B}(E_0,E_1)$. 

Consider thus the map induced by postcomposition: $$\Map_{/\id}(f,g) \to \Map_{/\eta}(f, g\sigma q)$$ where $\eta$ is $p_0$ applied to the unit map $\id\to \sigma q$.

The assumption that $g$ sends the components of the unit map $\id\to\sigma q$ to cartesian edges implies that this map is an equivalence. 

Now, $\sigma$ is right adjoint to $q$ and so precomposition by $q$ is right adjoint to precomposition by $\sigma$. It follows that the latter mapping anima is equivalent to $\Map_{/\id}(f\sigma, g\sigma)$ where now $\id$ is the identity transformation of $p_0\sigma$. 

Now we use the natural transformation $\empty\to b$ on $B$ to obtain, by precomposition, a map $$\Map_{/\id}(f\sigma, g\sigma) \to \Map_{/!}(f\sigma\emptyset, g\sigma)$$
where $!$ denotes the natural map $p_0\sigma\emptyset\to p_0\sigma$ induced by $\emptyset\to \sigma$. Our assumption that $f\sigma$ sends $\emptyset\to b$ to a coCartesian edge for al $b$ implies that this map is, in turn, an equivalence. 

Again by adjunction, we have an equivalence $$\Map_{/!}(f\sigma\empty, g\sigma)\simeq \Map_{E_1}(f\sigma\empty,g\sigma\empty)$$
Our assumption is that the latter is contractible. Pasting together all the equivalences so far, we find the desired claim. 
\end{proof}
\begin{rmk}
    Concretely, in our context, the proof unwinds informally to the following argument: Fix a natural transformation $\HH(-/-)\to \End(\one_{-})$, which we want to understand, and let $C\in\CAlg(\PrL)$ and $D$ an atomic rigid  commutative $C$-algebra. We want to understand $\HH(D/C)\to \End(\one_D)$. Since $\End(\one_D)$ is definitionally the right adjoint of $C\to D$ applied to $\one_D$, it suffices really to understand the map $(\HH(D/C),C)\to (\one_D,D)$ in $\CAlg(\PrL)_*$. 

    Now, by naturality applied to the map $(D,C)\to (D,D)$ in $\CAlg^\rig$, we have a commutative square of the form:
\[\begin{tikzcd}
	{(\HH(D/C),C)} & {(\End(\one_D),C)} \\
	{(\HH(D/D),D)} & {(\one_D,D)}
	\arrow[from=1-1, to=1-2]
	\arrow[from=1-1, to=2-1]
	\arrow[from=1-2, to=2-2]
	\arrow[from=2-1, to=2-2]
\end{tikzcd}\]
telling us exactly that this map can be understood as the map $\HH(D/C)\to \HH(D/D)$ \emph{coming from functoriality of $\HH$}, followed by a map $\HH(D/D)\to \one_D$. 

Now $\HH(D/D)=\one_D$, and there is a unique transformation $\one_D\to \one_D$ natural in $D$ since the initial $D$ is $\An$.

Thus, the transformation is entirely determined from functoriality. 
\end{rmk}
\begin{rmk}\label{rmk:desctr}
    In the above discussion, we had the abstract map $(\HH(D/C),C)\to (\HH(D/D),D)$ coming from functoriality ``across fibers'' to understand. This may seem nontrivial, but once we observe that $\HH$ has an extra property not axiomatized in \Cref{lm:axiomunique}, it becomes much clearer: $\HH$ preserves coCartesian edges, i.e. $\HH(D\otimes_C E/E)\simeq \eta \HH(D/C)$ where $\eta: C\to E$ is any map in $\CAlg(\PrL)$.

    Thus, the map $(\HH(D/C),C)\to (\HH(D/D),D)$ coming from the (non-cocartesian) map $(D,C)\to (D,D)$ can also be encoded using the coCartesian factorization $(D,C)\to (D\otimes_C D,D)\to (D,D)$ where now $D\otimes_C D\to D$ is the multiplication map $\mu$. Thus, another way to understand the transformation $\HH(D/C)\to \End(\one_D)$ is as follows: we instead understand its mate $\one_D\otimes\HH(D/C)\to \one_D$ which in turn is given as: $$\one_D\otimes\HH(D/C)\simeq \HH(D\otimes_C D/D)\xrightarrow{\HH(\mu/D)}\HH(D/D)= \one_D$$

    This is exactly the description mentioned in \cite[Remark 2.12]{meadd} and in the proof of \cite[Lemma 3.2.1]{keidarragimov}. In turn, this description makes it apparent that the trace map is a map of commutative algebras, since $\mu$ clearly is! 
\end{rmk}
\begin{cor}\label{cor:trconstr}
    The unique natural transformation $\HH(D/C)\to \End(\one_D)$ on $\CAlg^\rig$ is the mate of the following map:
    $$\one_D\otimes\HH(D/C)\simeq \HH(D\otimes_C D/D)\xrightarrow{\HH(\mu/D)}\HH(D/D)= \one_D$$
    In particular, it is an $S^1$-equivariant map of commutative algebras. 
\end{cor}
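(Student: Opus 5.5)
The statement has three parts. First, the unique natural transformation $\HH(D/C)\to\End(\one_D)$ on $\CAlg^\rig$ is the mate of the displayed composite. Second, it is $S^1$-equivariant. Third, it is a map of commutative algebras.

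For the identification, I will not show directly that the displayed composite is natural. Instead I will take the unique transformation $\tau$ from \Cref{thm:uniquetr} and compute its component at $(C,D)$ by running the argument of \Cref{lm:axiomunique} concretely, as in the Remark following it.

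Step one: apply naturality of $\tau$ to the unit morphism $(C,D)\to(D,D)$ in $\CAlg^\rig$. Since $(C,\End(\one_D))\to(D,\one_D)$ is cartesian, this shows that $\tau_{(C,D)}$ is the adjunct, along $C\rightleftarrows D$, of the composite $(\HH(D/C),C)\to(\HH(D/D),D)\xrightarrow{\tau_{(D,D)}}(\one_D,D)$.

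Step two: show that $\tau_{(D,D)}$ is the identification $\HH(D/D)\simeq\one_D$. Naturality along $\An\to D$, together with the fact that $\HH(-/-)$ and $\one_{-}$ both send this arrow to cocartesian edges, reduces this to the unique endomorphism of $(\An,\pt)$, which is the identity.

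Step three: describe the map $(\HH(D/C),C)\to(\HH(D/D),D)$ obtained from functoriality. Following \Cref{rmk:desctr}, I factor $(C,D)\to(D,D)$ as the cocartesian edge $(C,D)\to(D,D\otimes_C D)$ followed by the fiberwise map $\mu:D\otimes_C D\to D$ over $D$. On the first edge, $\HH$ gives the base-change equivalence $\one_D\otimes\HH(D/C)\simeq\HH(D\otimes_CD/D)$, because $\HH$ is a symmetric monoidal dimension and extension of scalars is symmetric monoidal. On the second edge it gives $\HH(\mu/D)$. Passing to adjoints produces exactly the stated mate.

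For the remaining properties, I check each factor of the composite. The dimension functor of \cite{HSS} is symmetric monoidal and lands in $S^1$-objects, so the base-change equivalence and $\HH(\mu/D)$ are both $S^1$-equivariant. The map $\mu$ is a morphism of commutative algebras in $\Mod_D(\PrL)$, so $\HH(\mu/D)$ is a map of commutative algebras in $D^{BS^1}$, and the base-change equivalence is symmetric monoidal. The mate of a map of commutative algebras under the lax symmetric monoidal right adjoint $D\to C$ is again a map of commutative algebras. Equivariance survives this step too, since it is enforced naturally over the whole diagram. Alternatively, the equivariant version of uniqueness in \Cref{thm:uniquetr} shows that the equivariant and underlying transformations agree, which gives equivariance directly.

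The main obstacle is making step three precise: one has to check that the cocartesian-then-fiberwise factorization really computes the functoriality of $\HH$ on the non-cocartesian edge. This needs $\HH$ to preserve cocartesian edges, a compatibility of the construction in \cite{HSS} with base change, and I would state and verify that compatibility first.
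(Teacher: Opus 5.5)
Your proposal is correct and follows essentially the paper's argument. The identification is exactly the unwinding in \Cref{rmk:desctr}: naturality along $(C,D)\to(D,D)$ with the cartesian edge for $\End(\one_D)$, reduction of the component at $(D,D)$ to $\An$, and the cocartesian-then-$\mu$ factorization using that $\HH$ preserves cocartesian edges, with the algebra and $S^1$-structure read off from $\mu$. The paper's alternative reruns \Cref{lm:axiomunique} with target $\V\mapsto\CAlg(\V)^{BS^1}$, which gets both structures at once from uniqueness, much like your fallback argument for equivariance.
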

\begin{proof}
    See the discussion in \Cref{rmk:desctr}. 
    
    Alternatively, for the fact that it is a map of commutative algebras with $S^1$-action, use \Cref{lm:axiomunique} with $B=\CAlg(\PrL), E_0=\CAlg^\rig$ and $E_1$ the coCartesian unstraightening of the functor $\CAlg(\PrL)\to \PrL$ given by $\V\mapsto \CAlg(\V)^{BS^1}$, and the verification of the assumptions is exactly the same as in the proof of \Cref{thm:uniquetr}. 

    This produces a unique map of commutative algebras with $S^1$-action $\HH(-/-)\to \End(\one_{-})$ which must agree with the map from \Cref{thm:uniquetr} when forgetting the extra structure, since the latter is also unique. 
\end{proof}

\begin{nota}
    We call the unique map $\HH(-/-)\to \End(\one_{-})$ on $\CAlg^\rig$ the ``trace map'' (not to be confused with the Dennis trace map).
\end{nota}

\begin{obs}
    Forgetting the commutative algebra structure and remembering only the $S^1$-action, the functor $\HH$ on $\CAlg^\rig$ factors as $\CAlg^\rig\to \CAlg^\at\to \CAlg(\PrL)_*$ where $\CAlg^\at$ is the coCartesian unstraightening of the functor $\V\mapsto \Mod_\V(\PrL)^\at$ defined on $\CAlg(\PrL)$, where $\Mod_\V(\PrL)^\at$ is the category of atomically generated $\V$-modules and atomic preserving $\V$-linear functors, as in \cite[Corollary 3.15]{Dbl}.

    The functor $\CAlg^\rig\to \CAlg^\at$ has a left adjoint given by $\Frrig_{-}$ by combining the main result of \cite{Frrig} with the theory of relative adjunctions, cf. \cite[Proposition 7.3.2.11]{HA} and so ($S^1$-equivariant) natural transformations $\HH(-/-)\to \End(\one_{-})$ on $\CAlg^\rig$ are equivalent to ($S^1$-equivariant) natural transformations $\HH(-/-)\to \End(\one_{\Frrig_{-}(-)})$ on $\CAlg^\at$. In particular, the anima of the latter kind is also contractible. 
\end{obs}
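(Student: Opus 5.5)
The plan is to treat the statement as a formal consequence of three facts: a factorization of $\HH$, a relative adjunction, and \Cref{thm:uniquetr}.

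\emph{Factorization of $\HH$.} For a commutative $\V$-algebra $\W$ in $\CAlg^\rig$, the object $\HH(\W/\V)$ is the symmetric monoidal dimension of $\W$ as a dualizable $\V$-module, computed as in \Cref{defn:HH} via \cite{HSS}. Dualizability holds because $\W$ is atomically generated, by \cite[Corollary 3.15]{Dbl}.
\begin{itemize}
\item This dimension, together with its $S^1$-action, only uses $\W$ as an object of $\Mod_\V(\PrL)^\at$. It is functorial in atomic-preserving $\V$-linear functors, since these are internal left adjoints.
\item It is compatible with base change: $\HH(\V'\otimes_\V \W/\V')\simeq \V'\otimes_\V\HH(\W/\V)$, because base change is symmetric monoidal on dualizable modules.
\item Hence $\HH$ is a functor of cocartesian fibrations $\CAlg^\at\to\CAlg(\PrL)_*$ over $\CAlg(\PrL)$ preserving coCartesian edges. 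Its restriction along the forgetful functor $U:\CAlg^\rig\to\CAlg^\at$ is the $\HH$ of \Cref{thm:uniquetr}.
\end{itemize}

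\emph{Relative adjunction.} Fiberwise, the main result of \cite{Frrig} gives a left adjoint $\Frrig_\V\dashv U_\V$. Both fibrations are cocartesian and $U$ preserves coCartesian edges. By \cite[Proposition 7.3.2.11]{HA}, the fiberwise adjoints assemble into a left adjoint $\Frrig_{-}$ relative to $\CAlg(\PrL)$ provided they commute with base change along $\V\to\V'$:
$$\Frrig_{\V'}(\V'\otimes_\V M)\simeq \V'\otimes_\V\Frrig_\V(M).$$
This holds formally, since both sides corepresent the same functor on $\CAlg^\rig_{\V'}$: the restriction of scalars of $U_{\V'}$, using that base change of rigid algebras is left adjoint to restriction and is compatible with the forgetful functor.

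\emph{Transferring natural transformations.} Given an adjunction $\Frrig\dashv U$ over $B=\CAlg(\PrL)$, precomposition gives an adjunction $U^*\dashv \Frrig^*$ on categories of functors over $B$ into $\CAlg(\PrL)_*$. The unit and counit are the whiskered ones, and they lie over identities because the adjunction is relative. This yields
$$\Map_{/B}(\HH\circ U,\End(\one_{-}))\simeq \Map_{/B}(\HH,\End(\one_{\Frrig_{-}(-)})).$$
The same argument applies verbatim to $S^1$-equivariant transformations, using $S^1$-objects in the target fibration. By \Cref{thm:uniquetr}, the left-hand side is contractible, hence so is the right-hand side.

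\emph{Main obstacle.} The step I expect to be most delicate is verifying the hypotheses of \cite[Proposition 7.3.2.11]{HA}. This means checking the base-change compatibility of $\Frrig$ displayed above, and that the relative unit and counit lie over identities so that the precomposition adjunction respects the "over $B$" mapping anima. If the corepresentability argument is not clean, I would fall back on the explicit Barkan--Steinebrunner formula for $\Frrig$ from \cite{Jan,Frrig}. That formula is visibly compatible with base change, because it is built from colimits and tensor products over $\V$.
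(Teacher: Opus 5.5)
Your architecture is the paper's: you factor $\HH$ through $\CAlg^\at$, assemble the fiberwise adjunctions of \cite{Frrig} into a relative adjunction via \cite[Proposition 7.3.2.11]{HA}, and transport contractibility from \Cref{thm:uniquetr} through the adjunction $U^*\dashv \Frrig_{-}^*$ on functors over $\CAlg(\PrL)$. That last step is fine, because the unit and counit lie over identities, and the $S^1$-equivariant version goes through verbatim. The problem is your ``formal'' proof of the Beck--Chevalley condition $\Frrig_{\V'}(\V'\otimes_\V M)\simeq \V'\otimes_\V\Frrig_\V(M)$.

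That argument uses base change $\CAlg^\rig_\V\to\CAlg^\rig_{\V'}$ being left adjoint to restriction of scalars. But restriction does not preserve atomic rigidity: along $\Sp\to\Sp_p$, the algebra $\Sp_p$ is atomic rigid over itself but not over $\Sp$, since its unit is not compact. So for $\mathcal{W}'\in\CAlg^\rig_{\V'}$ you only get $\Map(\V'\otimes_\V\Frrig_\V(M),\mathcal{W}')\simeq \Map_{\CAlg(\PrL)_{\V/}}(\Frrig_\V(M),\mathcal{W}')$, where $\mathcal{W}'$ is a \emph{non-rigid} commutative $\V$-algebra. The fiberwise adjunction $\Frrig_\V\dashv U_\V$ says nothing about such maps. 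Similarly, ``the restriction of scalars of $U_{\V'}$'' does not land in $\Mod_\V(\PrL)^\at$; the relevant condition on $\V$-linear functors $M\to\mathcal{W}'$ is that $\V$-atomics go to $\V'$-atomics (dualizables), not to $\V$-atomics.

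What you actually need is a stronger universal property: $\Frrig_\V(M)$ corepresents, on \emph{all} commutative $\V$-algebras $A$, the $\V$-linear functors $M\to A$ carrying atomics to dualizables. With that, both sides of the Beck--Chevalley comparison agree. This property is true, visible for instance from the enriched description in \Cref{thm:Jan} or from the explicit formula, but it is a genuine input about $\Frrig$, not a formality. So your stated fallback is the actual argument rather than an optional backup, and it is exactly what the paper outsources when it invokes ``the main result of \cite{Frrig}''.
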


Barkan and Steinebrunner proved the following description of free rigid categories in the enriched context, in which the (now proved to be unique) transformation $\HH(-/-)\to \End(\one_{\Frrig_{-}(-)})$ plays a crucial role:
\begin{thm}[Barkan--Steinebrunner, \cite{Jan}]\label{thm:Jan}
    Let $\V\in\CAlg(\PrL)$, and let $M$ be a $\V$-enriched category. The free rigid symmetric monoidal $\V$-enriched category on $M$ has: 
    \begin{enumerate}
        \item The endomorphism ring of the unit $\one$ is the free commutative algebra in $\V$ on $\HH(M/\V)_{hS^1}$; 
        \item For every sequence of objects $x_1,...,x_n, y_1,...,y_m$ in $M$, an object $\bigotimes_i x_i \otimes \bigotimes_j y_j^\vee$; 
        \item For every sequence of objects $x_1,...,x_n, y_1,...,y_m$ as above, the mapping object from the unit $$\hom(\one, \bigotimes_i x_i\otimes\otimes_j y_j^\vee) $$  is a coproduct indexed by cobordisms $\emptyset\to n^+ \coprod m^-$ of tensor products of objects $\hom(y_j,x_i)$ whenever $i,j$ are the endpoints of one connected component of the cobordims in question, tensored up to $\End(\one)$.  
    \end{enumerate}
\end{thm}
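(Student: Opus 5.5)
The statement is Barkan--Steinebrunner's theorem, which the paper cites from \cite{Jan}. A proof would rebuild the free rigid category in two stages. First we take the free symmetric monoidal $\V$-enriched category on $M\sqcup M^{\op}$, where the second copy plays the role of duals. Then we localize or complete it so that the objects of the copy of $M^{\op}$ become actual duals, with evaluation and coevaluation maps satisfying the triangle identities. This reduces the problem to the $1$D cobordism hypothesis in a parametrized/enriched form. For a single object, the theorem of Harpaz--Lurie recalled above identifies the free rigid symmetric monoidal category on a point with $\Cob^{\mathrm{1d,or}}$. For general $M$, the free rigid category should be a category of ``$M$-labelled $1$-dimensional cobordisms'': oriented $1$-manifolds whose arcs and circles are decorated by morphisms of $M$, composed in $\V$.

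The key steps, in order, are as follows.

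\emph{Step 1 (objects).} Objects are generated under $\otimes$ by $x$ and $y^\vee$. This is clear from the universal property, since every object of the free rigid category is a tensor product of generators and their duals. This gives item 2.

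\emph{Step 2 (reduction to maps out of the unit).} By duality, $\hom(\bigotimes x_i\otimes \bigotimes y_j^\vee, \bigotimes x'_k\otimes\bigotimes {y'_l}^\vee)$ is a hom out of $\one$ into a larger tensor product. So it suffices to compute $\hom(\one,-)$, which is item 3.

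\emph{Step 3 (decomposition by cobordism type).} A labelled cobordism $\emptyset\to n^+\sqcup m^-$ decomposes into arcs and closed circles. Each arc joins some $y_j^\vee$ to some $x_i$ and is labelled by $\hom(y_j,x_i)$; composing labels along an arc is exactly enriched composition in $M$. Each circle is labelled by a cyclic word of endomorphisms. Summing over cyclic words modulo rotation gives $\HH(M/\V)$ with its $S^1$-action, and a single unlabelled circle records the orbits, so a circle contributes $\HH(M/\V)_{hS^1}$. Unordered collections of circles then give the free commutative algebra on $\HH(M/\V)_{hS^1}$, which is item 1. Since arcs and circles can be separated, the hom object from the unit is the coproduct over arc-matchings of $\bigotimes\hom(y_j,x_i)$, tensored with $\End(\one)$.

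\emph{Step 4 (universal property).} Finally one must show that this labelled cobordism category really is free. For a rigid $\V$-linear target $D$ and an enriched functor $M\to D$, there should be a unique extension. On arcs it is given by composition, and on circles it is given by the trace map $\HH(M/\V)\to\End(\one_D)$, which the paper has shown to be unique (\Cref{thm:uniquetr}). That uniqueness pins down how circles are sent, so the extension is determined.

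\emph{Main obstacle.} The hardest point is making Step 3 and Step 4 coherent in the $\infty$-enriched setting. Two things must be proved: that the circle components assemble to exactly $\HH(M/\V)_{hS^1}$, i.e.\ that homotopy orbits rather than strict orbits appear, and that gluing is associative up to coherent homotopy. I would first try to build the category as an enriched operadic or Segal-type object indexed over a labelled version of $\Cob^{\mathrm{1d,or}}$. Then I would check freeness by a left Kan extension from the case $M=$ a point, which is the cobordism hypothesis, using that both sides preserve colimits in $M$ (sifted colimits and coproducts). This follows \cite{Jan}.
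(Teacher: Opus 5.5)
The paper does not prove this statement; it imports it from \cite{Jan} and points to \cite[Corollary 4.2]{Frrig} for a proof carried out directly with modules. Judged as a proof, your sketch has a genuine gap exactly where the content lies: showing that the labelled cobordism model $L(M)$ is free, including the appearance of $\HH(M/\V)_{hS^1}$. Steps 1--2 are formal and fine. Step 4, however, misuses \Cref{thm:uniquetr}. That result says the anima of natural transformations $\HH(-/-)\to\End(\one_{-})$ between functors on $\CAlg^\rig$ is contractible. It says nothing about symmetric monoidal functors out of a fixed $L(M)$. Freeness means that restriction along $M\to L(M)$ induces $\Map^{\otimes}(L(M),D)\simeq\Map(M,D)$ for every rigid $D$. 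So you must construct a coherent symmetric monoidal extension and show that the space of such extensions is contractible. The fact that circles must go to the trace is a consequence of freeness, not a replacement for it. In the paper the logic runs the other way: $\End(\one_{\Frrig_C(M)})$ is computed by \cite{Jan,Frrig}, and uniqueness is used only afterwards, to identify $\can$ composed with the counit with $\tr$. Likewise, adjoining evaluations and coevaluations to $\mathrm{Free}(M\sqcup M^{\op})$ is not a localization. It is a pushout of symmetric monoidal categories, and computing that pushout is the whole problem.

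Your fallback, left Kan extension from $M=\pt$, cannot work, because the point does not generate $\V$-enriched categories under colimits. Since $\Map(\pt,M)$ is the anima of objects of $M$, the left Kan extension of $\Frrig$ from the point, evaluated at $M$, is $\Frrig$ of the core of $M$. This forgets all morphisms. Already for $\V=\An$, colimits of the point are just anima, so $M=\Delta^1$ is not reached. A resolution argument would have to start from generators such as the $\V$-category $[1]_v$ with two objects and $\hom(0,1)=v$. It would also need gluings of their endpoints, such as $B\mathrm{Free}_{\mathbb E_1}(v)$, whose circle term is the necklace object $\HH(\mathrm{Free}_{\mathbb E_1}(v))_{hS^1}$. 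These are precisely where labelled arcs and nontrivial circle terms first appear, and the Harpaz--Lurie theorem gives no information about them. One needs a labelled, point-defect form of the cobordism hypothesis. Moreover, colimit preservation of $L$ cannot be read off from the formulas, since $\HH(-/\V)$ does not commute with pushouts in $M$: for $\V=\An$ and $M$ a groupoid, it is the free loop space. It would have to be checked in rigid symmetric monoidal categories, which is essentially the theorem again. So the point you flag as the main obstacle, homotopy orbits and coherent gluing, is left open, and it is the theorem itself.
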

The relation to our story is that stable categories can be viewed as special cases of spectrally enriched categories, cf. \cite[Theorem 1.11]{heine} (with a precursor in \cite[Example 7.4.14]{gepnerhaugseng}). More generally, if $C$ is a locally rigid $2$-ring, compactly generated $C$-modules can be viewed as special cases of $C$-enriched categories. This gives us: 
\begin{cor}\label{cor:unitinFrrig}
    Let $C\in\CAlg(\PrL)$ and let $M$ be an atomically generated $C$-module. The endomorphism ring of the unit in $\Frrig_C(M)$ is the free commutative algebra in $C$ on $\HH(M/C)_{hS^1}$. 

    The morphism objects in $C$ from the unit in $\Frrig_C(M)$ to tensor products of atomic objects in $M$ and their duals are computed as in \Cref{thm:Jan}. 
\end{cor}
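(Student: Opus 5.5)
The plan is to deduce \Cref{cor:unitinFrrig} from \Cref{thm:Jan} by passing between atomically generated $C$-modules and $C$-enriched categories, and checking that the free constructions on the two sides agree. Given an atomically generated $C$-module $M$, let $M^\at$ be its full subcategory of atomic objects, viewed as a $C$-enriched category with hom objects $\hom(m,m')\in C$ (atomicity makes these $C$-linear and colimit-preserving in the second variable). By the enriched presentable correspondence (\cite[Theorem 1.11]{heine} and its $C$-linear refinement), $M$ is recovered as enriched presheaves on $M^\at$. Moreover, colimit-preserving $C$-linear functors out of $M$ correspond to $C$-enriched functors out of $M^\at$.

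The first key step is to identify $\Frrig_C(M)$ with enriched presheaves on $\Frrig^{\V}(M^\at)$, the free rigid symmetric monoidal $C$-enriched category of \Cref{thm:Jan} with $\V=C$. To do this, compare universal properties. A map $\Frrig_C(M)\to D$ in $\TwoRing^\rig_C$ is an atomic-preserving $C$-linear functor $M\to D$. Such a functor is the same as a $C$-enriched functor $M^\at\to D^{\dbl}$, using \Cref{ex:rigat=dbl} to identify $C$-atomics in $D$ with dualizables. By the enriched universal property, this is in turn a symmetric monoidal enriched functor $\Frrig^{C}(M^\at)\to D^\dbl$. Its presheaf extension is then a rigid $C$-algebra map. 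One has to check that presheaves on a rigid enriched category form a rigid $C$-algebra. This follows since the representables are dualizable and atomic, and they generate.

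Under this identification, the unit of $\Frrig_C(M)$ is the representable on the unit, so its endomorphism object in $C$ is the enriched endomorphism object in $\Frrig^C(M^\at)$. By \Cref{thm:Jan}(1), this is the free commutative algebra on $\HH(M^\at/C)_{hS^1}$. Similarly, mapping objects from the unit into tensors of atomics and duals are given by \Cref{thm:Jan}(3).

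It remains to identify $\HH(M^\at/C)$, the enriched Hochschild homology (the coend of $\hom$ over $M^\at$), with the dimension $\HH(M/C)$ of the dualizable module $M$ from \Cref{defn:HH}, compatibly with the $S^1$-actions. For this I would use the standard formula for the dual of a module of enriched presheaves: $M^\vee$ is presheaves on $(M^\at)\op$, and evaluation/coevaluation are given by the hom bimodule. Hence the trace of $\id_M$ is the coend, i.e. the cyclic bar construction. The main obstacle is the $S^1$-equivariance of this identification. Here I would invoke the uniqueness philosophy of \Cref{thm:uniquetr} and the subsequent observation: both sides are symmetric monoidal dimension functors on dualizable objects of $\Mod_C(\PrL)^\at$, so the $S^1$-actions coming from the cobordism hypothesis agree. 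Alternatively, I would simply note that Barkan--Steinebrunner's $\HH$ is defined as exactly this dimension.
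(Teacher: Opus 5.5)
Your proposal is correct and is essentially the paper's argument: the paper deduces this directly from \Cref{thm:Jan} by regarding atomically generated $C$-modules as $C$-enriched categories via \cite{heine}, and it points to \cite[Corollary 4.2]{Frrig} for a proof that avoids enrichment. Your identification of $\Frrig_C(M)$ with enriched presheaves on the free rigid $C$-enriched category on $M^\at$ via universal properties, and your identification of enriched Hochschild homology with the dimension $\HH(M/C)$, simply make explicit what the paper leaves implicit. For the $S^1$-equivariance, rely on your second option, the standard identification of the cyclic $S^1$-action with the cobordism-hypothesis one; the uniqueness in \Cref{thm:uniquetr} concerns transformations $\HH(-/-)\to\End(\one_{-})$ and does not by itself compare two $S^1$-actions on $\HH$.
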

We also refer to \cite[Corollary 4.2]{Frrig} for a proof that does not go through enriched categories and instead works directly at the level of $C$-modules (and in particular generalizes to dualizable $C$-modules which need not be atomically generated).

A special case of this is the following observation, also a consequence of the usual $1$D cobordism hypothesis: 
\begin{obs}\label{obs:QEnd1Cob}
Let $C\in\CAlg(\PrL)$. The endomorphism ring of the unit in $\Frrig_C(C)$ is the free commutative algebra in $C$ on $BS^1$.
\end{obs}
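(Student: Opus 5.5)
This observation is the special case $M=C$ of \Cref{cor:unitinFrrig}. I will check two things: that $C$, regarded as a module over itself, satisfies the hypotheses of that corollary, and that $\HH(C/C)_{hS^1}$ is what the statement claims.

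For the first point, $C$ is atomically generated as a $C$-module. Indeed, the unit $\one_C$ is atomic, since the right adjoint of $C\xrightarrow{-\otimes\one_C}C$ is the identity, which is colimit-preserving and $C$-linear. Moreover, $C$ is generated by $\one_C$ under colimits and tensors with $C$. So \Cref{cor:unitinFrrig} applies, and $\End(\one_{\Frrig_C(C)})$ is the free commutative algebra in $C$ on $\HH(C/C)_{hS^1}$.

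For the second point, I will identify $\HH(C/C)$ with its circle action. By definition, $\HH(-/C)$ is the symmetric monoidal dimension in $\Mod_C(\PrL)$. The object $C$ is the unit of that category, so its dimension is the unit $\one_C$ of $\End(C)\simeq C$. Its $S^1$-action is trivial: by \Cref{ex:dim(1)}, the Dimension of the unit in any symmetric monoidal category factors through $BS^1\to\pt$.

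Since the action is trivial, $\HH(C/C)_{hS^1}\simeq \one_C\otimes BS^1_+$, where I write $\one_C\otimes X_+$ for the free $C$-object on an anima $X$. The free commutative algebra in $C$ on this object is the free commutative algebra in $C$ on the anima $BS^1$, as claimed.

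As a consistency check, the usual $1$D cobordism hypothesis gives the same answer. $\Frrig_C(C)$ is the free rigid commutative $C$-algebra on a single object. Under the symmetric monoidal functor $\An\to C$, it is therefore obtained from $\Cob^{\mathrm{1d,or}}$ by stabilizing and extending scalars. By \Cref{obs:End1Cob}, the unit endomorphisms there are the free commutative monoid on $BS^1$, and the linearization is the free commutative algebra on $\one_C\otimes BS^1_+$.

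The only step that needs any care is confirming that the circle action on $\HH(C/C)$ is the trivial one in a coherent sense, not merely on underlying objects. Here I rely on \Cref{ex:dim(1)}: the unit of $\Mod_C(\PrL)$ is the image of the unit of the initial symmetric monoidal category, which is a $1$-category, so the trivialization is coherent.
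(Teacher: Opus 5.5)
Your proof is correct and follows the paper's route: the paper presents this observation as the special case $M=C$ of \Cref{cor:unitinFrrig}, where $\HH(C/C)=\one_C$ carries the trivial circle action, and it also remarks that the observation follows from the usual $1$D cobordism hypothesis. One small correction to your consistency check: ``stabilizing'' should be replaced by ``taking presheaves'', since $C\in\CAlg(\PrL)$ need not be stable.
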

\begin{nota}
    For an object $M\in C$, we let $\one_C\{M\}$ denote the free commutative algebra in $C$ on $M$. When $X$ is an anima, we shorten $\one_C\{\one_C[X]\}$ to $\one_C\{X\}$, hoping no confusion will arise.
\end{nota}

We note that in \cite{Jan} and \cite{Frrig}, the explicit trace map constructed in \Cref{cor:trconstr} is not explicitly used, and the identification of $\End(\one_{\Frrig_C(M)})$ in terms of $\HH(M/C)_{hS^1}$ is a construction rather than an observation.
In fact, we can use these works to provide an alternative construction: 
\begin{cons}
    Let $C\in\CAlg(\PrL)$, and let $D$ be an atomically generated $C$-module. We define the \emph{canonical map} $\can : \HH(D/C)_{hS^1}\to \End(\one_{\Frrig_C(D)})$ to be the one arising from \Cref{cor:unitinFrrig}. Since the identification in the latter is natural in $D$, so is this map. 

    When $D$ is an atomic rigid commutative $C$-algebra, we have a commutative $C$-algebra map $\Frrig_C(D)\to D$, and we use this to define the \emph{trace map} $\tr:\HH(D/C)_{hS^1}\to \End(\one_D)$ to be the composite of $\can$ with the induced map $\End(\one_{\Frrig_C(D)})\to\End(\one_D)$. This is a natural transformation of functors on $\CAlg^\rig$, and hence agrees with the trace map discussed earlier by \Cref{thm:uniquetr}. In particular, the corresponding $S^1$-equivariant map $\HH(D/C)\to \End(\one_D)$ is (canonically) a commutative algebra map. 
\end{cons}

Putting this together with the previous discussion, we get the following:
\begin{cons}\label{cons:Dim}
For $E\in \TwoRing^\locrig$, we define the Dimension morphism $$\Dim: K(E^\dbl)\to \End(\one_E)^{BS^1}$$ as the mate of the map $$\one_E\otimes K(E^\dbl)[BS^1]\simeq \one_E\otimes K(E^\dbl)_{hS^1}\to \one_E\otimes K_E(E)_{hS^1} \to \HH(E/E)_{hS^1}\xrightarrow{\tr}\one_E$$ where the map $\one_E\otimes K_E(E)\to \HH(E/E)$ is the $S^1$-equivariant Dennis trace introduced in \Cref{defn:HH}.

We sometimes also abusively denote by $\Dim$ the mate $K(E^\dbl)[BS^1]\to \End(\one_E)$, and similarly for the version with $K_E(E)$ in place of $K(E^\dbl)$.
\end{cons}
The following is an elementary observation which will come in handy later: 
\begin{obs}\label{obs:HHtr}
    Let $C$ be a locally rigid $2$-ring and $D$ a compactly generated $C$-module. Let $\iota_D : D\to \Frrig_C(D)$ be the canonical map. There is a natural commutative triangle in $C$:
    
\[\begin{tikzcd}
	{\HH(D/C)_{hS^1}} & {\HH(\Frrig_C(D))_{hS^1}} \\
	{\End(\one_{\Frrig_C(D)})}
	\arrow["{\HH(\iota_D)_{hS^1}}", from=1-1, to=1-2]
	\arrow["\can"', from=1-1, to=2-1]
	\arrow["\tr", from=1-2, to=2-1]
\end{tikzcd}\]
    
\end{obs}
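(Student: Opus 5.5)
We want to show that $\tr\circ\HH(\iota_D)_{hS^1}\simeq\can$ as maps $\HH(D/C)_{hS^1}\to \End(\one_{\Frrig_C(D)})$, naturally in $D\in\Mod_{C,\omega}$. The plan is to use the construction of $\tr$ given just before \Cref{cons:Dim}. For the atomic rigid commutative $C$-algebra $E=\Frrig_C(D)$, the trace map $\tr_E$ is, by definition, the composite
\[\HH(E/C)_{hS^1}\xrightarrow{\can_E}\End(\one_{\Frrig_C(E)})\xrightarrow{(\epsilon_E)_*}\End(\one_E),\]
where $\epsilon_E:\Frrig_C(E)\to E$ is the counit of the adjunction $\Frrig_C\dashv$ forget. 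Precomposing with $\HH(\iota_D)_{hS^1}$ and using that $\can$ is natural in the module variable (as stated in the construction of $\can$, via \Cref{cor:unitinFrrig}), applied to the $C$-linear compact-preserving map $\iota_D:D\to E$, gives
\[\can_E\circ\HH(\iota_D)_{hS^1}\simeq \Frrig_C(\iota_D)_*\circ\can_D.\]
Hence $\tr_E\circ\HH(\iota_D)_{hS^1}\simeq(\epsilon_E\circ\Frrig_C(\iota_D))_*\circ\can_D$. By the triangle identity, $\epsilon_{\Frrig_C(D)}\circ\Frrig_C(\iota_D)\simeq\id_{\Frrig_C(D)}$, and the composite reduces to $\can_D$, which is the claim.

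All homotopies used are natural in $D$: naturality of $\can$, of $\epsilon$, and the triangle identity. The triangle is therefore natural in $D$, and it commutes as stated in $C$, i.e.\ after applying the right adjoint that defines $\End(\one_{-})$ as an object of $C$.

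The only point needing care is making the naturality of $\can$ precise enough to apply it along $\iota_D$. The concern is that the functoriality of \Cref{cor:unitinFrrig} is along compact-preserving $C$-linear functors, with $\End(\one_{\Frrig_C(-)})$ viewed as a functor to $\CAlg(C)$ and $\HH(-/C)_{hS^1}$ as a functor to $C$. Since $E$ is compactly generated and $\iota_D$ preserves compacts (being a left adjoint in $\Mod_{C,\omega}$), this is exactly the functoriality available. Should the construction-level naturality be deemed insufficient, the fallback is the uniqueness argument of \Cref{thm:uniquetr} in its $\CAlg^\at$ form, as in the observation preceding \Cref{thm:Jan}. Both composites in the triangle are natural transformations $\HH(-/-)_{hS^1}\to\End(\one_{\Frrig_{-}(-)})$ on $\CAlg^\at$ (allowing $C$ to vary too), and the anima of such transformations is contractible, so the two composites agree.
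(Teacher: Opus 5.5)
Your proposal is correct and is essentially the paper's proof. The paper likewise writes the naturality square for $\can$ along $\iota_D$, then postcomposes with the multiplication (i.e.\ the counit) $\Frrig_C\Frrig_C(D)\to\Frrig_C(D)$ and uses $\epsilon_{\Frrig_C(D)}\circ\Frrig_C(\iota_D)\simeq\id$ to collapse the square to the triangle, so your uniqueness fallback is not needed.
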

\begin{proof}
    By naturality of $\can$, we have a natural commutative square as follows: 
    \[\begin{tikzcd}
	{\HH(D/C)_{hS^1}} & {\HH(\Frrig_C(D))_{hS^1}} \\
	{\End(\one_{\Frrig_C(D)})} & {\End(\one_{\Frrig_C(\Frrig_C(D))})}
	\arrow["{\HH(\iota)_{hS^1}}", from=1-1, to=1-2]
	\arrow["\can"', from=1-1, to=2-1]
	\arrow["\can", from=1-2, to=2-2]
	\arrow["{}"', from=2-1, to=2-2]
\end{tikzcd}\]

Using the multiplication $\Frrig_C\Frrig_C(D)\to \Frrig_C(D)$ and the fact that the composite $\Frrig_C(D)\xrightarrow{\Frrig_C(\iota)}\Frrig_C\Frrig_C(D)\to \Frrig_C(D)$ is naturally equivalent to the identity, we get the desired triangle. 
\end{proof}
The following corollary is the main way in which this observation will be applied: 
\begin{cor}\label{cor:commutativediagram}
    Let $C$ be a locally rigid $2$-ring and let $D$ be a compactly generated $C$-module. For any $C$-linear map $f:D\to C$, inducing a map $\tilde f: \Frrig_C(D)\to C$, the following diagram commutes: 
    \[\begin{tikzcd}
	{K(D^\at)[BS^1]} & {K(C^\dbl)[BS^1]} \\
	{\udl{\HH(D/C)_{hS^1}}} \\
	{\End(\one_{\Frrig_C(D)})} & {\End(\one_C)}
	\arrow[from=1-1, to=1-2]
	\arrow[from=1-1, to=2-1]
	\arrow["\Dim", from=1-2, to=3-2]
	\arrow[from=2-1, to=3-1]
	\arrow[from=3-1, to=3-2]
\end{tikzcd}\]
The same is true replacing $K(D^\at)$, resp. $K(C^\dbl)$ with $K_C(D)$, resp. $K_C(C)$. 
\end{cor}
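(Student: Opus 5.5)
The plan is to reduce the square to naturality of the Dennis trace together with \Cref{obs:HHtr} and the uniqueness and naturality of the trace map. Unwind the left vertical composite: it is the Dennis trace $K(D^\at)\to \udl{\HH(D/C)}$, made $S^1$-equivariant and hence inducing $K(D^\at)[BS^1]\simeq K(D^\at)_{hS^1}\to \udl{\HH(D/C)_{hS^1}}$, followed by $\can$. By \Cref{obs:HHtr}, $\can$ factors as $\HH(\iota_D)_{hS^1}$ followed by $\tr$ for $\Frrig_C(D)$. By naturality of the Dennis trace along the compact-preserving $C$-linear functor $\iota_D : D\to \Frrig_C(D)$, the left column is then the composite
$$K(D^\at)[BS^1]\to K(\Frrig_C(D)^\at)[BS^1]\to \udl{\HH(\Frrig_C(D)/C)_{hS^1}}\xrightarrow{\tr}\End(\one_{\Frrig_C(D)}).$$
Here the $C$-atomic objects of $\Frrig_C(D)$ are its dualizable objects by \Cref{ex:rigat=dbl}.

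Next, push forward along the commutative $C$-algebra map $\tilde f : \Frrig_C(D)\to C$, which is a rigid map of locally rigid $2$-rings. Naturality of the Dennis trace gives $\HH(\tilde f/C)$, and naturality of $\tr$ on $\CAlg^\rig$ (\Cref{thm:uniquetr}, applied with the fixed base $C$) identifies $\End(\one_{\Frrig_C(D)})\to\End(\one_C)$ after $\tr$ with $\tr_C\circ \HH(\tilde f/C)_{hS^1}$. Since $\tilde f\circ\iota_D\simeq f$, the whole composite becomes $K(D^\at)[BS^1]\xrightarrow{f} K(C^\dbl)[BS^1]\to \udl{\HH(C/C)_{hS^1}}\xrightarrow{\tr}\End(\one_C)$. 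The last two maps are exactly $\Dim$ from \Cref{cons:Dim}, once one checks that the route through $K_C(C)$ agrees with the Dennis trace on $K(C^\dbl)$ via \Cref{cons:compK}. This gives commutativity of the square.

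The main point needing care is the compatibility of the Dennis traces defined on $K((-)^\at)$ and on $K_C$, and their naturality in morphisms of $\Mod_{C,\omega}$. This should follow because both arise from the universal property: a symmetric monoidal transformation out of $K$, or out of the unit in $\Mot_C$, into the localizing invariant $\HH(-/C)$, and the comparison map of \Cref{cons:compK} is natural. One should also check that $f$, being a $C$-linear compact-preserving functor, is a morphism of $\Mod_{C,\omega}$, so that it induces a map on $K(-^\at)$. The $K_C$ version of the statement follows by the same argument, using $K_C$ and its Dennis trace directly in place of $K((-)^\at)$.
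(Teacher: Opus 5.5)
Your proposal is correct and takes essentially the same route as the paper. The paper enlarges the square to a diagram passing through $K(\Frrig_C(D)^\dbl)[BS^1]$ and $\udl{\HH(\Frrig_C(D)/C)_{hS^1}}$. There the top squares commute by naturality of the Dennis trace along $\iota_D$ and $\tilde f$, with $\tilde f\circ\iota_D\simeq f$; the bottom-left triangle commutes by \Cref{obs:HHtr}; and the bottom-right square commutes by naturality of $\tr$ along $\tilde f$. Your added checks, namely compatibility of the Dennis traces on $K((-)^\at)$ and $K_C$ and that $\iota_D$ and $f$ lie in $\Mod_{C,\omega}$, are points the paper leaves implicit, and they hold by construction in \Cref{defn:HH}.
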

\begin{proof}
    We complete it to a bigger diagram: 
    \[\begin{tikzcd}
	{K(D^\at)[BS^1]} & {K(\Frrig_C(D)^\dbl)[BS^1]} & {K(C^\dbl)[BS^1]} \\
	{\udl{\HH(D/C)_{hS^1}}} & {\udl{\HH(\Frrig_C(D/C))_{hS^1}}} & {\udl{\HH(C/C)_{hS^1}}} \\
	{\End(\one_{\Frrig_C(D)})} && {\End(\one_C)}
	\arrow[from=1-1, to=1-2]
	\arrow[from=1-1, to=2-1]
	\arrow[from=1-2, to=1-3]
	\arrow[from=1-2, to=2-2]
	\arrow[from=1-3, to=2-3]
	\arrow[from=2-1, to=2-2]
	\arrow[from=2-1, to=3-1]
	\arrow[from=2-2, to=2-3]
	\arrow[from=2-2, to=3-1]
	\arrow[from=2-3, to=3-3]
	\arrow[from=3-1, to=3-3]
\end{tikzcd}\]
The outer diagram is the desired diagram by design. Each of the top square commutes by naturality of the Dennis trace. The bottom left triangle commutes by \Cref{obs:HHtr}, and the bottom right quadrangle commutes by naturality of $\tr$ along symmetric monoidal functors of rigid commutative $C$-algebras. 

The proof with $K_C(D),K_C(C)$ is exactly the same. 
    \end{proof}

\section{$K$-theory and Dimensions}\label{section:main} 
The goal of this section is to prove our main theorem:
\begin{thm}\label{thm:designer}
    Let $C$ be a locally rigid $2$-ring. There exists a rigid extension $C\to D$ such that the two morphisms in the following composite are equivalences: $$K(D^\dbl)\to K_C(D) \to \End(\one_D)^{BS^1}.$$ 
\end{thm}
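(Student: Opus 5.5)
The plan is to build $D$ as a transfinite colimit of small rigid extensions $C=D_0\to D_1\to\cdots\to D_\alpha\to\cdots$. Each step either \emph{adds} preimages of classes in $\End(\one)^{BS^1}$, or \emph{kills} classes in $K$ (or in $K_C$) that vanish in $\End(\one)^{BS^1}$, or kills classes in the kernel of $K(-^\dbl)\to K_C(-)$. Three facts make this work. Extensions are fully faithful, so $\End(\one_{D_\alpha})\simeq\End(\one_C)$ along the tower. $K$, $K_C$ and $\End(\one)$ commute with filtered colimits of compact-preserving extensions. And $\Dim$ is natural along rigid maps. We would iterate over a regular cardinal $\kappa>\aleph_0$ bigger than the compactness of the unit and of everything involved. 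Then every homotopy class in $K(D^\dbl)$, $K_C(D)$ or $\End(\one_C)^{BS^1}$ is already defined at some stage $D_\alpha$, and it gets handled at a later stage. This means the composite and both maps become isomorphisms on all $\pi_*$.

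\textbf{Surjectivity step.} Take a class in $\pi_n\End(\one_C)^{BS^1}$, i.e.\ a map $\Sigma^n\one_C[BS^1]\to\one_C$. Write $\Sigma^n\one_C[BS^1]\simeq\HH(M/C)_{hS^1}$ with $M$ a shifted copy of $C$ (a suspension or delooping of $C$ as a $C$-module; for negative $n$ use $\Sigma^{-1}C$, whose $\HH$ picks up the right shift). By \Cref{cor:unitinFrrig}, $\End(\one_{\Frrig_C(M)})=\one_C\{\HH(M/C)_{hS^1}\}$, so our class corresponds to a commutative algebra map $\End(\one_{\Frrig_C(M)})\to\End(\one_{D_\alpha})$. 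Realize it by the pushout
\[
D_{\alpha+1}=D_\alpha\otimes_{\Mod_{\one_C\{\HH(M/C)_{hS^1}\}}}\Frrig_C(M).
\]
\Cref{cor:unittensor} together with the cobordism description shows that $\End(\one)$ does not change. Full faithfulness is checked on compacts via \Cref{thm:Jan}: the morphism objects are free modules over the unit endomorphisms. The generating object $m$ then has $K$-class mapping under $\Dim$ to the given class, by \Cref{cor:commutativediagram} applied to the tautological object.

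\textbf{Injectivity step.} Take a class $x\in\pi_nK_C(D_\alpha)$ with $\Dim(x)=0$. Using \Cref{cor:zigzag} and \Cref{lm:disk}, realize $x$, after a motivic equivalence, as a $C$-linear functor $f\colon N\to D_\alpha$, where $N$ is a $\kappa$-compact module with $M_C(N)$ a shifted unit-like object (a cofiber of an embedding into a motivically trivial module). This induces $\tilde f\colon\Frrig_C(N)\to D_\alpha$. \Cref{cor:commutativediagram} says the vanishing of $\Dim(x)$ is exactly the vanishing of the induced map on the free generator $\HH(N/C)_{hS^1}\to\End(\one_{D_\alpha})$. We then push out $D_\alpha$ along $\Frrig_C(N)\to\Frrig_C(N')$, where $N\to N'$ is the motivically trivial embedding from \Cref{lm:disk}. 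This kills $x$ motivically. We must still arrange that the unit endomorphisms, and hence full faithfulness, stay unchanged. The idea is that $\End(\one)$ of $\Frrig_C(N')$ relative to $\Frrig_C(N)$ differs only by free generators on $\HH$ of the cofiber, which vanish after the base change because $\Dim(x)=0$. The same device, applied to $K(D^\dbl)\to K_C(D)$, kills its fiber. Here the issue is that $K$ of dualizables and $K_C$ differ only by non-compact-unit phenomena, and we kill those classes using the atomic-versus-compact embedding.

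The main obstacle is this injectivity step. We must show that the rigid pushout is still an \emph{extension}, i.e.\ fully faithful on dualizables, and not merely nonzero. That requires the Barkan--Steinebrunner computation of mapping objects out of the unit in free rigid algebras, applied carefully to the relative situation, together with the cardinality bookkeeping so that the colimit closes up. The first thing we would try is to compute the mapping objects of the pushout as a relative tensor product over $\one_C\{\HH(N/C)_{hS^1}\}$ using \Cref{lm:homsintensor}, and then check that the map from $D_\alpha$ is an equivalence on $\hom$'s exactly when the trace-induced algebra map factors as required.
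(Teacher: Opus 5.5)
Your architecture matches the paper's: a $\kappa$-indexed tower of rigid extensions, realizing classes by base-changing $\Frrig_C$ of a categorified shift along the class, and killing classes through an embedding into a motivically trivial module. The tower is an acceptable substitute for the small object argument, provided each stage handles all of its classes at once by tensoring the extensions together. The injectivity step, however, fails twice as written.

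First, \Cref{cor:zigzag} does not give a functor $N\to D_\alpha$. It gives a zigzag $N\to D'\xleftarrow{\sim}D_\alpha$ with the motivic equivalence on the \emph{target}, and $D'$ is only a module, so there is no $\tilde f\colon\Frrig_C(N)\to D_\alpha$ yet. The missing idea is \Cref{cor:almostself-inj}. A fully faithful motivic equivalence $D_\alpha\to D'$ does not change $\HH$, so by \Cref{prop:Frrigff} the map $\Frrig_{D_\alpha}(D_\alpha)\to\Frrig_{D_\alpha}(D')$ is fully faithful. Then $\Frrig_{D_\alpha}(D')\otimes_{\Frrig_{D_\alpha}(D_\alpha)}D_\alpha$ is a rigid extension through which the unit factors via $D'$, and only there is $x$ realized by a functor (\Cref{cor:almostrealizingmaps}).

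Second, the pushout $D_\alpha\otimes_{\Frrig_C(N)}\Frrig_C(N')$ is actually \emph{not} an extension. Write $X=\HH(N/C)_{hS^1}$ and $P=\Frrig_C(N)\otimes_{\one_C\{X\}}\one_C$. Since $M_C(N')=0$ and $\Dim(x)=0$, both $\Frrig_C(N)\to\Frrig_C(N')$ and $\tilde f$ induce the augmentation $\one_C\{X\}\to\one_C$ on units, so both factor through $P$. Consequently your pushout receives $D_\alpha\otimes_{\Frrig_C(N)}P\simeq D_\alpha\otimes_C\Mod_{\one_C\otimes_{\one_C\{X\}}\one_C}(C)$ fully faithfully, and this has unit endomorphisms $\one_C\{\Sigma X\}\neq\one_C$. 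So the cofiber generators do not vanish; they get adjoined freely. The fix is to push out over $P$ instead. The map $P\to\Frrig_C(N')$ is fully faithful by the first half of \Cref{prop:Frrigff}, so $D_\alpha\otimes_P\Frrig_C(N')$ is an extension by \Cref{cor:fftensor}.

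Separately, your treatment of $K(D^\dbl)$ versus $K_C(D)$ is not an argument. For locally rigid $C$, $K((-)^\at)$ is not localizing on $\Mod_{C,\omega}$. So $M_C(N')=0$ need not give $K((N')^\at)=0$, and killing $x$ motivically need not kill a class of $K(D_\alpha^\dbl)$. Moreover, ``killing the fiber'' of $K(D^\dbl)\to K_C(D)$ would also require \emph{realizing} classes of $K_C$, which no killing operation does.

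The paper proves injectivity and surjectivity separately for $K(D^\dbl)\to\End(\one_D)^{BS^1}$ and for $K_C(D)\to\End(\one_D)^{BS^1}$, and concludes by two-out-of-three. For injectivity on $K(D^\dbl)$, it runs the argument above with small $C^\dbl$-modules $E_0\subset F_0$ over the rigid base $\Ind(C^\dbl)$, where trivial motive does force $K(F_0)=0$ (\Cref{cor:almostrealizingclasses}, \Cref{lm:atinj}). For surjectivity, it needs the canonical class of $\pi_nK_C(D_0)$ to lift to $\pi_nK(D_0^\at)$. This is arranged by building the shift over $\Sp$ and tensoring up (\Cref{ex:atomicshift}). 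Your ``$K$-class of the generating object'' only makes sense for $n=0$.

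Finally, $\Sigma^{-1}C$ is just $C$ as a $C$-module, since suspension is an autoequivalence, so its $\HH$ is $\one_C$ unshifted. You need a module whose motive is $\Sigma^nM_C(C)$, as provided by \cite{RSW}.
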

Note that since $D$ is a rigid extension of $C$, it is nonzero, but more importantly $\End(\one_D)\simeq \End(\one_C)$. Before proving the theorem, we draw our main conclusion from it: 
\begin{cor}\label{cor:noshift}
     Let $C$ be a nonzero rigid $T(n)$-local symmetric monoidal \category. There exists a rigid extension $C\to D$ such that $$L_{T(n+1)}K(D^\dbl)= L_{T(n+1)}K_{\Sp_{T(n)}}(D) = 0.$$
\end{cor}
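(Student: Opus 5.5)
We deduce this from \Cref{thm:designer}. Here $C$ is a nonzero rigid commutative $\Sp_{T(n)}$-algebra, hence a locally rigid $2$-ring; if $C$ is meant in the small sense, we first pass to its ind-completion. Apply \Cref{thm:designer} to $C$ itself to obtain a rigid extension $C\to D$ for which
$$K(D^\dbl)\to K_C(D)\to \End(\one_D)^{BS^1}$$
are both equivalences. Since $C\to D$ is a rigid map of locally rigid $2$-rings, $K$-theory relative to $C$ agrees with $K$-theory relative to $\Sp_{T(n)}$ by the remark preceding the notation $K_C$. This gives $K_C(D)\simeq K_{\Sp_{T(n)}}(D)$, so both spectra in the statement are identified with $\End(\one_D)^{BS^1}$.

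Next, since $D$ is fully faithfully embedded over $C$, we have $\End(\one_D)\simeq\End(\one_C)=:R$. This is a $T(n)$-local commutative ring spectrum, because mapping spectra in a $T(n)$-local category are $T(n)$-local. Also, $R^{BS^1}=\lim_{BS^1}R$ is a limit of $T(n)$-local spectra, hence is itself $T(n)$-local and in particular $L_n^f$-local. It remains to show $L_{T(n+1)}R^{BS^1}=0$.

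The key input is that $L_{T(n+1)}$ annihilates $L_n^f$-local spectra. Indeed, $T(n+1)\otimes X\simeq 0$ for $L_n^f$-local $X$, since $T(n+1)$ is $L_n^f$-acyclic ($L_n^f$-localization is smashing and kills $T(n+1)$). Concretely, $R^{BS^1}$ is a module over the $T(n)$-local ring $R^{BS^1}$, hence over $L_{T(n)}\Sph$. Any $T(n+1)$-local module over a $T(n)$-local ring is zero: its unit is $T(n+1)$-locally zero, because $T(n+1)\otimes L_{T(n)}\Sph=0$. Therefore $L_{T(n+1)}K(D^\dbl)\simeq L_{T(n+1)}R^{BS^1}=0$. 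Alternatively one could use the \emph{ring} map $K(D^\dbl)\to R^{BS^1}$, which is an equivalence, so $K(D^\dbl)$ is itself a $T(n)$-local ring and the vanishing follows directly.

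The main obstacle is not in this deduction, which is formal. It is the bookkeeping about which notion of $K$-theory appears: we need to check that ``rigid $T(n)$-local symmetric monoidal category'' matches a rigid commutative $\Sp_{T(n)}$-algebra, and that $K_{\Sp_{T(n)}}(D)$ agrees with $K_C(D)$. Should the latter identification cause trouble, \Cref{lm:agreeK} provides a fallback, since it gives $T(n+1)$-local agreement of $K(D^\dbl)$ and $K_{\Sp_{T(n)}}(D)$ directly.
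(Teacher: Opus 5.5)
Your proposal is correct and follows the paper's proof: apply \Cref{thm:designer} to $C$, then note that $\End(\one_D)\simeq\End(\one_C)$ is $T(n)$-local, hence $L_n^f$-local, so $\End(\one_C)^{BS^1}$ is also $L_n^f$-local and is killed by $L_{T(n+1)}$. You also make explicit the identification $K_C(D)\simeq K_{\Sp_{T(n)}}(D)$ via the remark on relative $K$-theory along rigid maps, a step the paper leaves implicit.
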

\begin{proof}
    Let $C\to D$ be as in \Cref{thm:designer}. The ring $\End(\one_C)$ is $T(n)$- and hence $L_n^f$-local, so the same holds for $\End(\one_C)^{BS^1}$ and thus its $T(n+1)$-localization vanishes, as was to be shown. 
\end{proof}
Note that the extensions from \Cref{thm:designer} are obtained via a small object argument, and in particular can be ``very big''. In fact, the theorem itself forces them to have at least $2^{\aleph_0}$ dualizable objects. One may therefore wonder how ``prevalent'' the failure of redshift is for ``small'' rigid $T(n)$-local categories. In fact, since the failure this case is so drastic, one can immediately see that the failure happens already at smaller stages. The proof will also show this, but we briefly digress to explain how to directly deduce it from the above result: 
\begin{cor}\label{cor:smallnoshift}
    Let $C\in \TwoRing^\rig_{T(n)}$ be arbitrary. There exists an $\aleph_1$-compact rigid commutative $C$-algebra $D$ such that $L_{T(n+1)}K(D^\dbl) =0$. 
\end{cor}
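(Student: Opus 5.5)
Here is the plan. Apply \Cref{cor:noshift} to $C$ to get a rigid extension $C\to D_\infty$ with $L_{T(n+1)}K(D_\infty^\dbl)=0$. Since $T(n+1)$-local $K$-theory vanishing is equivalent to the unit class $1\in \pi_0(L_{T(n+1)}K(D_\infty^\dbl))$ being zero, it is equivalent to the vanishing of a single element. Concretely, I would use the finite type $n+1$ complex $V$ with $v_{n+1}$-self map $v$: the ring $L_{T(n+1)}K$ vanishes iff $V\otimes K(D_\infty^\dbl)[v^{-1}]=0$. For a ring spectrum $R$ one may take $V$ to be a ring up to homotopy (or use that vanishing is detected on $\mathrm{End}(V)\otimes R$). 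Vanishing then means that some power $v^N$ acts by zero on the unit of $\mathrm{End}(V)\otimes K(D_\infty^\dbl)$. This is a statement about finitely many homotopy classes in $\pi_*(\mathrm{End}(V)\otimes K(D_\infty^\dbl))$, namely a specific class being null.

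Next, write $D_\infty$ as an $\aleph_1$-filtered colimit in $\TwoRing^\rig_C$ of $\aleph_1$-compact rigid commutative $C$-algebras $D_i$. This is possible since $\TwoRing^\rig_C$ is presentable (it is $\TwoRing^\locrig_{C/}$, compactly generated objects with compact-preserving maps). Here one should use that $C$ is itself $\aleph_1$-compact, or alternatively replace $C$ by an $\aleph_1$-compact subobject. Since the statement only asks for some $\aleph_1$-compact $C$-algebra, the relevant point is that $D_i$ be $\aleph_1$-compact in $\TwoRing^\rig_C$, which holds tautologically if we take the canonical diagram of $\aleph_1$-compact objects over $D_\infty$. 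Then $D_\infty^\dbl=\colim_i D_i^\dbl$ as a filtered colimit of small stable categories. This uses that dualizables equal atomics (\Cref{ex:rigat=dbl}), that atomics in a filtered colimit along compact-preserving functors are the colimit of atomics, and that filtered colimits in $\TwoRing^\locrig$ are computed by taking colimits of compact objects. Since $K$-theory commutes with filtered colimits, $K(D_\infty^\dbl)=\colim_i K(D_i^\dbl)$.

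Finally, the nullhomotopy of the finitely many classes witnessing $v^N\cdot 1=0$ in $\mathrm{End}(V)\otimes K(D_\infty^\dbl)$ is already present at some stage $i$. A class in a filtered colimit of spectra vanishes iff it vanishes at some stage, and $\mathrm{End}(V)\otimes-$ commutes with colimits. Hence $L_{T(n+1)}K(D_i^\dbl)=0$ for that $i$, and $D:=D_i$ works. It is nonzero, because the map $C\to D_\infty$ factors through it and $C\neq 0$, though nonvanishing is not even required by the statement.

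The main obstacle is the second step: justifying that $D_\infty$ is an $\aleph_1$-filtered colimit of $\aleph_1$-compact rigid $C$-algebras, and that $(-)^\dbl$ commutes with these filtered colimits. I would rely on presentability of $\TwoRing^\rig_C$ from \cite{LocRig}, together with the fact that the compact-object functor from compactly generated categories with compact-preserving functors to $\Catperf$ preserves filtered colimits. If that is delicate, an alternative is to run the small object argument of \Cref{thm:designer} directly, with only countably many steps killing the countably many classes needed (the unit times powers of $v$).
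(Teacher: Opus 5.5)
Your proposal is correct and follows the paper's argument: apply \Cref{cor:noshift}, write the resulting extension as an $\aleph_1$-filtered colimit of $\aleph_1$-compact rigid commutative $C$-algebras, and note that the vanishing of $T(n+1)\otimes K((-)^\dbl)$ already happens at some stage. Your detection of that vanishing through the single class $v^N\cdot 1$ in the ring spectrum $\mathrm{End}(V)\otimes K$ is simply a hands-on substitute for the paper's appeal to \cite[Lemma 5.8]{RSW}.

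Two points need tightening. First, commuting $(-)^\dbl$ past the colimit really uses that the diagram is $\aleph_1$-filtered, since dualizable objects such as the unit are only $\aleph_1$-compact over $\Sp_{T(n)}$; alternatively, use \Cref{lm:agreeK} to replace dualizables by compact objects. Second, the decomposition itself depends on $\TwoRing^\rig_C$ being $\aleph_1$-compactly generated, not merely presentable, so it is not tautological; the paper also takes this as known.
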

\begin{proof}
    Fix a rigid extension $C\to D_0$ satisfying the conclusion of \Cref{cor:noshift}.

    The functor $T(n+1)\otimes K((-)^\dbl)$ commutes with $\aleph_1$-filtered colimits (in fact it commutes with filtered colimits\footnote{Nonetheless, the rest of the argument needs $\aleph_1$, because $\TwoRing^\rig_{\Sp_{T(n)}}$ is not compactly generated, only $\aleph_1$-compactly generated.}). Writing $D_0$ as an $\aleph_1$-filtered colimit of $\aleph_1$-compact rigid commutative $C$-algebras $D_i, i\in I$ and using \cite[Lemma 5.8]{RSW} we find that for some $i_0\in I$, $L_{T(n+1)}K(D_{i_0}^\dbl) = 0$. 
\end{proof}

To prove \Cref{thm:designer}, we implement the strategy described in the introduction, which we recall here.

We will prove that any class in the kernel of $\pi_n\Dim$ can be killed in some rigid extension of $C$, and that any class in $\pi_n \End(\one_C)^{BS^1}$ can be realized in the $K$-theory of some rigid extension of $C$, and conclude by using a small object argument which essentially applies this mechanism inductively until all classes in the kernel have been killed, and all classes in the target have been reached. 

We do this by \emph{categorifying} homotopy classes in the source and the target, and observing that properties of these homotopy classes are reflected in \emph{categorical} properties, thanks to \Cref{thm:Jan}, and more specifically its consequence \Cref{cor:commutativediagram}. The surjectivity part (about realizing classes in the target) uses essentially only these ideas (and the ability to categorify de/suspensions of $K$-theory), while the injectivity part (killing classes in the kernel) really uses the main result of \cite{RSW}, which gives us the ability to categorify $K$-theory classes.

If we are not \emph{too} careful, in the case where $C$ is only locally rigid (as opposed to rigid), we only obtain the result concerning $K_C(D)$, as opposed to $K(D^\dbl)$. As explained in \Cref{lm:agreeK}, in the $T(n)$-local case, this would be sufficient to prove the failure of redshift, \emph{also} for $D^\dbl$ (since its $T(n+1)$-local $K$-theory agrees with the $T(n+1)$-localization of that relative $K$-theory spectrum), but we would like to actually get a calculation of $K(D^\dbl)$, so we will try to be careful.

We start by recalling an additional consequence of \Cref{cor:unitinFrrig}, namely: 
\begin{prop}[{\cite[Corollary 4.11]{Frrig}}]\label{prop:Frrigff}
Let $C$ be a locally rigid $2$-ring, and let $A\to B$ be a fully faithful morphism of compactly generated $C$-modules. The functor $$\Frrig_C(A)\otimes_{\End(\one_{\Frrig_C(A)})}\End(\one_{\Frrig_C(B)})\to\Frrig_C(B)$$ is fully faithful.  

In particular, if furthermore $\HH(A/C)\to \HH(B/C)$ is an equivalence, then $$\Frrig_C(A)\to\Frrig_C(B)$$ is fully faithful. 
\end{prop}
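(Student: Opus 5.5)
Write $E_A:=\End(\one_{\Frrig_C(A)})$ and $E_B:=\End(\one_{\Frrig_C(B)})$, both commutative algebras in $C$. By \Cref{cor:unitinFrrig}, $E_A\simeq \one_C\{\HH(A/C)_{hS^1}\}$ and $E_B\simeq \one_C\{\HH(B/C)_{hS^1}\}$. The plan is to check full faithfulness on a set of compact generators, computing both sides with \Cref{thm:Jan}/\Cref{cor:unitinFrrig}. The functor in question is the $E_B$-linear extension of $\Frrig_C(A)\to\Frrig_C(B)$; in the notation of \Cref{nota:basechange} its source is $\Frrig_C(A)\otimes_{E_A}\Mod_{E_B}(C)$. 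This source is compactly generated by objects $x\otimes E_B$, where $x$ ranges over the objects $\bigotimes_i a_i\otimes\bigotimes_j a_j'^\vee$ with $a_i,a_j'$ compact in $A$. Indeed, such tensor words generate $\Frrig_C(A)$ under colimits and tensors with $C$, because the free rigid algebra is generated by words in the generators and their duals.

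By duality, every mapping object between two such words reduces to a mapping object out of the unit into a word. So it suffices to show that for words $w$ in $A$,
$$\hom_{\Frrig_C(A)}(\one,w)\otimes_{E_A}E_B\to \hom_{\Frrig_C(B)}(\one,w)$$
is an equivalence in $C$. The left-hand side computes the mapping object in the base change: \Cref{lm:homsintensor} applies because the unit of $\Frrig_C(A)$ is atomic, being dualizable in a rigid $C$-algebra (\Cref{ex:rigat=dbl}), and we apply it over the base $\Mod_{E_A}(C)$.

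Now use item (3) of \Cref{thm:Jan}. The object $\hom(\one,w)$ is $\bigoplus_{\text{cobordisms}}\bigotimes \hom_A(a'_j,a_i)\otimes E_A$. The sum runs over the arc-only components $\emptyset\to n^+\amalg m^-$, and the closed components are absorbed into $E_A$. After base change this becomes $\bigoplus\bigotimes\hom_A(a_j',a_i)\otimes E_B$. The same formula in $\Frrig_C(B)$ gives $\bigoplus\bigotimes\hom_B(a_j',a_i)\otimes E_B$. Since $A\to B$ is fully faithful, $\hom_A(a'_j,a_i)\simeq\hom_B(a'_j,a_i)$, so the two sides agree.

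The main obstacle is naturality. We must check that the comparison map actually induced by the functor is the one matching these decompositions summand by summand. This requires the Barkan--Steinebrunner identification to be natural in the $C$-module $M$, compatibly with the $E_A$-module structure, so that the summand indexed by a cobordism maps to the same summand via the maps on $\hom$'s and $E_A\to E_B$. I would extract this from the naturality statement in \Cref{cor:unitinFrrig} (or \cite[Corollary 4.2]{Frrig}), with the module structure coming from the tensor product of closed-component and arc-component contributions.

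The "in particular" then follows. If $\HH(A/C)\to\HH(B/C)$ is an equivalence, then $E_A\to E_B$ is the free commutative algebra on an equivalence, hence an equivalence. The base change is therefore trivial and $\Frrig_C(A)\to\Frrig_C(B)$ is fully faithful.
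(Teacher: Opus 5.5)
Your proposal is correct and is essentially the paper's proof: reduce by dualizability to maps from the unit into words in generators, use \Cref{cor:unitinFrrig} to write both sides as free $E_A$- resp.\ $E_B$-modules on the cobordism-indexed sums of tensor products of hom objects, and conclude from full faithfulness of $A\to B$. The naturality you flag as the main obstacle is exactly what the paper uses implicitly, since it asserts that the identification in \Cref{cor:unitinFrrig} is natural. One minor imprecision: when $C$ is only locally rigid, the generators $x\otimes E_B$ are atomic rather than compact, and atomicity is all the reduction to generators requires.
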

\begin{rmk}
    Since \Cref{cor:unitinFrrig} has a generalization to dualizable $C$-modules, cf. \cite{Frrig}, so does the above proposition. This is the generality in which it is stated in \textit{loc.cit.}. 
\end{rmk}
\begin{proof}
 By \Cref{cor:unitinFrrig}, the second claim clearly follows from the first, so we prove the latter. 

For that one, it suffices to check on generators, and by dualizability, on maps from the unit to generators. There, \Cref{cor:unitinFrrig} proves that the $C$-morphism objects are free $\End(\one_{\Frrig_C(A)})$-modules over coproducts of tensor products of morphism objects in $A$. Since the latter are equivalent to the morphism objects of their image in $B$, the claim follows.   
\end{proof}

\begin{cor}\label{cor:almostself-inj}
Let $C$ be a locally rigid $2$-ring with $\kappa$-compact unit and let $f:C\to C'$ be a fully faithful $C$-motivic equivalence between $C$-modules, where $C'$ is $\kappa$-compact. 

There exists a $\kappa$-compact rigid extension $D$ of $C$ (in the sense of \Cref{defn:ext}) such that the unit map $C\to D$ factors through $C'$. Equivalently, such that $D\otimes_C f$ admits a $D$-linear retraction.
\end{cor}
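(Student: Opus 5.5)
The plan is to build $D$ as a pushout of free rigid algebras, and to use \Cref{prop:Frrigff} to get full faithfulness. The input is that $f$ is a $C$-motivic equivalence. By \Cref{defn:HH}, $\HH(-/C)$ is a finitary localizing invariant on $\Mod_{C,\omega}$ and descends to $\Mot_C$. Hence $\HH(f/C):\HH(C/C)\to\HH(C'/C)$ is an equivalence. Since $f$ is also fully faithful, \Cref{prop:Frrigff} shows that $\Frrig_C(f):\Frrig_C(C)\to\Frrig_C(C')$ is a fully faithful map of rigid commutative $C$-algebras.

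Next, let $\epsilon:\Frrig_C(C)\to C$ be the map of rigid commutative $C$-algebras adjoint to $\id_C$; it sends the universal object to the unit. I would define
$$D:=\Frrig_C(C')\otimes_{\Frrig_C(C)}C,$$
the pushout in $\TwoRing^\rig_C$, which is computed as the relative tensor product of modules. Rigid algebras are stable under such pushouts, so $D$ is a rigid commutative $C$-algebra.

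To show $C\to D$ is fully faithful, I would identify it with
$$\Frrig_C(f)\otimes_{\Frrig_C(C)}C:\ \Frrig_C(C)\otimes_{\Frrig_C(C)}C\to\Frrig_C(C')\otimes_{\Frrig_C(C)}C.$$
Then I apply \Cref{cor:fftensor} over the base $\Frrig_C(C)$. The modules involved are compactly generated, hence atomically generated over the (rigid over $C$) base. The functor $\Frrig_C(f)$ is fully faithful and preserves compacts, that is, atomics. I expect checking these hypotheses to be the main technical point. In particular, one must see that atomics over $\Frrig_C(C)$ coincide with the compacts/dualizables, via \Cref{ex:rigat=dbl} and \Cref{rmk:relrig}. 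So $D$ is a rigid extension in the sense of \Cref{defn:ext}.

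For the factorization, consider the $C$-linear composite $g:C'\xrightarrow{\iota_{C'}}\Frrig_C(C')\to D$. By naturality of $\iota$, $g\circ f$ equals
$$C\xrightarrow{\iota_C}\Frrig_C(C)\xrightarrow{\epsilon}C\to D.$$
Here $\epsilon\circ\iota_C=\id_C$ by the definition of $\epsilon$, so $g\circ f$ is the unit map $C\to D$, as required. For the equivalent formulation, the $D$-linear extension $D\otimes_C C'\to D$ of $g$ is a retraction of $D\otimes_C f$. Conversely, given such a retraction, restricting it along $C'\to D\otimes_C C'$ recovers a factorization of the unit map through $C'$.

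Finally, for $\kappa$-compactness: the forgetful functor $\TwoRing^\rig_C\to\Mod_{C,\omega}$ preserves $\kappa$-filtered colimits. Hence its left adjoint $\Frrig_C$ preserves $\kappa$-compact objects, so $\Frrig_C(C)$ and $\Frrig_C(C')$ are $\kappa$-compact; for $\Frrig_C(C)$ this uses that the unit of $C$ is $\kappa$-compact. The algebra $C$ itself is initial in $\TwoRing^\rig_C$, hence compact there. A pushout of $\kappa$-compact objects is $\kappa$-compact, so $D$ is a $\kappa$-compact rigid extension of $C$.
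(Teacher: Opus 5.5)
Your proposal is correct and matches the paper's proof: because $\HH(-/C)$ descends to $\Mot_C$, the motivic equivalence $f$ induces an equivalence on $\HH$, so \Cref{prop:Frrigff} makes $\Frrig_C(C)\to\Frrig_C(C')$ fully faithful; the pushout along the counit is then a rigid extension by \Cref{cor:fftensor}, and the factorization and $\kappa$-compactness follow from the universal property. The one difference is that you spell out steps the paper leaves implicit, namely the $\HH$-equivalence, the identity $\epsilon\circ\iota_C=\id_C$, and the fact that $\Frrig_C$ preserves $\kappa$-compact objects.
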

\begin{proof}
  The fact that these two statements are equivalent is easy: if we have a factorization $C\to C'\to D$, then upon tensoring with $D$ and using the multiplication map $D\otimes_C D\to D$, we find the desired retraction; and conversely given a retraction, we can simply compose it with the unit map $C'\to D\otimes_C C'$  to get the desired factorization. 

  We now prove the factorization claim. 

    Consider $\Frrig_C(C')$: it is a $\kappa$-compact rigid commutative $C$-algebra, and by \Cref{prop:Frrigff}, $\Frrig_C(C)\to \Frrig_C(C')$ is fully faithful. 

The pushout $D := \Frrig_C(C')\otimes_{\Frrig_C(C)}C$ along the counit map $\Frrig_C(C)\to C$ is therefore an extension of $C$, by \Cref{cor:fftensor}. 

It is also $\kappa$-compact by inspecting its universal property, and by construction we have that the composite $C\to C'\to \Frrig_C(C')\to D$ is the unit map, hence $D$ is our desired extension. 
\end{proof}

We can now extract the following general consequence, saying essentially that up to relatively small rigid extensions, any map in motives into $M(C)$ can be realized directly by a map of categories: 
\begin{cor}\label{cor:almostrealizingmaps}
    Let $C$ be a locally rigid $2$-ring with $\kappa$-compact unit. Let $f:M_C(E)\to M_C(C)$ be a map in $\Mot_C$, where $E$ is a $\kappa$-compact compactly generated $C$-module for $\kappa$ uncountable regular.
    
    There exists a $\kappa$-compact rigid extension $C\to D$ and a $D$-linear map $\overline{f}: D\otimes_C E\to D$ such that $M_D(\overline f)\simeq D\otimes_C f$. 
\end{cor}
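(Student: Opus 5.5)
The plan is to combine \Cref{cor:zigzag} with \Cref{cor:almostself-inj}. First apply \Cref{cor:zigzag} to $f:M_C(E)\to M_C(C)$, with $D=E$ and $E=C$ in the notation there. Both are $\kappa$-compact in $\Mod_{C,\omega}$: $E$ by assumption, and $C$ because its unit is $\kappa$-compact. This gives a fully faithful $C$-motivic equivalence $j:C\to C'$ with $C'$ $\kappa$-compact, together with a $C$-linear compact-preserving map $g:E\to C'$. These satisfy $M_C(j)^{-1}\circ M_C(g)\simeq f$ in $\Mot_C$.

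Next apply \Cref{cor:almostself-inj} to $j$. It produces a $\kappa$-compact rigid extension $C\to D$ such that $D\otimes_C j: D\to D\otimes_C C'$ has a $D$-linear retraction $r: D\otimes_C C'\to D$. We then set
\[\overline f := r\circ (D\otimes_C g): D\otimes_C E\to D\otimes_C C'\to D .\]
This is $D$-linear, and compact-preserving because $r$ is obtained from a map $C'\to D$ of compactly generated modules preserving compacts.

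It remains to check that $M_D(\overline f)\simeq D\otimes_C f$. Motives are natural along extension of scalars, with $D\otimes_C M_C(-)\simeq M_D(D\otimes_C -)$. Hence $D\otimes_C j$ is a $D$-motivic equivalence, and $M_D(r)$ is a retraction of the equivalence $M_D(D\otimes_C j)$. It is therefore its inverse: $M_D(r)\simeq M_D(D\otimes_C j)^{-1}$. Composing, we get
\[M_D(\overline f)\simeq M_D(D\otimes_C j)^{-1}\circ M_D(D\otimes_C g)\simeq D\otimes_C\bigl(M_C(j)^{-1}\circ M_C(g)\bigr)\simeq D\otimes_C f.\]

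The main point requiring care is this last identification. It needs the base change $\Mot_C\to\Mot_D$ to be compatible with $M_C$ and $M_D$, which is the stated naturality of $M_{-}$. It also needs to be made precise that a one-sided inverse of an equivalence is the inverse. Beyond that, one should check that $\kappa$-compactness of $C$ as a module, and of the unit of $D$, is what the two cited corollaries require.
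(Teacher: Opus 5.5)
Your proposal is correct and is essentially the paper's own proof. Both represent $f$ by a zigzag $E\to C'\xleftarrow{\sim}C$ via \Cref{cor:zigzag}, then use \Cref{cor:almostself-inj} to obtain a $\kappa$-compact rigid extension $D$ with a retraction $D\otimes_C C'\to D$, which is automatically a motivic inverse, and set $\overline f$ to be the composite $D\otimes_C E\to D\otimes_C C'\to D$.
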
 
\begin{proof}
    By \Cref{cor:zigzag}, we can represent $f$ as a zigzag $E\to C'\xleftarrow{\sim}C$ where $C\to C'$ is a fully faithful motivic equivalence and $C'$ is $\kappa$-compact. 

Therefore \Cref{cor:almostself-inj} applies and we find a $\kappa$-compact rigid extension $D$ of $C$ and a retraction $D\otimes_CC'\to D$. Since the composite $D\to D\otimes_CC'\to D$ is the identity, $D\otimes_C C'\to D$ is a motivic inverse to $D\to D\otimes_C C'$. We therefore find that the composite $\overline{f}:D\otimes_C E\to D\otimes_C C'\to D$ has the correct image under $M_D$. 
\end{proof}

We also obtain a similar consequence for the $K$-theory of dualizable objects rather than $K_C$:
\begin{cor}\label{cor:almostrealizingclasses}
Let $C$ be a locally rigid $2$-ring with $\kappa$-compact unit, and let $f:C^\dbl\to C'$ be a fully faithful motivic equivalence of $C^\dbl$-modules, where $C'$ is $\kappa$-compact. 

There exists a $\kappa$-compact rigid extension $D$ of $C$ and a map $C'\to D^\dbl$ such that the composite $C^\dbl\to C'\to D^\dbl$ is the unit map. Equivalently, such that $D^\dbl\otimes_{C^\dbl}f$ admits a retraction.

As a consequence, for any map $M_{C^\dbl}(E)\to M_{C^\dbl}(C^\dbl)$ in $\Mot_{C^\dbl}$ where $E$ is a $\kappa$-compact $C^\dbl$-module, there exists a $\kappa$-compact rigid extension $D$ of $C$ such that upon basechange to $D^\dbl$, there exists a map $\overline{f}: D^\dbl\otimes_{C^\dbl}E\to D^\dbl$ such that the map $M_{D^\dbl}(\overline{f})\simeq D^\dbl\otimes_{C^\dbl}f$ in $\Mot_{D^\dbl}$. 
\end{cor}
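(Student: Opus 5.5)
We mimic the proof of \Cref{cor:almostself-inj}, now working over the small rigid $2$-ring $C^\dbl$, equivalently over the rigid $2$-ring $\Ind(C^\dbl)$, and afterwards pass to a rigid extension of $C$. First, the equivalence of the two formulations is the same formal argument as before. Given a factorization $C^\dbl\to C'\to D^\dbl$, tensor up to $D^\dbl$ and compose with the multiplication $D^\dbl\otimes_{C^\dbl}D^\dbl\to D^\dbl$ to get a retraction. Conversely, given a retraction, precompose it with $C'\to D^\dbl\otimes_{C^\dbl}C'$.

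For the factorization, consider $\Frrig_{C^\dbl}(C')$, the free small rigid $C^\dbl$-algebra on $C'$; it is $\kappa$-compact since $C'$ is. Because $f$ is a motivic equivalence and $\HH(-/C^\dbl)$ is a localizing invariant, $\HH(C^\dbl/C^\dbl)\to \HH(C'/C^\dbl)$ is an equivalence. Since $f$ is moreover fully faithful, \Cref{prop:Frrigff} (applied over the rigid base $\Ind(C^\dbl)$) shows that $\Frrig_{C^\dbl}(C^\dbl)\to\Frrig_{C^\dbl}(C')$ is fully faithful. We then form the pushout along the counit $\Frrig_{C^\dbl}(C^\dbl)\to C^\dbl$ and the unit $C^\dbl\to C$, namely
$$D := \Ind\bigl(\Frrig_{C^\dbl}(C')\bigr)\otimes_{\Ind(\Frrig_{C^\dbl}(C^\dbl))} C.$$
Here the base change to $C$ is the key new point. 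This is a commutative $C$-algebra that is rigid over $C$, since it is a pushout of rigid algebras in $\TwoRing^\locrig$ and its unit is the image of a compact/dualizable object, hence atomic. By \Cref{cor:fftensor}, the map $C\to D$ is fully faithful, because it is a base change of a fully faithful, atomic-preserving functor between atomically generated modules. It is $\kappa$-compact because it is a pushout of $\kappa$-compact objects and the unit of $C$ is $\kappa$-compact. By construction, $C'\to\Frrig_{C^\dbl}(C')\to D$ lands in dualizable objects, and its restriction to $C^\dbl$ is the unit map.

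The main obstacle is the step where we base change from $\Ind(C^\dbl)$ to $C$ inside the pushout. We must check that the pushout in $\TwoRing^\rig_C$ is computed by this relative tensor product, and that fully faithfulness survives the non-rigid map $\Ind(C^\dbl)\to C$. I would handle this by rewriting $D$ as $(\Ind\Frrig_{C^\dbl}(C')\otimes_{\Ind C^\dbl}C)\otimes_{\Frrig_C(C)}C$ and using that $-\otimes_{\Ind C^\dbl}C$ sends $\Frrig_{\Ind C^\dbl}(C')$ to $\Frrig_C(C\otimes_{\Ind C^\dbl} C')$. \Cref{cor:fftensor} over the base $C$ then applies, with $\Frrig_C(C)\to\Frrig_C(C\otimes C')$ fully faithful by \Cref{prop:Frrigff} and \Cref{lm:homsintensor}.

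For the consequence, we apply \Cref{cor:zigzag} over the base $\Ind(C^\dbl)$. It represents $f$ by a zigzag $E\to C'\xleftarrow{\sim}C^\dbl$, with $C'$ $\kappa$-compact and the backwards map a fully faithful motivic equivalence. The first part gives $D$ together with a retraction $D^\dbl\otimes_{C^\dbl}C'\to D^\dbl$, which is therefore a motivic inverse of the unit. We then set
$$\overline f: D^\dbl\otimes_{C^\dbl}E\to D^\dbl\otimes_{C^\dbl}C'\to D^\dbl,$$
exactly as in \Cref{cor:almostrealizingmaps}.
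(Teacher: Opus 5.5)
Your proposal is correct and, after the rewriting in your third paragraph, it is the paper's proof. The paper base-changes $f$ first to the fully faithful $C$-linear motivic equivalence $C\to\tilde{C}':=C\otimes_{\Ind(C^\dbl)}\Ind(C')$ and applies \Cref{cor:almostself-inj} over $C$ verbatim, which yields your $D=\Frrig_C(\tilde{C}')\otimes_{\Frrig_C(C)}C$. It then checks that $C'\to\tilde{C}'\to D$ lands in $D^\dbl$ via atomics (atomics are the same as dualizables over $C$), whereas you get this from symmetric monoidality of $\Frrig_{C^\dbl}(C')\to D$, at the cost of needing $\Frrig$ to commute with the non-rigid base change $\Ind(C^\dbl)\to C$.
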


\begin{proof}
     The ``As a consequence'' part follows exactly as above; and the ``Equivalently'' statement has also been explained earlier, so we focus on the first statement.
     
     So consider a fully faithful motivic equivalence $C^\dbl \to C'$. Taking Ind-completions and basechanging along $\Ind(C^\dbl)\to C$, we obtain a map of $C$-modules $C\to \tilde{C}'$ which is now a fully faithful $C$-linear motivic equivalence. 

     By \Cref{cor:almostself-inj}, we find a $\kappa$-compact rigid extension $D$ of $C$ with a map $\tilde{C}'\to D$ factoring the unit map $C\to D$. 

     Note that by \cite[Lemma 4.13]{LocRig}, the map $C'\to \tilde{C}'$ has values in $(\tilde{C}')^\at$ and $\tilde{C}'\to D$ is $C$-internally left adjoint and hence preserves atomics. Since $D$ is rigid over $C$, atomics are the same as dualizables, and so the composite $C'\to \tilde{C}'\to D$ lands in $D^\dbl$. Since the composite $C^\dbl\to C'\to D^\dbl\to D$ is the (restriction of) the unit map, and $D^\dbl\to D$ is fully faithful, $C^\dbl\to C'\to D^\dbl$ is the unit map, as was to be shown. 
\end{proof}
We now have almost all tools at hand to prove our main result - in fact, we could already prove the rigid version (e.g. the rational version, which suffices to disprove redshift at height $0$, or the $\mathbb F_p$-version, to answer \cite[Conjecture 5, Question 16]{levycat}), but for the locally rigid version (relevant for redshift at higher heights), the subtleties pointed out in \Cref{section:Mot} require us to introduce a new notion:
\begin{defn}
    Let $C$ be a locally rigid $2$-ring and $D$ be a compactly generated $C$-module. A homotopy class in $\pi_n \map_{\Mot_C}(M_C(C),M_C(D))$ is called atomic if it is in the image of $\pi_n K(D^\at)$ under the map $K(D^\at)\to \map_{\Mot_C}(M_C(C),M_C(D))$ from \Cref{cons:compK}. 

    If $M\in \Mot_C$ is a $C$-linear motive, a homotopy class in $\pi_n\map_{\Mot_C}(M_C(C),M)$ is called $\kappa$-atomic if there exists a $\kappa$-compact compactly generated $C$-module $D$ with $M_C(D)\simeq M$ such that the homotopy class in question is atomic in the above sense.
\end{defn}
\begin{rmk}
    If $C$ is rigid, these conditions are automatic. 
\end{rmk}
Recall that the map under consideration is typically not an equivalence, so one may worry about the existence of a sufficient supply of ($\kappa$-)atomic homotopy classes. We provide the following examples to show that this is actually a reasonable notion. The first is an obvious example: 
\begin{ex}\label{ex:atomicunit}
    The element $1\in \End_{\Mot_C}(M_C(C))$ is atomic, since the unit $\one_C\in C$ is atomic by definition.
\end{ex}
This second example is more important and will be used later on - it allows us to create new atomic homotopy classes out of old ones:
\begin{ex}\label{ex:atomicshift}
Let $\kappa$ be an uncountable regular cardinal such that $\one_C$ is $\kappa$-compact. 
    Let $M\in\Mot_C$ be a motive, and $x\in \pi_n\map_{\Mot_C}(M_C(C),M)$ be a $\kappa$-atomic homotopy class. In this case, $x$, viewed as a class in $\pi_{n+k}\map_{\Mot_C}(M_C(C), \Sigma^k M)$ is also $\kappa$-atomic.
\end{ex}
\begin{proof}
Let $D$ be a $\kappa$-compact compactly generated $C$-module with $M_C(D)\simeq M$ such that $x$ is in the image of $K(D^\at)$, say the image of $\overline{x}\in \pi_n K(D^\at)$. 

Consider now a $\kappa$-compact ordinary compactly generated stable category $E$ with $\Sp$-linear motive equivalent to $\Sigma^k M_{\Sp}(\Sp)$, which exists by \cite[Proposition 5.1]{RSW}\footnote{If $\kappa\geq 2^{\aleph_0}$, then we can simply pick $E=\mathrm{Calk}_{\mathrm{cont}}(\Sp)^{\otimes k}$, cf. \cite[Definition 1.60]{efimovI}, the given citation is only to guarantee the $\kappa$-compactness if $\kappa$ is arbitrary.}. Then $K(E^{\aleph_0} \otimes D^\at)\simeq \Sigma^k K(D^\at)$, and $M_C(E\otimes D)\simeq \Sigma^k M_C(D)$. 

Finally, the map $E^{\aleph_0}\otimes D^\at\to E\otimes D$ lands in $(E\otimes D)^\at$ by \cite[Lemma 4.13]{LocRig} together with the fact that external tensor products of atomics are atomics (since tensor products of internal left adjoints are internal left adjoints). It follows that the image of the class $\overline{x}\in \pi_nK(D^\at)\cong \pi_{n+k}(\Sigma^k K(D^\at))\cong \pi_{n+k}(K(E^{\aleph_0}\otimes D^\at))$ under $K(E^{\aleph_0}\otimes D^\at)\to K((E\otimes D)^\at)$ is a lift as desired. 
\end{proof}
\begin{ex}\label{ex:atomiccompos} Let $\kappa$ be an uncountable regular cardinal such that $\one_C$ is $\kappa$-compact. 

    Let $M\in \Mot_C$ and let $x\in \pi_n\map_{\Mot_C}(M_C(C), M)$ be a $\kappa$-atomic homotopy class. For any map $M\to M_C(C)$, there exists a $\kappa$-compact rigid extension $D$ of $C$ such that the image of $x$ in $\pi_n\End_{\Mot_C}(M_C(C),M_C(C))$ is sent to an atomic class in $\pi_n\End_{\Mot_D}(M_D(D),M_D(D))$
\end{ex}
\begin{proof}
    Fix a $\kappa$-compact compactly generated $C$-module $E$ with $M_C(E)\simeq M$ such that $x$ lifts to $K(E^\at)$. By \Cref{cor:almostrealizingmaps}, there exists a $\kappa$-compact rigid extension $D$ of $C$ such that, after extension along $\Mot_C\to \Mot_D$ the map $M_C(E)\simeq M\to M_C(C)$ can be realized as a map $D\otimes_CE\to D$ of $D$-modules. 
    
    Since this sends atomics to atomics, we have a commutative diagram as follows, proving the result: 
 \[\begin{tikzcd}
	{K(E^\at)} & {K((D\otimes_CE)^\at)} & {K(D^\dbl)} \\
	{\map_{\Mot_C}(M_C(C),M_C(E))} & {\map_{\Mot_D}(M_D(D),M_D(D\otimes_CE))} & {\map_{\Mot_D}(M_D(D),M_D(D))}
	\arrow[from=1-1, to=1-2]
	\arrow[from=1-1, to=2-1]
	\arrow[from=1-2, to=1-3]
	\arrow[from=1-2, to=2-2]
	\arrow[from=1-3, to=2-3]
	\arrow[from=2-1, to=2-2]
	\arrow[from=2-2, to=2-3]
\end{tikzcd}\]
\end{proof}

We can now start proving results about the Dimension morphism itself. Namely, we will prove four results, two results about each of the two maps $K((-)^\dbl) \to \End(\one)^{BS^1}$, $K_C(-)\to \End(\one)^{BS^1}$. The results are either  surjectivity results of the form ``any class in the target is realized in the source up to some sufficiently large extension'' or injectivity results of the form ``any homotopy class that vanishes under this map already vanishes in the source up to a small extension''. 

We fix some notation for the next few lemmas:
\begin{nota}\label{nota:std}
For the next four lemmas, $C$ is a locally rigid $2$-ring, and $\kappa$ is an uncountable regular cardinal such that $\one_C$ is $\kappa$-compact.     
\end{nota}

\begin{lm}[Atomic Surjectivity]\label{lm:atomicsurj}
Let $C$ be as in \Cref{nota:std}, and let $x\in \pi_n\Map(BS^1,\End(\one_C))$. 

There exists a $\kappa$-compact rigid extension $D$ of $C$ and class $\tilde{x}\in \pi_nK(D^\dbl)$ which maps to $x$ under the Dimension morphism $K(D^\dbl)\to \End(\one_D)^{BS^1}\simeq \End(\one_C)^{BS^1}$. 
\end{lm}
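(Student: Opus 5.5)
We follow the strategy sketched in the introduction. Interpret $x\in\pi_n\Map(BS^1,\End(\one_C))$ as a map $\Sigma^n\one_C[BS^1]\to\one_C$ in $C$, or, after adjunction, as a map of commutative algebras $\one_C\{\Sigma^n\one_C[BS^1]\}\to\one_C$. The plan is to recognize the source as the endomorphism ring of the unit in a free rigid algebra, via \Cref{cor:unitinFrrig}. By \Cref{ex:atomicshift} applied to the atomic class $1$ of \Cref{ex:atomicunit}, we choose a $\kappa$-compact compactly generated $C$-module $E$ with $M_C(E)\simeq\Sigma^n M_C(C)$, together with an atomic class $\bar e\in\pi_nK(E^\at)$ lifting the shifted unit. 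Concretely, we take $E=E_0\otimes C$ with $E_0$ an ordinary $\kappa$-compact category with motive $\Sigma^nM_\Sp(\Sp)$. For negative $n$ we use a Verdier quotient/delooping instead, or we reduce to $n\ge 0$ by replacing $x$ via the $\Sigma$-shift on the $E_0$ side. Since $\HH(-/C)$ is a localizing invariant, it factors through motives, so $\HH(E/C)\simeq\Sigma^n\one_C$, $S^1$-equivariantly with trivial action because it comes from $\HH(C/C)=\one_C$. Thus $\End(\one_{\Frrig_C(E)})\simeq\one_C\{\Sigma^n\one_C[BS^1]\}$.

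Next, $x$ yields a commutative algebra map $\phi:\End(\one_{\Frrig_C(E)})\to\one_C$. Set
$$D:=\Frrig_C(E)\otimes_{\End(\one_{\Frrig_C(E)})}\one_C,$$
that is, the base change along $\phi$ as a module category $\Mod_{\one_C}$ over $\Mod_{\End(\one)}(C)$. This is a rigid commutative $C$-algebra, and it is $\kappa$-compact because $E$ and $\one_C$ are. We then check that $C\to D$ is fully faithful. The unit of $D$ has endomorphisms $\End(\one_{\Frrig})\otimes_{\End(\one_{\Frrig})}\one_C\simeq\one_C$ by \Cref{lm:homsintensor}, since $\Frrig_C(E)$ is rigid over $\Mod_{\End(\one)}$ with atomic unit. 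Because $C$ is generated by $\one_C$ as a $C$-module, this suffices.

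We then define $\tilde x$ as the image of $\bar e$ under $K(E^\at)\to K(\Frrig_C(E)^\dbl)\to K(D^\dbl)$; this is valid because atomics are sent to dualizables, by \Cref{ex:rigat=dbl}. To compute $\Dim(\tilde x)$, we apply naturality of $\Dim$ along $\Frrig_C(E)\to D$ together with the argument of \Cref{cor:commutativediagram}, which uses only the map $\Frrig_C(E)\to D$ in place of $\tilde f$. This shows that $\Dim(\tilde x)$, viewed as a map $K(D^\dbl)[BS^1]\to\End(\one_D)$, restricted along $\bar e$, is the composite of three maps: first $\Sigma^n\one_C[BS^1]\to\udl{\HH(E/C)_{hS^1}}$, induced from $\bar e$ by the Dennis trace; then $\can$ into $\End(\one_{\Frrig_C(E)})$; and finally $\phi$. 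The first map sends the generator to the generator, since $\bar e$ lifts the unit class and the Dennis trace sends $[\one_C]$ to $1\in\HH(C/C)$; it stays compatible through the shift because $\HH$ is motivic and commutes with the $\Sigma$-construction. Since $\can$ is the inclusion of generators, the composite is exactly $x$.

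The main obstacle is the bookkeeping in this last step. One has to check that the identification $\HH(E/C)_{hS^1}\simeq\Sigma^n\one_C[BS^1]$ is compatible with the Dennis trace image of $\bar e$ equivariantly, so that the composite is $x$ itself rather than $x$ twisted by some automorphism. If it is only $x$ up to a unit (for example a sign), we can precompose $\phi$ with the inverse automorphism of the free algebra and correct $x$ accordingly. A secondary point is the fully-faithfulness of $C\to D$. Here we must make sure that base change along $\phi$ is computed by \Cref{cor:unittensor}, i.e. that the unit of $\Frrig_C(E)$ is atomic over $\Mod_{\End(\one)}(C)$, which holds because $\Frrig_C(E)$ is rigid over that base.
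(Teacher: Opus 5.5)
Your proposal is correct and follows essentially the same route as the paper. You realize $\Sigma^n M_C(C)$ by a $\kappa$-compact module carrying an atomic lift of the canonical class (via \Cref{ex:atomicunit} and \Cref{ex:atomicshift}). You then base-change $\Frrig_C(E)$ along the algebra map $\one_C\{\Sigma^n\one_C[BS^1]\}\to\one_C$ classified by $x$, and compute $\Dim$ of the image class using the variant of \Cref{cor:commutativediagram} with target $D$ in place of $C$.

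Your extra bookkeeping only makes explicit what the paper leaves implicit. The Dennis trace of $\bar e$ is identified with the generator on the nose, by Yoneda, since $K_C$ is corepresented by $M_C(C)$. Your argument for fully faithfulness of $C\to D$ is also fine, and equivalent to the paper's appeal to \Cref{cor:fftensor}.
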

\begin{proof}
     Let $x\in \pi_n\map(BS^1,\End(\one_C))$. Let $D_0$ be an $\kappa$-compact compactly generated $C$-module such that $M_C(D_0)=\Sigma^n M_C(C)$, and such that the canonical class in $\pi_n \map_{\Mot_C}(M_C(C),M_C(D_0))$ is atomic - the existence of such a $D_0$ is guaranteed by \Cref{ex:atomicunit} and \Cref{ex:atomicshift}. 

Consider now the map in $\CAlg(C)$ from the free commutative algebra on $\HH(D_0/C)_{hS^1} = \Sigma^n\one_C[BS^1]$ to $\one_C$ classified by $x$. Using \Cref{cor:unitinFrrig}, this free commutative algebra is the endomorphism algebra of the unit in $\Frrig_C(D_0)$, so we can basechange $\Frrig_C(D_0)$ along this commutative algebra map to get a new rigid $C$-algebra, $D$, and observe that $C\to D$ is fully faithful and $D$ is $\kappa$-compact. 

We claim that the composite $$\Sigma^n\Sph[BS^1]\to  K(D_0^\at)[BS^1] \to K(D^\at)[BS^1] = K(D^\dbl)[BS^1]\xrightarrow{\Dim} \End(\one_C)$$ is $x$, which will prove the desired claim. Here, the first arrow is a lift to $K(D_0^\at)$ of the canonical map in $\pi_n\map_{\Mot_C}(M_C(C),M_C(D_0))$ which exists by assumption. 

This in fact follows from \Cref{cor:commutativediagram}: indeed, this states that the relevant composite is equivalent to $K(D_0^\at)[BS^1]\to \udl{\HH(D_0/C)_{hS^1}}\to \End(\one_{\Frrig_C(D_0)})\to \End(\one_C)$, but by virtue of factoring through $D$, this last morphism is exactly free on the map $x: \HH(D_0/C)_{hS^1}\simeq \Sigma^n\one_C[BS^1] \xrightarrow{x} \one_C$. 
\end{proof}

\begin{cor}[Motivic surjectivity]\label{lm:motsurj}
Let $C$ be as in \Cref{nota:std}, and let $x\in \pi_n\map_{\Mot_C}(M_C(C),M_C(C))$. There exists a $\kappa$-compact rigid extension $D$ of $C$ such that the image of $x$ in $\pi_n\map_{\Mot_D}(M_D(D),M_D(D))$ is in the image of $\pi_nK(D^\dbl)$.
\end{cor}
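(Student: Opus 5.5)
The plan is to reduce the statement to \Cref{ex:atomiccompos}, applied to the motive $M = \Sigma^n M_C(C)$, via the factorization of $x$ through a suspension of the unit motive. A homotopy class $x\in \pi_n\map_{\Mot_C}(M_C(C),M_C(C))$ is the same datum as a map $\tilde x: \Sigma^n M_C(C)\to M_C(C)$ in $\Mot_C$. It can also be written as the composite of the canonical class $u\in \pi_n\map_{\Mot_C}(M_C(C),\Sigma^n M_C(C))$ (the suspension of the identity) with $\tilde x$. By \Cref{ex:atomicunit} the identity class $1\in \pi_0\End_{\Mot_C}(M_C(C))$ is atomic, and it is $\kappa$-atomic because $C$ itself is $\kappa$-compact in $\Mod_{C,\omega}$ (its unit is $\kappa$-compact). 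Then \Cref{ex:atomicshift} shows that $u$, viewed in $\pi_n\map_{\Mot_C}(M_C(C),\Sigma^nM_C(C))$, is $\kappa$-atomic.

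Next, apply \Cref{ex:atomiccompos} with $M=\Sigma^nM_C(C)$, the $\kappa$-atomic class $u$, and the map $\tilde x: M\to M_C(C)$. This produces a $\kappa$-compact rigid extension $D$ of $C$ such that the image of $\tilde x\circ u = x$ in $\pi_n\End_{\Mot_D}(M_D(D))$ is atomic.

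It remains to translate this into the statement. Atomic means lying in the image of $\pi_nK(D^\at)\to \pi_n\map_{\Mot_D}(M_D(D),M_D(D))$. Here I am using that $K_D$ agrees with $K_C$ on $D$-modules, by the remark on relative $K$-theory for rigid maps. Since $D$ is rigid over $C$, \Cref{ex:rigat=dbl} gives $D^\at = D^\dbl$, so $x$ lies in the image of $\pi_nK(D^\dbl)$.

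The only step needing care is bookkeeping rather than real mathematics. First, "atomic over $C$" for $D$ must coincide with "atomic over $D$", which again follows from \Cref{ex:rigat=dbl}. Second, the base-change square in \Cref{ex:atomiccompos} must be compatible with the identification of the image of $x$ under $\Mot_C\to\Mot_D$; this compatibility comes from the commutative diagram in the proof of that example.
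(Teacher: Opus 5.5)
Your proposal is correct and is essentially the paper's proof. The paper likewise sets $M=\Sigma^n M_C(C)$, uses \Cref{ex:atomicunit} and \Cref{ex:atomicshift} to see that the canonical class is $\kappa$-atomic, views $x$ as a map $M\to M_C(C)$, and applies \Cref{ex:atomiccompos}; your extra bookkeeping identifying $D$-atomics with $D^\dbl$ via \Cref{ex:rigat=dbl} is already built into the diagram in the proof of that example.
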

\begin{proof}
Let $M := \Sigma^n M_C(C)$. By combining \Cref{ex:atomicunit} and \Cref{ex:atomicshift}, we find that the canonical class in $\pi_n \map_{\Mot_C}(M_C(C),M)$ is $\kappa$-atomic. View $x$ as a map $M\to M_C(C)$. 

 Applying \Cref{ex:atomiccompos} now gives the desired $\kappa$-compact rigid extension $D$. 
\end{proof}

\begin{lm}[Atomic injectivity]\label{lm:atinj}
Let $C$ be as in \Cref{nota:std}, and let $x\in \pi_nK(C^\dbl)$ such that $\Dim(x) = 0\in\pi_n(\End(\one_C)^{BS^1})$. 

There exists a $\kappa$-compact rigid extension $D$ of $C$ such that the image of $x$ in $\pi_nK(D^\dbl)$ is $0$. 
\end{lm}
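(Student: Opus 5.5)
The plan is the ``injectivity'' half of the strategy from the introduction: categorify $x$ as an actual functor, use $\Dim(x)=0$ to factor the induced map out of a free rigid algebra through an augmentation, and then push everything into a motivically trivial module.

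\emph{Step 1: categorify $x$.} Since $C^\dbl$ is a small rigid $2$-ring, I would view $x$ as a map $\Sigma^n M_{C^\dbl}(C^\dbl)\to M_{C^\dbl}(C^\dbl)$ in $\Mot_{C^\dbl}$. I would pick a $\kappa$-compact $C^\dbl$-module $E$ with $M_{C^\dbl}(E)\simeq \Sigma^n M_{C^\dbl}(C^\dbl)$, as in the proof of \Cref{lm:atomicsurj} (this uses \cite[Proposition 5.1]{RSW} and \Cref{ex:atomicshift}). I would choose $E$ so that the canonical class lifts to a class $\bar e\in\pi_nK(E)$.

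By \Cref{cor:almostrealizingclasses}, there is a $\kappa$-compact rigid extension $D_1$ of $C$ and a functor $\bar f:D_1^\dbl\otimes_{C^\dbl}E\to D_1^\dbl$ realizing $x$ motivically. I would Ind-complete this to a $D_1$-linear compact-preserving functor $F:E'\to D_1$, where $E'$ is a $\kappa$-compact compactly generated $D_1$-module. Then $\HH(E'/D_1)_{hS^1}\simeq\Sigma^n\one_{D_1}[BS^1]$. Since $\bar f$ realizes $x$, the image of $\bar e$ in $K(D_1^\dbl)$ is the image of $x$.

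\emph{Step 2: use $\Dim(x)=0$.} Let $\tilde F:\Frrig_{D_1}(E')\to D_1$ be the map induced by $F$. \Cref{cor:commutativediagram} identifies the induced map
$$\End(\one_{\Frrig_{D_1}(E')})=\one\{\Sigma^n\one[BS^1]\}\to\End(\one_{D_1})$$
with the free map on $\Dim(x)$. Since $\Dim(x)=0$, I choose a nullhomotopy, so this map is homotopic to the augmentation. Hence $\tilde F$ factors through
$$D_2:=\Frrig_{D_1}(E')\otimes_{\one\{\Sigma^n\one[BS^1]\}}\one .$$

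\emph{Step 3: kill the class.} By \Cref{lm:disk}, I embed $E'$ fully faithfully into a $\kappa$-compact motivically trivial $N$. Since $\HH$ is a localizing invariant, $\HH(N/D_1)=0$, so $\End(\one_{\Frrig(N)})\simeq\one$. \Cref{prop:Frrigff} then says that $D_2\to\Frrig_{D_1}(N)$ is fully faithful. I set
$$D:=\Frrig_{D_1}(N)\otimes_{D_2}D_1 ,$$
where the tensor is along $D_2\to D_1$. By \Cref{cor:fftensor}, $D_1\to D$ is fully faithful, so $D$ is a $\kappa$-compact rigid extension of $C$. In $D$, the composite $E'\xrightarrow{F}D_1\to D$ factors through $N$. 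Therefore the image of $x$ in $\pi_nK(D^\dbl)$ is the image of $\bar e$ under $K(E'^\at)\to K(N^\at)\to K(D^\dbl)$; this uses that compact-preserving functors preserve atomics.

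\emph{Main obstacle.} The hard part is that motivic triviality of $N$ kills the class only in $K_D(D)$, not necessarily in $K(D^\dbl)$, because $K(N^\at)$ need not vanish when the base is only locally rigid. I would first try to choose $N$ so that $K(N^\at)$ itself vanishes, for instance $N=\Ind$ of a countable-swindle category containing $E'^\at$, tensored up to $D_1$. I would then check that this $N$ is still $\kappa$-compact, using the $\kappa$-filtered colimit and \cite[Lemma 5.8]{RSW} argument from \Cref{lm:disk}, and that it still has $\HH(N/D_1)=0$. Alternatively, I could arrange that $\bar e$ already dies in $K(N^\at)$ after a further truncation at some $\kappa$-compact stage. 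The cardinality bookkeeping in Steps 1 and 3 should be routine, using \Cref{cor:zigzag}.
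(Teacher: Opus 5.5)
\paragraph{Gap.} Your Steps 1 and 2, and the construction of $D$ in Step 3, match the paper's proof. However, the argument has a genuine gap exactly where you flag the main obstacle. As written, it only shows that the image of $x$ dies in $K_D(D)$, which is the motivic statement of \Cref{lm:motinj}, not that it dies in $K(D^\dbl)$.

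The problem is that you take $N$ from \Cref{lm:disk} over the base $D_1$, which is only locally rigid. Vanishing of $M_{D_1}(N)$ then gives $K_{D_1}(N)=0$ but says nothing about $K(N^\at)$, so factoring through $K(N^\at)$ kills nothing.

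Your proposed repairs are left unchecked, and the first is not set up correctly. A swindle category containing $E'^\at$, Ind-completed and tensored up to $D_1$, has no evident $D_1$-linear functor \emph{from} $E'$: the available comparison $D_1\otimes\Ind(E'^\at)\to E'$ goes the wrong way. You would also still need to verify $\HH(N/D_1)=0$ and $\kappa$-compactness.

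\paragraph{The missing idea.} Perform the embedding over the small rigid $2$-ring $D_1^\dbl$ rather than over $D_1$. Over a rigid base, vanishing of the motive \emph{is} vanishing of $K$-theory.
\begin{enumerate}
\item Let $E_1:=D_1^\dbl\otimes_{C^\dbl}E$, so that $E'=D_1\otimes_{\Ind(D_1^\dbl)}\Ind(E_1)$.
\item Apply \Cref{lm:disk} with base the rigid $2$-ring $\Ind(D_1^\dbl)$. This gives a $\kappa$-compact $D_1^\dbl$-module $F_0$ with $E_1\to F_0$ fully faithful and $M_{D_1^\dbl}(F_0)=0$. Hence $K(F_0)=\map_{\Mot_{D_1^\dbl}}(M(D_1^\dbl),M(F_0))=0$. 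Your swindle intuition is the right one: over a rigid base, \Cref{lm:disk} is exactly $E_1\to\Ind(E_1)^{\aleph_1}$ cut down to a $\kappa$-compact stage. But it must be done $D_1^\dbl$-linearly on $E_1$, not on $E'^\at$.
\item Set $N:=D_1\otimes_{\Ind(D_1^\dbl)}\Ind(F_0)$. Then $E'\to N$ is fully faithful by \Cref{cor:fftensor}, and $M_{D_1}(N)=0$ by base change of motives. Hence $\HH(N/D_1)=0$, and your Step 3 runs verbatim.
\item The factorization of $E'\to D$ through $N$ restricts to a factorization $E_1\to F_0\to D^\dbl$. This holds because $F_0\to N$ lands in atomics (as in the proof of \Cref{cor:almostrealizingclasses}), and $N\to D$ preserves atomics, which in $D$ are the dualizables.
\end{enumerate}
Thus $\bar e$, and hence the image of $x$, maps through $K(F_0)=0$.
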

\begin{proof}
    Let $x\in \pi_n K(C^\dbl)$ be in the kernel of $\pi_n\Dim$, and let $\overline{x}$ be its image in $\End_{\Mot_C}(M_C(C))$.

Fix a $\kappa$-compact $C^\dbl$-module $E_0$ with $M_{C^\dbl}(E_0)\simeq \Sigma^n M_{C^\dbl}(C^\dbl)$. We let $E:= C\otimes_{\Ind(C^\dbl)}\Ind(E_0)$.

By \Cref{cor:almostrealizingclasses}, we can find a $\kappa$-compact rigid extension $D$ of $C$ such that after basechange to $D^\dbl$, $x$ can be realized as a $D^\dbl$-linear map $X:D^\dbl\otimes_{C^\dbl}E_0\to D^\dbl$. Up to changing notation, we set $C=D$ and assume $x$ can be realized as a map $E_0\to C^\dbl$. 

Note that the following composite is null essentially by design, since it is $\Dim\circ \overline{x}$: $$\map_{\Mot_C}(M_C(C),M_C(E))[BS^1]\to \map_{\Mot_C}(M_C(C),M_C(C))[BS^1]\xrightarrow{\Dim} \End(\one_C)$$

By \Cref{cor:commutativediagram}, we may rewrite it as $$\map_{\Mot_C}(M_C(C),M_C(E))[BS^1]\to \udl{\HH(E/C)_{hS^1}} \to \End(\one_{\Frrig_C(E)}) \to \End(\one_C)$$ where the second map is induced by the commutative $C$-algebra map $\tilde{X}: \Frrig_C(E)\to C$ induced by $X$. 

Now the map $\one_C\otimes \map_{\Mot_C}(M_C(C),M_C(E))[BS^1]\to \HH(E/C)_{hS^1} \simeq \Sigma^n \one_C[BS^1]$ is split (since the map $\one_C\otimes K_C(C)\to \one_C$ is) and hence we find that the map $\HH(E/C)_{hS^1}\to \one_C$ is actually null. 

Fix a $\kappa$-compact compactly generated $C^\dbl$-module $F_0$ with a fully faithful inclusion $E_0\to F_0$ and with trivial $C^\dbl$-motive, as is possible by \Cref{lm:disk}. Set $F:=C\otimes_{\Ind(C^\dbl)}\Ind(F_0)$.

Note that the $C$-linear motive of $F$ is also trivial, and in particular by \Cref{cor:unitinFrrig} the induced map $\End(\one_{\Frrig_C(E)})\to \End(\one_{\Frrig_C(F)})$ is equivalent to the map $\End(\one_{\Frrig_C(E)})\to \one_C$ from above. 

It follows that both $\Frrig_C(E)\to \Frrig_C(F)$ and $\Frrig_C(E)\to C$ factor through $$\Frrig_C(E)\otimes_{\End(\one_{\Frrig_C(E))})}\one_C.$$ The former map induces, by \Cref{prop:Frrigff}, a fully faithful map $$\Frrig_C(E)\otimes_{\End(\one_{\Frrig_C(E)})}\one_C\to \Frrig_C(F).$$
In total, the basechange $\Frrig_C(F)\otimes_{\Frrig_C(E)\otimes_{\End(\one_{\Frrig_C(E)})}\one_C}C$ is a $\kappa$-compact rigid extension of $C$, which we now call $D$.

Note that the map $E\to C\to D$ now factors through $F$, and so the map $E_0\to D^\dbl$ factors through $F_0$. 

On $K$-theory, this means that $K(E_0)\to K(D^\dbl)$ (whose image on $\pi_n$ hits the image of $x$ in $K(D^\dbl)$) factors through $K(F_0) = 0$. Hence $x=0$, as was to be shown. 

\end{proof}

And finally, we prove the following lemma (which, ultimately, we will not really use, but is probably good to have in handy): 
\begin{lm}[Motivic injectivity]\label{lm:motinj}
Let $C$ be as in \Cref{nota:std}, and $x\in \pi_n \map_{\Mot_C}(M(C),M(C))$, and suppose $\Dim(x)=0\in\pi_n(\End(\one_C)^{BS^1})$.  

There exists a $\kappa$-compact rigid extension $D$ of $C$ such that the image of $x$ in $\pi_n\map_{\Mot_D}(M(D),M(D))$ is $0$. 
\end{lm}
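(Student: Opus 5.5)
The plan is to run the proof of \Cref{lm:atinj} again, replacing $K(C^\dbl)$ by $K_C$ throughout. The realization step will use \Cref{cor:almostrealizingmaps} in place of \Cref{cor:almostrealizingclasses}. There are two ways to organize this. The cheapest is a reduction to \Cref{lm:atinj}. By \Cref{lm:motsurj}, after a $\kappa$-compact rigid extension $D_1$ of $C$, the image of $x$ in $\pi_n\map_{\Mot_{D_1}}(M(D_1),M(D_1))$ lifts to a class $y\in\pi_nK(D_1^\dbl)$. The Dimension morphism on $K(D_1^\dbl)$ factors through $K_{D_1}(D_1)$ by \Cref{cons:Dim}, and it is natural in rigid extensions, with $\End(\one_{D_1})\simeq\End(\one_C)$. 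Hence $\Dim(y)=\Dim(x)=0$. \Cref{lm:atinj} then gives a further $\kappa$-compact rigid extension $D$ of $D_1$ in which $y$ vanishes, so the image of $x$ in $\pi_n\map_{\Mot_D}(M(D),M(D))$ vanishes too. Composites of $\kappa$-compact rigid extensions are again $\kappa$-compact rigid extensions, by \Cref{rmk:relrig} and fully faithfulness being stable under composition.

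If one prefers a direct argument, I would proceed as follows. Choose a $\kappa$-compact compactly generated $C$-module $E$ with $M_C(E)\simeq\Sigma^nM_C(C)$, regard $x$ as a map $M_C(E)\to M_C(C)$, and apply \Cref{cor:almostrealizingmaps}. After renaming, this realizes $x$ as a $C$-linear map $X\colon E\to C$. \Cref{cor:commutativediagram}, in its $K_C$ version, identifies $\Dim(x)=0$ with the vanishing of the composite $K_C(E)[BS^1]\to\udl{\HH(E/C)_{hS^1}}\to\End(\one_{\Frrig_C(E)})\to\End(\one_C)$. The Dennis trace $\one_C\otimes K_C(E)\simeq\Sigma^n\one_C\otimes K_C(C)\to\HH(E/C)\simeq\Sigma^n\one_C$ is split, so the map $\HH(E/C)_{hS^1}\to\one_C$ is null. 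As a consequence, $\tilde X$ factors through $\Frrig_C(E)\otimes_{\End(\one_{\Frrig_C(E)})}\one_C$.

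Next, embed $E$ fully faithfully into a $\kappa$-compact motivically trivial $F$ using \Cref{lm:disk}. Then \Cref{prop:Frrigff} makes $\Frrig_C(E)\otimes_{\End(\one)}\one_C\to\Frrig_C(F)$ fully faithful. The pushout $D=\Frrig_C(F)\otimes_{(\cdots)}C$ is a $\kappa$-compact rigid extension by \Cref{cor:fftensor}. Over $D$, the map $D\otimes_CX$ factors through $D\otimes_CF$, whose motive is zero. So $D\otimes_C x$ factors through $0$ in $\Mot_D$.

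The main obstacle in the direct route is the splitting claim for the Dennis trace in the locally rigid setting. Concretely, one needs the unit $\one_C\to\one_C\otimes K_C(C)\to\HH(C/C)=\one_C$ to be the identity, and to transport this along the equivalence $M_C(E)\simeq\Sigma^nM_C(C)$. This is why I would present the reduction argument first: there the only input beyond earlier lemmas is the compatibility of $\Dim$ with the map $K(D^\dbl)\to K_D(D)$ and with rigid extensions. Both hold by construction in \Cref{cons:Dim}.
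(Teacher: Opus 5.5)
Your proposal is correct. Your second, ``direct'' argument is essentially the paper's proof. The paper reruns \Cref{lm:atinj} at the level of $C$-modules, using \Cref{cor:almostrealizingmaps} in place of \Cref{cor:almostrealizingclasses} and applying \Cref{lm:disk} directly to $C$-modules rather than to $C^\dbl$-modules.

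Your primary argument is genuinely different: you deduce motivic injectivity formally from \Cref{lm:motsurj} together with \Cref{lm:atinj}. This is shorter, and it explains why the paper says it will not really need this lemma. Indeed, the $K_C$ half of \Cref{thm:designer} also follows from the $K(D^\dbl)$ half plus \Cref{lm:motsurj}: the composite is an equivalence and the first map is $\pi_*$-surjective.

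The price is a few routine checks that you only gesture at:
\begin{itemize}
\item $\one_{D_1}$ is still $\kappa$-compact, so \Cref{nota:std} holds for $D_1$. This is true because the right adjoint of $C\to D_1$ preserves colimits.
\item A $\kappa$-compact object of $(\TwoRing^\rig_C)_{D_1/}$ is $\kappa$-compact in $\TwoRing^\rig_C$, because $D_1$ is.
\item Both $\Dim$ and the comparison $K((-)^\dbl)\to K_{(-)}(-)$ are compatible with base change along rigid extensions. The paper uses this implicitly anyway, for instance with the retractions in the proof of \Cref{thm:designer}.
\end{itemize}
The paper's route, by contrast, is self-contained and avoids \Cref{lm:motsurj}, and with it the atomic-lifting step of \Cref{ex:atomiccompos}.

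One correction to your framing: the reduction does not sidestep the Dennis trace splitting. The proof of \Cref{lm:atinj} uses exactly the same splitting of $\one_C\otimes K_C(E)[BS^1]\to\HH(E/C)_{hS^1}$, stated at the $K_C$ level. It is harmless in any case. The map $\one_C\otimes K_C(C)\to\HH(C/C)=\one_C$ is an $S^1$-equivariant map of commutative algebras, so it retracts the unit. Exactness of the Dennis trace as a transformation on $\Mot_C$ then transports the splitting to $\Sigma^n M_C(C)$.
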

\begin{proof}
    This is essentially the same proof as above with \Cref{cor:almostrealizingmaps} instead of \Cref{cor:almostrealizingclasses}, and staying at the level of $C$ throughout . 
\end{proof}

We can now prove the main theorem of this section: 
\begin{proof}[Proof of \Cref{thm:designer}]
    Fix $C$ as in the statement, and let $\kappa$ be an uncountable regular cardinal such that $\one_C$ is $\kappa$-compact. 
    
       We apply the small object argument \cite[Proposition 1.4.7]{DAGX} to the class $S$ of rigid extensions in $\TwoRing^\rig_C$, and the subset $S_0$ of rigid extensions between $\kappa$-compact rigid commutative $C$-algebras. The class $S$ is indeed weakly saturated: fully faithful maps are closed under pushouts by \Cref{cor:fftensor}, they are obviously closed under retracts. For closure under transfinite composition, note that the functor taking compact objects $\TwoRing^\rig_C \to \Catperf$ commutes with filtered colimits and reflects fully faithfulness. In $\Catperf$, fully faithfulness is closed under filtered colimits and hence under transfinite compositions. 
    
    The result of \textit{loc. cit.} implies that we can factor $C\to 0$ as $C\to D \to 0$ where $C\to D$ is a rigid extension and $D\to 0$ has the right lifting property against $\kappa$-compact rigid extensions. 

By \cite[Lemma A.23]{ChroNS}, any $\kappa$-compact rigid extension $D\to E$ is obtained as the basechange of a $\kappa$-compact rigid extensions $D_0\to E_0$ between $\kappa$-compacts along a map $D_0\to D$, and therefore $D\to 0$ also has the right lifting property against all $\kappa$-compact rigid extensions of $D$. In other words, any $\kappa$-small extension of $D$ retracts onto $D$.

We can now prove that $D$ does the job: suppose $x\in \pi_n K(D^\dbl)$ vanishes under $\Dim$. By \Cref{lm:atinj}, there exists a $\kappa$-small rigid extension $D'$ of $D$ such that $x$ vanishes in $\pi_n K((D')^\dbl)$. Since $D\to D'$ has a retraction, $x$ must have already been $0$. For surjectivity on $\pi_n$, we use \Cref{lm:atomicsurj} instead. 

The same argument works for $K_C(D)$, using \Cref{lm:motinj} and \Cref{lm:motsurj} instead. 
\end{proof}

As a direct corollary, we obtain:
\begin{cor}\label{cor:levytext}
    For every commutative ring spectrum $R$, there exists a small rigid idempotent-complete commutative $\Perf(R)$-algebra $C$ such that $K(C)\simeq R^{BS^1}$, and $\End(\one_C)\simeq R$. 

    In particular, for every ordinary ring $R$, there exists a small rigid idempotent-complete commutative $\Perf(R)$-algebra $C$ such that $K_0(C)\cong R$. 
\end{cor}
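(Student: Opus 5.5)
We will apply \Cref{thm:designer} to the rigid $2$-ring $\Mod_R$, which is in particular locally rigid. This gives a rigid extension $\Mod_R\to D$ for which both maps $K(D^\dbl)\to K_{\Mod_R}(D)\to \End(\one_D)^{BS^1}$ are equivalences. We then set $C:=D^\dbl$. This is a small, idempotent-complete stably symmetric monoidal category in which every object is dualizable. It is a commutative $\Perf(R)$-algebra because the symmetric monoidal functor $\Mod_R\to D$ preserves compacts, and $\Mod_R^{\aleph_0}=\Perf(R)$ consists of dualizables, so it lands in $D^\dbl$.

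The first step is to identify the unit endomorphisms. Since $\Mod_R\to D$ is fully faithful, $\End(\one_D)\simeq \End(\one_{\Mod_R})=R$, and the endomorphism spectrum of $\one_C$ computed in $C$ agrees with that computed in $D$. Hence $\End(\one_C)\simeq R$, and $K(C)=K(D^\dbl)\simeq R^{BS^1}$ by \Cref{thm:designer}. I would also check that this equivalence is one of commutative ring spectra, since $\Dim$ is built from the Dennis trace and $\tr$, both of which are multiplicative by \Cref{cor:trconstr}. This is not needed for the $K_0$ statement.

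For the ordinary-ring case, regard $R$ as the discrete ring spectrum $HR$ (we need $R$ commutative, as in the corollary). Then
\[
K_0(C)\cong \pi_0(R^{BS^1})=\pi_0\map(\Sigma^\infty_+BS^1,HR)=H^0(BS^1;R).
\]
Because $BS^1$ is connected, this group is $R$, and the ring structure agrees with that of $R$. The $\Perf(R)$-algebra structure on $C$ is the one constructed in the first paragraph.

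The only real point to verify is that $D^\dbl$ is the right small object, that is, idempotent complete with unit endomorphisms computed correctly. Both follow because $D$ is rigid, so $D^\dbl=D^{\aleph_0}$, which is idempotent complete.
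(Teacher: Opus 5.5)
Your proposal is correct and follows the paper's own route: the paper states this as a direct corollary of \Cref{thm:designer} applied to the rigid $2$-ring $\Mod_R$, setting $C=D^\dbl=D^{\aleph_0}$ and reading off $\End(\one_C)\simeq R$ from full faithfulness. In the discrete case, $K_0(C)\cong\pi_0(R^{BS^1})=H^0(BS^1;R)\cong R$, with the ring structure coming from the multiplicativity of $\Dim$ (\Cref{cor:trconstr}), exactly as you say.
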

This answers \cite[Conjecture 5]{levycat} by taking $R=\overline{\mathbb F_p}$, and more generally Question 16 in \textit{loc.cit.} (though, even in characteristic $0$, we lose control over the ``trace zero'' aspect of Levy's results - as explained in \cite[Theorem 4]{levycat}, in characteristic $p$ this is necessary). 

This leaves open the following question:
\begin{ques}
    Which commutative ring spectra can appear as the $K$-theory of a rigid stably symmetric monoidal category ?
\end{ques}

Employing the same method of versal objects as in \cite[Section 2]{levycat}, we also find: 
\begin{cor}
    There exists a functor $C_{-}: \mathrm{CRing}\to \CAlg^\rig_\mathbb Z$ and a natural isomorphism $K_0(C_{-})\cong \mathrm{id}$. 
\end{cor}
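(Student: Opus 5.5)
The plan is to reduce to \Cref{cor:levytext}, exactly as in \cite[Section 2]{levycat}, but applied to a single \emph{versal} input. Concretely, let $P=\mathbb Z[t_s : s\in S]$ be a polynomial ring, and view any commutative ring $R$ as a quotient of a polynomial ring. The key observation is that \Cref{thm:designer} produces, for a base $C$, an extension $D$ with $K_0(D^\dbl)\cong \pi_0(\End(\one_C)^{BS^1})\cong \pi_0\End(\one_C)$. So it suffices to produce, functorially in $R$, a rigid commutative $\Perf(\mathbb Z)$-algebra whose unit has endomorphisms $R$ and which has the ``designer'' property.

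First step: fix once and for all the functor $R\mapsto \Mod_{HR}$, which is a functor $\mathrm{CRing}\to\TwoRing^\rig_{\mathbb Z}$. Rather than applying the small object argument separately to each $\Mod_{HR}$, which is not functorial, I would apply it once to a single base. I would take the base to be the ``free'' one, namely $\Mod_{H\mathbb Z[\mathbb Z]}$-style versal objects as in Levy. A cleaner route is to make the small object argument itself functorial. The factorization $C\to D\to 0$ of \Cref{thm:designer} comes from \cite[Proposition 1.4.7]{DAGX}, and the small object argument with a fixed \emph{set} $S_0$ of generating morphisms is functorial in the input arrow. The subtlety is that $S_0$ depends on $C$, namely on $\kappa$-compact rigid extensions \emph{of} $\kappa$-compact $C$-algebras. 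I would instead work in $\TwoRing^\rig_{\mathbb Z}$, taking $S_0$ to be the set of $\kappa$-compact rigid extensions between $\kappa$-compact rigid $\mathbb Z$-algebras, with $\kappa=\aleph_1$. The functorial factorization of $\Mod_{HR}\to 0$ then gives $\Mod_{HR}\to D_R\to 0$, natural in $R$. Setting $C_R:=D_R^\dbl$ defines the desired functor.

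Second step: check that $D_R$ has the property used in the proof of \Cref{thm:designer}. By \Cref{rmk:relrig} and \cite[Lemma A.23]{ChroNS}, every $\aleph_1$-compact rigid extension of $D_R$ is a base change of one in $S_0$, so it retracts onto $D_R$. Then \Cref{lm:atinj} and \Cref{lm:atomicsurj} apply verbatim to give $K(C_R)\simeq \End(\one_{D_R})^{BS^1}\simeq HR^{BS^1}$. This is natural in $R$ because $\Dim$ is natural along symmetric monoidal functors. On $\pi_0$ the map $\pi_0(HR^{BS^1})\to R$, given by evaluation at a point, is an isomorphism, since $HR$ is discrete. Hence $K_0(C_R)\cong R$ naturally, via $\dim$.

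The main obstacle I expect is the functoriality of the small object argument at the $\infty$-categorical level. \cite[Proposition 1.4.7]{DAGX} asserts existence of a factorization, not a functor. I would handle this by noting that we only need a functor on the $1$-category $\mathrm{CRing}$, with an isomorphism of $K_0$, which is a set-valued functor. It therefore suffices to construct the transfinite pushout tower functorially on the homotopy category level, by choosing at each stage the pushout along the coproduct of all squares from $S_0$. Those choices are canonical up to contractible choice because they are colimits, hence functorial. If that proves delicate, the fallback is Levy's versal trick. Apply \Cref{cor:levytext} to polynomial rings, and use that $\mathrm{CRing}$ is generated under reflexive coequalizers by polynomial rings, i.e.\ by $R\mapsto$ its canonical free resolution. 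Then left Kan extend, checking that $K_0$ commutes with the relevant colimits.
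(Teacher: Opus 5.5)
\textbf{There is a genuine gap.} For a single ring your argument is fine. Factoring $\Mod_{HR}\to 0$ against $S_0$ gives a transfinite composite of pushouts of rigid extensions, so $\Mod_{HR}\to D_R$ is a rigid extension and $\End(\one_{D_R})\simeq HR$. Then \Cref{lm:atinj,lm:atomicsurj} give $K_0(D_R^{\dbl})\cong \pi_0(HR^{BS^1})\cong R$ via $\dim$. But that is just \Cref{cor:levytext}. The only new content of the statement is functoriality in $R$, and your proposal does not establish it.

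\emph{The small object argument with chosen cells is not functorial.} The argument of \cite[Proposition 1.4.7]{DAGX} attaches one cell for each $s\colon A\to B$ in $S_0$ and each \emph{component} of $\Map(A,X)$. Forming the pushout requires choosing a representative in each component. A ring map $g\colon R\to R'$ then induces a map of next stages only after you also choose homotopies $g\circ\phi\simeq\phi'$ between representatives.
\begin{itemize}
\item These choices are not contractible. They are torsors under $\pi_1\Map(A,D_{R'})$, which contains automorphism groups of objects when $A$ is the free rigid algebra on one object.
\item The induced map of pushouts depends on these choices, and nothing makes them compatible with composition. The pushout is canonical once the attaching map is fixed, but the attaching map is not.
\item So you do not obviously get even a functor to $\ho(\CAlg^\rig_{\mathbb Z})$. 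Such a functor would not be a functor to $\CAlg^\rig_{\mathbb Z}$ anyway.
\end{itemize}

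\emph{The choice-free variant leaves the class of rigid extensions.} Attaching $\Map(A,X)\otimes(A\to B)$ is functorial, but colimits over non-discrete anima do not preserve full faithfulness. For example, $\Mod_{\mathbb Z}\otimes S^1\to B\otimes S^1\simeq B\otimes_{B\otimes B}B$ is $\mathbb Z\to \HH(B/\mathbb Z)$ on unit endomorphisms. You would therefore lose $\End(\one_{D_R})\simeq HR$.

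\emph{Your fallback also fails.} \Cref{cor:levytext} gives no functoriality even on polynomial rings. Moreover, $K$ commutes with filtered colimits of rigid categories but not with reflexive coequalizers or geometric realizations. So left Kan extension from polynomial rings does not preserve $K_0\cong\mathrm{id}$.

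\textbf{A repair within your approach.} Let $\mathcal I$ be the essentially small category of finitely presented rings.
\begin{itemize}
\item Run the ordinary small object argument in the presentable category $\Fun(\mathcal I,\TwoRing^\rig_{\mathbb Z})$ on $\Mod_{H(-)}\to 0$. Use as generators the maps $\mathrm{Hom}_{\mathcal I}(j,-)\otimes s$ for $j\in\mathcal I$ and $s\in S_0$.
\item These generators are pointwise coproducts, i.e.\ tensor products, of rigid extensions. Hence their weakly saturated closure consists of pointwise rigid extensions.
\item By Yoneda, the right lifting property against these generators says exactly that every value has the right lifting property against $S_0$. So your second step applies to each value.
\item Finally, left Kan extend along $\mathcal I\subset\mathrm{CRing}=\Ind(\mathcal I)$. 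This is a filtered colimit, which commutes with $K_0$ and with $\End(\one)$.
\end{itemize}
The paper does none of this explicitly. It obtains functoriality by invoking Levy's method of versal objects \cite[Section 2]{levycat}.
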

\section{Nullstellensatzian rigid categories}\label{section:NS}
Nullstellensatzian objects were introduced in \cite{ChroNS} as a way to axiomatize Hilbert's Nullstellensatz in categories that look like ``categories of rings'' but where it is not necessarily possible to reasonably speak of elements and polynomial equations. They feature here for several reasons. First, the strategy used in \cite{ChroNS} to prove redshift for commutative ring spectra is in some sense a winning strategy to understand redshift: either Nullstellensatzian objects satisfy redshift, in which case all objects do (redshift is ``downwards-closed''), or they do not, in which case they provide natural and canonical counterexamples. This idea is in fact how we originally arrived at the results of the present paper. As is perhaps now clear, we will see that $\aleph_1$-Nullstellensatzian rigid $T(n)$-local categories all fail redshift, thus proving truly canonical counterexamples. 

Second, the examples constructed in \Cref{section:main} are obtained via a small object argument which is almost the same as the one going into the construction of Nullstellensatzians, except that we restrict to $\kappa$-small extensions instead of all $\kappa$-small morphisms. Thus the objects we already considered are already some kind of Nullstellensatzians, though they don't exactly fit in the framework of \cite[Appendix A]{ChroNS}. It is therefore natural to wonder how actual Nullstellensatzians behave. 

Finally, Nullstellensatzian rigid categories are extremely unusual kinds of objects, that control some of the landscape of the theory of rigid categories, and of tt-geometry, as partly demonstrated in \cite{BHR}, and it therefore seems fruitful to obtain more information about them. The goal of this section is to gather consequences of the previous sections for Nullstellensatzians in general. In the next section, we will begin an investigation of Nullstellensatzian rigid categories at height $0$, that is, over $\mathbb Q$. 
\subsection{Nullstellensätze}\label{section:NSrec}
In this section, we review the notion of Nullstellensatzian objects from \cite{ChroNS}. The intuition behind the following definition is that one can phrase Hilbert's Nullstellensatz as the statement that over an algebraically closed field $K$, any nonzero, finite type $K$-algebra $R$ admits a map back to $K$. 

From this perspective, the following definition makes sense\footnote{It is at first sight perhaps too optimistic, but the results of \cite{ChroNS} do show that this is a reasonable definition.}: 
\begin{defn}
    Let $C$ be an accessible category. An object $x\in C$ is called $\aleph_0$-Nullstellensatzian if it is non-terminal, and for every non-terminal compact object of the slice, $y\in (C_{x/})^{\aleph_0}$, admits a map back to the initial object $x\in C_{x/}$. 

    More generally, $x$ is called $\kappa$-Nullstellensatzian if it is non-terminal and any non-terminal objects in $(C_{x/})^\kappa$ admits a map back to $x\in C_{x/}$. 
\end{defn}
\begin{ex}
    Hilbert's Nullstellensatz guarantees that algebraic closed fields are $\aleph_0$-Nullstellensatzian in the category of ordinary commutative rings. 

    More generally, Lang proves in \cite{Lang} that algebraically closed fields of cardinality $\geq \kappa$ are $\kappa$-Nullstellensatzian. 
\end{ex}
Since the word ``Nullstellensatzian'' is long to pronounce, read and write, we adopt the following convention:
\begin{conv}
We write ``$\kappa$-NS'' in place of ``$\kappa$-Nullstellensatzian''. 

Furthermore, we write ``NS'' for ``Nullstellensatzian'', which we use as a synonym for ``$\kappa$-NS for some $\kappa$'' rather than specifically $\aleph_0$-NS as is done in \cite{ChroNS} - this is because $\aleph_0$-NS objects will not play a specific role in this paper. 
\end{conv}
\begin{obs}
    Let $\kappa \leq \lambda$ be two regular cardinals. Since $\kappa$-compacts are $\lambda$-compact, we see that $\lambda$-NS objects are $\kappa$-NS as well. 
\end{obs}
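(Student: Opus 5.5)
This is immediate from unwinding the definition. Let $x\in C$ be $\lambda$-NS, with $\kappa\leq\lambda$ regular. Non-terminality of $x$ is part of being $\lambda$-NS, so it carries over verbatim. It remains to show that every non-terminal $y\in (C_{x/})^{\kappa}$ admits a map $y\to x$ in $C_{x/}$.

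The only input is the standard fact that, for regular $\kappa\leq\lambda$, every $\kappa$-compact object of an accessible category is $\lambda$-compact. The reason is that a $\lambda$-filtered diagram is in particular $\kappa$-filtered, so preserving $\kappa$-filtered colimits implies preserving $\lambda$-filtered colimits. I would apply this in the slice $C_{x/}$, which is again accessible. One could also note that its $\kappa$-compact objects are computed via \cite[Proposition 5.4.5.15]{HTT}, but that is not needed. The argument uses only that $(C_{x/})^\kappa\subseteq (C_{x/})^\lambda$, and this follows directly from the definition of compactness in $C_{x/}$.

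So let $y\in (C_{x/})^\kappa$ be non-terminal. Then $y\in (C_{x/})^\lambda$, and it is still non-terminal, since non-terminality does not depend on $\kappa$. The $\lambda$-NS property of $x$ therefore supplies a map $y\to x$ in $C_{x/}$, which proves the claim.

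There is no real obstacle here. The one point to check is that "$\kappa$-compact" is interpreted in the slice, and not as the underlying object being $\kappa$-compact in $C$. The inclusion $(C_{x/})^\kappa\subseteq(C_{x/})^\lambda$ holds in any category with filtered colimits, so this reading causes no difficulty.
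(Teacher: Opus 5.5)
Your proposal is correct and matches the paper's own one-line justification: $\lambda$-filtered diagrams are $\kappa$-filtered, so $(C_{x/})^\kappa\subseteq (C_{x/})^\lambda$, and the $\lambda$-NS property then applies directly to every non-terminal $\kappa$-compact object of the slice. Your remark that compactness is meant in the slice $C_{x/}$ is the right reading, and nothing further is needed.
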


NS objects truly make sense in contexts somewhat close to Hilbert's original Nullstellensatz, so in categories of objects that are somewhat ``like rings''. 

The following definition axiomatizes some desirable feature of such a category. 
\begin{defn}
    In a category $C$ with a terminal object $*$, the terminal object is called \emph{strict} if for any $x$, the existence of a map $*\to x$ implies that $x$ is also terminal (and in particular the map $*\to x$ is an equivalence).

    A presentable category $C$ is called \emph{weakly spectral} if it has a strict and compact terminal object. 
\end{defn}
\begin{rmk}\label[rmk]{rmk:nocg}
    The definition of weakly spectral categories in \cite{ChroNS} includes the requirement that they be compactly generated. However, we will only use this notion in order to apply \cite[Proposition A.17]{ChroNS}, the proof of which does not require compact generation, only presentability. Compact generation comes in at later points in \cite[Appendix A]{ChroNS}, for more subtle properties of their constructible spectrum.
\end{rmk}
\begin{ex}
    The category of ordinary commutative rings is weakly spectral \cite[Example A.6]{ChroNS}, and so is the category of commutative ring spectra \cite[Example A.9]{ChroNS} (see also \cite[Remark A.10]{ChroNS}). 
\end{ex}
\begin{ex}
    If $C$ is weakly spectral, so is $C_{R/}$ for any $R\in C$, \cite[Lemma A.15]{ChroNS}. 
\end{ex}
The key result here is \cite[Proposition A.17]{ChroNS}, guaranteeing a large supply of NS objects:
\begin{prop}[{\cite[Proposition A.17]{ChroNS}}]
Let $C$ be a weakly spectral category and $R\in C$ non-terminal. For any regular $\kappa$, there exists a $\kappa$-NS object in $C_{R/}$.     
\end{prop}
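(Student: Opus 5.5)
The plan is to run a small object argument in $C_{R/}$, in the spirit of the proof of \Cref{thm:designer}. There we factored $C\to 0$ and used only extensions; here we use all maps between $\kappa$-compact objects, and weak spectrality is what guarantees the result is non-terminal.

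\textbf{Step 1 (reduction).} By \cite[Lemma A.15]{ChroNS}, $C_{R/}$ is again weakly spectral, and its initial object $R$ is non-terminal. So we may rename $C_{R/}$ as $C$. It then suffices to show: if $C$ is weakly spectral and its initial object $\emptyset$ is non-terminal, then $C$ contains a $\kappa$-NS object. Enlarging $\kappa$ is harmless, since $\lambda$-NS implies $\kappa$-NS for $\lambda\geq\kappa$. We therefore assume $C$ is $\kappa$-accessible and that the terminal object $*$ is $\kappa$-compact (it is compact by assumption).

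\textbf{Step 2 (the construction).} Let $S_0$ be the set, up to equivalence, of maps $a\to b$ between $\kappa$-compact objects of $C$ with $b$ non-terminal. Build a transfinite sequence $x_0=\emptyset\to x_1\to\cdots$ of length $\kappa$, taking colimits at limit stages. At a successor stage we enumerate all pairs consisting of a map $a\to x_\alpha$ and an element $a\to b$ of $S_0$ such that the pushout $x_\alpha\sqcup_a b$ is non-terminal. For each such pair, we push out along it, and we take the cocomposite over the enumeration.

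Each individual pushout is non-terminal by construction. The delicate point is that this must survive iterated pushouts and colimits, and the cleanest way to arrange it is to add the pushouts one at a time in a well-ordered enumeration. Each pushout is kept only if the result stays non-terminal. At limit ordinals, compactness of $*$ together with strictness gives non-terminality of the colimit: if $\colim x_\beta\simeq *$, then the identity of $*$, being a map from a compact object into a filtered colimit, factors through some $x_\beta$. This yields a map $*\to x_\beta$, and strictness forces $x_\beta\simeq *$, which contradicts the inductive hypothesis.

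\textbf{Step 3 (verification).} Let $x=\colim_{\alpha<\kappa}x_\alpha$. Then $x$ is non-terminal by the argument just given. Now take a non-terminal $\kappa$-compact object $y$ of $C_{x/}$. Write it as $x\sqcup_a b$ for some map $a\to b$ between $\kappa$-compacts and some map $a\to x$; this uses $\kappa$-accessibility, since $\kappa$-compact objects under $x$ are retracts of such pushouts. The map $a\to x$ factors through some stage $x_\alpha$ because $a$ is $\kappa$-compact. Moreover, $x_\alpha\sqcup_a b$ is non-terminal, because it maps to $y$, and if it were terminal, strictness would force $y$ to be terminal. So at stage $\alpha+1$ this pushout was attempted; the attempt is made against the current intermediate object and is only kept if that stays non-terminal, and this is again automatic by strictness since the intermediate maps to $y$. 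Hence $x_{\alpha+1}$ receives a map from $b$ compatible with $a$, and this induces a map $y=x\sqcup_a b\to x$ under $x$. The retract case is handled by composing with the retraction.

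\textbf{Main obstacle.} The main obstacle is the bookkeeping in Step 2: organizing the enumeration so that non-terminality is preserved at every stage while every relevant pair still gets attempted. The strictness-plus-compactness argument above is what handles both the limit stages and the ``kept only if non-terminal'' clause.
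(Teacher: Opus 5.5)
Your proposal is correct and follows essentially the same route as the proof of \cite[Proposition A.17]{ChroNS}, which the paper cites rather than reproving: a $\kappa$-indexed transfinite construction that pushes out along maps between $\kappa$-compact objects and discards any pushout that would become terminal. The hypotheses are used in the same two places: compactness plus strictness of the terminal object keep filtered colimits non-terminal, and strictness again, via the map to the given non-terminal $y$, guarantees that every relevant pushout is actually kept.
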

If we think of NS objects as ``algebraically closed'' objects, then this proposition says that every object maps to at least one ``algebraically closed'' object. Note that the assumption of weak spectrality is rather weak, and in particular one may really think of this result as encompassing all categories of ``ring-like objects''.

It is sometimes possible to \emph{classify} NS objects, and \cite{ChroNS} provides one striking example of such a classification in the context of $T(n)$-local commutative ring spectra. Their result can be stated as follows: 
\begin{thm}[{\cite[Theorem 6.12]{ChroNS}}]\label{thm:choNS}

\begin{enumerate}
    \item At height $0$, a rational commutative ring spectrum is $\kappa$-NS if and only if it is equivalent to $L[u^{\pm 1}], |u|=2$ for some algebraically closed field $L$ of characteristic $0$ of cardinality $\geq \kappa$; 
    \item At height $n\geq 1$ and an implicit prime $p$, a $T(n)$-local commutative ring spectrum is $\kappa$-NS if and only if it is equivalent to the Lubin--Tate theory $E(L, \mathbf G)$ for some algebraically closed field $L$ of characteristic $p$ and cardinality $\geq \kappa$, and $\mathbf G$ some\footnote{Up to isomorphism, there is at most one.} formal group of height $n$ over $L$.
\end{enumerate}
    
\end{thm}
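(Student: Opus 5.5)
Since this is \cite[Theorem 6.12]{ChroNS}, the plan follows the strategy of Burklund--Schlank--Yuan rather than the methods of this paper. It has two directions: first show that the listed objects are $\kappa$-NS, then show that every $\kappa$-NS object is of this form. I will treat height $n\geq 1$ and indicate the modifications at height $0$.

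\emph{Lubin--Tate theories are NS.} Let $E=E(L,\mathbf G)$ with $L$ algebraically closed of cardinality $\geq\kappa$, and let $A$ be a nonzero $\kappa$-compact $T(n)$-local commutative $E$-algebra.
\begin{enumerate}
\item First I will produce an $E$-algebra map $A\to E(L',\mathbf G_{L'})$ for some large algebraically closed field $L'\supseteq L$. Since $A\neq 0$ is $T(n)$-local, $K(n)\otimes A\neq 0$ by the nilpotence theorem. Hence $A$ maps to a nonzero $K(n)$-local algebra with field-like homotopy.
\item From there I will reach a Lubin--Tate theory over an algebraically closed residue field. For this I will use Lurie's universal property of Lubin--Tate theory: maps out of $E(L,\mathbf G)$ are controlled by maps of perfect residue fields together with deformations of $\mathbf G$. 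This is combined with a $T(n)$-local ``strict Henselian / algebraic closure'' step, which adjoins roots of polynomial equations and extracts separable extensions via Galois theory of $E$.
\item This step is the main obstacle. The crucial input is that $T(n)$-locally one can kill nilpotence and take residue fields without $A$ becoming zero. This is where chromatic nilpotence and $K(n)$-local Galois theory do the real work.
\item Finally I will use $\kappa$-compactness. The map $A\to E(L')$ depends on fewer than $\kappa$ parameters, so it factors through $E(L'')$ with $L''$ a subextension of $L'$ finitely generated over $L$, up to $\kappa$-small data. By Lang's Nullstellensatz \cite{Lang} for fields of cardinality $\geq\kappa$, there is an $L$-point $L''\to L$, that is, a specialization. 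The universal property then gives $E(L'')\to E(L)$ over $E$, which yields the required retraction $A\to E$.
\end{enumerate}

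\emph{Every $\kappa$-NS object is a Lubin--Tate theory.} Let $R$ be $\kappa$-NS.
\begin{enumerate}
\item By the first part of the argument above, applied over the base $\Sph_{T(n)}$, there is a nonzero map $R\to E(L',\mathbf G)$.
\item Using the NS property of $R$ on suitable $\kappa$-compact $R$-algebras, I will show that $\pi_*R$ is a complete regular local ring of the right shape with algebraically closed residue field. Concretely:
\begin{itemize}
\item adjoining inverses or roots of elements of $\pi_0R$ and applying the NS property shows that every monic polynomial over the residue field has a root, so it is algebraically closed;
\item killing elements shows that the maximal ideal is generated by $p,u_1,\dots,u_{n-1}$.
\end{itemize}
Alternatively, I will compare with the Lubin--Tate theory $E$ of the residue field, using the universal map $E\to R$ and a retraction $R\otimes E\to R$ obtained from NS. This exhibits $R$ as a retract of an $E$-algebra which is itself forced to equal $E$ by the first direction.
\item The cardinality bound $|L|\geq\kappa$ follows from the NS property: otherwise, adjoining $\kappa$-few new transcendentals produces a $\kappa$-compact extension with no retraction.
\end{enumerate}

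\emph{Height $0$.} The same scheme works rationally with simpler inputs. Rational commutative algebras are commutative differential graded $\Q$-algebras, and the nilpotence step is replaced by ordinary commutative algebra.
\begin{itemize}
\item The NS property forces $\pi_*R$ to be even periodic: killing odd classes, and inverting and adjoining a degree-$2$ unit, both yield nonzero $\kappa$-compact extensions, so $R$ must already admit retractions from them.
\item Lang's theorem then forces $\pi_0R=L$ to be algebraically closed of cardinality $\geq\kappa$.
\item Conversely, for $L[u^{\pm1}]$ one reduces the NS property to Lang's Nullstellensatz on $\pi_0$ of a nonzero $\kappa$-compact algebra, after passing to a quotient by a maximal ideal.
\end{itemize}
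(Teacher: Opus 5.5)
The paper does not prove this statement. It imports it from \cite[Theorem 6.12]{ChroNS} and uses it as a black box, so your sketch has to stand on its own. At height $n\geq 1$ it has a genuine gap.

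\textbf{The direction ``$E(L)$ is NS''.} The step you flag as ``the main obstacle'' is the whole content of the theorem: given a nonzero $T(n)$-local $E_\infty$-$E(L)$-algebra $A$, produce an $E_\infty$-map $A\to E(L')$ with $L'$ algebraically closed. You give no argument for it, and the mechanism you suggest cannot work. For $n\geq 1$ there is no nonzero $T(n)$-local $\mathbb E_2$-ring whose $\pi_0$ is a field:
\begin{itemize}
\item in characteristic $p$ it would receive an $\mathbb E_2$-map from $H\mathbb F_p$ (Hopkins--Mahowald), hence be $T(n)$-acyclic;
\item in characteristic $0$ it would be rational, hence also $T(n)$-acyclic.
\end{itemize}
So ``mapping to a $K(n)$-local algebra with field-like homotopy'', ``taking residue fields'' and ``killing nilpotence'' are not operations available in $\CAlg(\Sp_{T(n)})$; already $K(n)$ is not $\mathbb E_2$. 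The only field-like targets are the Lubin--Tate theories themselves, which is exactly what you are trying to reach.

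Two further points in this direction:
\begin{itemize}
\item $K(n)\otimes A\neq 0$ does not follow from the nilpotence theorem alone. It is false for general $T(n)$-local spectra at heights $n\geq 2$, since the telescope conjecture fails. It needs the multiplicative structure, as in Hahn or Land--Mathew--Meier--Tamme.
\item Your step 4 is repairable but misstated. A field finitely generated over an algebraically closed $L$ has no $L$-algebra map to $L$ unless it is $L$ itself. You must instead factor through $E(R^{\mathrm{perf}})$ for finitely generated (resp.\ $<\kappa$-generated) $L$-subalgebras $R\subset L'$, using $E(L')\simeq L_{T(n)}\colim_R E(R^{\mathrm{perf}})$, and then apply Hilbert's or Lang's Nullstellensatz to $R$.
\end{itemize}

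\textbf{The converse direction.} This is not established either.
\begin{itemize}
\item The elementwise ``kill or invert'' tests that drive your height-$0$ argument see nothing at height $n$. For every $T(n)$-local $E_\infty$-ring $R$, both $L_{T(n)}(R/\!/p)$ and $L_{T(n)}R[p^{-1}]$ vanish. So the NS property alone cannot show that $\pi_0R$ is complete local with maximal ideal $(p,u_1,\dots,u_{n-1})$; moreover the $u_i$ do not exist until $R$ is known to be an $E$-algebra.
\item Your alternative route is circular: Lurie's universal map $E(L)\to R$ presupposes that $R$ is $K(n)$-local and complex periodic, with $\pi_0R$ complete local with residue field $L$.
\item $R\otimes E(L)$ is not $\kappa$-compact over $R$ in general, so NS does not directly give a retraction. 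You would need to work with $E(\overline{\mathbb F}_p)$ and its finite Galois subextensions.
\item Even once $R$ is an $E$-algebra, the NS property of $E$ says that nonzero compact $E$-algebras map back to $E$. It does not say that an $E$-algebra, or a retract of $R\otimes E$, ``equals $E$''; the $E$-algebras $E(L')$ with $L'\supsetneq L$ refute that inference.
\end{itemize}
What is missing is an actual proof that some map $E(L)\to R$ is an equivalence, which requires controlling all of $\pi_*R$.

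\textbf{Height $0$.} By contrast, your height-$0$ sketch is essentially sound. Killing an odd class $x$ cannot kill $R$, because of the cofiber sequence $\Sigma^{|x|}\Q\to\Lambda_\Q(x)\to\Q$ of $\Lambda_\Q(x)$-modules. Periodicity, the field property, algebraic closedness and Lang's trick then go through. However, ``passing to a quotient by a maximal ideal'' should be replaced by an honest cell-by-cell (or $\mathbb Z/2$-graded cdga) argument, since such quotients are not $E_\infty$-operations.
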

We point out that the situation in positive ``non-chromatic'' characteristic, i.e. over $\mathbb F_p$, is more subtle, cf. \cite{FloNS} (see more concretely Proposition 1.4. in \textit{loc. cit.}). 

We will not (attempt to) reach as concrete a description in the context of rigid $2$-rings as the above, but our goal in the next section is to study some basic properties of NS rigid commutative algebras over some locally rigid base $C$.

\subsection{Basics about Nullstellensatzian rigid categories}\label{section:NSrig}
In this section, after quickly proving the existence of sufficiently many NS rigid algebras over a locally rigid base, we start analyzing their basic properties. 

We have mentioned in \Cref{section:NSrec} that most categories of ring-like objects are \emph{weakly spectral} in the sense of \cite[Definition A.1]{ChroNS}, and therefore admit a sufficient supply of NS objects. We briefly verify that $\TwoRing^\rig_C$ is indeed weakly spectral for most locally rigid $2$-rings $C$, subject to some mild finiteness condition:
\begin{lm}
Let $C$ be a locally rigid $2$-ring with a $\otimes$-conservative compact object $c$. 

The category $\TwoRing_C^\rig$ of rigid commutative $C$-algebras is weakly spectral.
\end{lm}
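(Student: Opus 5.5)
We have to check three things: $\TwoRing^\rig_C$ is presentable, its terminal object $0$ is strict, and $0$ is compact.

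\emph{Presentability.} $\TwoRing^\rig_C=\TwoRing^\locrig_{C/}$ can be identified, via compact objects, with a category of non-unital commutative algebras over $C^{\aleph_0}$ in $\Catperf$ satisfying conditions: unitality after ind-completion, and the map to $\Ind$ landing in dualizables. These conditions are closed under filtered colimits. Colimits exist: pushouts are relative tensor products by \Cref{cor:fftensor}, and $\Frrig_C$ supplies free objects. From this, presentability follows by a standard accessibility argument. Alternatively, one can cite \cite{LocRig} for the presentability of rigid commutative algebras.

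\emph{Strictness.} Suppose $0\to D$ is a map of $C$-algebras. Then the unit $\one_D$ is the image of $\one_0=0$ under a symmetric monoidal functor, so $\one_D\simeq 0$ and hence $D\simeq 0$. This step is formal.

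\emph{Compactness of $0$.} This is the main point, and it is where the hypothesis on $c$ enters. We must show that for a filtered colimit $D=\colim_i D_i$ in $\TwoRing^\rig_C$ with $D\simeq 0$, some $D_i$ is already $0$ (and likewise for mapping anima, which is automatic by strictness).

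Filtered colimits are computed on compact objects in $\Catperf$. So $D^{\aleph_0}=\colim_i D_i^{\aleph_0}$, with mapping spectra computed as filtered colimits.

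Consider the image $c_i$ of $c$ in $D_i$; it is compact, hence dualizable since $D_i$ is rigid over $C$. The identity of $c_D$ is $0$ in $D$ because $D=0$. Since $\End(c_D)=\colim_i\End(c_{D_i})$ on $\pi_0$, the identity of $c_{D_i}$ vanishes for some $i$, so $c_{D_i}\simeq 0$.

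It remains to see that $D_i=0$. Here we use $\otimes$-conservativity. Choose a dualizable $c'$ such that $c\otimes c'$ generates: for instance, if $C=\Sp_{T(n)}$ one can use a type $n$ complex $V$ with $\one$ in the thick tensor ideal of $V\otimes V^\vee$ after localization. More generally, I expect to use that $\otimes$-conservativity of a compact $c$ in the locally rigid $C$ gives, via $\hom(c,-)$ and $c\otimes c^\vee$, that $\one_C$ is a colimit of objects in the tensor ideal generated by $c$. For instance, one can try to realize $\one_C\simeq\colim$ of a tower built from $c\otimes c^\vee$ in $C$ (the $c\otimes c^\vee$-localization is trivial on $\one_C$ by conservativity).

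Base-changing this to $D_i$, the unit $\one_{D_i}$ is a colimit of objects of the form $c_{D_i}\otimes(-)$, all of which are $0$. Hence $\one_{D_i}=0$ and $D_i=0$.

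The delicate step I expect is precisely this last translation of "$c\otimes-$ conservative" into "$\one_C$ lies in the localizing tensor ideal generated by $c$". I would prove it by noting that the fiber of $\one_C\to L\one_C$, where $L$ is localization away from the ideal generated by $c$, is killed after tensoring with $c$, so conservativity forces $\one_C$ to be $c$-acellular-free, hence in the localizing ideal. Since symmetric monoidal colimit-preserving functors preserve localizing ideals, the conclusion transfers to $D_i$.
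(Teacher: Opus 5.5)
Your skeleton matches the paper's proof. Strictness holds because $\one_D$ is the image of $\one_0=0$. Compactness holds because filtered colimits in $\TwoRing^\rig_C$ are computed on compact objects in $\Catperf$, so $c$ already dies in some $D_i$.

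For presentability, your sketch is vaguer than the paper, and it misattributes the identification of pushouts to \Cref{cor:fftensor}. The paper's argument is: the forgetful functor to $\Mod_{C,\omega}$ preserves filtered colimits and is conservative, so $\Frrig_C$ applied to compact generators gives compact generators.

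The real difference is the last step, $c_{D_i}\simeq 0\Rightarrow D_i\simeq 0$. The paper asserts this for rigid $C$-algebras; the natural justification is that the right adjoint $U\colon D_i\to C$ is colimit-preserving and $C$-linear. Then $c\otimes\End(\one_{D_i})\simeq U(c\otimes\one_{D_i})=0$, and conservativity gives $\End(\one_{D_i})=0$. You instead prove the intrinsic statement that $\one_C$ lies in the localizing tensor ideal $\mathcal L$ generated by $c$. The kernel of $C\to D_i$ is a localizing tensor ideal containing $c$, so it contains $\one_C$. This avoids the projection formula and works for any $2$-ring under $C$, not only rigid ones, at the cost of a little Bousfield localization. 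Both routes are fine.

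The sentence where you carry out your step is backwards and must be fixed. $c\otimes-$ kills the cofiber $L\one_C$, not the fiber $\Gamma\one_C$; in fact $c\otimes\Gamma\one_C\simeq c$. Applying conservativity to the fiber would give $\one_C\simeq L\one_C$, the opposite of what you want. The correct argument runs as follows.
\begin{itemize}
\item The localization exists because $\mathcal L$ is generated by the set $\{c\otimes g\}$ with $g$ compact.
\item $c\otimes L\one_C$ lies in $\mathcal L$, since $\mathcal L$ is an ideal.
\item $c\otimes L\one_C$ is $\mathcal L$-local: for $\ell\in\mathcal L$ we have $\map(\ell,c\otimes L\one_C)\simeq\map(\ell\otimes c^\vee,L\one_C)=0$. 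This uses that $c$ is compact, hence dualizable, in the locally rigid $C$.
\item Hence $c\otimes L\one_C=0$, so $L\one_C=0$ by conservativity, and $\one_C\in\mathcal L$.
\end{itemize}

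Also delete the exploratory aside about $\Sp_{T(n)}$. For $n\geq 1$, $L_{T(n)}\Sph$ is not in the thick tensor ideal generated by a type $n$ complex, since every object of that ideal has its identity annihilated by a power of $p$. This is exactly why only the localizing ideal works.
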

\begin{proof}
    We have to check that the terminal object $0\in\CAlg^\rig$ is strictly terminal and compact. While we do not need to (cf. \Cref{rmk:nocg}), we also prove that $\TwoRing_C^\rig$ is compactly generated. 

    For the latter claim, we simply observe that the inclusion $\TwoRing_C^\rig \subset \CAlg(\Mod_{C,\omega})$ preserves all colimits (hence filtered ones), and the forgetful functor $\CAlg(\Mod_{C,\omega})\to \Mod_{C,\omega}$ preserves filtered colimits. It follows that the left adjoint $\Frrig_C : \Mod_{C,\omega}\to \TwoRing^\rig_C$ preserves compacts, and since the forgetful functor is conservative, these compacts generate $\TwoRing^\rig_C$ under colimits, thus proving compact generation. 

    That $0$ is strictly terminal follows from the fact that it is strictly terminal in $\CAlg(C)$ and that if $\End(\one_D) = 0$, then $D=0$ for any pointedly monoidal $D$. 

    That $0$ is compact follows from the fact that $\Mod_{C,\omega}\xrightarrow{(-)^{\aleph_0}}\Catperf$ preserves filtered colimits, and that if $\one_D\otimes c=0$ for some rigid commutative $C$-algebra $D$ and our compact generator $c$, then $D=0$. 
\end{proof}
\begin{defn}\label{defn:quasicompact}
    A locally rigid $2$-ring is called quasi-compact if it has a $\otimes$-conservative compact object. 
\end{defn}
\begin{rmk}
    It may be that a better definition is simply that the $0$ object is compact in $\CAlg$, and that the above is only a convenient criterion to prove this. 
\end{rmk}
\begin{ex}
    The category of $T(n)$-local spectra is quasi-compact, since the $T(n)$-localization of any finite type $n$ spectrum is a $\otimes$-conservative compact object. 
\end{ex}
\begin{cor}\label{cor:existNS}
Let $C$ be a quasi-compact locally rigid $2$-ring.
    For any nonzero $D\in\TwoRing^\rig_C$ and any regular cardinal $\kappa$, there exists a map $D\to E$ where $E$ is $\kappa$-NS in $\TwoRing^\rig_C$. 
\end{cor}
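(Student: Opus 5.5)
The plan is to deduce this directly from the preceding lemma together with \cite[Proposition A.17]{ChroNS}. Since $C$ is quasi-compact, it has a $\otimes$-conservative compact object $c$, so the previous lemma shows that $\TwoRing^\rig_C$ is weakly spectral. It is also presentable: it is compactly generated, as shown in that proof, and in any case \Cref{rmk:nocg} says that presentability is all that \cite[Proposition A.17]{ChroNS} requires.

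\begin{enumerate}
\item Reduce to the slice. By \cite[Lemma A.15]{ChroNS}, the slice $(\TwoRing^\rig_C)_{D/}$ is again weakly spectral. By \Cref{rmk:relrig} it is equivalent to $\TwoRing^\rig_D$, though we do not need this identification.
\item Apply the existence result. The object $D$ is nonzero, hence non-terminal, since the terminal object is $0$. Applying \cite[Proposition A.17]{ChroNS} to $\TwoRing^\rig_C$ and the non-terminal object $D$ then produces, for the given regular $\kappa$, a $\kappa$-NS object of the slice $(\TwoRing^\rig_C)_{D/}$, that is, a map $D\to E$.
\end{enumerate}

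The main point to check is whether ``$\kappa$-NS in $\TwoRing^\rig_C$'' means $\kappa$-NS in $\TwoRing^\rig_C$ itself or in the slice under $D$. The two agree: for $E$ under $D$ there is a canonical equivalence $((\TwoRing^\rig_C)_{D/})_{E/}\simeq(\TwoRing^\rig_C)_{E/}$, compatible with the forgetful functors. These forgetful functors preserve and detect filtered colimits, because colimits in an under-category are computed underlying when indexed by weakly contractible diagrams. Hence they identify $\kappa$-compact objects on both sides, and they identify terminal objects. So the Nullstellensatz condition for $E$ is the same in either category, and $E$ is $\kappa$-NS in $\TwoRing^\rig_C$.

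The only genuinely non-formal input is the weak spectrality lemma already proved. Within it, compactness of the terminal object $0$ is where quasi-compactness is used: $D=0$ is detected by the vanishing of $\one_D\otimes c$, which is a compact condition. This is why the hypothesis is needed and why I expect no further obstacle.
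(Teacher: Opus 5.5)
Your proposal is correct and is the paper's argument: the preceding lemma gives weak spectrality of $\TwoRing^\rig_C$ from quasi-compactness, and \cite[Proposition A.17]{ChroNS} then produces the $\kappa$-NS object under $D$. The paper leaves implicit your check that being $\kappa$-NS in the slice under $D$ agrees with being $\kappa$-NS in $\TwoRing^\rig_C$. That check follows directly from the equivalence $((\TwoRing^\rig_C)_{D/})_{E/}\simeq(\TwoRing^\rig_C)_{E/}$, so the argument about filtered colimits and the appeal to Lemma A.15 can be dropped.
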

\begin{proof}
    This follows from the previous lemma by \cite[Proposition A.17]{ChroNS}.
\end{proof}
\begin{rmk}
    The fact that compact generation is not needed, cf. \Cref{rmk:nocg}, shows that one may also consider \emph{large} rigid categories in the sense of Gaitsgory--Rozenblyum \cite[Chapter 1, §9]{GR}, and this category \emph{also} admits a sufficient supply of NS objects (here, we do use presentability, cf. \cite[Theorem A]{LocRig}, and that the terminal object is still compact follows from \cite[Proposition 2.52]{Dbl}). 
\end{rmk}
Thus, there is a large supply of $\kappa$-NS objects in $\TwoRing^\rig_C$ for ``most'' locally rigid $2$-rings $C$, and we are in a reasonable position to study them\footnote{If one is only interested in failure of redshift, our proof is actually in some sense fully constructive, and one does not need to consider specifically NS objects.}. We will in fact study those NS objects also without the quasi-compactness assumption, although when removing this condition, what we prove might be vacuous. 

The first basic observation is that they are related to NS commutative rings by decategorification as follows:

\begin{lm}\label{lm:end1NS}
Let $C$ be a locally rigid $2$-ring, and let $D$ be a $\kappa$-NS rigid commutative $C$-algebra, for any regular $\kappa$. The commutative algebra $\End(\one_D)\in \CAlg(C)$ is $\kappa$-NS. 
\end{lm}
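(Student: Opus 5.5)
Write $R:=\End(\one_D)\in\CAlg(C)$. The natural tool is the base-change construction $A\mapsto D\otimes_R A := D\otimes_{\Mod_R(C)}\Mod_A(C)$ for commutative $R$-algebras $A$ in $C$, following \Cref{nota:basechange}. This is left adjoint, from $\CAlg(C)_{R/}$ to $(\TwoRing^\rig_C)_{D/}$, to $E\mapsto \End(\one_E)$, where $\End(\one_E)$ receives an $R$-algebra map via $D\to E$. The plan is to transport the NS property of $D$ to $R$ through this adjunction. So let $A$ be a non-terminal $\kappa$-compact object of $\CAlg(C)_{R/}$; we must produce an $R$-algebra map $A\to R$.

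First, I check that $D\otimes_R A$ is a rigid commutative $D$-algebra. $\Mod_A(C)$ is rigid over $\Mod_R(C)$ because its unit $A$ is compact (indeed atomic) there. Also $D$ is a $\Mod_R(C)$-algebra, since $R=\End(\one_D)$ acts on $D$. Hence the pushout $D\otimes_R A$ is rigid over $D$ and so lies in $\TwoRing^\rig_C$ under $D$.

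Second, I check that $D\otimes_R A$ is $\kappa$-compact under $D$. The adjunction gives
$$\Map_{D/}(D\otimes_R A,E)\simeq\Map_{\CAlg(C)_{R/}}(A,\End(\one_E)).$$
The functor $E\mapsto\End(\one_E)$ is the right adjoint $E\to C$ applied to the unit, and it preserves $\kappa$-filtered colimits. This is the one place that needs care. Filtered colimits in $\TwoRing^\rig_C$ are computed on compact objects, so mapping spectra out of the unit commute with filtered colimits, at least when the unit of $C$ is $\kappa$-compact. If the unit of $C$ is only compact relative to the ind-unit, I would argue instead via \Cref{cor:unittensor}-style identifications. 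It follows that $D\otimes_R A$ is $\kappa$-compact.

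Third, I check that $D\otimes_R A$ is nonzero, and I expect this to be the main obstacle. Since $\Mod_A(C)$ is atomic over $\Mod_R(C)$, \Cref{lm:homsintensor} (or \Cref{cor:unittensor}, applied over the base $\Mod_R(C)$) gives
$$\End(\one_{D\otimes_R A})\simeq \End_R(\one_D)\otimes_R A = R\otimes_R A = A,$$
where the relevant internal hom of $\one_D$ over $\Mod_R(C)$ is $R$ itself. Because $A\neq 0$, the category $D\otimes_R A$ is nonzero.

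Finally, I apply the NS property. Since $D$ is $\kappa$-NS, there is a map $D\otimes_R A\to D$ under $D$. Applying $\End(\one_{-})$ gives a map of $R$-algebras $A\simeq\End(\one_{D\otimes_R A})\to\End(\one_D)=R$. Non-terminality of $R$ follows from $D\neq 0$. Therefore $R$ is $\kappa$-NS in $\CAlg(C)$.
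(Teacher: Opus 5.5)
Your proof is correct and is essentially the paper's argument: base change $D$ along $R=\End(\one_D)\to A$, show that $D\otimes_R A$ is nonzero and $\kappa$-compact under $D$, and pass the NS retraction to endomorphisms of units. The only cosmetic difference is in the nonvanishing step: the paper gets it from full faithfulness of $\Mod_R(C)\to D$, base-changed via \Cref{cor:fftensor} to $\Mod_A(C)\to D\otimes_R A$, whereas you read off $\End(\one_{D\otimes_R A})\simeq A$ directly from \Cref{lm:homsintensor}, which is the same input. Your hedge in the compactness step is also unnecessary: $\one_E$ is a filtered colimit of images of compact objects of $C$, and filtered colimits in $\TwoRing^\rig_C$ are computed on compact objects, so $E\mapsto\End(\one_E)$ preserves all filtered colimits with no assumption on the unit of $C$.
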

\begin{proof}
    Let $\End(\one_D)\to R$ be a $\kappa$-compact, nonzero commutative $\End(\one_D)$-algebra in $C$. 

    We can form the pushout $D_R:=D\otimes_{\End(\one_D)}R$ (cf. \Cref{nota:basechange}). Since $\Mod_{\End(\one_D)}(C)\to D$ is fully faithful, so is $\Mod_R(C)\to D\otimes_{\End(\one_D)}R$ by \Cref{cor:fftensor}. Since $R\neq 0$, it follows that $D_R\neq 0$. Furthermore, $D_R$ is $\kappa$-compact over $D$ and thus there exists a splitting $D_R\to D$, which provides the appropriate splitting on units. 
\end{proof}

Let us recall that when $C=\Mod_\Q$, this means that $\End(\one_D)$ is a $2$-periodized algebraically closed field of characteristic $0$, $L[u^{\pm 1}], |u|=2$, $L$ of cardinality $\geq \kappa$, and when $C=\Sp_{T(n)}$, this means that $\End(\one_D)$ is a Lubin--Tate theory of height $n$ associated to an algebraically closed field of characteristic $p$, cf. \Cref{thm:choNS}.
\subsection{A $K$-theoretic Nullstellensatz}
In this section, we prove the following Nullstellensatzian analogue of \Cref{thm:designer}: 
\begin{thm}\label{thm:KNS}
     Let $C$ be a locally rigid $2$-ring with $\kappa$-compact unit, and let $D\in \TwoRing^\rig_C$ be $\max(\kappa, \aleph_1)$-NS. The Dimension morphism $$\Dim: K(D^\dbl)\to \End(\one_D)^{BS^1}$$ is an equivalence of commutative ring spectra. 
\end{thm}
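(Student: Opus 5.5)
The plan is to mimic the final step of the proof of \Cref{thm:designer}, replacing the small object argument by the Nullstellensatzian property. Set $\lambda:=\max(\kappa,\aleph_1)$. This is an uncountable regular cardinal, and $\one_C$ is $\lambda$-compact, so the conventions of \Cref{nota:std} apply to the base $D$ with cardinal $\lambda$. Indeed, $D$ is a rigid commutative $C$-algebra and $\one_D$ is $\lambda$-compact in $D$. By \Cref{rmk:relrig}, rigid commutative $D$-algebras are the same as rigid commutative $C$-algebras under $D$. Moreover, $\lambda$-compact objects of $\TwoRing^\rig_D$ are in particular $\lambda$-compact objects of the slice $(\TwoRing^\rig_C)_{D/}$.

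The key observation is the following. Let $D\to D'$ be a $\lambda$-compact rigid extension. Then $D'$ is nonzero, because $D\to D'$ is fully faithful and $D\neq 0$ (an NS object is non-terminal). Since $D$ is $\lambda$-NS, there is a map $D'\to D$ under $D$, i.e. a retraction of $D\to D'$. Hence $K(D^\dbl)\to K((D')^\dbl)$ is injective on homotopy groups.

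We then check bijectivity of $\pi_n\Dim$ for each $n$.

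\emph{Injectivity.} Take $x\in\pi_nK(D^\dbl)$ with $\Dim(x)=0$. By \Cref{lm:atinj}, applied to $D$ in place of $C$, there is a $\lambda$-compact rigid extension $D'$ in which $x$ vanishes. By the retraction above, $x=0$.

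\emph{Surjectivity.} Take $y\in\pi_n(\End(\one_D)^{BS^1})$. By \Cref{lm:atomicsurj} there is a $\lambda$-compact rigid extension $D'$ and a class $\tilde y\in\pi_nK((D')^\dbl)$ with $\Dim(\tilde y)=y$. Here we use the identification $\End(\one_{D'})\simeq\End(\one_D)$ coming from full faithfulness. Push $\tilde y$ along the retraction $r\colon D'\to D$. By naturality of $\Dim$ along symmetric monoidal functors, $\Dim(r_*\tilde y)$ is the image of $y$ under the map $\End(\one_{D'})\to\End(\one_D)$ induced by $r$. Since $r$ is a retraction of the fully faithful unit $D\to D'$, this map is inverse to the equivalence $\End(\one_D)\simeq\End(\one_{D'})$, so $\Dim(r_*\tilde y)=y$.

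It remains to see that $\Dim$ is a map of commutative ring spectra. It is the mate of a composite of the Dennis trace, which is symmetric monoidal, with the trace map, which is a map of commutative algebras by \Cref{cor:trconstr}. Hence $\Dim$ is a ring map, and being a $\pi_*$-isomorphism, it is an equivalence of commutative ring spectra.

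The main point requiring care is the naturality of $\Dim$ in $D$, both along the extension and along the retraction. This should follow from the naturality of the Dennis trace and of $\tr$ on $\CAlg^\rig$ (\Cref{thm:uniquetr}). One must also check that \Cref{lm:atinj} and \Cref{lm:atomicsurj} are applicable with $D$ as the base, which holds because $D$ is itself locally rigid with $\lambda$-compact unit. The need for uncountability of $\lambda$ is exactly why $\max(\kappa,\aleph_1)$ appears in the statement.
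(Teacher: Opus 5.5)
Your proposal is correct and is essentially the paper's proof. The paper likewise observes that the argument for \Cref{thm:designer} only uses that every $\max(\kappa,\aleph_1)$-compact rigid extension of $D$ admits a retraction, which holds for an NS $D$ because such extensions are nonzero, and then reapplies \Cref{lm:atinj} and \Cref{lm:atomicsurj}; you additionally spell out the transfer of surjectivity along the retraction via naturality of $\Dim$, and the multiplicativity of $\Dim$, both of which the paper leaves implicit.
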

As in \Cref{section:main} (cf. \Cref{cor:noshift}), we obtain the following immediate corollary: 
\begin{cor}\label{cor:NSnoshift}
    For $D\in\TwoRing^\rig_{T(n)}$ $\aleph_1$-NS, $L_{T(n+1)}K(D^\dbl)= 0$. 
\end{cor}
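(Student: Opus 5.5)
The plan is to run the argument from the proof of \Cref{thm:designer} again, now using the Nullstellensatzian property in place of the lifting property produced by the small object argument. Set $\lambda:=\max(\kappa,\aleph_1)$. Then $\lambda$ is uncountable and regular, and $\one_C$ is $\lambda$-compact, so $C$ satisfies the hypotheses of \Cref{nota:std} with $\lambda$ in place of $\kappa$. Note also that $D$ is a rigid commutative $C$-algebra and $\one_D$ is $\lambda$-compact in $D$, because $D\to C$ preserves colimits. Hence \Cref{lm:atomicsurj} and \Cref{lm:atinj} apply with base $D$ itself, by \Cref{rmk:relrig}. Finally, $\Dim$ is a map of commutative ring spectra: it is built from the symmetric monoidal Dennis trace and the trace map, which is a commutative algebra map by \Cref{cor:trconstr}. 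So it only remains to show that $\Dim$ is an isomorphism on every $\pi_n$.

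\emph{Injectivity.} Let $x\in\pi_nK(D^\dbl)$ with $\Dim(x)=0$. By \Cref{lm:atinj} applied over $D$, there is a $\lambda$-compact rigid extension $D\to D'$ in which $x$ maps to $0$. Since $D\to D'$ is fully faithful and $D\neq 0$ (NS objects are non-terminal), we get $D'\neq 0$. So $D'$ is a non-terminal $\lambda$-compact object of $(\TwoRing^\rig_C)_{D/}$, and the NS property gives a retraction $D'\to D$ under $D$. Applying $K((-)^\dbl)$, the image of $x$ under the composite $D\to D'\to D$ is $x$ itself. It is also $0$, since it factors through $0\in\pi_nK(D'^\dbl)$. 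Hence $x=0$.

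\emph{Surjectivity.} Let $y\in\pi_n(\End(\one_D)^{BS^1})$. By \Cref{lm:atomicsurj} over $D$, there is a $\lambda$-compact rigid extension $D\to D'$ and a class $\tilde y\in\pi_nK(D'^\dbl)$ with $\Dim(\tilde y)=y$, using $\End(\one_{D'})\simeq\End(\one_D)$. As before $D'\neq 0$, so the NS property gives a retraction $r:D'\to D$ under $D$. By naturality of $\Dim$ along symmetric monoidal functors, $\Dim(r_*\tilde y)$ is the image of $y$ under $\End(\one_{D'})^{BS^1}\to\End(\one_D)^{BS^1}$. Since $r$ is a map under $D$ and $\End(\one_D)\to\End(\one_{D'})$ is an equivalence, this map is inverse to the identification we used, so $\Dim(r_*\tilde y)=y$.

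The main obstacle is bookkeeping rather than new mathematics. First, one must check that the $\lambda$-compact extensions produced by the lemmas are $\lambda$-compact as objects of the slice under $D$, not just $\lambda$-compact "relative to $D$". If the lemmas are read as giving $D'$ as a pushout of a $\lambda$-compact extension between $\lambda$-compact objects, this is the standard argument of \cite[Lemma A.23]{ChroNS}. Second, one must verify that naturality of $\Dim$ holds along the retraction, which need not be fully faithful. That naturality follows from naturality of the Dennis trace together with \Cref{thm:uniquetr}.
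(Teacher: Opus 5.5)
Your argument that $\Dim\colon K(D^\dbl)\to\End(\one_D)^{BS^1}$ is bijective on every $\pi_n$ is correct. It is precisely the paper's proof of \Cref{thm:KNS}: rerun \Cref{thm:designer} with NS retractions in place of the small-object lifting property. Your first ``obstacle'' is in fact vacuous. By \Cref{rmk:relrig}, a $\lambda$-compact rigid extension of $D$ is already $\lambda$-compact in $(\TwoRing^\rig_C)_{D/}$, and \cite[Lemma A.23]{ChroNS} is only needed for the small object argument. However, the statement at hand is the corollary, and your proposal stops before reaching it. It never specializes to $C=\Sp_{T(n)}$ and never derives the vanishing of $L_{T(n+1)}K(D^\dbl)$. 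Two steps are missing, and the first is not merely cosmetic.

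\textbf{The cardinal.} Your argument needs $D$ to be $\lambda$-NS, with $\lambda=\max(\kappa,\aleph_1)$ and $\one_C$ $\kappa$-compact, because the lemmas you invoke only produce $\lambda$-compact extensions. The hypothesis only gives $\aleph_1$-NS. So you must check that $L_{T(n)}\Sph$ is $\aleph_1$-compact in $\Sp_{T(n)}$; note it is not compact for $n\geq 1$. It is $\aleph_1$-compact, for three reasons:
\begin{enumerate}
\item $L_{T(n)}\Sph\simeq L_{T(n)}\mathrm{fib}(\Sph\to L^f_{n-1}\Sph)$;
\item this fiber is a countable colimit of finite spectra of type $\geq n$;
\item $L_{T(n)}\colon\Sp\to\Sp_{T(n)}$ preserves colimits and sends such finite spectra to compact objects.
\end{enumerate}

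\textbf{The vanishing.} You must deduce the vanishing from the equivalence, as follows.
\begin{enumerate}
\item $\End(\one_D)$ is a mapping spectrum in a $\Sp_{T(n)}$-linear category, hence $T(n)$-local and in particular $L_n^f$-local.
\item $L_n^f$-local spectra are closed under limits, so $K(D^\dbl)\simeq\End(\one_D)^{BS^1}$ is $L_n^f$-local.
\item $L_{T(n+1)}$ annihilates $L_n^f$-local spectra, because $L_n^f$ is smashing and kills finite spectra of type $n+1$.
\end{enumerate}
With these two additions your proof is the paper's: \Cref{thm:KNS} followed by the argument of \Cref{cor:noshift}.
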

In fact this corollary follows also from \Cref{cor:smallnoshift}, and it does not strictly speaking provide more examples of failure of redshift, but it indicates that this failure happens ``everywhere'', and on canonical objects (Nullstellensatzians being the analogue of algebraically closed fields). 

Let us also recall some examples: 
\begin{ex}\label{ex:KNSTn}
    Suppose $C=\Sp_{T(n)}$ for some $n\geq 1$, and some implicit prime $p$. In this case, for any $\aleph_1$-NS rigid $C$-algebra $D$, $\End(\one_D)$ is a Lubin--Tate theory $E_n(k)$, where $k$ is an algebraically closed field of characteristic $p$ (and we have suppressed the formal group of height $n$ from the notation since over such a field, it is unique up to isomorphism). Thus we find that $K_0(D^\dbl)\cong E_n(k)^*(BS^1)\cong W(k)\llbracket u_1,...,u_{n-1}\rrbracket \llbracket x \rrbracket$. 

    If $C=\Sp_\mathbb Q\simeq D(\mathbb Q)$, $\End(\one_D)\simeq L[u^{\pm 1}]$ for some algebraically closed field $L$
 of characteristic $0$ and $u$ in degree $2$, and so $K_0(D^\dbl) \cong L\llbracket x\rrbracket$.
 \end{ex}
 \begin{rmk}
     The above result, even in the rational case, is for $\aleph_1$-NS rigid categories. In fact, if $C$ is rigid and we wish to study $\aleph_0$-NS rigid commutative $C$-algebras, then it is not difficult to change the arguments to prove that $\dim : K(D^\dbl)\to \End(\one_D)$ is $\pi_0$-surjective. 

    More generally, for any compact space $X$ and map $X\to BS^1$, the restricted $\Dim$ morphism $\Dim_{\mid X}: K(D^\dbl)\to \End(\one_D)^X$ is $\pi_0$-surjective on the image of $\pi_0(\End(\one_D)^{BS^1})\to \pi_0(\End(\one_D)^X)$. In particular, if $\one_D$ is complex oriented, we get surjectivity onto $\pi_0(\End(\one_D)^{\mathbb CP^n})$ for every $n$, and thus the image in $\pi_0(\End(\one_D)^{BS^1})$ is dense for a certain complete topology. 
 \end{rmk}
 A somewhat surprising corollary is the following size estimate: 
\begin{cor}
    Let $C$ be a $\kappa$-NS object in $\TwoRing^\rig_\Q$, $\kappa\geq \aleph_1$. In this case, $C$ has at least $\kappa^{\aleph_0}$ objects.
\end{cor}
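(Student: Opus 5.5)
The plan is to count objects of $C^\dbl$ by the size of $K_0(C^\dbl)$, which is computed by \Cref{thm:KNS} together with the classification of Nullstellensatzian rational commutative rings. Since $\kappa\geq\aleph_1$ and $\Mod_\Q$ is rigid with compact unit, \Cref{thm:KNS} applies and gives an isomorphism $\pi_0\Dim: K_0(C^\dbl)\cong \pi_0(\End(\one_C)^{BS^1})$. By \Cref{lm:end1NS}, $\End(\one_C)$ is a $\kappa$-NS rational commutative ring spectrum. By \Cref{thm:choNS}, it is therefore of the form $L[u^{\pm1}]$ with $|u|=2$, where $L$ is algebraically closed of characteristic $0$ and $|L|\geq\kappa$. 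As in \Cref{ex:KNSTn}, we get $K_0(C^\dbl)\cong L\llbracket x\rrbracket$, a set of cardinality $|L|^{\aleph_0}\geq \kappa^{\aleph_0}$.

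It remains to bound $|K_0(C^\dbl)|$ from above by the number of objects of $C^\dbl$, up to equivalence. Every class in $K_0$ of a small stable (idempotent-complete) category has the form $[a]-[b]$ for objects $a,b$; indeed it has the form $[a]$, using $[\Sigma b]=-[b]$ and additivity of direct sums. So the map $\pi_0((C^\dbl)^\simeq)\to K_0(C^\dbl)$ is surjective, and $C^\dbl$ has at least $\kappa^{\aleph_0}$ equivalence classes of objects. Since $C$ contains $C^\dbl$, the same holds for $C$, whether "objects" means objects of the rigid small category or of $C$ itself.

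There is no real obstacle here; the only points to check are bookkeeping. First, \Cref{thm:KNS} needs $\max(\kappa_0,\aleph_1)$-NS, where $\kappa_0$ is such that the unit of $\Mod_\Q$ is $\kappa_0$-compact. We can take $\kappa_0=\aleph_0\le\kappa$, so $\kappa$-NS suffices. Second, the cardinality statement $|L\llbracket x\rrbracket| = |L|^{\aleph_0}$ is immediate, and $|L|^{\aleph_0}\geq\kappa^{\aleph_0}$ follows from $|L|\geq\kappa$. Third, we should make sure $\pi_0$ of $\End(\one_C)^{BS^1}$ is indeed $L\llbracket x\rrbracket$. This holds because $L[u^{\pm1}]$ is even periodic, so the Atiyah–Hirzebruch spectral sequence for $BS^1$ collapses with no $\lim^1$ issues, since everything is in even degrees and the relevant towers are surjective.
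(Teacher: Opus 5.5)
Your proposal is correct and is essentially the paper's argument. The paper also obtains $K_0(C^\dbl)\cong L\llbracket x\rrbracket$ of cardinality at least $\kappa^{\aleph_0}$ from \Cref{thm:KNS} (as in \Cref{ex:KNSTn}), and bounds $|K_0|$ above by $\max(\aleph_0,|\pi_0(C^\simeq)|)$; your observation that every class has the form $[a]$ gives the slightly sharper bound $|K_0(C^\dbl)|\le|\pi_0((C^\dbl)^\simeq)|$, which makes no difference here since $\kappa^{\aleph_0}$ is uncountable.
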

\begin{proof}
    $K_0(C)$ is of cardinality $\leq \max(\aleph_0, |\pi_0(C^\simeq)|)$ and by \Cref{ex:KNSTn}, it has cardinality $\geq |L[[x]]| \geq \kappa^{\aleph_0}$. 
\end{proof}
This is surprising since $\kappa^{\aleph_0}$ may be $>\kappa$, in which case $C$ cannot be $\kappa^+$-compact. This is in stark contrast with the case of commutative ring spectra over $\Q$, where the smallest $\kappa$-NS is of the ``minimum reasonable size'', namely $\kappa$ (and hence $\kappa^+$-compact). 

Let us now prove \Cref{thm:KNS}. There are many ways to go about this, and in particular it is possible to \emph{deduce} it from \Cref{thm:designer}. Let us instead, for simplicity, explain why the proof is the same:
\begin{proof}[Proof of \Cref{thm:KNS}]
Without loss of generality, assume $\kappa \geq \aleph_1$. 

    The proof of \Cref{thm:designer} only used one property of $D$: that any $\kappa$-compact rigid extension $D\to E$ split. If $D$ is $\kappa$-NS, then more is true: any rigid map $D\to E$ with $E\neq 0$ is split. But if $D\to E$ is a rigid extension and $D\neq 0$, then $E\neq 0$, so this property holds for $\kappa$-NS objects, and so the same proof applies directly. 
\end{proof}

We conclude this section with a brief remark, that will not be used in the rest of the paper but seems worth mentioning. 
\begin{rmk}
   Most of the technical results in \Cref{section:main} take the form ``For any locally rigid $2$-ring $C$, and given some kind of $\kappa$-small data, there exists a $\kappa$-compact rigid extension $D$ of $C$ satisfying some property''. When $C$ is itself $\kappa$-NS, the extension $D$ actually splits, and for all these results, it actually implies that $C$ itself has that property. It is worth stating explicitly these results in that case to get a sense of how surprisingly behaved these objects are. To give an example,  \Cref{cor:almostrealizingmaps} specializes to: if $C$ is $\kappa$-NS, then any map $M_C(E)\to M_C(C)$ in $\Mot_C$, where $E$ is a $\kappa$-compact compactly generated $C$-module, can be realized by a map $E\to C$ of $C$-modules.

   This means we can think of NS rigid categories as ``motivically self-injective''.
\end{rmk}
\section{Rational rigid categories and semisimplicity}\label{section:Q}
In this section, we study in more detail rational Nullstellensatzian rigid categories. 

One way to think about \Cref{thm:KNS} is that it is established by proving that certain extensions of rigid categories, constructed in terms of Dimension-theoretic data, are nonzero; and this nonzero-ness is much weaker than what is actually proved, namely fully faithfulness. 

There is another way of guaranteeing conservativity of certain operations, that is, of guaranteeing that they produce nonzero objects, which is semi-simplicity. The remarkable thing is now that in characteristic $0$, many free rigid categories turn out to be semi-simple, by a result of Deligne \cite{deligne}, which was already exploited in \cite{BHR}. This allows us to study NS rigid categories over $\mathbb Q$ in much more depth, and this is the goal of this section. 

We begin with some brief recollections about Dimensions in characteristic $0$ and about semi-simplicity, which we use to describe the main examples of semi-simple categories we will be considering, and deduce a number of results about NS rigid rational categories. 
\begin{conv}
    Throughout this section, all $2$-rings will be rigid and rational. 
\end{conv}
Thus, equivalently, we will be considering small idemptent-complete symmetric monoidal stable categories in which all objects are dualizable. 
\subsection{Dimensions in characteristic $0$}\label{section:ratdim}
Recall that if $x\in C$ is a dualizable object in a symmetric monoidal category, its Dimension is a map $BS^1\to \End(\one_C)$. If $C$ is additive, this map extends canonically to $\Sph[BS^1]\to \End(\one_C)$, and if $C$ is rational, to $\Q[BS^1]$. Since $\End(\one_C)$ is a commutative ring spectrum, it in fact factors through $\Q\{BS^1\}$, and so we will benefit from an understanding of the latter: 
\begin{lm}\label{lm:freepi*}
    The free rational commutative ring spectrum $\Q\{BS^1\}$ on $BS^1$ has homotopy groups a polynomial ring $\Q[t_i,i\in \NN]$ with $|t_i|=2i$. 

    In particular it is free as a rational commutative ring spectrum on these classes $t_i$.
\end{lm}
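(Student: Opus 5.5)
The plan is to use the standard computation of free commutative algebras in characteristic $0$. Over $\Q$, the free commutative algebra on a module $M$ is $\bigoplus_k (M^{\otimes k})_{h\Sigma_k}$. Since rationally homotopy orbits by finite groups are just coinvariants on homotopy groups, this gives
\[\pi_*\Q\{M\}\cong \mathrm{Sym}^*_{\Q}(\pi_*M),\]
the graded-commutative free algebra on the graded vector space $\pi_*M$. I would justify this by the usual argument: $\Q$ is a field and $|\Sigma_k|$ is invertible, so $(M^{\otimes k})_{h\Sigma_k}$ has homotopy $((\pi_*M)^{\otimes k})_{\Sigma_k}$ with the Koszul sign convention. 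This is a standard fact about rational $\mathbb{E}_\infty$-algebras, for instance via the equivalence with rational CDGAs.

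Next I apply this with $M=\Q[BS^1]=\Q\otimes\Sigma^\infty_+BS^1$. Its homotopy is $H_*(BS^1;\Q)$, which is a $\Q$-vector space with one generator $\beta_i$ in each even degree $2i$, $i\geq 0$. Since all generators are in even degrees, the graded-symmetric algebra is an honest polynomial ring with no exterior part. Setting $t_i=\beta_i$, we obtain $\pi_*\Q\{BS^1\}\cong\Q[t_i,i\in\NN]$ with $|t_i|=2i$. Here $t_0$ is the class of the basepoint, corresponding to the plain dimension.

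For the ``in particular'', each $t_i$ is represented by a map $\Sigma^{2i}\Q\to\Q\{BS^1\}$. Together these induce a map of commutative algebras $\Q\{\bigoplus_i\Sigma^{2i}\Q\}\to\Q\{BS^1\}$. This map comes from the module map $\bigoplus_i\Sigma^{2i}\Q\to\Q[BS^1]$, which is a $\pi_*$-isomorphism and hence an equivalence: rational modules are determined by their homotopy, since they are sums of shifts of $\Q$. Applying $\Q\{-\}$ therefore gives an equivalence, and this is exactly the statement that $\Q\{BS^1\}$ is free on the classes $t_i$. Alternatively, I would compare homotopy on both sides using the first computation.

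The only step with real content is the identification of $\pi_*$ of the free rational commutative algebra with the symmetric algebra on homotopy, i.e. the exactness of rational homotopy orbits. I expect no serious obstacle, since I would cite it as standard.
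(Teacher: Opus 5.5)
Your proposal is correct and follows essentially the same route as the paper. The paper splits $\Q[BS^1]\simeq\bigoplus_n \Q\cdot u_n$, with $u_n$ in degree $2n$ dual to $u^n\in\pi_*(\Q^{BS^1})$, and takes $t_i$ to be the image of $u_i$ in the free algebra; you additionally spell out the standard input, left implicit there, that rationally $\pi_*$ of a free commutative algebra is the graded-symmetric algebra on $\pi_*$.
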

\begin{proof}
    It is the free commutative ring spectrum over $\Q$ on the module $\Q[BS^1] \simeq \bigoplus_n \Q\cdot u_n$ where $u_n$ is dual to $u^n \in \pi_*(\Q^{BS^1})\cong \Q[u], |u|=-2$.

    If we call $t_i$ the image of $u_i$ under the map $\Q[BS^1]\to\Q\{BS^1\}$, the result follows. 
\end{proof}
The generator $t_0$ corresponds to the usual dimension, with no $S^1$-action, and will bear particular importance for us. Thus, we single it out with the following notation: 
\begin{nota}\label{nota:t}
    We let $t:= t_0\in\pi_0(\Q\{BS^1\})$. 
\end{nota}
In particular, the Dimension $\Dim(x)$ of a dualizable object $x$ in such a category is determined by a sequence $d_i \in \pi_{2i}(\End(\one_C)), i \in\NN$, and is given as a ring map on $\pi_*$ by sending a homogeneous polynomial $P\in\Q[t_i,i\in \NN]$ to $P((d_i,i\in\NN))$. 
\begin{nota}
    For $(C,x)$ as above, we let $\Dim(x)_i$ denote the $d_i$ from the above discussion, i.e. the image of $t_i$ under $\Q\{BS^1\}\to \End(\one_C)$.

    In particular, $\Dim(x)_0= \dim(x)$. 
\end{nota}

The following will be a convenient calculational tool:
\begin{lm}\label{lm:sumissum}
    Let $C$ be a rational additively symmetric monoidal category and $x,y\in C$ two dualizable objects. In this case, $\Dim(x\oplus y)_i= \Dim(x)_i+\Dim(y)_i$. 
\end{lm}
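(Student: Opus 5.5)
The plan is to reduce the statement to additivity of the Dimension map $\Dim: (C^\dbl)^\simeq\to\End(\one_C)^{BS^1}$ with respect to direct sums, and then read off components. Concretely, the claim is that $\Dim(x\oplus y)\simeq \Dim(x)+\Dim(y)$ as maps $BS^1\to \End(\one_C)$, where the sum is taken in the $E_\infty$-ring (in particular the spectrum) $\End(\one_C)$. Granting this, extend both sides linearly to maps $\Q[BS^1]\to\End(\one_C)$. Extension to $\Q[BS^1]$ is additive in the map, so the extension of $\Dim(x\oplus y)$ equals the sum of the two extensions. The classes $\Dim(-)_i$ are obtained by evaluating these linear maps on the generators $u_i\in\pi_{2i}\Q[BS^1]$, which by \Cref{lm:freepi*} map to $t_i$. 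Evaluation on a fixed homotopy class is additive, so $\Dim(x\oplus y)_i=\Dim(x)_i+\Dim(y)_i$.

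The key point to be careful about is the following. The $\Dim(-)_i$ are images of $t_i$ under the \emph{ring} map $\Q\{BS^1\}\to\End(\one_C)$, and ring maps are not additive in the map. However, $t_i$ lies in the image of the generating module $\Q[BS^1]$. The restriction of the ring map to $\Q[BS^1]$ is just the adjoint module map, and that map is additive in the underlying map $BS^1\to\End(\one_C)$. So it suffices to show additivity at the level of maps of anima $BS^1\to\Omega^\infty\End(\one_C)$.

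For additivity itself, I would use that $C$ is stable, or at least additive. There is a split cofiber sequence $x\to x\oplus y\to y$, and the earlier lemma (Keidar--Ragimov, \cite[Corollary 3.2.2]{keidarragimov}) then gives $\Dim(x\oplus y)\simeq\Dim(x)+\Dim(y)$. If $C$ is only additive rather than stable, I would instead pass to a stable envelope, for instance $\Perf$ or bounded complexes over $C$. This is a symmetric monoidal functor $C\to C'$ preserving direct sums, dualizable objects, and $\End(\one)$ up to the map $\End(\one_C)\to\End(\one_{C'})$. Since $\Dim$ is natural in symmetric monoidal functors, because it comes from $\Cob\to C$, the identity can be checked after mapping to $C'$, at least on homotopy classes. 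Equality of the classes $\Dim(-)_i$ in $\pi_{2i}$ would be detected there provided the map on $\pi_*\End(\one)$ is injective. For an additive $1$-category, $\End(\one_C)$ is discrete and all $\Dim(-)_i$ with $i>0$ vanish by \Cref{ex:dim(1)}, so only $i=0$ matters, and there additivity of the ordinary trace of the identity is classical.

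The main obstacle is precisely the additive, non-stable case: justifying that the Keidar--Ragimov additivity, or the factorization through $K$-theory, applies. The cleanest route I would try first avoids envelopes entirely. $\Dim$ factors through the additive $K$-theory (the group completion of $(C^\dbl)^\simeq$ under $\oplus$) via the Dennis trace to $\THH$, and the map $(C^\dbl)^\simeq\to K^{\oplus}(C^\dbl)$ is an $E_\infty$-map for $\oplus$. Additivity of $\Dim$ on $\pi_0$ classes, which is all that is needed, then follows immediately.
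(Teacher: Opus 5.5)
Your proposal is correct and follows essentially the paper's argument. The paper's proof consists precisely of your ``key point'': evaluation at the $t_i$ factors as $\Map(BS^1,\End(\one_C))\simeq\Map_\Q(\bigoplus_n\Q\cdot u_n,\End(\one_C))\simeq\prod_n\Omega^{2n}\End(\one_C)$, which comes from the anima/$\Q$-module adjunction and is therefore additive. The paper leaves the additivity $\Dim(x\oplus y)\simeq\Dim(x)+\Dim(y)$ itself implicit (it comes from the earlier Keidar--Ragimov lemma), so your treatment of it, including the non-stable additive case, is extra detail rather than a different route.
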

\begin{proof}
    Evaluation at the $t_i$'s is essentially the map $$\Map(BS^1,\End(\one_C))\simeq \Map_{\mathbb E_\infty}(\Q[t_i,i\in\NN],\End(\one_C))\to \Map_\Q(\bigoplus_n \Q\cdot u_n,\End(\one_C))$$
    But now this total composite is simply induced by the adjunction between anima and $\Q$-modules, and is therefore clearly an additive equivalence 
    $$\Map(BS^1,\End(\one_C))\simeq  \Map_\Q(\bigoplus_n \Q\cdot u_n,\End(\one_C))\simeq \prod_n \Omega^{2n}\End(\one_C)$$
\end{proof}
\subsection{Semi-simple categories}\label{section:sscat}
In this section, we do some brief recollections on semi-simple categories with the goal to prove a certain conservativity result for rings in semi-simple categories, and later modules over semi-simple categories. 
\begin{defn}
    An additive $1$-category $C$ is called semi-simple if for all $x\in C$, $\End_C(x)$ is a semi-simple ring. 

    An object in a semi-simple additive category is called simple if its endomorphism ring is a division algebra. 
\end{defn}
Recall that a semi-simple ring is isomorphic, via the Artin--Wedderburn theorem, to a finite product $\prod_i M_{n_i}(D_i)$ of matrix rings over division algebras. 
\begin{lm}\label{lm:simplesplit}
    Let $C$ be a semi-simple additive $1$-category and let $a$ be a simple object. 

    In this case, any nonzero morphism $a\to x$ has a retraction.  
\end{lm}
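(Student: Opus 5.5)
The plan is to use the Artin--Wedderburn decomposition of $\End_C(a\oplus x)$ to split off a copy of $a$ inside $x$. Let $f: a\to x$ be nonzero. The ring $R:=\End_C(a\oplus x)$ is semi-simple by assumption, and the idempotents $e_a, e_x$ projecting onto the two summands give $e_aRe_a = \End(a)$, $e_xRe_x=\End(x)$, $e_xRe_a = \Hom(a,x)$ and $e_aRe_x=\Hom(x,a)$. Under this dictionary, $f$ is an element of $e_xRe_a$, and a retraction is an element $g\in e_aRe_x$ with $gf=e_a$. So the statement becomes a purely ring-theoretic one about a semi-simple ring $R$ with an idempotent $e=e_a$ such that $eRe$ is a division ring.

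Next I translate into modules. Over the semi-simple ring $R$, every left module is semi-simple, and $Re$ is a left ideal with $\End_R(Re)\cong (eRe)\op$, a division ring. Since every idempotent endomorphism of a semi-simple module splits, $Re$ must then be a simple $R$-module: a proper nonzero direct summand would give a nontrivial idempotent in $\End_R(Re)$. Right multiplication by $f$ gives an $R$-linear map $\rho_f: Re\to Re_x$, $r\mapsto rf$. This map is nonzero because $e\mapsto ef\cdot$, more precisely $e_a\mapsto f\neq 0$; here I use $fe_a=f=e_xf$, viewing $f$ as an element of $R$. By Schur, $\rho_f$ is injective, since $Re$ is simple. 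Semi-simplicity of $Re_x$ then gives a complementary summand, hence an $R$-linear retraction $\pi: Re_x\to Re$ with $\pi\rho_f=\id$.

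Finally I convert back. Any $R$-linear map $Re_x\to Re$ is right multiplication by some $g\in e_xRe$, via $g=\pi(e_x)$. Read in $C$, such a $g$ is a morphism $x\to a$, and $\pi\rho_f=\id$ evaluated at $e$ gives $fg' = e$ in the composition order of $R$, i.e. $g\circ f=\id_a$. The main obstacle is purely bookkeeping: keeping track of left and right conventions, and in particular whether composition in $C$ corresponds to multiplication in $R$ or in $R\op$. Since $R\op$ is also semi-simple, either convention works once it is fixed consistently.

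An alternative that avoids modules is to note that $f$ is nonzero, so $f^*$-style composites are nonzero. Using that $R$ is von Neumann regular (semi-simple rings are), there is an $h\in R$ with $fhf=f$. Set $g:=e_ahe_x$, so that $fgf=f$ still holds. Then $gf\in e_aRe_a$ is an idempotent, because $(gf)^2=g(fgf)=gf$, and it is nonzero because $f(gf)=f\neq 0$. Since $e_aRe_a$ is a division ring, its only nonzero idempotent is the identity, so $gf=e_a$. I would actually write this short regularity argument as the proof, as it is cleaner.
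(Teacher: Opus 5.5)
Your proof is correct, and the von Neumann regularity argument you intend to write up takes a genuinely different and shorter route than the paper's. The paper first assumes $C$ is idempotent complete and generated by $a\oplus x$. It then identifies $C$ with finitely generated projective modules over $R=\End_C(a\oplus x)$ and applies Artin--Wedderburn to get $C\simeq\prod_i\Vect_{D_i}$. Finally it reduces to a single factor, where $a$ must be $D_i$ and the claim is linear algebra.

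You instead work inside the one ring $R$. Choosing $h$ with $fhf=f$ and setting $g=e_ahe_x$ gives $fgf=f$. So $gf$ is a nonzero idempotent in the division ring $e_aRe_a=\End_C(a)$, hence $gf=\id_a$. This avoids idempotent completion, the Morita identification, and the step, left implicit in the paper, that a simple object of $\prod_i\Vect_{D_i}$ lives in a single factor. It also proves slightly more: you only use that $\End_C(a\oplus x)$ is von Neumann regular rather than semi-simple. The paper's route buys the more conceptual picture of semi-simple categories as products of categories of vector spaces.

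One small point about your first, module-theoretic sketch. With the convention you fixed ($f\in e_xRe_a$, multiplication given by composition), $e_af=0$, so right multiplication by $f$ is zero on $Re_a$. That argument should instead use the right ideals, $e_aR\to e_xR$, $r\mapsto fr$, or be run in $R^{\mathrm{op}}$. This does not affect the regularity argument, which is consistent as written.
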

\begin{proof}
Let $f:a\to x$ be a nonzero morphism. Without loss of generality, $C$ is idempotent-complete and generated under direct sums and retracts by $a\oplus x$. Let $R = \End_C(a\oplus x)$ so that $C\simeq \mathrm{Proj}^{\mathrm{f.g.}}_R$. Since $R\cong \prod_i M_{n_i}(D_i)$ for some division rings $D_i$ and some integers $n_i$, there is an equivalence $C\simeq \prod_i \Vect_{D_i}$. The property at hand is clearly closed under products of the category $C$, so we may assume $C=\Vect_{D_i}$, where the assumption guarantees that $a=D_i$ and then the claim is standard linear algebra. 
\end{proof}

\begin{cor}\label{cor:semisnilp}
    Let $C$ be a small rigid $2$-ring such that $\ho(C)$ is semi-simple and $\one_C\in \ho(C)$ is simple. If $R,S\in \Alg(\Ind(C))$ are nonzero rings, then $R\otimes S$ is also nonzero. 
\end{cor}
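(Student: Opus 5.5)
The idea is to detect nonvanishing of $R\otimes S$ through its unit map: $R\otimes S\neq 0$ as soon as the unit $\one\to R\otimes S$ is nonzero. Both $R$ and $S$ live in $\Ind(C)$, so I will reduce to maps out of the compact (dualizable) objects of $C$ and use \Cref{lm:simplesplit} to split off the unit.

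Since $R\neq 0$, its unit $\eta_R:\one_C\to R$ is nonzero: otherwise $\mathrm{id}_R=\mu\circ(\eta_R\otimes\mathrm{id}_R)$ would be null. Because $\one_C$ is compact in $\Ind(C)$ and $R$ is a filtered colimit of objects $r_\alpha\in C$, the map $\eta_R$ factors through some $r_\alpha$. Moreover, $\pi_0\Map(\one,R)=\colim_\alpha\pi_0\Map(\one,r_\alpha)$, so we may choose $\alpha$ so that the factored map $f:\one\to r_\alpha$ is nonzero in $\ho(C)$. Since $\ho(C)$ is semi-simple and $\one$ is simple, \Cref{lm:simplesplit} gives a retraction $p:r_\alpha\to\one$ in $\ho(C)$.

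In the same way, choose $g:\one\to s_\beta$ factoring $\eta_S$, together with a retraction $q:s_\beta\to\one$. The unit of $R\otimes S$ then factors as
\[\one\simeq\one\otimes\one\xrightarrow{f\otimes g}r_\alpha\otimes s_\beta\to R\otimes S,\]
and $f\otimes g$ has retraction $p\otimes q$ in $\ho(C)$, since $\ho(C)$ is symmetric monoidal. This shows that $\one$ is a summand of $r_\alpha\otimes s_\beta$, but it does not yet show that the unit of $R\otimes S$ is nonzero.

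This last step is the main obstacle: the retraction $p$ does not obviously extend to $R$. To get past it, I will pass to the colimit. The maps $r_\alpha\to R$ for varying $\alpha$ exhibit $\eta_R$ as the image of a compatible system, and I will use the $R$-module structure. Suppose the unit of $R\otimes S$ were zero. Then $R\otimes S=0$, so $r_\alpha\otimes S\to R\otimes S$ is zero, and by compactness $f\otimes g$ would become null after mapping to $r_{\alpha'}\otimes s_{\beta'}$ for some larger indices. That is not immediately contradictory, so instead I plan to use $\hom(r,-)$: semi-simplicity means every $c\in C$ is a finite sum of simples, and dualizability gives $\hom(\one,R\otimes S)\simeq\colim\hom(\one,r\otimes s)$.

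A cleaner route, which I will try first, is to argue via homotopy rings. Apply $\Map(\one,-)$ and set $A=\pi_0\Map(\one,R)$, a nonzero ring, and likewise $B$ for $S$. Semi-simplicity of $\ho(C)$ with $\one$ simple should make $\pi_0\Map(\one,-)$ monoidal up to a retract: $\ho(C)$-summand decompositions of each $r\otimes s$ give a surjection from $\pi_0\Map(\one,r\otimes s)$ onto $\pi_0\Map(\one,r)\otimes_k\pi_0\Map(\one,s)$, where $k=\End(\one)$ is a division ring. Concretely, decompose $r=\one^{m}\oplus r'$ and $s=\one^{n}\oplus s'$ with $r',s'$ containing no copy of $\one$. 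Then $\one^{mn}$ is a summand of $r\otimes s$, and this is natural enough in filtered colimits to give a ring map $\pi_0\Map(\one,R\otimes S)\to A\otimes_k B$ sending $1\mapsto1\otimes1$. Since $A\otimes_k B\neq0$ over the division ring $k$, it follows that $R\otimes S\neq0$. The naturality of this projection in $r$ and $s$ is the step to check carefully, namely that maps between objects preserve the isotypic $\one$-component projections. This holds by Schur's lemma in a semi-simple category.
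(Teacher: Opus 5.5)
The route you finally commit to is correct but differs from the paper's. It contains one false but unused claim. The map $\pi_0\Map(\one,R\otimes S)\to A\otimes_k B$ induced by the isotypic projections $r\otimes s\to r_{\one}\otimes s_{\one}$ is $k$-linear, but it is not a ring map. For instance, work in graded $\Q$-vector spaces with $R=\Q[x^{\pm1}]$ and $S=\Q[y^{\pm1}]$, where $x$ has weight $1$ and $y$ has weight $-1$. Then $\pi_0\Map(\one,R\otimes S)=\Q[(xy)^{\pm1}]$ and $A=B=\Q$, and the projection kills $xy$ and $(xy)^{-1}$ but not their product $1$. This does not hurt you, since all you use is a well-defined additive map sending the unit to $1\otimes 1\neq 0$. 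The naturality you flag is fine. If $r'$ has no summand $\one$, then $\pi_0\Map(\one,r')=0=\pi_0\Map(r',\one)$ by Schur (or \Cref{lm:simplesplit}), so every map in $\ho(C)$ commutes with the projections. Compactness of $\one$ then lets you pass to the filtered colimit. You also need $\ho(C)$ idempotent complete in order to split off $r_{\one}$.

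More to the point, your fourth paragraph already contained a complete proof, and you abandoned it. Suppose $f\otimes g$ dies in $r_{\alpha'}\otimes s_{\beta'}$. Its image there is $f'\otimes g'$, where $f'\colon\one\to r_{\alpha'}$ and $g'\colon\one\to s_{\beta'}$ are the composites with the transition maps. These still factor the nonzero units $\eta_R,\eta_S$, so they are nonzero and admit retractions $p',q'$ by \Cref{lm:simplesplit}. Then $(p'\otimes q')\circ(f'\otimes g')=\id_{\one}$ forces $\one=0$, contradicting simplicity; so it is immediately contradictory.

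This is essentially the paper's argument, phrased there asymmetrically. The paper writes only $R$ as a filtered colimit of objects $\one\to C_i$ of $C_{\one/}$. It uses compactness of $S$ as an $S$-module to see that $S\to S\otimes C_i$ is null for some $i$. It then concludes $S=0$ because $S\otimes(\one\to C_i)$ has a retraction. That argument uses only \Cref{lm:simplesplit} and compactness. Your isotypic route needs the full decomposition of objects into simples and the Schur bookkeeping. In exchange it proves more: $A\otimes_k B$ is a natural $k$-linear retract of $\pi_0\Map(\one,R\otimes S)$.
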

\begin{proof}
Write $R\simeq \colim_i C_i$ as a filtered colimit of objects of $C_{\one_C /}= (\Ind(C)_{\one_C/})^{\aleph_0}$. If $R\otimes S=0$, then by compactness of $S$ as an $S$-module, the map $S\to S\otimes C_i$ is $0$ for some $i$. 

But the composite $\one\to C_i\to R$ is nonzero since $R$ is a nonzero ring, so $\one_C\to C_i$ is nonzero. By \Cref{lm:simplesplit}, it has a retraction, and hence $S=0$ since $(S\to S\otimes C_i) = S\otimes(\one_C\to C_i)$ has a retraction
\end{proof}
\begin{rmk}
    This is of course false if the unit is not simple, as witnessed e.g. by $C=\Perf(k)\times\Perf(k)$, $R=(k,0), S= (0,k)$. The above statement shows that this is essentially the only way a counterexample can arise. 
\end{rmk}

\begin{cor}\label{cor:semiscons}
    Let $C$ be a small rigid $2$-ring and assume $\ho(C)$ is semi-simple with simple unit. 

Let $D,E$ be two (small) $C$-modules. If $D,E\neq 0$, then $D\otimes_C E\neq 0$.
\end{cor}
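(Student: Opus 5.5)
We reduce to \Cref{cor:semisnilp} by expressing both modules through endomorphism algebras. Pass to the large world: $\Ind(D)$ and $\Ind(E)$ are nonzero compactly generated $\Ind(C)$-modules, and $\Ind(D\otimes_C E)\simeq \Ind(D)\otimes_{\Ind(C)}\Ind(E)$. Since $D\neq 0$, pick a nonzero object $d\in D$, and similarly a nonzero $e\in E$. As $\Ind(C)$ is rigid, compact objects of $\Ind(D)$ are atomic (\Cref{ex:rigat=cpct}). Hence the internal endomorphism objects $R:=\End(d)$ and $S:=\End(e)$, computed in $\Ind(C)$, are well defined and lie in $\Alg(\Ind(C))$.

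These algebras are nonzero. Indeed $\Map_{\Ind(C)}(\one, R)\simeq \End_D(d)$, which is nonzero because $d\neq 0$: its identity is not zero. So $R\neq 0$, and likewise $S\neq 0$.

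Now apply \Cref{lm:homsintensor} with $m_0=m_1=d$ and $n_0=n_1=e$, both atomic. This gives
\[\hom_{\Ind(D)\otimes_{\Ind(C)}\Ind(E)}(d\otimes e,d\otimes e)\simeq R\otimes S,\]
and the equivalence is compatible with the algebra structures, although only nonvanishing matters here. By \Cref{cor:semisnilp}, which uses the hypotheses that $\ho(C)$ is semi-simple with simple unit, $R\otimes S\neq 0$. So $d\otimes e$ is a nonzero object of $\Ind(D)\otimes_{\Ind(C)}\Ind(E)$. It is compact, being the image of the compact object $(d,e)$ under the compact-preserving external tensor, so it lies in $D\otimes_C E$. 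Therefore $D\otimes_C E\neq 0$.

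The main obstacle is the bookkeeping between small and large modules. One must check that the small tensor product $D\otimes_C E$ is the compact part of the large one, and that \Cref{lm:homsintensor} computes internal homs in the relative tensor product over $\Ind(C)$ rather than over $\Sp$; both follow from the conventions set out after \Cref{nota:smallrigid}. A second point is that \Cref{cor:semisnilp} is stated for algebras in $\Ind(C)$, not just compact ones, which is exactly the generality needed, since $R$ and $S$ need not be compact in $\Ind(C)$ unless $D$ and $E$ are themselves rigid.
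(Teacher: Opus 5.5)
Your proof is correct and is the paper's argument: pick nonzero $d\in D$, $e\in E$, observe that $\End(d)$ and $\End(e)$ are nonzero algebras in $\Ind(C)$, use \Cref{cor:semisnilp} to see that $\End(d)\otimes\End(e)\neq 0$, and identify this with $\End(d\otimes e)$ via \Cref{lm:homsintensor}. One aside in your final paragraph is inaccurate, though nothing depends on it: internal endomorphism algebras need not be compact in $\Ind(C)$ even when $D$ and $E$ are rigid. For example, $\Perf(\Q[x])$ over $\Perf(\Q)$ has $\End(\one)=\Q[x]$, which is not compact in $\Mod_\Q$.
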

\begin{proof}
    Let $d\in D, e\in E$ be nonzero objects. In this case, $\End_D(d),\End_E(e)$ are nonzero algebras in $\Ind(C)$. By \Cref{cor:semisnilp}, their tensor product $\End_D(d)\otimes\End_E(e)$ is also nonzero, and so by \Cref{lm:homsintensor} $\End_{D\otimes_C E}(d\otimes e)$ is nonzero as well. 
\end{proof}
We will need one more claim related to tensor products. First, recall:
\begin{defn}\label{defn:absss}
    A ring $R$ is called absolutely semi-simple if for any field extension $K\to L$ and any central map $K\to R$, $L\otimes_K R$ is semi-simple. 

    An additive $1$-category is called absolutely semi-simple if its endomorphisms rings are absolutely semi-simple. 
\end{defn}
\begin{prop}\label{prop:abssstensor}
    Let $C,D$ be stable, $K$-linear categories whose homotopy categories are absolutely semi-simple, where $K$ is a commutative ring spectrum with $\pi_*K$ a graded field. In this case, $\ho(C\otimes_K D)$ is semi-simple. 
\end{prop}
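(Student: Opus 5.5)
The plan is to reduce the statement to a computation with endomorphism rings of "elementary tensors" $c\otimes d$, and then use absolute semi-simplicity to control the tensor product of the graded homotopy rings over the graded field $\pi_*K$.

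\textbf{Step 1: reduce to semi-simplicity of a generating additive subcategory.} I would first record a general fact about a stable idempotent-complete category $\mathcal E$. Suppose $\mathcal A\subset \ho(\mathcal E)$ is a full additive subcategory, closed under shifts and retracts, that generates $\mathcal E$ under finite colimits and retracts. If $\mathcal A$ is semi-simple, then $\mathcal A=\ho(\mathcal E)$.

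To see this, let $f:x\to y$ be a map in $\mathcal A$. Since $\End(x\oplus y)$ is semi-simple, the Artin--Wedderburn argument of \Cref{lm:simplesplit} lets us decompose $f$ as $\id\oplus 0:x'\oplus x''\to x'\oplus y''$. Hence $\mathrm{cof}(f)\simeq \Sigma x''\oplus y''$ lies in $\mathcal A$. So $\mathcal A$ is closed under cofibers, hence is everything.

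I would apply this with $\mathcal E=C\otimes_K D$ (idempotent-completed) and $\mathcal A$ the retracts of finite sums $\bigoplus_i\Sigma^{n_i}c_i\otimes d_i$. These objects generate $\mathcal E$.

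Next I would reduce further to single objects $c\otimes d$. Shifts can be absorbed into the $c_i$. A finite sum $\bigoplus c_i\otimes d_i$ is a summand of $(\bigoplus c_i)\otimes(\bigoplus d_i)$. Corner rings $eAe$ of semi-simple rings are semi-simple. So it suffices to show that $\pi_0\End(c\otimes d)$ is semi-simple for all $c\in C$ and $d\in D$.

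\textbf{Step 2: compute $\pi_0\End(c\otimes d)$.} Every object of a $K$-linear category is atomic over $\Mod_K$, since $\Mod_K$ is rigid. So \Cref{lm:homsintensor} gives $\End(c\otimes d)\simeq \End(c)\otimes_K\End(d)$. Because $\pi_*K$ is a graded field, the Künneth theorem gives
\[\pi_*\End(c\otimes d)\cong A_*\otimes_{\pi_*K}B_*, \qquad A_*=\pi_*\End(c),\ B_*=\pi_*\End(d).\]
So we need the degree-$0$ part of this graded tensor product to be semi-simple.

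\textbf{Step 3: graded semi-simplicity.} The hypothesis says more than "$A_0$ is semi-simple". Since $\ho(C)$ is absolutely semi-simple, $\pi_0\End(\bigoplus_j\Sigma^{m_j}c)$ is absolutely semi-simple for every finite family of shifts. These rings are exactly the degree-$0$ parts of graded matrix rings over $A_*$.

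I would use this to show that $A_*$ is graded semi-simple over $\pi_*K$. That is, $A_*$ should be a finite product of graded endomorphism algebras $\End_{\Delta_*}(V_*)$ of finite-dimensional graded vector spaces over graded division algebras $\Delta_*$. Moreover, the degree-$0$ part of every graded matrix ring over these factors should be absolutely semi-simple. The same would hold for $B_*$.

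Using Morita invariance in the graded sense, the problem then reduces to $\Delta_*\otimes_{\pi_*K}\Delta'_*$ for graded division algebras. One must show that the degree-$0$ part of any graded matrix ring over this tensor product is semi-simple.

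\textbf{Step 4: the main obstacle.} This last step is where I expect the real difficulty. My first attempt would be the following. Let $L=Z(\Delta_0)$, which is a field extension of $k=\pi_0K$. Absolute semi-simplicity makes $L\otimes_k\Delta'_0$, and its graded analogues, semi-simple. Then filter by the grading group $\Gamma$, namely the set of degrees in which $\Delta_*$ (respectively $\Delta'_*$) has units. The degree-$0$ part of $\Delta_*\otimes\Delta'_*$ is a crossed-product-type extension of $\Delta_0\otimes_k\Delta'_0$ by the finite group $(\Gamma\cap\Gamma')/\Gamma_K$.

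Semi-simplicity should then follow from two facts. First, $\Delta_0\otimes_k\Delta'_0$ is semi-simple, which is exactly the absolute semi-simplicity hypothesis applied via the central map $k\to\Delta_0$. Second, we need a Maschke-type argument to pass to the crossed product. This requires care when the residue characteristic divides the group order. In that case I would instead invoke absolute semi-simplicity for the appropriate graded matrix rings, using shifted sums $\bigoplus\Sigma^{m}c$, to absorb the group directly.
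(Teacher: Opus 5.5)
\textbf{Your Step 1 is the paper's argument, but Step 4 cannot be completed.} Step 1 matches the paper exactly: retracts of pure tensors are closed under sums, shifts and retracts, semi-simplicity gives closure under cofibers, so they are everything. Step 2 correctly isolates what remains, namely semi-simplicity of $\pi_0(\End(c)\otimes_K\End(d))$. The paper handles this with one sentence (``our assumption guarantees that $\pi_0(\End_C(x)\otimes_K\End_D(y))$ is still semi-simple'').

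The gap is in Step 4, where you assume $(\Gamma\cap\Gamma')/\Gamma_K$ is finite. It need not be. If $\pi_*K$ is concentrated in degree $0$, then $\Gamma_K=0$. If in addition $\Delta_*$ and $\Delta'_*$ both have units in nonzero degrees, then $\Gamma\cap\Gamma'\cong\mathbb{Z}$. The degree-zero part of $\Delta_*\otimes_{\pi_*K}\Delta'_*$ is then a skew Laurent ring $(\Delta_0\otimes\Delta_0')[z^{\pm1};\sigma]$, which is never Artinian.

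No Maschke-type or ``absorb the group'' argument can repair this, because the statement is false in this generality. Take $K=\mathbb{Q}$ and $C=D=\Perf(R)$ with $R=\mathbb{Q}[u^{\pm1}]$, $|u|=2$.
\begin{itemize}
\item Every object of $C$ is $R^a\oplus\Sigma R^b$, and $\pi_0\End(R^a\oplus\Sigma R^b)\cong M_a(\mathbb{Q})\times M_b(\mathbb{Q})$. So $\ho(C)$ is absolutely semi-simple.
\item However $C\otimes_{\mathbb{Q}}D\simeq\Perf(R\otimes_{\mathbb{Q}}R)$, and $\pi_*(R\otimes_{\mathbb{Q}}R)\cong\mathbb{Q}[u^{\pm1},v^{\pm1}]$.
\item Hence $\pi_0\End(\one)\cong\mathbb{Q}[(u/v)^{\pm1}]$, a Laurent polynomial ring, which is not semi-simple.
\end{itemize}

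\textbf{What is actually needed.} The paper's one-line justification holds only under an extra hypothesis that kills the K\"unneth cross terms $\pi_n\End(c)\otimes\pi_{-n}\End(d)$ for $n\notin\Gamma_K$. One such hypothesis: $C$ and $D$ are generated under shifts, finite sums and retracts by objects whose mapping spectra satisfy $\pi_*\map(x,x')\cong\pi_*K\otimes_{\pi_0K}\pi_0\map(x,x')$, with $\pi_0\map(x,x')$ finite-dimensional over $\pi_0K$. Under this hypothesis the argument closes as follows.
\begin{itemize}
\item $\pi_0\End$ of a finite sum of shifted pure tensors of generators splits, over the cosets of $\Gamma_K$, into corners of rings $\pi_0\End(c)\otimes_{\pi_0K}\pi_0\End(d)$.
\item For finite-dimensional algebras, absolute semi-simplicity is the same as separability.
\item Tensor products of separable algebras are separable, hence semi-simple, and your Step 1 then finishes.
\end{itemize}
Your claim that $\Delta_0\otimes\Delta'_0$ is semi-simple ``by hypothesis'' also silently needs this finite-dimensionality, since absolute semi-simplicity only concerns base change along field extensions.

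The extra hypothesis does hold in the paper's only use of the proposition, \Cref{lm:uniquetrans}. There the mapping spectra between the basic objects $X^{\otimes i}\otimes X^{\vee,\otimes j}$ of $\Frrig_{\Q}(\pt)_K$ are free $K$-modules on finitely many degree-zero cobordism classes.
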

\begin{proof}
    Let $x\in C, y\in D$. Our assumption guarantees that $\pi_0(\End_C(x)\otimes_K\End_D(y))$ is still semi-simple. 

    Now let $x_0,x_1\in C, y_0,y_1\in C$, it is clear that $x_0\otimes y_0\oplus x_1\otimes y_1$ is a retract of $(x_0\oplus x_1)\otimes (y_0\oplus y_1)$ in $C\otimes_K D$. 

    Thus, the full subcategory of $\ho(C\otimes_K D)$ spanned by retracts of pure tensors is closed under finite direct sums, retracts, and shifts. Furthermore, it is semi-simple, so that any map $a\to b$ in this full subcategory has a cofiber of the form $c\oplus \Sigma d$ for some $c$ a retract of $b$ and $d$ a retract of $a$, and is therefore also in this full subcategory. 

    Therefore, this full subcategory, since it generates $C\otimes_K D$ under cofibers, is the whole thing, thus proving the claim. 
\end{proof}

We conclude with the key example of semi-simple small rigid $2$-rings we will consider, where the semi-simplicity is essentially due to Deligne \cite{deligne} and generalized to this context in \cite{BHR}: 
\begin{thm}\label{thm:semisimple}
    Let $f:\Q\{BS^1\}\to K$ be a map of commutative ring spectra such that $\pi_*K$ is a graded field and $f(t-n)\neq 0$ for all $n\in\Z$ (here, $t$ is as in \Cref{nota:t}). In this case, $\Frrig_\Q(\pt)_K := \Frrig_\Q(\pt)\otimes_{\Q\{BS^1\}}K$ has an absolutely semi-simple homotopy category with simple unit. 
\end{thm}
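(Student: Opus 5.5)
Our plan is to reduce to Deligne's semi-simplicity theorem for $\mathrm{Rep}(GL_t)$ via the explicit description of free rigid categories, \Cref{thm:Jan} and \Cref{cor:unitinFrrig}. By \Cref{obs:QEnd1Cob} and \Cref{lm:freepi*}, $\End(\one_{\Frrig_\Q(\pt)})\simeq \Q\{BS^1\}$, and $\Frrig_\Q(\pt)$ is generated under finite colimits and retracts by the objects $x^{\otimes n}\otimes (x^\vee)^{\otimes m}$. By dualizability, every mapping spectrum in the base change $\Frrig_\Q(\pt)_K$ reduces to $\hom(\one, x^{\otimes a}\otimes (x^\vee)^{\otimes b})$. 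By \Cref{thm:Jan}, before base change this object is a free $\Q\{BS^1\}$-module on the set of cobordisms $\emptyset\to a^+\amalg b^-$ without closed components. These are the perfect matchings between the $a$ positive and the $b$ negative points, so the module is zero unless $a=b$, and it is free on $\Sigma_a$ when $a=b$. The base change to $K$ is computed by \Cref{lm:homsintensor}, so the mapping spectra become $K[\Sigma_a]$-type free modules. Composition is governed by the walled Brauer / oriented Brauer combinatorics: closed circles produced by gluing evaluate to $\Dim(x)$ composed with $f$, and circles with a $BS^1$-twist evaluate to $f(t_i)$.

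The key step is to identify $\pi_*$ of the endomorphism algebras. Since $\pi_*K$ is a graded field, $\pi_*\End(x^{\otimes a}\otimes(x^\vee)^{\otimes b})\cong \pi_*K\otimes_{\pi_0 K}\mathrm{Br}_{a,b}(\delta)$, where $\mathrm{Br}_{a,b}(\delta)$ is the walled Brauer algebra over the field $\pi_0K$ with parameter $\delta=f(t)$. This rests on two observations. Because the mapping spectra are free over $K$, only the degree-zero generator $t_0$ can appear as a structure constant in compositions of degree-$0$ matchings. Moreover, circles can carry no $S^1$-twist, since matching diagrams are contractible apart from their circle components and gluings only produce untwisted circles. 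So the higher $t_i$ never enter. Deligne's theorem, in the form of \cite{BHR}, then says that for $\delta\notin\Z$ these walled Brauer algebras are semi-simple. For the same reason they stay semi-simple after any field extension $L/\pi_0 K$, since $\delta$ still satisfies $\delta\notin \Z$ there. This gives absolute semi-simplicity on the generating objects. The degree-$0$ part suffices: $\pi_*K$ is a graded field, so $\pi_*K\otimes$ of a semi-simple algebra gives a semi-simple graded ring, and $\pi_0$ of endomorphisms of sums of shifts is a matrix ring over these.

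To pass from generators to all of $\ho(\Frrig_\Q(\pt)_K)$, we reuse the argument of \Cref{prop:abssstensor}. Consider the full subcategory of retracts of finite sums of shifts of the generators. It is semi-simple, so every map in it splits and its cofiber decomposes as $c\oplus\Sigma d$ with $c,d$ in the subcategory. Hence the subcategory is closed under cofibers, and it is therefore everything. Simplicity of the unit is immediate: $\pi_0\End(\one)=\pi_0K$ is a field.

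The main obstacle we expect is the careful identification of composition with the walled Brauer algebra. In particular, we must check that \Cref{thm:Jan} describes composition, not just the mapping objects, and that the circle-evaluation uses exactly $f(t_0)$. If needed we will instead cite \cite{BHR}, where this generalization of Deligne is stated.
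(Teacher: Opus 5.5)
Your proposal is correct and follows essentially the paper's route. In both, hom-spectra in $\Frrig_\Q(\pt)$ are free over $\Q\{BS^1\}$ on finite sets of matchings, so $\pi_0$ of endomorphism rings after base change depends only on $f(t)$ and is governed by Deligne's $\mathrm{Rep}(GL_{f(t)})$ (your walled Brauer algebras), and the same closure-under-cofibers argument then finishes.

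One small point: semisimplicity of the individual algebras $\mathrm{Br}_{a,b}(\delta)$ does not by itself make $\pi_0\End$ of a sum of \emph{distinct} generators a ``matrix ring over these''. To cover this, either invoke Deligne's semisimplicity of the whole additive category, as the paper does, or argue directly: homs between generators vanish unless $a-b=a'-b'$, and $x^{\otimes a}\otimes(x^\vee)^{\otimes b}$ is a retract of $x^{\otimes(a+1)}\otimes(x^\vee)^{\otimes(b+1)}$ because $\delta\neq 0$.
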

\begin{proof}
See \cite[§4.A]{BHR} for details. For convenience, we sketch a proof here. 

    It is clear from the cobordism hypothesis that for direct sums $S$ of the generators $X^{\otimes i}\otimes X^{\vee, \otimes j}$, $\End_{\Frrig_\Q(\pt)}(S)$ is free on a finite set over $\Q\{BS^1\}$, and hence the ring $\pi_0(\End_{\Frrig_\Q(\pt)_K}(S_K))$ only depends on $\pi_0(f)$. 

    Thus, to prove that these specific endomorphism rings are absolutely semi-simple, we may assume without loss of generality that $K$ is in degree $0$. In this case, it is clear that $\Frrig_\Q(\pt)_K$ is the stable envelope of the free rigid additive $K$-linear category on a dualizable object with dimension $f(t)$. For this, Deligne proves in \cite[Proposition 10.3, Théorème 10.5]{deligne} that this additive category is absolutely semi-simple. 

    It follows that for $S$ such a direct sum, or summand thereof, $\pi_0(\End_{\Frrig(\pt)_K}(S_K))$ is absolutely semi-simple. Since $K$ is a graded field and $\End_{\Frrig(\pt)}(S)$ is free on $\Q\{BS^1\}$ on a set, these endomorphism ring spectra are all either even periodic or in degree $0$. Either way, it follows from this that the collection of these $S$'s, their summands and suspensions, is closed under cofibers in $\Frrig(\pt)_K$, and hence is the whole category, thus proving the absolute semi-simplicity. 
\end{proof}
\begin{rmk}
    For $K$ being the ``fraction field'' of $\Q\{BS^1\}$, obtained by inverting all nonzero homogeneous elements in $\pi_*\Q\{BS^1\}$, our $\Frrig(\pt)_K$ is exactly $\mathbb{A}^1_\eta$ from \cite{BHR}. 
\end{rmk}

\subsection{Nonintegral objects}\label{section:nonint}
\Cref{thm:semisimple} suggests looking at a particular class of objects. 
\begin{defn}
A map $f:\Q\{BS^1\}\to K$ be a map of commutative ring spectra such that $\pi_*K$ is a graded field and $f(t-n)\neq 0$ is called an admissible map. 

A dualizable object $x\in C$, for $C$ a rational rigid $2$-ring is called $f$-transcendental if $C\otimes_{\Q\{BS^1\}} K$ is nonzero, where $\Q\{BS^1\}\to \End(\one_C)$ is given by $\Dim(x)$. 

It is called non-integral if it is $f$-transcendental for some admissible $f$. 
\end{defn}
\begin{rmk}
    An object $x$ being $f$-transcendental or non-integral only depends on $\Dim(x)$. 
\end{rmk}
\begin{rmk}\label{rmk:translation}
    With the above definition, \Cref{thm:semisimple} states that if $f:\Q\{BS^1\}\to K$ is admissible, then $\Frrig(\pt)_K$ has an absolutely semi-simple homotopy category with simple unit.
\end{rmk}
\begin{rmk}\label{rmk:admissiblemaps}
    If $f:\Q\{BS^1\}\to K$ is an admissible map and $K\to K'$ is an arbitrary map between graded fields, the composite $f':\Q\{BS^1\}\to K\to K'$ is also admissible, and we can think of $f$ as ``refining'' this composite. $f$-transcendentality and $f'$-transcendentality are then equivalent. 

    By intrinsic formality of graded fields in characteristic $0$, any admissible map is ``refined'' in this sense by some canonical map $\Q\{BS^1\}\to K(\mathfrak p)$ given on homotopy groups by $\pi_*(\Q\{BS^1\})\to \pi_*(\Q\{BS^1\})_\mathfrak p/\mathfrak p$ where $\mathfrak p$ is a homogeneous prime ideal in $\pi_*(\Q\{BS^1\})$, not containing any $(t-n), n \in\Z$. These $K(\mathfrak p)$ are all $\aleph_1$-compact over $\Q\{BS^1\}$. 

    This gives a classification of admissible maps up to refinement, and hence of notions of transcendentality. 
\end{rmk}
 
\begin{obs}\label{obs:fcan}
    Let $C$ be a $\aleph_1$-NS in $\TwoRing^\rig_\Q$, and let $x\in C$ be $f$-transcendental for some admissible $f:\Q\{BS^1\}\to K$. If $K$ is $\aleph_1$-compact, then $\Dim(x): \Q\{BS^1\}\to \End(\one_C)$ factors through $K$. 

    In particular,  if $f$ has kernel $\mathfrak p$ on homotopy groups, then $\Dim(x)$ factors through $K(\mathfrak p)$. Since the map $K\to \End(\one_C)$ is injective on homotopy groups ($K$ is a graded field!), the kernel $\mathfrak p$ agrees with the kernel of $\Dim(x)$. Thus, the ``minimal'' (in the sense of \Cref{rmk:admissiblemaps}) admissible $f$ for which $x$ is $f$-transcendental only depends on $\Dim(x)$. 
\end{obs}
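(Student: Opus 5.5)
The plan is to apply the Nullstellensatzian property of $C$ directly to the base change $C_K := C\otimes_{\Q\{BS^1\}}K$, where $\Q\{BS^1\}\to \End(\one_C)$ is $\Dim(x)$. First I would identify $C_K$ as a rigid commutative $C$-algebra. It is the pushout in $\TwoRing^\rig_\Q$ of $C \leftarrow \Mod_{\Q\{BS^1\}} \to \Mod_K$, where the left map is induced by $\Dim(x)$ via the free/forgetful adjunction for modules over $\End(\one_C)$. Since $\Mod_K$ is rigid and $K$ is $\aleph_1$-compact as a commutative $\Q\{BS^1\}$-algebra, $\Mod_K$ is $\aleph_1$-compact in $(\TwoRing^\rig_\Q)_{\Mod_{\Q\{BS^1\}}/}$. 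Indeed, maps out of $\Mod_R$ into a rigid $D$ under $\Mod_{\Q\{BS^1\}}$ are commutative algebra maps $R\to \End(\one_D)$ under $\Q\{BS^1\}$, and $\End(\one_-)$ preserves filtered colimits, since taking compact objects commutes with filtered colimits and the unit is compact. By the standard pushout argument, $C_K$ is therefore $\aleph_1$-compact in $(\TwoRing^\rig_\Q)_{C/}$. By $f$-transcendentality it is nonzero.

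Next, because $C$ is $\aleph_1$-NS, there is a map $C_K\to C$ in $\TwoRing^\rig_{C/}$, that is, a retraction of $C\to C_K$ under $C$. Taking endomorphisms of the unit, we get commutative algebra maps $\End(\one_C)\to \End(\one_{C_K})\to \End(\one_C)$ that compose to the identity. The structure map $K\to \End(\one_{C_K})$ restricts along $f$ to the composite $\Q\{BS^1\}\xrightarrow{\Dim(x)}\End(\one_C)\to\End(\one_{C_K})$. Postcomposing with the retraction gives a factorization of $\Dim(x)$ as $\Q\{BS^1\}\xrightarrow{f}K\to\End(\one_C)$, which is the first claim.

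For the ``in particular'' part, I would apply the first claim to the admissible map $\Q\{BS^1\}\to K(\mathfrak p)$ refining $f$, as in \Cref{rmk:admissiblemaps}. This step needs two facts. First, $x$ is still $K(\mathfrak p)$-transcendental, which holds because $C\otimes K = (C\otimes K(\mathfrak p))\otimes_{K(\mathfrak p)} K$ is nonzero. Second, $K(\mathfrak p)$ is $\aleph_1$-compact. Then $\Dim(x)$ factors through $K(\mathfrak p)$. Since $\pi_*K(\mathfrak p)$ is a graded field and $\End(\one_C)\neq 0$, the map $K(\mathfrak p)\to\End(\one_C)$ is injective on $\pi_*$, so $\ker\pi_*\Dim(x)=\ker\pi_*(\Q\{BS^1\}\to K(\mathfrak p))=\mathfrak p$.

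The main obstacle I expect is the $\aleph_1$-compactness of $C_K$ over $C$. In particular, I need to check carefully that the rigid pushout is computed by the relative tensor product of categories. I would handle this through the universal property described above rather than by inspecting the categories themselves.
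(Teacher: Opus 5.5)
Your proposal is correct and follows the same route as the paper. The paper's proof is the one-line remark that $C\otimes_{\Q\{BS^1\}}K$ is nonzero and, being $\aleph_1$-compact over $C$, admits a retraction onto $C$ by the $\aleph_1$-NS property; applying $\End(\one_{-})$ to that retraction factors $\Dim(x)$ through $K$.

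You add detail the paper leaves implicit: the $\aleph_1$-compactness of the pushout via the universal property of $\Mod_K$. You also treat the ``in particular'' correctly, by passing to the $\aleph_1$-compact refinement $K(\mathfrak p)$ and using that a map from a graded field to the nonzero ring $\End(\one_C)$ is injective on $\pi_*$.
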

Indeed, the non-zeroness of $C\otimes_{\Q\{BS^1\}}K$ guarantees a splitting, for $C$ $\aleph_1$-NS. 
\begin{rmk}\label{rmk:factorimpladm}
    If $\Dim(x)$ factors through an admissible $f:\Q\{BS^1\}\to K$, then clearly $x$ is $f$-admissible! 
\end{rmk}

We will now show that non-integral objects have many remarkable properties, especially in NS categories. 

We begin with divisibility properties, which were the key to an earlier proof of our Noshift theorem in characteristic $0$: 
\begin{lm}\label{lm:transpdiv}
Let $C$ be a small rational rigid $2$-ring and let $x\in C$ be non-integral. For any integer $n\geq 1$, the category $C[\frac{x}{n}]$ obtained by freely adding to $C$ a dualizable object $y$ with an equivalence $\oplus_n y \simeq x$ is nonzero. 
\end{lm}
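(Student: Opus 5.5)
The plan is to reduce the nonvanishing of $C[\frac{x}{n}]$ to \Cref{cor:semiscons}, using the semi-simplicity of \Cref{thm:semisimple} over a suitable graded field. First I will set up notation. By definition,
$$C[\tfrac{x}{n}] \simeq C\otimes_{\Frrig_\Q(\pt)}\Frrig_\Q(\pt),$$
where the left map $\Frrig_\Q(\pt)\to C$ classifies $x$, and the right map $\phi:\Frrig_\Q(\pt)\to\Frrig_\Q(\pt)$ classifies $\oplus_n Y$ for $Y$ the universal object. By \Cref{lm:sumissum}, $\Dim(\oplus_n Y)_i=n\Dim(Y)_i$. So on units, $\phi$ restricts to the ring map $\nu:\Q\{BS^1\}\to\Q\{BS^1\}$ given by $t_i\mapsto nt_i$, using the description of \Cref{lm:freepi*}. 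Since we are rational, $\nu$ is an automorphism.

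Fix an admissible $f:\Q\{BS^1\}\to K$ for which $x$ is $f$-transcendental, and set $f':=f\circ\nu^{-1}$, i.e. $t_i\mapsto f(t_i)/n$. The first step is to check that $f'$ is admissible. This holds because $f'(t-m)=f(t)/n-m=\frac1n f(t-nm)\neq 0$ for every $m\in\Z$. Then $f'\circ\nu=f$, so $\phi$ induces, after base change, a symmetric monoidal functor
$$\Frrig_\Q(\pt)_{K,f}:=\Frrig_\Q(\pt)\otimes_{\Q\{BS^1\},f}K\;\longrightarrow\;\Frrig_\Q(\pt)_{K,f'}.$$
This functor sends $X$ to $\oplus_n Y$.

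Next I identify a base change of $C[\frac{x}{n}]$. Base changing $C[\frac{x}{n}]$ along $\Q\{BS^1\}\xrightarrow{f'}K$, where $\Q\{BS^1\}$ maps in via $\Dim(y)$, and using associativity of relative tensor products, gives
$$C[\tfrac{x}{n}]\otimes_{\Q\{BS^1\},f'}K\;\simeq\;(C\otimes_{\Q\{BS^1\},f}K)\otimes_{\Frrig_\Q(\pt)_{K,f}}\Frrig_\Q(\pt)_{K,f'}.$$
Here I use that $C\otimes_{\Frrig_\Q(\pt)}\Frrig_\Q(\pt)_{K,f}\simeq C\otimes_{\Q\{BS^1\},f}K$, with $C$ a $\Q\{BS^1\}$-algebra via $\Dim(x)$. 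I also use that $\Dim(x)$ and $\Dim(y)\circ\nu$ agree in $C[\frac{x}{n}]$. Since $C[\frac{x}{n}]$ maps to this base change, it suffices to show the right-hand side is nonzero.

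Finally I apply \Cref{cor:semiscons}. By \Cref{rmk:translation}, $\Frrig_\Q(\pt)_{K,f}$ has semi-simple homotopy category with simple unit. The first module, $C\otimes_{\Q\{BS^1\}}K$, is nonzero precisely because $x$ is $f$-transcendental. The second module, $\Frrig_\Q(\pt)_{K,f'}$, is nonzero by \Cref{thm:semisimple} applied to the admissible $f'$, since its unit is simple and hence nonzero. Therefore the tensor product is nonzero, and so $C[\frac{x}{n}]\neq 0$.

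The main obstacle I expect is the bookkeeping in the pushout identification: making sure that the $\Q\{BS^1\}$-algebra structures (via $\Dim(x)$ versus $\Dim(y)$) match under $\nu$ coherently, and not only on $\pi_*$. I would handle this by working entirely with pushouts in $\TwoRing^\rig_\Q$ and the naturality of $\End(\one_{-})$ along $\phi$. A smaller point is that $\nu$ is homogeneous, since $t_i$ sits in degree $2i$, so $f'$ is indeed a map of commutative ring spectra into the graded field $K$.
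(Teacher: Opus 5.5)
Your proposal is correct and follows the same route as the paper: present $C[\frac{x}{n}]$ as a pushout over $\Frrig_\Q(\pt)$, base change to $K$, and apply \Cref{cor:semiscons} over $\Frrig_\Q(\pt)_K$. The two factors are $C\otimes_{\Q\{BS^1\}}K$, which is nonzero by transcendentality, and the twisted base change of $\Frrig_\Q(\pt)$, which is nonzero because $t_i\mapsto nt_i$ is an automorphism. Your explicit twist $f'=f\circ\nu^{-1}$ simply spells out what the paper compresses into that isomorphism observation. Its admissibility is harmless but not needed, since $\Frrig_\Q(\pt)_{K,f'}\neq 0$ already follows from its unit having endomorphism ring $K$.
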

\begin{proof}
    This category is the pushout 
    \[\begin{tikzcd}
	{\Frrig_\Q(\pt)} & C \\
	{\Frrig_\Q(\pt)} & {C[\frac{x}{n}]}
	\arrow[from=1-1, to=1-2]
	\arrow[from=1-1, to=2-1]
	\arrow[from=1-2, to=2-2]
	\arrow[from=2-1, to=2-2]
\end{tikzcd}\] where the left vertical functor classifies the object $\oplus_n x$ where $x\in \Frrig_\Q(\pt)$ is the universal dualizable object. 

We claim that if $x$ is $f$-transcendental, for $f:\Q\{BS^1\}\to K$ admissible, then the pushout $$C_K := \Frrig_\Q(\pt)_K\otimes_{\Frrig_\Q(\pt)} C$$ is nonzero. Indeed, it is equivalently the pushout $$C\otimes_{\Q\{BS^1\}} K $$  which is nonzero by definition. 

Furthermore, the other basechange, namely $\Frrig_\Q(\pt)\otimes_{\Frrig_\Q(\pt)} \Frrig_\Q(\pt)_K$ is also nonzero. This is also given by extension of scalars at the level of units, hence it suffices to show that the pushout $\Q\{BS^1\}\otimes_{\Q\{BS^1\}}K$ is nonzero, where the map $\Q\{BS^1\}\to \Q\{BS^1\}$ is induced by $x\mapsto \oplus_n x$. 

By inductively applying \Cref{lm:sumissum}, we see that this map sends $t_i$ to $nt_i$ for all $i$, which is clearly an isomorphism since we are in characteristic $0$. Thus this pushout is nonzero. 

It follows from these facts and from \Cref{cor:semiscons}, which applies by \Cref{thm:semisimple} (see also \cite{BHR}), that the basechange $C[\frac{x}{n}]\otimes_{\Q\{BS^1\}}K$ is nonzero, and so in particular $C[\frac{x}{n}]$ is nonzero. 
\end{proof}
The following is the remarkable consequence, showing how large nonintegral objects have to be in NS rational rigid $2$-rings:
\begin{cor}\label{cor:NStransdiv}
    Let $C$ be NS rigid and $x\in C$ be non-integral. There exists a $y$ and an equivalence $x\simeq \oplus_n y$.  
\end{cor}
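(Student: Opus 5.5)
The plan is to use the Nullstellensatzian property directly on the extension $C[\frac{x}{n}]$ from \Cref{lm:transpdiv}. Say $C$ is $\kappa$-NS in $\TwoRing^\rig_\Q$ for some regular $\kappa$; it is in particular $\aleph_0$-NS, and that is all we will use. By \Cref{lm:transpdiv}, since $x$ is non-integral, the small rigid $2$-ring $C[\frac{x}{n}]$ is nonzero. We will show that $C[\frac{x}{n}]$ is a compact object of $(\TwoRing^\rig_\Q)_{C/}$. The NS property then yields a map $C[\frac{x}{n}]\to C$ of rigid commutative $C$-algebras (a retraction of $C\to C[\frac{x}{n}]$). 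Let $y\in C$ be the image of the universal object $y$ under this retraction. Because the retraction is symmetric monoidal, additive and $C$-linear, it carries the equivalence $\oplus_n y\simeq x$ in $C[\frac{x}{n}]$ to an equivalence $\oplus_n y\simeq x$ in $C$, since $x$ lies in the image of $C$ and the retraction fixes it.

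For compactness we use the pushout description in the proof of \Cref{lm:transpdiv}:
\[C[\tfrac{x}{n}]\simeq \Frrig_\Q(\pt)\otimes_{\Frrig_\Q(\pt)} C,\]
where one leg classifies $x$ and the other classifies $\oplus_n$ of the universal object. Base change $C\otimes_{\Frrig_\Q(\pt)}(-)$ from $(\TwoRing^\rig_\Q)_{\Frrig_\Q(\pt)/}$ to $(\TwoRing^\rig_\Q)_{C/}$ is left adjoint to restriction, and restriction preserves filtered colimits, because filtered colimits in these under-categories are computed underlying. Base change therefore preserves compact objects. It remains to see that $\Frrig_\Q(\pt)$, viewed under itself via $\oplus_n$, is compact in $(\TwoRing^\rig_\Q)_{\Frrig_\Q(\pt)/}$. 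Maps out of it to an object $E$ under $\Frrig_\Q(\pt)$ (corresponding to some $z\in E$) are exactly the space of pairs $(w,\ \oplus_n w\simeq z)$. This space is a fiber of a map between spaces of the form $\Map(\Frrig_\Q(\pt),-)\simeq (E^\dbl)^\simeq$, and both of those commute with filtered colimits. A finite limit of filtered-colimit-preserving functors preserves filtered colimits, so compactness follows.

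The only real obstacle is making sure compactness holds in the correct slice category, and that the NS property applies in its $\kappa$-compact-objects form: compact objects are $\kappa$-compact, so $\aleph_0$-compactness suffices for every $\kappa$. The nonvanishing, which is the hard input, is already supplied by \Cref{lm:transpdiv}, together with \Cref{cor:semiscons} and \Cref{thm:semisimple}.
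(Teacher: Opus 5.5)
Your proposal is correct and is the paper's argument: \Cref{lm:transpdiv} gives that $C[\frac{x}{n}]$ is nonzero, it is compact under $C$, and the NS property then gives a retraction $C[\frac{x}{n}]\to C$ whose value on the universal object is the desired $y$. The paper only asserts the compactness as clear, and your argument (base change along $\Frrig_\Q(\pt)\to C$ preserves compact objects, and $\Frrig_\Q(\pt)$ under itself via $\oplus_n$ is compact as a fiber of corepresented functors) is a valid way to fill in that step.
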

\begin{proof}
    By the previous lemma, $C[\frac{x}{n}]$ is nonzero and it is clearly compact so that the NS property of $C$ gives a splitting $C[\frac{x}{n}]\to C$ and hence the desired $y$. 
\end{proof}
Note that this already gives that for non-integral objects, their $K$-theory class is $n$-divisible. 
\begin{rmk}
    One can wonder whether this holds for more general objects $x$, i.e. whether the map $\Frrig_\Q(\pt)\to \Frrig_\Q(\pt)$ picking out an $n$-fold direct sum of the generator is more generally ``conservative'' in some sense. This is not the case in full generality: the endomorphism ring of the unit of any symmetric monoidal $C$ is commutative, and hence cannot be a nonzero matrix ring. It follows that in no nonzero symmetric monoidal $C$ is $\one_C$ an $n$-fold direct sum of a single object $x$, $n\geq 2$. More generally, $\bigoplus_k \one_C$ and $\bigoplus_k \Sigma\one_C$ have as endomorphism rings $M_k(R)$, for $R$ a commutative ring, and hence cannot be more than $k$-divisible under direct sum. This shows that for conservativity, one needs the elements $(t-n), n\in\Z$ to be inverted, and this is what the definition of non-integral encodes. 
\end{rmk}

Together, the following two lemmas then give us an alternative proof of Noshift at height $0$:  
\begin{lm}\label{lm:existtrans}
    Let $C$ be an $\aleph_1$-NS rational rigid $2$-ring. There exists a non-integral object in $C$. 
\end{lm}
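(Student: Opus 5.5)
The plan is to produce the object directly from the Nullstellensatzian property, by adjoining a universal dualizable object whose Dimension is forced to factor through an explicit admissible map. This avoids any knowledge of $\End(\one_C)$. By \Cref{rmk:factorimpladm}, it suffices to find $x\in C$ such that $\Dim(x)\colon \Q\{BS^1\}\to\End(\one_C)$ factors through some admissible $f\colon\Q\{BS^1\}\to K$.

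\textbf{Choice of admissible map.} By \Cref{lm:freepi*}, $\Q\{BS^1\}$ is free on the classes $t_i$. I would let $K=\Q(s)$ be the rational function field in one variable, concentrated in degree $0$. This is a graded field. Define $f$ by $t=t_0\mapsto s$ and $t_i\mapsto 0$ for $i\geq 1$. Then $f(t-n)=s-n\neq 0$ for every $n\in\Z$, so $f$ is admissible. Since $K$ is countable, it is an $\aleph_1$-compact commutative $\Q\{BS^1\}$-algebra; for instance, it is a countable colimit of localizations of a finitely presented algebra.

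\textbf{The extension.} Consider $E:=\Frrig_\Q(\pt)_K=\Frrig_\Q(\pt)\otimes_{\Q\{BS^1\}}K$ and set $D:=C\otimes_{\Q}E$. Here $\Frrig_\Q(\pt)$ is compact in $\TwoRing^\rig_\Q$, and $\Mod_K$ is $\aleph_1$-compact over $\Mod_{\Q\{BS^1\}}$. Hence $E$ is $\aleph_1$-compact in $\TwoRing^\rig_\Q$, and $D$ is an $\aleph_1$-compact rigid commutative $C$-algebra.

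\textbf{Nonvanishing of $D$.} By \Cref{cor:unittensor} we have $\End(\one_D)\simeq\End(\one_C)\otimes_\Q\End(\one_E)$. Both factors are nonzero: $C$ is NS and hence nonzero, and $\End(\one_E)\simeq K\neq 0$ by \Cref{cor:unitinFrrig}. A tensor product of two nonzero $\Q$-modules is nonzero, so $D\neq 0$. Alternatively, $E$ has a semisimple homotopy category with simple unit by \Cref{thm:semisimple}, and \Cref{cor:semiscons} gives the same conclusion.

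\textbf{Conclusion.} Since $C$ is $\aleph_1$-NS and $D$ is a nonzero $\aleph_1$-compact $C$-algebra, there is a $C$-algebra retraction $r\colon D\to C$. Let $x\in C$ be the image under $r$ of the universal dualizable object of $\Frrig_\Q(\pt)$. Then $\Dim(x)$ is the composite
$$\Q\{BS^1\}\to\End(\one_{\Frrig_\Q(\pt)})\to\End(\one_E)\simeq K\to\End(\one_D)\to\End(\one_C),$$
so it factors through the admissible $f$. By \Cref{rmk:factorimpladm}, $x$ is $f$-transcendental, hence non-integral.

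\textbf{Main obstacle.} I expect the only delicate point to be the $\aleph_1$-compactness bookkeeping, namely checking that the basechange along $\Q\{BS^1\}\to K$ stays $\aleph_1$-compact in $\TwoRing^\rig_\Q$. If this causes trouble, I would replace $K$ by a suitable $\aleph_1$-compact $K(\mathfrak p)$ as in \Cref{rmk:admissiblemaps}.
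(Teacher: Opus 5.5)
Your proof is correct and is essentially the paper's argument: form $C\otimes_\Q \Frrig_\Q(\pt)_K$ for an $\aleph_1$-compact admissible $f:\Q\{BS^1\}\to K$, note it is nonzero because the tensor product is over $\Q$, retract it onto $C$ using the $\aleph_1$-NS property, and apply \Cref{rmk:factorimpladm} to the image of the generator. The only difference is cosmetic: you take $K=\Q(s)$ in degree $0$ with $t_i\mapsto 0$ for $i\geq 1$, whereas the paper takes the graded fraction field of $\pi_*\Q\{BS^1\}$. One small correction: your $K$ is countably presented rather than a colimit over a finitely presented algebra, since you must kill the countably many $t_i$ with $i\geq 1$; this still gives $\aleph_1$-compactness.
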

\begin{proof}
The rational free commutative ring spectrum $\Q\{BS^1\}$ is $\mathbb Q[t_i, i \in \mathbb N]$ where $|t_i| = 2$. Let $f:\Q\{BS^1\} \to K$ be the admissible morphism obtained by inverting all homogeneous nonzero elements in the source. In this case, $\Frrig_\mathbb Q(\pt)_K$ is $\aleph_1$-compact, and so the inclusion $C\to C\otimes\Frrig_\mathbb Q(\pt)_K$ admits a retraction (the target is evidently nonzero since the tensor product is happening over $\mathbb Q$). The image under such a retraction of the generator in $\Frrig_\mathbb Q(\pt)_K$ is then a non-integral object, by \Cref{rmk:factorimpladm}.

    
\end{proof}
\begin{rmk}
    In fact, this is true for $\aleph_0$-NS rational rigid $2$-rings as well. Indeed, one can instead simply consider $\Frrig_\Q(\pt)\otimes_{\Q\{BS^1\}} \Q\{BS^1\}/(t_0-r)$ for any non-integer rational number $r$ and tensor this with $C$. Since this is compact and nonzero, its tensor with $C$ is also compact over $C$ and nonzero, hence has a map to $C$. Then there exists some (homogeneous) residue field of $\Q[t_i, i\in\NN]/(t_0-r)$ for which the basechange will remain nonzero, proving that the image in $C$ of the free element in $\Frrig_\Q(\pt)$ is nonintegral. 
\end{rmk}
\begin{lm}\label{lm:trans+1istrans}
Let $C$ be a rational rigid $2$-ring, and let $x\in C$ be a dualizable object.
    If $x$ is non-integral, then so is $\one_C\oplus x$. 
\end{lm}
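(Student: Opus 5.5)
We compare the two Dimensions. The unit has trivial $S^1$-action on its dimension by \Cref{ex:dim(1)}: $\Dim(\one_C)$ factors through $BS^1\to\pt$. Consequently, the map $\Q\{BS^1\}\to\End(\one_C)$ it classifies sends $t_0\mapsto 1$ and $t_i\mapsto 0$ for $i\geq 1$. This holds because $\Q[BS^1]\to\Q$ is the augmentation, which kills the classes $u_i$, $i\geq1$, dual to $u^i$. By \Cref{lm:sumissum}, we therefore get
$$\Dim(\one_C\oplus x)_0=\Dim(x)_0+1,\qquad \Dim(\one_C\oplus x)_i=\Dim(x)_i \quad (i\geq 1).$$
So $\Dim(\one_C\oplus x)=\Dim(x)\circ\sigma$, where $\sigma:\Q\{BS^1\}\to\Q\{BS^1\}$ is the map of commutative ring spectra determined, by freeness (\Cref{lm:freepi*}), by $t_0\mapsto t_0+1$ and $t_i\mapsto t_i$ for $i\geq1$. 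Since $\sigma$ has inverse $t_0\mapsto t_0-1$, it is an automorphism. To justify $\Dim(\one_C\oplus x)=\Dim(x)\circ\sigma$ at the level of maps of commutative ring spectra and not only on $\pi_*$, we use that maps out of the free algebra are determined by the images of the $t_i$. This is the additive identification in the proof of \Cref{lm:sumissum}.

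Now suppose $x$ is $f$-transcendental for an admissible $f:\Q\{BS^1\}\to K$, and set $f':=f\circ\sigma^{-1}$. Then $f'(t-n)=f(t-1-n)\neq 0$ for all $n\in\Z$, since $n+1$ ranges over $\Z$ as $n$ does. Hence $f'$ is admissible. Finally,
$$C\otimes_{\Q\{BS^1\}}K,$$
with $\Q\{BS^1\}\to\End(\one_C)$ given by $\Dim(\one_C\oplus x)=\Dim(x)\circ\sigma$ and $\Q\{BS^1\}\to K$ given by $f'$, is identified with the same tensor product formed using $\Dim(x)$ and $f$. Indeed, precomposing both structure maps by the automorphism $\sigma$ does not change the relative tensor product: the two pushout diagrams are isomorphic via $\sigma$. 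The latter is nonzero by assumption, so $\one_C\oplus x$ is $f'$-transcendental, hence non-integral.

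The only point needing care is the first step, namely that the $S^1$-refined Dimension of $\one_C$ has vanishing higher components $\Dim(\one_C)_i$, $i\geq 1$. This follows directly from \Cref{ex:dim(1)}, so I expect no real obstacle; the rest is bookkeeping with the automorphism $\sigma$.
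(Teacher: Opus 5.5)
Your strategy is the paper's: you compute $\Dim(\one_C\oplus x)=\Dim(x)\circ\sigma$, with $\sigma(t_0)=t_0+1$ and $\sigma(t_i)=t_i$ for $i\geq 1$, using \Cref{ex:dim(1)} and \Cref{lm:sumissum}, and then transport the witness $f$ along this automorphism. The first step is fine, but the transport goes the wrong way. Precomposing the pair of structure maps $(\Dim(x),f)$ with $\sigma$ gives $(\Dim(x)\circ\sigma,\, f\circ\sigma)$, not $(\Dim(x)\circ\sigma,\, f\circ\sigma^{-1})$. The pair you actually use is the $\sigma$-translate of $(\Dim(x),\, f\circ\sigma^{-2})$. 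So your claim that the tensor product agrees with the one formed from $\Dim(x)$ and $f$ is false as stated.

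This error changes the conclusion, not just the notation. Take $K=\Q$ with $f(t_0)=1/2$ and $f(t_i)=0$ for $i\geq 1$, which is admissible. Let $C=\Frrig_\Q(\pt)_K$ and let $x$ be the universal object. Then $\Dim(x)=f$, so $x$ is $f$-transcendental, because $\Q\otimes_{\Q\{BS^1\}}\Q$ maps to $\Q$ by multiplication and is therefore nonzero. On the other hand, $\dim(\one_C\oplus x)=3/2$ while $f\circ\sigma^{-1}(t_0)=-1/2$. In $\End(\one_C)\otimes_{\Q\{BS^1\}}K$ the element $t_0-3/2$ acts by $0$ through the left factor and by the unit $-2$ through the right factor. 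That ring therefore vanishes, and by \Cref{cor:unittensor} so does $C\otimes_{\Q\{BS^1\}}K$. Hence $\one_C\oplus x$ is \emph{not} $f\circ\sigma^{-1}$-transcendental.

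The fix is to take $f':=f\circ\sigma$, that is, $t_0\mapsto f(t_0)+1$ and $t_i\mapsto f(t_i)$. This map is admissible because $f\circ\sigma(t-n)=f(t-(n-1))\neq 0$, and the two pushout diagrams are now genuinely isomorphic via $\sigma$. This $f'$ is exactly the paper's map $g$, and with this correction your argument coincides with the paper's proof.
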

\begin{proof}
By \Cref{lm:sumissum}, $\Dim(\one_C\oplus x)$ sends $t_i$ to $\Dim(x)_i$ if $i> 0$ and to $1+\dim(x)$ for $i=0$. Indeed, $\Dim(\one_C): BS^1\to \End(\one_C)$ factors through $\pt$: the $S^1$-action is trivial. 

If $f:\Q\{BS^1\}\to K$ is admissible, then the map $g:\Q\{BS^1\}\to K$ determined by sending $t_0$ to $1+f(t_0)$ and $t_i$ to $f(t_i)$ for $i>0$ is also admissible. If $x$ is $f$-transcendental, then, since $g$ and $f$ differ by an automorphism of $\Q\{BS^1\}$ (the one fixing all $t_i$'s and sending $t_0$ to $1+t_0$), $\one_C\oplus x$ is $g$-transcendental. 
\end{proof}

We thus obtain a second proof at height $0$ of the failure of redshift:
\begin{cor}
    Let $C$ be an $\aleph_1$-NS rigid rational $2$-ring. $K_0(C)$ is rational and hence so is $K(C)$. In particular, $L_{K(1)}K(C) = 0$: there is no redshift. 
\end{cor}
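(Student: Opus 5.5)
We want to show that $K_0(C)$ is a $\Q$-vector space, i.e. that every class in $K_0(C)$ is uniquely $n$-divisible for every $n\geq 1$. Rationality of $K(C)$ then follows formally, since $K(C)$ is a ring spectrum and $\pi_*K(C)$ is a $\pi_0$-module. Finally, $L_{K(1)}$ of a rational spectrum vanishes.

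\textbf{Divisibility.} By \Cref{lm:existtrans}, fix a non-integral object $x\in C$. By \Cref{lm:trans+1istrans}, $\one_C\oplus x$ is non-integral as well. Applying \Cref{cor:NStransdiv} to both objects (here $C$ is $\aleph_1$-NS, so in particular NS) gives $y,y'$ with $x\simeq\oplus_n y$ and $\one_C\oplus x\simeq \oplus_n y'$. Hence in $K_0(C)$
$$[\one_C]=[\one_C\oplus x]-[x]=n([y']-[y]),$$
so the unit class $1\in K_0(C)$ is $n$-divisible. Since $K_0(C)$ is a commutative ring, writing $1=nu$ gives $a=n(ua)$ for every $a$, so every element is $n$-divisible.

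\textbf{Torsion-freeness.} From $1=nu$ with $u\in K_0(C)$, $n$ is a unit in the ring $K_0(C)$. Thus $K_0(C)$ is a $\Q$-algebra, and there is no torsion to worry about.

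\textbf{Conclusion.} Since $\pi_0K(C)$ is a $\Q$-algebra, each $\pi_kK(C)$ is a module over it and hence rational, so $K(C)$ is rational (a module over its rationalization). The main step is the divisibility of the unit class, which needs the two non-integral objects; everything else is formal. Alternatively, this is consistent with \Cref{thm:KNS}, which gives $K(C)\simeq \End(\one_C)^{BS^1}$, a rational spectrum. The elementary route above is preferable, though, since it uses only the semisimplicity input.
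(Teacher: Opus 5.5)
Your proposal is correct and matches the paper's proof: a non-integral $x$ from \Cref{lm:existtrans}, $\one_C\oplus x$ non-integral by \Cref{lm:trans+1istrans}, both $n$-divisible by \Cref{cor:NStransdiv}, hence $1=[\one_C\oplus x]-[x]$ is $n$-divisible. Your extra step, that $1=nu$ makes $n$ a unit in the commutative ring $K_0(C)$ so that every $\pi_kK(C)$ is a $\Q$-module, just spells out the paper's ``it suffices''.
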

\begin{proof}
    It suffices to prove that $1\in K_0(C)$ is divisible by $n$ for any $n\geq 1$. 

    Let $x\in C$ be non-integral, which exists by \Cref{lm:existtrans}. Then by \Cref{lm:trans+1istrans}, $\one_C\oplus x$ is also non-integral, so by \Cref{cor:NStransdiv}, they are both $n$-divisible in $K_0(C)$, and therefore so is $1= 1+[x]-[x]$. 
\end{proof}
As follows from \Cref{lm:end1NS} and \Cref{thm:choNS}, the unit of a ($\aleph_1$-)NS rigid rational $2$-ring is a $2$-periodized algebraically closed field of characteristic $0$. One may therefore wonder how far these objects, and therefore the failure of redshift, are from classical mathematical practice. We now show that this $2$-periodicity can be removed, and in fact, we can obtain similar results in a completely discrete setting: 
\begin{cor}\label{cor:1cat}
    There exists a rational additively symmetric monoidal $1$-category with all objects dualizable whose $K$-theory is rational. 
\end{cor}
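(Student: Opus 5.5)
The plan is to pass to the homotopy category of an $\aleph_1$-NS rational rigid $2$-ring and transport the divisibility argument from the previous corollary to additive $K$-theory. Fix a nonzero rigid rational $2$-ring and, using \Cref{cor:existNS} (the base $\Mod_\Q$ is rigid, hence quasi-compact), map it to an $\aleph_1$-NS object $C$ of $\TwoRing^\rig_\Q$. Let $A := \ho(C^\dbl)$ be the homotopy category of its compact (equivalently dualizable) objects. I will argue that $A$ is the required $1$-category.

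First I check the structural properties of $A$. It is an additive $1$-category, since homotopy categories of stable categories are additive. Its hom groups are $\Q$-vector spaces because $C$ is rational. The symmetric monoidal structure on $C^\dbl$ descends to a symmetric monoidal structure on $A$ that is additive in each variable. Every object of $A$ is dualizable, because duality data (evaluation, coevaluation and the triangle identities) only involve homotopy classes of maps. Finally, $A$ is nonzero: $\one_C\neq 0$, and $\End_A(\one)=\pi_0\End(\one_C)$ is an algebraically closed field by \Cref{lm:end1NS} and \Cref{thm:choNS}.

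Next I produce divisibility in $K_0$. By \Cref{lm:existtrans} there is a non-integral $x\in C^\dbl$, and by \Cref{lm:trans+1istrans} the object $\one_C\oplus x$ is non-integral too. For each $n\geq 1$, \Cref{cor:NStransdiv} gives $y,z\in C^\dbl$ with $x\simeq\oplus_n y$ and $\one_C\oplus x\simeq\oplus_n z$. These are equivalences in $C$, hence isomorphisms in $A$, and they are compatible with the direct sum of $A$ because the biproduct of $C$ is the biproduct of $A$. So in the additive (split exact) $K_0^{\oplus}(A)$, the group completion of the monoid of isomorphism classes, we get
\[
1=[\one\oplus x]-[x]=n\bigl([z]-[y]\bigr).
\]
Thus $1$ is divisible by every $n$.

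Finally I conclude rationality. Additive $K$-theory $K^{\oplus}(A)$ of a symmetric monoidal additive category whose tensor product is biadditive is an $\mathbb E_\infty$-ring spectrum, via the standard multiplicative group-completion machinery for bipermutative-type categories. Its $\pi_0=K_0^\oplus(A)$ is a commutative ring in which every integer $n\geq1$ is invertible, so $\pi_0$ is a $\Q$-algebra. Since every $\pi_k K^\oplus(A)$ is a $\pi_0$-module, all homotopy groups are rational, and $K^{\oplus}(A)$ is rational.

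The step I expect to require the most care is this last one: making sure the ring structure on $K^\oplus(A)$ is available when the symmetric monoidal structure on $\ho(C^\dbl)$ is only the $1$-categorical truncation. That truncation is still an honest symmetric monoidal additive $1$-category, so standard results should apply, but this is where I would check the details. If a countable example is wanted, I would run the same argument on an $\aleph_1$-compact stage of the small object argument, as in \Cref{cor:smallnoshift}. Only countably many objects $y,z$ and isomorphisms are needed, one pair for each $n$.
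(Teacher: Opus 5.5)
Your proposal is correct and is essentially the paper's argument: take an $\aleph_1$-NS rigid rational $2$-ring $C$, use the non-integral objects $x$ and $\one_C\oplus x$ to get the purely additive relation $\one_C\oplus y^{\oplus n}\simeq z^{\oplus n}$, and note that it persists in $\ho(C)$, so $K^{\oplus}(\ho(C))$ is rational. You also spell out why divisibility of $1$ in $K_0^{\oplus}$ forces all of $K^{\oplus}$ to be rational (via the multiplicative structure) and check nonvanishing, both of which the paper leaves implicit, as it does in the preceding corollary.
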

By taking the stable envelope of such a $1$-category, and using the theorem of the weight-heart \cite[Theorem 8.1.3]{hebestreitsteimle}, we can also find a rigid rational $2$-ring with a discrete unit whose $K$-theory is rational. 
\begin{proof}
Let $C$ be an $\aleph_1$-NS rigid rational $2$-ring. 

    What we have actually proved in the previous theorem is a relation of the form $\one_C\oplus~y^{\oplus n}\simeq~z^{\oplus n}$ in $C^\dbl$.
    
    This is an entirely additive relation, and in particular it also holds in $\ho(C)$, and hence $K^{\oplus}(\ho(C))$ is rational and $\ho(C)$ is our example. 
\end{proof}

We already know by \Cref{thm:KNS} that $\Dim$ induces an isomorphism for $\aleph_1$-NS rigid rational $2$-rings, so if two non-integral objects have the same Dimension, they have the same $K$-theory class. In fact, much more is true: 
\begin{lm}\label{lm:uniquetrans}
    Let $C$ be an $\aleph_1$-NS rational rigid $2$-ring and let $x,y\in C$ be two objects. Suppose $\Dim(x)= \Dim(y)$ and both are $f$-transcendental for some admissible $f$ (which can be chosen to be the same for $x$ and $y$ by \Cref{obs:fcan}).  In this case, $x\simeq y$. 

    In other words, there is only one non-integral object per dimension function. 
\end{lm}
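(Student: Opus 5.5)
The plan is to build a small category that freely adds an isomorphism between $x$ and $y$, show it is nonzero using semi-simplicity, and then use the NS property to map it back to $C$.

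\emph{Step 1: the universal pair.} Consider $\Frrig_\Q(\pt\sqcup\pt)=\Frrig_\Q(\pt)\otimes\Frrig_\Q(\pt)$, the free rigid rational $2$-ring on two objects $a,b$. Its unit has endomorphisms $\Q\{BS^1\}\otimes\Q\{BS^1\}$. The pair $(x,y)$ gives a map $\Frrig_\Q(\pt\sqcup\pt)\to C$. Since $\Dim(x)=\Dim(y)$, and by \Cref{obs:fcan} both factor through the same admissible $f:\Q\{BS^1\}\to K$ with $K$ $\aleph_1$-compact (we may take $K=K(\mathfrak p)$), this map factors through
\[
A:=\Frrig_\Q(\pt)_K\otimes_K\Frrig_\Q(\pt)_K .
\]
Here both Dimensions are sent through $f$, so the two copies of $\Q\{BS^1\}$ are identified with $K$. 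This already uses the NS property, via \Cref{obs:fcan}.

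\emph{Step 2: adjoin an isomorphism.} Let $A'$ be obtained from $A$ by freely adjoining an equivalence $a\simeq b$. Concretely, $A'=A\otimes_{\Frrig(\pt\sqcup\pt)}\Frrig(\pt)$ along the fold map $\pt\sqcup\pt\to\pt$. Equivalently, $A'=\Frrig_\Q(\pt)_K\otimes_{\Frrig_\Q(\pt)_K\otimes_K\Frrig_\Q(\pt)_K}\Frrig_\Q(\pt)_K$, where the second map is the multiplication/fold. This $A'$ is $\aleph_1$-compact over $A$. If $C\otimes_A A'\neq 0$, then NS gives a retraction $C\otimes_A A'\to C$, and the image of the adjoined isomorphism yields $x\simeq y$.

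\emph{Step 3: nonvanishing.} By \Cref{prop:abssstensor} and \Cref{thm:semisimple}, $\ho(A)$ is semi-simple. I would also check that the unit of $A$ is simple: $\End(\one_A)\simeq K\otimes_K K=K$, which is a graded field. Then \Cref{cor:semiscons} applies over $A$, so it suffices to show $C\neq0$ (given) and $A'\neq 0$. For $A'$, its unit endomorphisms are $K\otimes_{K}K=K$ by \Cref{cor:unittensor}, provided the relevant units are atomic. More directly, $A'$ receives a map to $\Frrig_\Q(\pt)_K$ via the fold, which is nonzero by \Cref{thm:semisimple}, so $A'\neq 0$.

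I expect the main obstacle to be verifying the hypotheses of \Cref{cor:semiscons} for $A$. This means showing that $\ho(A)$ is genuinely semi-simple, with simple unit, after base change along the diagonal identification of Dimensions. The first thing to try is to identify $A$ with $\Frrig_\Q(\pt\sqcup\pt)\otimes_{\Q\{BS^1\}^{\otimes2}}(K\otimes K)$ and then pass to $K$ along the multiplication $K\otimes K\to K$. Absolute semi-simplicity, via \Cref{prop:abssstensor}, handles the tensor product over $K$.
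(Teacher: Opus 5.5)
Your argument is essentially the paper's. Both factor $(x,y)$ through $\Frrig_\Q(\pt)_K\otimes_K\Frrig_\Q(\pt)_K$, which is semi-simple with simple unit by \Cref{thm:semisimple} and \Cref{prop:abssstensor}, base change $C$ along a nonzero algebra in which the two generators are identified, get nonvanishing from \Cref{cor:semiscons}, and retract using $\aleph_1$-NS. The only difference is that the paper uses the fold $\Frrig_\Q(\pt)_K\otimes_K\Frrig_\Q(\pt)_K\to\Frrig_\Q(\pt)_K$ directly instead of your $A'$.

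One correction, which is harmless. In fact $A'\not\simeq\Frrig_\Q(\pt)_K$: one has $\End(\one_{A'})\simeq\HH(\Q\{BS^1\}/\Q)\otimes_{\Q\{BS^1\}}K$, an exterior algebra over $K$ on odd classes, because gluing $a$ to $b$ after their Dimensions are already identified freely adds self-homotopies of that Dimension. So your ``equivalently'' and the claim $\End(\one_{A'})\simeq K$ are false. But your other argument, the map $A'\to\Frrig_\Q(\pt)_K$ induced by the fold, already gives $A'\neq 0$, which is all you need.
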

\begin{proof}
    Choose the witness $f:\Q\{BS^1\}\to K$ to be the same for $x$ and $y$, and $\kappa$-compact, as is possible thanks to \Cref{rmk:admissiblemaps} and \Cref{obs:fcan}. By \Cref{obs:fcan}, the maps $\Frrig_\Q(\pt)\to C$ classifying $x$ and $y$ factor through $\Frrig_\Q(\pt)_K$. 

    Furthermore, we note that $\Frrig(\pt)_K\otimes_{\Perf(K)}\Frrig(\pt)_K$ is still semi-simple. Indeed by \Cref{thm:semisimple}, $\Frrig(\pt)_K$ is absolutely semi-simple in the sense of \Cref{defn:absss}. Thus, \Cref{prop:abssstensor} guarantees the semi-simplicity of the tensor product. 

    Furthermore, the unit in the tensor product still has $K$ as endomorphism ring spectrum, and hence the tensor product still has a simple unit.

    Hence, the pushout $\Frrig(\pt)_K\otimes_{\Frrig(\pt)_K\otimes_{\Perf(K)}\Frrig(\pt)_K}C$ is nonzero by \Cref{cor:semiscons}, and it is clearly $\aleph_1$-compact over $C$. 

    The retraction one obtains from $C$ being $\aleph_1$-NS is a witness that $x\simeq y$. 
\end{proof}
Thus non-integral elements are easy to classify in an NS category. The same cannot be said for elements of integer dimension, because of the usual counterexamples: 
\begin{ex}\label[ex]{ex:trivdim}
    Let $C$ be a rigid category and $x\in C$. Then $\dim(x\oplus\Sigma x) = 0$.
\end{ex}
\begin{rmk}
Of course in the above example, it is not only the dimension of $x\oplus\Sigma x$ which is $0$/an integer, it is its $K$-theory class, and we know from \Cref{thm:KNS} that this is all that can happen. Thus, if $\Dim(x) = \Dim(y)$, then $[x]=[y]$. In fact, we can get better: if $x,y$ have the same Dimension and are not necessarily non-integral, by \Cref{lm:existtrans}, in the $\aleph_1$-NS case we can find some non-integral $N$ such that $N\oplus x, N\oplus y$ are both non-integral, and of the same dimension. By \Cref{lm:uniquetrans}, they are therefore equivalent. 
\end{rmk}
\begin{cor}\label{cor:summandtrans}
    Let $C$ be an $\aleph_1$-NS rigid rational $2$-ring, and let $N$ be a non-integral object, $x$ an arbitrary object. $x$ is a summand of $N$. 
\end{cor}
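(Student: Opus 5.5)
The idea is to write $N$ as $x\oplus N'$ with $N'$ non-integral and of the same Dimension as $N$, and then use \Cref{lm:uniquetrans} to identify $x\oplus N'$ with $N$.

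Set $d:=\Dim(N)$ and $e:=\Dim(x)$. The target is an object $N'$ with $\Dim(N')=d-e$ such that both $N'$ and $x\oplus N'$ are non-integral. By \Cref{lm:sumissum} we then get $\Dim(x\oplus N')=d$. Both $x\oplus N'$ and $N$ are non-integral with the same Dimension, so they are $f$-transcendental for a common admissible $f$ by \Cref{obs:fcan}. \Cref{lm:uniquetrans} then gives $N\simeq x\oplus N'$, exhibiting $x$ as a summand of $N$.

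To produce $N'$, I would use the negative of $x$ in $K$-theory: $\Dim(\Sigma x)=-\Dim(x)$. Consider $\Sigma x\oplus N$. Its Dimension is $d-e$, the same as that of $N'$. I first try to show $\Sigma x\oplus N$ is non-integral, and then take $N'=\Sigma x\oplus N$. This object has Dimension $d-e$. Moreover $x\oplus N'=x\oplus\Sigma x\oplus N$ has Dimension $d$. The remaining task is to check non-integrality of both.

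The main obstacle is this non-integrality check, since adding an arbitrary object could destroy transcendence. I would handle it as in \Cref{lm:trans+1istrans}, but more robustly via the NS property, by adjoining a fresh generic object. Let $g:\Q\{BS^1\}\to K$ be the map to the graded fraction field, which is admissible. Form $C\otimes\Frrig_\Q(\pt)_K$. This is nonzero and $\aleph_1$-compact over $C$, so it retracts to $C$. The generator then yields an object $G\in C$ whose Dimension factors through $K$ and which is therefore $g$-transcendental. It is better to form the extension $C\otimes_{\Q}\Frrig_\Q(\pt)\otimes_{\Q\{BS^1\}\otimes\End(\one_C)}(\ldots)$ so that the generic Dimension stays generic relative to $\End(\one_C)$, i.e.\ $\Dim(G)_i$ are algebraically independent from $\Dim(N)_i,\Dim(x)_i$. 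Concretely, tensor $C$ with $\Frrig_\Q(\pt)$ and base change along $\End(\one_C)\{BS^1\}\to L$, the graded fraction field over $\End(\one_C)$. This is nonzero and $\aleph_1$-compact, hence splits. For such $G$, any $G\oplus w$ has Dimension $\Dim(G)+\Dim(w)$ with all coordinates still algebraically independent over $\Q$. So $\Dim(G\oplus w)$ factors through an admissible map and $G\oplus w$ is non-integral by \Cref{rmk:factorimpladm}.

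Now apply the first paragraph with $N'$ replaced by suitable sums involving $G$. Both $N$ and $x\oplus G\oplus\Sigma G\oplus\Sigma x\oplus N$ are non-integral: the first by hypothesis, the second because it contains $G$. They have equal Dimension $d$, since $\Dim(G)+\Dim(\Sigma G)=0$ and $\Dim(x)+\Dim(\Sigma x)=0$. So \Cref{lm:uniquetrans} gives $N\simeq x\oplus(G\oplus\Sigma G\oplus\Sigma x\oplus N)$, and $x$ is a summand of $N$. The only delicate point is justifying the generic object $G$ over $\End(\one_C)$. If this fails, I would use $C\otimes_\Q \Frrig_\Q(\pt)_K$ together with \Cref{cor:semiscons} applied to $\Frrig_\Q(\pt)_K$ to get the required non-vanishing.
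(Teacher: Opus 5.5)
Your overall strategy is exactly the paper's: exhibit an object with $x$ as a summand and the same Dimension as $N$, check that it is non-integral, and invoke \Cref{lm:uniquetrans}. The step you flag as the main obstacle, however, is justified incorrectly.

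In your final object $x\oplus G\oplus\Sigma G\oplus\Sigma x\oplus N$, the contributions of $G$ cancel. Since $\Dim(\Sigma G)=-\Dim(G)$, \Cref{lm:sumissum} shows that this object has Dimension exactly $\Dim(N)$, so the genericity of $\Dim(G)$ plays no role. Your claim that any $G\oplus w$ has Dimension coordinates algebraically independent over $\Q$ fails precisely when $w$ contains $\Sigma G$. So ``non-integral because it contains $G$'' is not a valid reason.

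The construction of $G$ is also not available as described. Base changing $C\otimes_\Q\Frrig_\Q(\pt)$ along $\End(\one_C)\{BS^1\}\to L$, with $L$ the graded fraction field over $\End(\one_C)$, inverts uncountably many elements, so the result is not $\aleph_1$-compact over $C$. Nor could it retract onto $C$: a retraction would give an $\End(\one_C)$-algebra map $L\to\End(\one_C)$. If $a$ denotes the image of $t_0$, this map would send the unit $t_0-a$ of $L$ to $0$, which is impossible.

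The missing observation is the remark right after the definition: whether an object is $f$-transcendental depends only on its Dimension. Since $\Dim(x\oplus\Sigma x)=0$, the object $N\oplus x\oplus\Sigma x$ has the same Dimension as $N$. It is therefore $f$-transcendental for the very same admissible $f$ that witnesses non-integrality of $N$, and \Cref{lm:uniquetrans} gives $N\simeq N\oplus x\oplus\Sigma x$. This is the paper's entire proof.

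So the obstacle you identify, that adding an arbitrary object might destroy transcendence, never arises, because the summand added has zero Dimension. Neither the non-integrality of $N'$ nor any generic object $G$ is needed. Your first candidate $N'=\Sigma x\oplus N$ already works once you drop the unnecessary requirement that $N'$ itself be non-integral; you only need $x\oplus N'$ to be non-integral, and that is immediate from its Dimension.
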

\begin{proof}
    $N\oplus x\oplus\Sigma x$ is non-integral since its Dimension is equivalent to that of $N$. Thus, \Cref{lm:uniquetrans} guarantees it is equivalent to $N$, and hence $x$ is a retract of $N$.  
\end{proof}

The results of this section suggest the following picture of an $\aleph_1$-NS rigid rational $2$-ring $C$: we have all the $\one_C^{\oplus n}, \Sigma \one_C^{\oplus n}, n\in\NN$, which are the ``standard'' objects and correspond to the ring part $\Perf(\End(\one_C))$ of $C$; then we have non-integral objects, these are classified up to equivalence by their Dimension. In particular, they are ``permuted'' through the standard objects: if $N_0, N_1$ are non-integral objects and $\Dim(N_0)+n= \Dim(N_1)$, then $N_0\oplus\one_C^{\oplus n}\simeq N_1$, and similarly for $-n$ and $\Sigma \one_C^{\oplus n}$. These objects are somewhat mysterious, but how they relate to one another is somewhat understandable, ultimately because of \Cref{cor:semiscons}. In particular, by combining \Cref{ex:trivdim}, \Cref{lm:sumissum} and \Cref{lm:uniquetrans}, we find that for any non-integral object $N$, every object $x\in C$ is a retract of $N$. 

The more mysterious objects are the objects with integer dimensions which are not ``standard''. They exist, e.g. $N\oplus\Sigma N$ for any  non-integral $N$. There are also some ``weirder'' ones, e.g. objects whose $\Dim(x)_0$ is an integer (e.g. $0$) but which have higher $\Dim(x)_i$'s. We cannot manipulate those as easily, though of course they are also retracts of non-integral objects. 
\subsection{Duality}\label{section:duality}
In this section, we study duality. As in the previous sections, the only objects about which we can really say anything are the non-integral objects. We first give a definition: 
\begin{defn}
    Let $C$ be a symmetric monoidal category and $x\in C$ a dualizable object. We say $x$ is self-dual if there is an equivalence $x\simeq x^\vee$, and we say $x$ is \emph{coherently} self-dual if the symmetric monoidal functor $\Cob\to C$ classifying $x$ factors through the unoriented ($1$d-)cobordism category $\Cob^{\mathrm{unor}}$.  
\end{defn}
\begin{rmk}
    Just as self-duality should really be \emph{the data} of the equivalence $x\simeq x^\vee$, coherent self-duality should really be the data of the factorization through $\Cob^{\mathrm{unor}}$. 
\end{rmk}
\begin{rmk}
    There is another possible definition of ``coherently self-dual'', involving the fact that for any symmetric monoidal category $C$ where every object is dualizable, $C$ upgrades canonically to a $C_2$-fixed point in $\Cat$ under the $\op$-action, cf. Harpaz's construction here \cite{Yonatan}. Since anima are also canonically fixed under this $C_2$-action, this provides a $C_2$-action on $C^\simeq$, and one can define $x$ to be coherently self-dual if it admits a lift to $(C^\simeq)^{hC_2}$. We will not need to know that these two perspectives are equivalent. 
\end{rmk}

\begin{prop}
    Let $C$ be an $\aleph_1$-NS rigid rational $2$-ring, and let $x\in C$ be an object. Consider the following statements: 
    \begin{enumerate}
        \item $x$ is self-dual in $C$; 
        \item For all odd $i$, $\Dim(x)_i \in \pi_{2i}\End(\one_C)$ is $0$
    \item $x$ is coherently self-dual in $C$;
        \item $\Dim(x)$ is $C_2$-equivariant as a map $BS^1\to \End(\one_C)$, where $BS^1$ has the orientation-reversing $C_2$-action.
        \end{enumerate} 
        In general, (1) implies (2), (3) implies (4), (3) implies (1) and (4) implies (2). When $x$ is non-integral, all implications can be reversed. 
\end{prop}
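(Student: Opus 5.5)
The general implications come from how duality interacts with the cobordism hypothesis, and the converses for non-integral objects will come from \Cref{lm:uniquetrans} and its coherent analogue.

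\textbf{The general implications.} The key computation is that $\Dim(x^\vee)=\Dim(x)\circ\sigma$, where $\sigma$ is the orientation-reversing involution of $BS^1$, i.e. complex conjugation. Indeed, the functor $\Cob\to C$ classifying $x^\vee$ is the one classifying $x$ precomposed with the orientation-reversal automorphism of $\Cob$, which acts on $\End(\one_{\Cob})=\FCom(BS^1)$ through $\sigma$ on $BS^1$.

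On $\pi_*\Q\{BS^1\}$, the map $\sigma$ sends $u\mapsto -u$ on $\pi_*\Q^{BS^1}$. Dually it sends $t_i\mapsto (-1)^i t_i$, so $\Dim(x^\vee)_i=(-1)^i\Dim(x)_i$. This gives the following.
\begin{itemize}
\item (1)$\Rightarrow$(2): an equivalence $x\simeq x^\vee$ gives $\Dim(x)_i=-\Dim(x)_i$ for odd $i$. Hence $\Dim(x)_i=0$, since we are rational.
\item (3)$\Rightarrow$(1): restrict the factorization $\Cob\to\Cob^{\mathrm{unor}}\to C$ to objects; in $\Cob^{\mathrm{unor}}$ the generator is equivalent to its dual.
\item (3)$\Rightarrow$(4): the map $BS^1\to\End(\one_{\Cob})\to\End(\one_{\Cob^{\mathrm{unor}}})$ is $C_2$-equivariant. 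Endomorphisms of the unit in $\Cob^{\mathrm{unor}}$ are free on unoriented circles, i.e. on $BO(2)=(BS^1)_{hC_2}$, so $\Dim(x)$ factors through $(BS^1)_{hC_2}$, which is exactly the $C_2$-equivariance.
\item (4)$\Rightarrow$(2): the invariance $\Dim(x)\circ\sigma\simeq\Dim(x)$ on $\pi_*$ kills the odd $t_i$, as in the first bullet.
\end{itemize}

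\textbf{(2)$\Rightarrow$(1) for non-integral $x$.} Condition (2) together with the computation above says $\Dim(x^\vee)=\Dim(x)$. Moreover $x^\vee$ is non-integral whenever $x$ is: its Dimension is $\Dim(x)$ twisted by the ring automorphism $\sigma$ of $\Q\{BS^1\}$, and $\sigma$ fixes $t_0$, so it carries admissible maps to admissible maps. By \Cref{obs:fcan}, $x$ and $x^\vee$ are transcendental for the same $f$, and \Cref{lm:uniquetrans} then gives $x\simeq x^\vee$.

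\textbf{(4)$\Rightarrow$(3) for non-integral $x$.} This is the main obstacle. I would mimic the proof of \Cref{lm:uniquetrans}, replacing $\Frrig_\Q(\pt)$ by the free rigid rational category on a coherently self-dual object, $\Frrig^{\mathrm{unor}}:=\Q\otimes\Cob^{\mathrm{unor}}$, suitably stabilized and idempotent completed. Its unit endomorphisms are $\Q\{(BS^1)_{hC_2}\}$, and rationally this is $\Q\{BS^1\}/(t_{\mathrm{odd}})$. Condition (4) says exactly that $\Dim(x)$ factors through $\Q\{(BS^1)_{hC_2}\}$, and hence through an admissible $K$ over it.

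I then need the analogue of \Cref{thm:semisimple}: $\Frrig^{\mathrm{unor}}_K$ is absolutely semi-simple with simple unit. This should follow from Deligne's semisimplicity of $\mathrm{Rep}(O_t)$ for non-integral $t$, by the same argument as in the oriented case via \cite{deligne}; this is the step I expect to require the most care. Granting it, consider the pushout $C\otimes_{\Frrig_\Q(\pt)_K}\Frrig^{\mathrm{unor}}_K$, where the base factor maps to $C$ via $x$ (using \Cref{obs:fcan}). \Cref{cor:semiscons} should show this pushout is nonzero: I would first argue that $\Frrig_\Q(\pt)_K\to\Frrig^{\mathrm{unor}}_K$ does not kill the relevant base change, by comparing unit endomorphisms as in the proof of \Cref{lm:transpdiv}. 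Since the pushout is $\aleph_1$-compact over $C$, the NS property produces a retraction to $C$. That retraction is a factorization of $x$ through $\Cob^{\mathrm{unor}}$, which is (3).
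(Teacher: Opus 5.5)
Your individual arguments are sound, but the converses you prove do not close the cycle. Proving (2)$\Rightarrow$(1) and (4)$\Rightarrow$(3) for non-integral $x$ only gives (1)$\Leftrightarrow$(2) and (3)$\Leftrightarrow$(4). The general implications (3)$\Rightarrow$(1) and (4)$\Rightarrow$(2) both go from the second pair to the first. So nothing in your proposal gives (1)$\Rightarrow$(3) or (2)$\Rightarrow$(4), which are part of ``all implications can be reversed''.

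The repair is already inside your last step: the pushout argument only ever uses that $f(t_i)=0$ for odd $i$, which is condition (2). So start from (2). Take $f\colon \Q\{BS^1\}\to K(\mathfrak p)$ with $\mathfrak p=\ker(\pi_*\Dim(x))$. It is admissible since $x$ is non-integral, it is $\aleph_1$-compact, $\Dim(x)$ factors through it by \Cref{obs:fcan}, and it kills the odd $t_i$ by (2). Your argument then gives (2)$\Rightarrow$(3) directly.

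This is exactly the paper's proof. It shows only (2)$\Rightarrow$(3) and lets (3)$\Rightarrow$(1)$\Rightarrow$(2) and (3)$\Rightarrow$(4)$\Rightarrow$(2) do the rest. Your (correct) use of \Cref{lm:uniquetrans} for (2)$\Rightarrow$(1) then becomes redundant. Alternatively, your own reformulation of (4) as a factorization through $\Q\{(BS^1)_{hC_2}\}\simeq\Q\{BS^1\}/(t_{\mathrm{odd}})$, together with the freeness of $\Q\{BS^1\}$ on the $t_i$, gives (2)$\Rightarrow$(4) outright.

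Second, the step you flag as the main obstacle is superfluous. The pushout $C\otimes_{\Frrig_\Q(\pt)_K}\Q[\Cob^{\mathrm{unor}}]_K$ is formed over $\Frrig_\Q(\pt)_K$. So \Cref{cor:semiscons} only needs $\Frrig_\Q(\pt)_K$ to be semi-simple with simple unit, which is \Cref{thm:semisimple}, plus nonvanishing of the two factors. No semi-simplicity of the unoriented category (Deligne's $\mathrm{Rep}(O_t)$) is used anywhere.

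Nonvanishing of $\Q[\Cob^{\mathrm{unor}}]_K=\Q[\Cob^{\mathrm{unor}}]\otimes_{\Perf(\Q\{BS^1\})}\Perf(K)$ is a pure unit computation. By \Cref{cor:unittensor} its unit endomorphisms are $\Q\{(BS^1)_{hC_2}\}\otimes_{\Q\{BS^1\}}K$. This is a Koszul complex over $K$ on the elements $f(t_i)$, $i$ odd, and it is nonzero precisely because these vanish. With that, $\aleph_1$-compactness and the NS property give the retraction onto $C$, i.e.\ the factorization through $\Cob^{\mathrm{unor}}$, as you say.
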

\begin{proof}
On the $1$d oriented cobordism category, orientation reversal is a symmetric monoidal functor sending the positively oriented point to the negatively oriented point. It follows from the universal property of $\Cob$ that for any symmetric monoidal category $D$ and any dualizable object $x\in D$, $\dim(x^\vee): BS^1\to\End(\one_D)$ is equivalent to $\dim(x)\circ \iota$, where $\iota:BS^1\to BS^1$ reverses orientation. 

We also note that on $\Q[BS^1]\simeq \bigoplus_i \Q\cdot u_i$, $\iota$ induces $u_i\mapsto (-1)^i u_i$. 

This proves that if (1) holds, i.e. $x\simeq x^\vee$, then $\dim(x)_i = (-1)^i \dim(x)_i$, so that for all odd $i$, $\dim(x)_i = 0$ (since $2$ is invertible), i.e. (2) holds. 

It's clear that the above also holds if we assume (4) in place of (1). 

(3) clearly implies (1), since the point in $\Cob^{\mathrm{unor}}$ is self-dual. 

Finally, in general, that (3) implies (4) is also clear because the map $\Cob\to \Cob^{\mathrm{unor}}$ induces the map $BS^1\to (BS^1)_{hC_2}$ on endomorpisms of the unit. 

We now assume that $t$ is non-integral, and suppose (2) holds. We shall prove (3), which suffices to conclude. 

In this case, let $f: \Q\{BS^1\}\to K$ be an admissible morphism such that $x$ is $f$-transcendental. (2) implies that $f(t_i) = 0$ for $i$ odd. It follows that $\Q[\Cob^{\mathrm{unor}}]\otimes_{\Perf((\FCom)_\Q(BS^1))} \Perf(K) \neq 0$, where $\Cob^\mathrm{unor}$ is the unoriented cobordism category. Indeed, the endomorphism ring of the unit in the latter is exactly $(\FCom)_\Q((BS^1)_{hC_2})$ whose homotopy ring is $\Q[t_i,i\textnormal{ even }]$. 

Since this category is $\aleph_1$-compact, it follows that the basechange $C\otimes_{\Frrig(\pt)_K}\Q[\Cob^{\mathrm{unor}}]_K$ maps back to $C$, proving (3). 
\end{proof}

Thus among non-integral objects, self-duality is completely understandable at the level of Dimension, in a computable way. 
\subsection{Invertibles}\label{section:picard}
In this section, we prove: 
\begin{prop}\label[prop]{prop:Pic}
    Let $C$ be an $\aleph_0$-NS rigid rational $2$-ring. The only nontrivial invertible object in $C$ is $\Sigma\one_C$,i.e. $\pi_0\Pic(C)\cong \mathbb Z/2\cdot [\Sigma \one_C]$.  
\end{prop}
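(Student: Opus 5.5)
The plan is to reduce, after possibly replacing an invertible $L$ by $\Sigma L$, to the case where $L$ looks exactly like the unit at the level of dimensions. Then I use the Nullstellensatzian property to split a finitely presented extension of $C$ that forces $L\simeq\one_C$.

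\emph{Step 1: dimensions of invertible objects.} By \Cref{lm:end1NS}, $\End(\one_C)$ is an $\aleph_0$-NS rational commutative ring spectrum. By \Cref{thm:choNS} it is therefore $k[u^{\pm1}]$ with $|u|=2$ and $k$ algebraically closed; in particular $\pi_0\End(\one_C)=k$ is a field. For $L$ invertible, $\dim(L)$ is the trace of the symmetry $\tau_{L,L}$, which is multiplication by a unit $\epsilon_L\in\pi_0\End(\one_C)$ with $\epsilon_L^2=1$, and $\dim(L)=\epsilon_L$. Hence $\epsilon_L=\pm1$. Since $\epsilon_{\Sigma L}=-\epsilon_L$, after possibly replacing $L$ by $\Sigma L$ we may assume $\epsilon_L=1$. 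It then suffices to show $L\simeq\one_C$, which also gives $\Sigma\one_C\not\simeq\one_C$ because their dimensions are $-1\neq 1$. This yields $\pi_0\Pic(C)\cong\mathbb Z/2$.

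\emph{Step 2: an algebra trivializing $L$.} In $\Ind(C)$, consider $A:=\bigoplus_{n\in\Z}L^{\otimes n}$, which I would construct as the commutative algebra $\mathrm{Sym}(L\oplus L^{-1})/(xy-1)$. The point is that for an invertible object with $\epsilon_L=1$, the $\Sigma_n$-action on $L^{\otimes n}$ is classified by a map $B\Sigma_n\to B\mathrm{GL}_1(\End(\one_C))$ which is trivial on $\pi_1$ (it goes through the sign, and $\epsilon_L=1$). Rationally, $B\Sigma_n$ has no higher cohomology, so the action is trivial coherently. Therefore $L^{\otimes n}_{h\Sigma_n}\simeq L^{\otimes n}$, and $A$ has underlying object $\bigoplus_n L^{\otimes n}$ with $\one_C$ as a retract; in particular $A\neq0$. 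Equivalently and more invariantly, I would describe $D:=\Mod_A(\Ind C)$ as a pushout of rigid $2$-rings: the map $\Frrig_\Q(\pt)\to C$ classifying $L$ factors through the free rigid $2$-ring on an invertible object with trivial twist, which is $\Perf(\Q[\Z])=\Perf(\Q[x^{\pm1}])$ rationally. Then
\[
D= C\otimes_{\Perf(\Q[x^{\pm1}])}\Perf(\Q).
\]
This is a compact (finitely presented) rigid commutative $C$-algebra, and in it $L\otimes A\simeq A$, i.e.\ $L$ becomes the unit.

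\emph{Step 3: conclude via NS.} Since $\Mod_A(\Ind C)$ contains $A\neq0$, $D\neq0$ and is $\aleph_0$-compact in $\TwoRing^\rig_C$. The $\aleph_0$-NS property then gives a $C$-algebra map $D\to C$ retracting the unit. Applying it to the equivalence $L\otimes A\simeq A$ yields $L\simeq\one_C$ in $C$.

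The main obstacle is Step 2: making precise that the twist condition $\epsilon_L=1$ suffices for $L$ to be classified by a symmetric monoidal functor out of $\Perf(\Q[x^{\pm1}])$. Concretely, the map of Picard spectra $\Sph\to\mathrm{pic}(C)$ picking out $L$ must factor rationally through $\Z$. The obstruction lives in $\eta$ and the $\epsilon$ sign, which I expect to kill using rationality and $\epsilon_L=1$. Everything else, namely the nonvanishing via the retract $\one_C\to A$ and the compactness, should be routine.
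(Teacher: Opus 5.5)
Your proposal follows the paper's proof. You normalize $L$ to have dimension $1$ (replacing it by $\Sigma L$ if needed), factor its classifying map through finite $\Z$-graded $\Q$-modules $\Perf(\Q)[\Z]$, base change along the grading-forgetting functor $\Perf(\Q)[\Z]\to\Perf(\Q)$, check the result is nonzero, and retract it onto $C$ using the NS property.

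The local variations are harmless. You get $\epsilon_L=\pm1$ from $\pi_0\End(\one_C)$ being a field via \Cref{lm:end1NS} and \Cref{thm:choNS}. The paper instead splits $C\simeq C_1\times C_{-1}$ along the idempotent $\tfrac{1+\epsilon_L}{2}$ and uses the NS property to see that one factor is all of $C$, which avoids the classification. For nonvanishing you note that $\one_C$ is a split summand of $A=\bigoplus_n L^{\otimes n}$; the paper invokes semi-simplicity through \Cref{cor:semiscons}, which here amounts to the same splitting.

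Two things need fixing.

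First, the free rational rigid $2$-ring on an invertible object of dimension $1$ is not $\Perf(\Q[x^{\pm1}])$. With its tensor product over $\Q[x^{\pm1}]$, symmetric monoidal functors out of that category classify units of $\pi_0\End(\one_C)$, not invertible objects of $C$. The correct object is $\Perf(\Q)[\Z]$, i.e.\ finite $\Z$-graded $\Q$-modules without the Koszul sign. Your $D=\Mod_A(\Ind(C))$ is exactly the base change along $\Perf(\Q)[\Z]\to\Perf(\Q)$, so the rest of your argument goes through once this is corrected.

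Second, the step you flag as the main obstacle is the real content, and your argument via the $\Sigma_n$-actions on each $L^{\otimes n}$ separately does not produce the needed $\mathbb E_\infty$ factorization.

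\begin{itemize}
\item The paper settles it by identifying the free symmetric monoidal groupoid on an invertible object with $\Omega^\infty\Sph$. The map $\Omega^\infty\Sph\to\PPic(\Z)$ is an isomorphism on $\pi_0$ and $\pi_1$, and the higher stems are finite, so rationally the free object is $\Perf(\Q)[\PPic(\Z)]$. Its unit has endomorphisms $\Q[C_2]\cong\Q\times\Q$, and the factor where the generator has dimension $1$ is $\Perf(\Q)[\Z]$.
\item Your Picard-spectrum formulation also works and is quick to finish. The fiber of $\Sph\to H\Z$ is $\tau_{\geq1}\Sph$, which has finite homotopy groups, while $\tau_{\geq2}\mathrm{pic}(C)$ is rational. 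Hence the composite $\tau_{\geq1}\Sph\to\Sph\to\mathrm{pic}(C)$ is null as soon as the image of $\eta$ in $\pi_1\mathrm{pic}(C)=\pi_0\End(\one_C)^\times$ is trivial. That image is $\epsilon_L$, so $\epsilon_L=1$ gives a map $H\Z\to\mathrm{pic}(C)$. This is a symmetric monoidal functor $\Perf(\Q)[\Z]\to C$ sending the generator to $L$.
\end{itemize}
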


For this, we start by analyzing the free rational small $2$-ring on an invertible object. 
\newcommand{\Kos}{\mathrm{Kos}}
\begin{lm}
    The free rational rigid $2$-ring on an invertible object is equivalent to the rationalized linearization of $\PPic(\Z)$, $\Perf(\Q)[\PPic(\Z)]$. 
    
    It splits as a product $A\times B$ where $A=\Perf(\Q)[\Z]$ is the category of finite graded $\Q$-modules, and $B=\Perf(\Q)[\Z]^{\Kos}$ is the same category but with the Koszul sign rule symmetric monoidal structure. In particular, $A$ and $B$ are both semi-simple with simple units. 
\end{lm}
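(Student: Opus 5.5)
The first step is to identify the free rational rigid $2$-ring on an invertible object with $\Perf(\Q)[\PPic(\Z)]$. The free symmetric monoidal category on an invertible object is the free grouplike $\mathbb E_\infty$-space on a point, namely $\Omega^\infty \Sph \simeq \QQ S^0$, viewed as a symmetric monoidal groupoid. Every object of a rigid category built from it is dualizable, and the inverse serves as the dual. So the free rational small $2$-ring on an invertible object is the rational linearization $\Perf(\Q)[\Omega^\infty\Sph]$, that is, the idempotent-complete stable closure of $\Q[\Omega^\infty\Sph]$. Rationally, only the $1$-truncation should matter. Mapping spectra are $\Q$-chains on path spaces of $\Omega^\infty\Sph$, and these are $\Q\otimes\Sigma^\infty_+\Omega^\infty\Sph$ on each component. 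The higher homotopy groups $\pi_{\geq 2}$ are finite, so they are rationally invisible. I would use this to replace $\Omega^\infty\Sph$ by its $1$-truncation, which is the Picard groupoid $\PPic(\Z)$ with $\pi_0=\Z$ and $\pi_1=\Z/2$ and nontrivial $k$-invariant (the sign $\eta$). I would check explicitly that the map of symmetric monoidal groupoids $\Omega^\infty\Sph\to\tau_{\leq 1}\Omega^\infty\Sph\simeq\PPic(\Z)$ induces a rational homology equivalence on each component, hence an equivalence after $\Perf(\Q)[-]$.

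The second step is to split along the $\pi_1=\Z/2$. Each automorphism group is $\Z/2$, so the group algebra $\Q[\Z/2]$ splits via the idempotents $e_\pm=\tfrac{1\pm\sigma}{2}$, where $\sigma$ is the generator of $\pi_1$, i.e.\ the symmetry of $\one$ with itself, which is central. This gives a symmetric monoidal splitting $\Perf(\Q)[\PPic(\Z)]\simeq A\times B$. The idempotents $e_\pm$ lie in $\pi_0\End(\one)=\Q[\Z/2]$, and a product decomposition of the unit's endomorphism ring into idempotents splits a rigid $2$-ring as a product. On the factor where $\sigma\mapsto 1$, the symmetry on $L\otimes L$ (with $L$ the generator) acts by the image of $\eta$, which becomes $+1$. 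That factor is $\Perf(\Q)[\Z]$ with the trivial symmetry, i.e.\ finite graded $\Q$-vector spaces with the usual rule. On the other factor, where $\sigma\mapsto -1$, the swap on $L\otimes L$ is $-1$, which is exactly the Koszul sign rule. To justify these identifications, I would note that both factors are the linearization of the discrete group $\Z$ with a braiding determined by a quadratic form $\Z\to\{\pm1\}$, and that the two possible forms correspond to $e_\pm$. A concrete way to do this is to compute the swap on $L\otimes L$ in $\PPic(\Z)$: it is $\sigma$, the generator of $\pi_1$.

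Semisimplicity is then immediate. In each factor, objects are finite sums of shifts of the $L^{\otimes n}$. Endomorphism algebras of distinct $L^{n}$ vanish, since the components are different. Endomorphism rings are $\Q$ (rational chains on a point, as $\pi_1$ has been split off), so the homotopy categories are products of copies of $\Vect_\Q$ up to shifts, and the units are simple.

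The main obstacle is the first step. It requires making rigorous that the free rigid $2$-ring on an invertible object is the linearization of the free Picard anima, and that rationalization kills everything above the $1$-type, keeping track of the symmetric monoidal structure rather than only the underlying category. I would handle the monoidal structure by noting that $\Perf(\Q)[-]$ is symmetric monoidal and left adjoint, and that a symmetric monoidal map of Picard anima which is a rational equivalence on components induces a fully faithful, essentially surjective functor.
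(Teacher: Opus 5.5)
Your overall route matches the paper's. You identify the free object with $\Perf(\Q)[\Omega^\infty\Sph]$, pass to $\PPic(\Z)$ because the map is an isomorphism on $\pi_0,\pi_1$ and the higher homotopy is finite, split along the idempotents of $\pi_0\End(\one)\cong\Q[C_2]$, and identify the factors by the sign $\sigma$ takes.

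Two slips need correcting. First, $\sigma$ is \emph{not} the symmetry of $\one$ with itself. The symmetry $\tau_{\one,\one}$ is the identity in any symmetric monoidal category, and if $\sigma$ were that, $e_-$ would be $0$ and $B$ would vanish. The generator of $\pi_1$ is the swap $\tau_{L,L}$, transported to $\mathrm{Aut}(\one)$ using invertibility of $L$; equivalently, it is $\dim(L)$. You say this correctly later, so just drop the earlier description.

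Second, the criterion ``rational homology equivalence on each component'' is off by one loop. Mapping spectra of $\Perf(\Q)[G]$ are $\Q[\Map_G(a,b)]$, i.e.\ rational chains on loop spaces of $G$, not on its components. For the same reason, your formula $\Q\otimes\Sigma^\infty_+\Omega^\infty\Sph$ should read $\Q\otimes\Sigma^\infty_+\Omega^\infty\Sigma^{-1}\Sph$. What you need is a bijection on $\pi_0$ together with a rational homology equivalence on loop spaces. The map $\pt\to BC_2$ satisfies your stated criterion, yet $\Perf(\Q)\to\Perf(\Q[C_2])$ is not an equivalence. In the case at hand the correct criterion does hold, precisely because of the isomorphism on $\pi_0,\pi_1$ and finiteness of the rest, which is what the paper uses.

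The step that is genuinely thin is identifying the factors with $\Perf(\Q)[\Z]$ and its Koszul variant. Appealing to a classification of braidings on the linearization of $\Z$ by quadratic forms is only an assertion. Computing the swap on $L\otimes L$ gives you the invariant, but not a comparison functor.

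The paper gets that functor from freeness:
\begin{enumerate}
\item $\Q(1)\in\Perf(\Q)[\Z]$ is invertible, so it classifies a symmetric monoidal functor out of $\Perf(\Q)[\PPic(\Z)]$.
\item On $\End(\one)=\Q[C_2]$ this functor sends $\sigma=\dim(L)$ to $\dim\Q(1)=1$, so it factors through the factor $A$ cut out by $e_+$.
\item The induced $A\to\Perf(\Q)[\Z]$ is an equivalence: on the generators $L^{\otimes n}$, both sides have mapping spectra $\Q$ for equal indices and $0$ otherwise.
\end{enumerate}
The same argument with the Koszul sign rule, where $\dim\Q(1)=-1$, identifies $B$. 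With these repairs, your semisimplicity argument goes through as written.
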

\begin{proof}
    The free symmetric monoidal category on an invertible object is the symmetric monoidal groupoid $\Omega^\infty\Sph$. 

    In particular, this has a map to $\PPic(\Z)$ picking out the invertible object $\Sigma \Z$. One checks by direct inspection that this map is an isomorphism on $\pi_0,\pi_1$. Since the higher homotopy groups of $\Omega^\infty\Sph$ are finite, it follows that $\Perf(\Q)[\Omega^\infty\Sph]\to\Perf(\Q)[\PPic(\Z)]$ is an equivalence, from this, the first claim follows. 

For the second claim, we note that the endomorphism ring of the unit in $\Perf(\Q)[\PPic(\Z)]$ is $\Q[C_2]\cong \Q\times\Q$ with a map sending the generator $\sigma$ to $(1,-1)$. It follows that the category splits as $A\times B$. We now identify $A,B$ as desired. We do it for $A$, and the argument is entirely analogous for $B$: note that the element $\Q(1)$ ($\Q$ in grading $1$) is invertible in $\Perf(\Q)[\Z]$, and hence induces a map $\Perf(\Q)[\PPic(\Z)]\to \Perf(\Q)[\Z]$. The dimension of $\Q(1)$ is $1$, and hence on endomorphisms of the unit, the map $\Q[C_2]\to \Q$ sends the generator to $1$, and so the map $\Perf(\Q)[\PPic(\Z)]\to \Perf(\Q)[\Z]$ factors through the projection onto $A$ - it is then easy to verify that the induced map $A\to\Perf(\Q)[\Z]$ is an equivalence.  
\end{proof}
By basechange, the following is immediate:
\begin{cor}
    Let $C$ be a rational $2$-ring and $L$ in $C$ be invertible. $C$ splits as $C_1\times C_{-1}$, where the image of $L$ in $C_1$ has dimension $1$, and dimension $-1$ in $C_{-1}$. 

    Furthermore, the map $\Perf(\Q)[\PPic(\Z)]\to C_1$ classifying $L$ factors through $\Perf(\Q)[\Z]$, and to $C_{-1}$ it factors through $\Perf(\Q)[\Z]^\Kos$. 
\end{cor}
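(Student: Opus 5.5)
The plan is to pull the statement back from the universal case along the $2$-ring map classifying $L$. By the preceding lemma, $L$ is classified by a symmetric monoidal exact functor $\varphi:\Perf(\Q)[\PPic(\Z)]\to C$, and the source splits as $A\times B$. The splitting comes from the idempotents $e_{\pm}=\tfrac{1\pm\sigma}{2}$ in $\pi_0\End(\one)\cong\Q[C_2]\cong\Q\times\Q$, where $\sigma=\dim(L_{\mathrm{univ}})$. I would push these idempotents forward along $\varphi$ to get orthogonal idempotents $\varphi(e_+),\varphi(e_-)\in\pi_0\End(\one_C)$ summing to $1$. Since $C$ is stable and idempotent-complete, such a decomposition of the unit gives a product decomposition $C\simeq C_1\times C_{-1}$, with $C_{\pm1}:=C\otimes_{\Perf(\Q)[\PPic(\Z)]}(\text{corresponding factor})$. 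Concretely, $C_{\pm 1}$ is the full subcategory of objects on which $\varphi(e_{\mp})$ acts by zero. Equivalently, it is the basechange of $C$ along $\End(\one_C)\to\End(\one_C)[\varphi(e_\pm)^{-1}]$. Using \Cref{nota:basechange}, this gives
\[C\simeq C\otimes_{\Q[C_2]}(\Q\times\Q)\simeq (C\otimes_{\Q[C_2]}\Q_{+})\times(C\otimes_{\Q[C_2]}\Q_{-}).\]

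Next I would compute the dimension of $L$ in each factor. By construction, $\dim(L)=\varphi(\sigma)$, and $\sigma$ maps to $1$ in $A$ and to $-1$ in $B$. Hence the image of $L$ in $C_1$ has dimension $1$, and in $C_{-1}$ it has dimension $-1$.

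For the factorization claim, the classifying map $\Perf(\Q)[\PPic(\Z)]\to C\to C_1$ sends $e_-$ to $0$. A symmetric monoidal functor out of a product $A\times B$ that kills the idempotent cutting out $B$ factors uniquely through the projection to $A$. This is because the pushout $A\otimes_{A\times B}C_1$ is $C_1$ itself, since $C_1$ is already a module over the $e_+$-localization. Combining this with the identification $A\simeq\Perf(\Q)[\Z]$ from the lemma gives the factorization through $\Perf(\Q)[\Z]$. The case of $C_{-1}$ and $\Perf(\Q)[\Z]^{\Kos}$ is symmetric.

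The only point needing care is that products of $2$-rings behave well under basechange. Concretely, I need $(A\times B)$-modules to split as products, so that $C\otimes_{A\times B}(A\times B)\simeq (C\otimes_{A\times B}A)\times(C\otimes_{A\times B}B)$. This follows because $A\times B\simeq A\oplus B$ in $\Catperf$, and because tensoring over a product with one factor is the localization at the corresponding idempotent. I expect this to be routine rather than a real obstacle.
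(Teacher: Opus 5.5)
Your proposal is correct and follows the paper's argument. The paper just says the corollary is immediate by basechanging the preceding lemma along the functor classifying $L$. You have spelled out that basechange: the idempotents $\tfrac{1\pm\sigma}{2}$ of $\Q[C_2]$, the fact that symmetric monoidal functors preserve dimensions, and the fact that $A$ is the idempotent localization of $A\times B$.
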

We can now prove the main result of this section:
\begin{proof}[Proof of \Cref{prop:Pic}]
    Let $C$ be as in the proposition, and let $L\in C$ be invertible. 

    The previous corollary provides a decomposition $C\simeq C_1\times C_{-1}$. One of $C_1, C_{-1}$ must be nonzero. Up to changing $L$ by $\Sigma L$, we may assume $C_1\neq 0$. 
    
    Since $C_1$ is clearly compact as commutative $C$-algebras, we find that the map $C\to C_1$ admits a splitting, which proves $C= C_1$. 

    Thus the map $\Perf(\Q)[\PPic(\Z)]\to C$ classifying $L$ factors through graded $\Q$-modules, which is a semi-simple category with simple unit. It further maps to $\Perf(\Q)$, sending the image of the free invertible object to $\Q$, so that by \Cref{cor:semiscons}, the tensor product $C\otimes_{\Perf(\Q)[\Z]}\Perf(\Q)$ is nonzero. Since it is compact as a commutative $C$-algebra, it retracts onto $C$ which proves $L\simeq \one_C$. 
\end{proof}
\bibliographystyle{alpha}
\bibliography{Biblio}
\end{document}